\newcommand{\declarecolor}[2]{\definecolor{#1}{RGB}{#2}\expandafter\newcommand\csname #1\endcsname[1]{\textcolor{#1}{##1}}}
\definecolor{mydarkblue}{rgb}{0,0.08,0.45}
\newclass{\SigmaP}{\Sigma_2^P}
\newclass{\CLS}{CLS}
\newcommand{\mat}[1]{\mathbf{#1}}
\def\va{{\bm{a}}}
\def\vc{{\bm{c}}}
\def\vu{{\bm{u}}}
\def\vv{{\bm{v}}}
\def\vx{{\bm{x}}}
\def\vy{{\bm{y}}}
\def\vz{{\bm{z}}}
\renewcommand\vec\bm
\newcommand{\tilcX}{\tilde{\cX}}
\newcommand{\tilF}{\tilde{F}}
\newcommand{\VI}{\textsc{VI}}
\newcommand{\diffeps}{\tilde{\epsilon}}
\newcommand{\svigap}{\mathsf{SVIGap}}
\newcommand{\vatil}{\tilde{\va}}
\newcommand{\optellipsoid}{\texttt{ExtraGradientEllipsoid}}
\newcommand{\sviorstrictevi}{\texttt{SVIorStrictEVI}}
\newcommand{\opt}{\mathsf{OPT}}
\def\cA{{\mathcal{A}}}
\def\cB{{\mathcal{B}}}
\def\cE{{\mathcal{E}}}
\def\cK{{\mathcal{K}}}
\def\cP{{\mathcal{P}}}
\def\cX{{\mathcal{X}}}
\def\cZ{{\mathcal{Z}}}
\newcommand{\delimit}[3]{\newcommand{#1}[1]{\left#2##1\right#3}}
\DeclareMathOperator*{\argmin}{argmin}
\DeclareMathOperator*{\argmax}{argmax}
\let\E\relax
\DeclareMathOperator*{\E}{\mathbb E}
\DeclareMathOperator{\size}{\mathsf{size}}
\newcommand{\reg}{\overline{\mathsf{Reg}}}
\newcommand{\sw}{\mathsf{SW}}
\newcommand{\sep}{\mathtt{SEP}}
\newcommand{\vol}{\op{vol}}
\newcommand{\proj}{\Pi}
\renewcommand{\R}{\mathbb R}
\newcommand{\N}{\mathbb N}
\newcommand{\Q}{\mathbb Q}
\let\op\operatorname
\let\eps\epsilon
\let\grad\nabla
\let\ip\ev
\newcommand{\defeq}{:=}
\newcommand{\ie}{{\em i.e.}\xspace}
\newcommand{\eg}{{\em e.g.}\xspace}
\newcommand{\sgn}{\op{sgn}}
\theoremstyle{plain}
\newtheorem{theorem}{Theorem}[section]
\newtheorem{proposition}[theorem]{Proposition}
\newtheorem{lemma}[theorem]{Lemma}
\newtheorem{claim}[theorem]{Claim}
\newtheorem{corollary}[theorem]{Corollary}
\theoremstyle{definition}
\newtheorem{definition}[theorem]{Definition}
\newtheorem{assumption}[theorem]{Assumption}
\newtheorem{observation}[theorem]{Observation}
\theoremstyle{remark}
\newtheorem{remark}[theorem]{Remark}
\setlist[enumerate]{itemsep=1mm,parsep=0mm,topsep=.5mm}
\setlist[itemize]{itemsep=1mm,parsep=0mm,topsep=.5mm}
\renewcommand\vec\bm
\renewcommand\grad\nabla
\title{A Polynomial-Time Algorithm for Variational Inequalities\\ under the Minty Condition}
\author[1]{Ioannis Anagnostides}
\author[2]{Gabriele Farina}
\author[1,3]{Tuomas Sandholm}
\author[2]{Brian Hu Zhang}
\affil[1]{Carnegie Mellon University}
\affil[2]{Massachusetts Institute of Technology}
\affil[3]{Additional affiliations: Strategy Robot, Inc., Strategic Machine, Inc., Optimized Markets, Inc.}
\affil[ ]{}
\affil[ ]{\texttt{\{ianagnos,sandholm,bhzhang\}}\texttt{@cs.cmu.edu}, \texttt{gfarina}\texttt{@mit.edu}}
\begin{document}

\maketitle

\begin{abstract}
Solving \emph{(Stampacchia) variational inequalities (SVIs)} is a foundational problem at the heart of optimization, with a host of critical applications ranging from engineering to economics. However, this expressivity comes at the cost of 
computational hardness. As a result, most research has focused on carving out specific subclasses that elude those intractability barriers. A classical property that goes back to the 1960s is the \emph{Minty condition}, which postulates that the \emph{Minty} VI (MVI) problem---the weak dual of the SVI problem---admits a solution.

In this paper, we establish the first polynomial-time algorithm---that is, with complexity growing polynomially in the dimension $d$ and $\log(1/\epsilon)$---for solving $\epsilon$-SVIs for Lipschitz continuous mappings under the Minty condition. Prior approaches either incurred an exponentially worse dependence on $1/\epsilon$ (and other natural parameters of the problem) or made more restrictive assumptions, such as monotonicity. To do so, we introduce a new variant of the ellipsoid algorithm whereby separating hyperplanes are obtained after taking a descent step from the center of the ellipsoid. It succeeds even though the set of SVIs can be nonconvex and not fully dimensional. Moreover, when our algorithm is applied to an instance with no MVI solution and fails to identify an SVI solution, it produces a succinct \emph{certificate of MVI infeasibility}. We also show that deciding whether the Minty condition holds is $\mathsf{coNP}$-complete, thereby establishing that the disjunction of those two problems---computing $\epsilon$-SVIs and ascertaining MVI infeasibility---is polynomial-time solvable even though each problem is individually intractable.

We provide several extensions and new applications of our main results. Most notably, we obtain the first polynomial-time algorithms for i) globally minimizing a (potentially nonsmooth) \emph{quasar-convex} function, and ii) computing Nash equilibria in multi-player \emph{harmonic games}. Finally, in two-player general-sum concave games, we give the first polynomial-time algorithm that outputs \emph{either} a Nash equilibrium \emph{or} a \emph{strict} coarse correlated equilibrium.
\end{abstract}

\pagenumbering{gobble}

\clearpage

\tableofcontents

\clearpage

\pagenumbering{arabic}

\section{Introduction}

\emph{Variational inequalities (VIs)} are a mainstay framework at the heart of optimization that unifies a host of foundational problems in diverse areas ranging from engineering to economics~\citep{Kinderlehrer00:Introduction,Facchinei03:Finite,Bensoussan11:Applications}. The basic problem underpinning VIs---when restricted to Euclidean spaces---can be formulated as follows.

\begin{definition}[SVIs]
    \label{def:VI}
    Let $\cX$ be a convex and compact subset of $\R^d$ and a (single-valued) mapping $F : \cX \to \R^d$. The $\epsilon$-approximate \emph{Stampacchia variational inequality (SVI)} problem\footnote{It is common to refer to SVIs as simply VIs, but we adopt the former nomenclature here to disambiguate with the \emph{Minty VI} problem, introduced in~\Cref{def:MVI}.} asks for a point $\vx \in \cX$ such that
    \begin{equation}
    \label{eq:SVI}
    %\tag{SVI}
    \langle F(\vx), \vx' - \vx \rangle \geq - \epsilon \quad \forall \vx' \in \cX.
\end{equation}
\end{definition}

Assuming $F$ is continuous, an exact SVI solution (that is, with $\epsilon = 0$) always exists~\citep{Facchinei03:Finite}---by Brouwer's fixed-point theorem applied on the function $\vx \mapsto \proj_{\cX}(\vx - \eta F(\vx))$, where $\proj_\cX$ is the (Euclidean) projection mapping. For computational purposes, we have introduced a slackness $\epsilon > 0$ in the right-hand side of~\eqref{eq:SVI}; without this relaxation, the only SVI solution can be irrational. It is assumed that $\epsilon$ is given (in binary) as part of the input, and the goal is to design algorithms with complexity growing polynomially in $\log(1/\epsilon)$ and the dimension $d$.

A canonical example of~\Cref{def:VI} is the problem of computing fixed points of gradient descent in constrained optimization. In particular, for a differentiable function $f$, the $\epsilon$-SVI problem corresponding to $F \defeq \nabla f$ can be expressed as $\langle \nabla f(\vx), \vx' - \vx \rangle \geq - \epsilon$ for all $\vx' \in \cX$, which captures precisely the first-order optimality conditions~\citep{Boyd04:Convex}. Another standard example of~\Cref{def:VI} is the problem of computing (approximate) \emph{Nash equilibria} in multi-player games~\citep{Nash50:Equilibrium}.\looseness-1

Unfortunately, by virtue of encompassing (approximate) Nash equilibria, solving general SVIs is computationally intractable---namely, $\PPAD$-hard~\citep{Papadimitriou94:On}---even when $F$ is linear and $\epsilon$ is an absolute constant~\citep{Rubinstein15:Inapproximability}; recent work by~\citet{Bernasconi24:Role,Kapron24:Computational} characterizes the exact complexity of SVIs, as well as generalizations thereof. In fact, even in the special case discussed above wherein $F \defeq \nabla f$, computing $\epsilon$-SVI solutions is also hard---under certain well-believed complexity assumptions---when $\epsilon$ is exponentially small in the dimension~\citep{Fearnley23:Complexity}.

In light of the intractability of solving general SVIs, most research has restricted its attention to more structured classes of problems. Following a long and burgeoning line of work that goes back to the 1960s, we operate under the \emph{Minty condition} (introduced below as~\Cref{ass:Minty}); it is based on a variant of SVIs (\Cref{def:VI}) tracing back to~\citet{Minty67:Generalization}, known as the \emph{Minty} VI problem.

\begin{definition}[MVIs]
    \label{def:MVI}
    Let $\cX$ be a convex and compact subset of $\R^d$ and a mapping $F : \cX \to \R^d$. The \emph{Minty variational inequality (MVI)} problem asks for a point $\vx \in \cX$ such that
\begin{equation}
    \label{eq:MVI}
    %\tag{MVI}
    \langle F(\vx'), \vx' - \vx \rangle \geq 0 \quad \forall \vx' \in \cX.
\end{equation}
\end{definition}

Unlike SVIs, an MVI solution may not exist even when $F$ is continuous---indeed, the Minty condition is precisely the requirement that an MVI solution exists:

\begin{assumption}[Minty condition]
    \label{ass:Minty}
    A variational inequality problem $\VI(\cX, F)$ satisfies the {\em Minty condition} if there exists $\vx \in \cX$ that satisfies~\eqref{eq:MVI}.
\end{assumption}
Assuming continuity, \Cref{def:MVI} refines~\Cref{def:VI} in the following sense. 

\begin{lemma}[Minty's lemma]
    \label{lemma:Minty}
    If $F$ is continuous and $\cX$ is convex and compact, then any MVI solution is also an SVI solution.
\end{lemma}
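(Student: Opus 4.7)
The plan is to use the standard convex combination trick along a line segment and pass to the limit using continuity of $F$. Concretely, I would fix an arbitrary $\vx'' \in \cX$ and, for each $t \in (0,1]$, consider the convex combination $\vx_t \defeq (1-t)\vx + t\vx'' = \vx + t(\vx'' - \vx)$, which lies in $\cX$ by convexity.

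Next, I would apply the MVI defining inequality~\eqref{eq:MVI} with the test point $\vx' \defeq \vx_t$, giving
\[
\langle F(\vx_t), \vx_t - \vx \rangle = t \langle F(\vx_t), \vx'' - \vx \rangle \geq 0.
\]
Dividing by $t > 0$ yields $\langle F(\vx_t), \vx'' - \vx \rangle \geq 0$ for every $t \in (0,1]$. Then, letting $t \to 0^+$ and using continuity of $F$ on the compact set $\cX$, so that $F(\vx_t) \to F(\vx)$, I obtain $\langle F(\vx), \vx'' - \vx \rangle \geq 0$. Since $\vx''$ was arbitrary, this establishes that $\vx$ satisfies~\eqref{eq:SVI} with $\epsilon = 0$, and is thus an exact SVI solution.

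There is no real obstacle here; the only subtle point is that one needs the convex combination $\vx_t$ to lie in $\cX$ (which uses convexity of $\cX$) and one needs the inner product to behave well under the limit (which uses continuity of $F$, plus compactness only insofar as it guarantees $F(\vx_t)$ is defined along the segment). Compactness of $\cX$ is not strictly needed for this direction; only convexity of $\cX$ and continuity of $F$ are invoked.
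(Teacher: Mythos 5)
Your proof is correct and is the standard argument for Minty's lemma (the paper itself states this classical result without proof); the convex-combination-and-limit step is exactly right, and your observation that compactness is not needed for this direction is also accurate.
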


On the other hand, an SVI solution need not be an MVI solution---otherwise~\Cref{ass:Minty} would always hold under a continuous mapping. One well-known special case in which the set of MVI solutions does coincide with the set of SVI solutions is when $F$ is \emph{monotone}; that is, when $\langle F(\vx) - F(\vx'), \vx - \vx' \rangle \geq 0$ for all $\vx, \vx' \in \cX$. This holds more generally when $F$ is \emph{pseudomonotone}, which means that $\langle F(\vx'), \vx - \vx' \rangle \geq 0 \implies \langle F(\vx), \vx - \vx' \rangle \geq 0$ for all $\vx, \vx' \in \cX$. Importantly, the Minty condition is more permissive than monotonicity, encompassing a broader class of problems; indeed, it captures a host of nonconvex optimization problems (\emph{cf.}~\Cref{sec:Mintyexamples}).

By now, it is well-known that under the Minty condition, there are algorithms with complexity scaling polynomially in $1/\epsilon$ (and all other natural parameters of the problem) for computing an $\epsilon$-SVI solution. This goes back to the early, pioneering work of~\citet{Sibony70:Methodes}, \citet{Martinet70:Breve}, and~\citet{Korpelevich76:Extragradient}; in particular, the latter showed that the extra-gradient method converges in all monotone VIs---this stands in stark contrast to the usual gradient descent algorithm, which can cycle even in two-player zero-sum games~\citep{Mertikopoulos18:Cycles}, which are monotone. Motivated by many applications in machine learning and reinforcement learning, there has been renewed interest in the Minty condition recently (\emph{e.g.},~\citealp{Burachik2020:Projection,Lei21:Extragradient,Lin24:Perseus,Song20:Optimistic,Ye22:Infeasible,Mertikopoulos19:Learning,Mertikopoulos18:Optimistic,Daskalakis20:Independent,Patris24:Learning,Cai24:Accelerated}). While some of these bounds do not have an explicit dependence on the dimension $d$, they become exponential when the precision $\epsilon$ is exponentially small in terms of $d$. On the other end of the spectrum, all existing algorithms with $\poly(d, \log(1/\epsilon))$ complexity make restrictive assumptions that are significantly stronger than the Minty condition, akin to monotonicity or some type of an ``error bound'' (for example, we refer to~\citealp{Song20:Optimistic,Ye22:Infeasible,Luthi85:Solution,Magnanti95:Unifying}, discussed further in~\Cref{sec:related}).\looseness-1

\subsection{Our results}

\begin{table}[ht]
\centering
\caption{Summary of main results. All algorithms (upper bounds) have runtimes that depend polynomially on both $d$ and $\log(1/\eps)$, and, to our knowledge, are the first algorithms for their respective settings with this dependence. $^\dagger$: Unlike our other positive results, our result for quasar-convex optimization holds even when $F = \grad f$ is not Lipschitz continuous.}
\vspace{.2cm}
\label{tab:bounds}
\begin{tabular}{p{3cm}p{9.8cm}l}
\toprule
& \textbf{Result} & \textbf{Reference} \\
\midrule
\multirow{5}{*}{Upper bounds} & \textbullet\,~$\epsilon$-SVIs under the Minty condition & \Cref{theorem:mainMinty-informal} \\
 &  \textbullet\,~$\epsilon$-SVIs \emph{or} $\Omega_\epsilon(\epsilon^2)$-strict EVIs & \Cref{theorem:maineither-informal} \\
& \textbullet\,~$\epsilon$-global optimization under quasar-convexity$^\dagger$ & \Cref{cor:improved-quasarconvex} \\
&  \textbullet\,~$\epsilon$-Nash equilibria in harmonic games & \Cref{cor:harmonic} \\
 & \textbullet\,~$\eps$-Nash \emph{or} $\Omega_\eps(\eps^6)$-strict CCEs in two-player concave games & \cref{theorem:informal-twoplayerstrict}\\
\midrule
\multirow{2}{*}{Lower bounds} & \textbullet\,~Deciding MVI feasibility (\emph{i.e.},~Minty condition) & \Cref{theorem:hardmintyconp-informal} \\
 & \textbullet\,~Solving MVIs under the Minty condition & \Cref{theorem:Mintyexp-informal} \\
\bottomrule
\end{tabular}
\end{table}

We establish the first polynomial-time algorithm for solving $\epsilon$-SVIs under the Minty condition. In what follows, we assume that the constraint set $\cX$ is accessed through a (weak) separation oracle (\Cref{def:weaksep}) and $\cB_1(\vec{0}) \subseteq \cX \subseteq \cB_R(\vec{0})$ for some $R \leq \poly(d)$; the latter can be met be bringing $\cX$ into isotropic position, a transformation that does not affect our main result (as formalized in~\Cref{sec:isotropic}). With regard to the mapping $F$, we assume that it can be evaluated in polynomial time, it is $L$-Lipschitz continuous, and $\|F(\vx)\|$ is bounded by $B > 0$ (\Cref{ass:polyeval}). We are now ready to state our main result.

\begin{theorem}[Precise version in~\Cref{theorem:mainMinty-precise}]
    \label{theorem:mainMinty-informal}
    Under the Minty condition (\Cref{ass:Minty}), there is an algorithm that runs in $\poly(d, \log(B/\epsilon), \log L)$ time and returns an $\epsilon$-SVI solution.
\end{theorem}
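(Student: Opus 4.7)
The plan is to design a variant of the ellipsoid method tailored to MVI-based separation. I maintain an ellipsoid $E_t$ containing the set of MVI solutions, with center $\vx_t$; I initialize $E_0 = \cB_R(\vec{0})$. At each iteration, I compute the one-step gradient-descent iterate $\vy_t \defeq \proj_\cX(\vx_t - \eta F(\vx_t))$ for some $\eta < 1/L$. If the residual $\|\vy_t - \vx_t\|$ falls below an appropriately chosen threshold $\delta$, I output $\vx_t$: the first-order optimality of the projection defining $\vy_t$, together with $\cX \subseteq \cB_R(\vec{0})$ and $\|F\| \leq B$, gives that $\vx_t$ is an $\epsilon$-SVI solution provided $\delta \lesssim \eta \epsilon / (R + \eta B)$. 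Otherwise, I update $E_t$ using a separating hyperplane through $\vx_t$ with normal $F(\vy_t)$ (and when $\vx_t \notin \cX$, I instead cut using the given weak separation oracle for $\cX$).

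The correctness of the separation leverages the Minty condition, applied at $\vy_t \in \cX$, which yields $\langle F(\vy_t), \vy_t - \vxstar \rangle \geq 0$ for every MVI solution $\vxstar$. Meanwhile, the first-order optimality of the projection evaluated at $\vz = \vx_t$ gives $\langle F(\vx_t), \vy_t - \vx_t \rangle \leq -\tfrac{1}{\eta} \|\vy_t - \vx_t\|^2$, and combining with the $L$-Lipschitz continuity of $F$ (to swap $F(\vx_t)$ for $F(\vy_t)$) yields $\langle F(\vy_t), \vy_t - \vx_t \rangle \leq -(1/\eta - L)\|\vy_t - \vx_t\|^2 < 0$. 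Adding the two inequalities produces $\langle F(\vy_t), \vxstar - \vx_t \rangle < 0$, so every MVI solution $\vxstar$ lies strictly in the half-space kept by the central cut, certifying the ellipsoid update.

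The main obstacle is termination. Standard ellipsoid analysis yields $\vol(E_{t+1}) \leq e^{-\Omega(1/d)} \vol(E_t)$, but because the set of MVI solutions may fail to be full-dimensional---possibly a singleton---one cannot directly conclude that an SVI solution has been found simply from the volume collapsing. To overcome this, I would exploit the fact that whenever the stopping criterion fails, the bound above actually yields a \emph{deep cut}: $\vxstar$ is separated from the hyperplane through $\vx_t$ with margin $\Omega(\delta^2 / B)$, so the ellipsoid update can use a hyperplane shifted toward $\vy_t$, giving faster shrinkage in the relevant direction. Combined with the observation that once the ellipsoid has sufficiently small semi-axis in every direction in which $F$ varies, the Lipschitz continuity of $F$ together with the non-expansivity of the projection forces $\vy_t$ to fall within $\delta$ of $\vx_t$, I expect to close out termination in $T = \poly(d, \log(B/\epsilon), \log L)$ iterations. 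Together with the isotropization step ensuring $R \leq \poly(d)$, this delivers the claimed complexity.
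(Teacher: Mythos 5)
Your separation mechanism is the same as the paper's: the extra-gradient step $\vy_t = \proj_\cX(\vx_t - \eta F(\vx_t))$, the projection optimality inequality $\langle F(\vx_t), \vy_t - \vx_t\rangle \le -\tfrac{1}{\eta}\|\vy_t - \vx_t\|^2$, and the Lipschitz swap together show---exactly as in the paper's strict-separation lemma---that $F(\vy_t)$ cuts $\vx_t$ away from every MVI solution with inner-product margin $(1/\eta - L)\|\vy_t-\vx_t\|^2$. Your stopping rule (small residual $\|\vy_t-\vx_t\|$ certifies an $\epsilon$-SVI solution) is a valid, mildly different substitute for the paper's direct SVI test via a linear optimization oracle, and the two cases are complementary, so the per-iteration logic is sound.

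The gap is in termination. Deep cuts only improve the per-step volume reduction by constant factors and do not touch the real obstacle, which is that small volume certifies nothing when the MVI set is lower-dimensional. Your second claim---that once the ellipsoid has small semi-axes ``in every direction in which $F$ varies,'' non-expansivity forces $\|\vy_t - \vx_t\| \le \delta$---appeals to a property the ellipsoid method does not provide: it drives the volume to zero, not the diameter. The ellipsoid can degenerate into a thin sliver whose center remains far from $\vxstar$, and then the residual can stay as large as $\eta B$ no matter how small the volume is (the paper's \Cref{sec:noopt} exhibits precisely this sliver behavior). The correct way to use the strictness you already derived runs in the opposite direction: every cut passes through the center and leaves $\vxstar$ at geometric depth $\mu = \Omega(\delta^2/(\eta B))$ on the kept side, so a ball of radius about $\mu$ around $\vxstar$ survives inside every ellipsoid (with some care for the non-strict cuts issued by the separation oracle of $\cX$, handled in the paper via \Cref{lemma:closeinters,lemma:certificate}), whence the volume never drops below $\mu^d/d^d$. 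Consequently, after $T = O(d^2\log R + d\log(1/\mu)) = \poly(d,\log(B/\epsilon),\log L)$ iterations the algorithm must already have stopped with an $\epsilon$-SVI solution, on pain of contradicting the Minty condition. It is this volume \emph{lower} bound, not faster shrinkage or eventual small diameter, that closes the argument.
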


Compared to all previous results cited earlier under the Minty condition, this result improves exponentially the dependence on $1/\epsilon$ and $L$. We clarify that, although the underlying algorithm posits the Minty condition, its execution does not hinge on knowing an MVI solution. Indeed, a peculiar feature of~\Cref{theorem:mainMinty-informal} is that while its main precondition concerns MVIs, the output itself is an approximate SVI solution; the reason for this discrepancy will become clear shortly. In the special case where $F$ is monotone, \Cref{theorem:mainMinty-informal} implies a polynomial-time algorithm for finding $\epsilon$-MVI solutions---since under monotonicity the set of MVIs coincides with the set of SVIs.

Before we present some more results that build on~\Cref{theorem:mainMinty-informal} (gathered in~\Cref{tab:bounds}), we dive into our technical approach. 

%\begin{corollary}
%\label{cor:monotone}
%When $F$ is monotone, there is an algorithm that runs in $\poly(d, \log(B/\epsilon), \log L)$ time and returns an $\epsilon$-MVI solution.
%\end{corollary}

%Indeed, as we saw earlier, monotonicity implies that the set of MVI solutions coincides with the set of SVI solutions, so~\Cref{cor:monotone} directly follows from~\Cref{theorem:mainMinty-informal}. 

\subsubsection{Technical approach: proof of~\texorpdfstring{\Cref{theorem:mainMinty-informal}}{Theorem~\ref{theorem:mainMinty-informal}}}

The proof of~\Cref{theorem:mainMinty-informal} relies on the usual (central-cut) ellipsoid algorithm, but with certain unusual twists. 

\paragraph{Challenge 1: lack of convexity} The first immediate conundrum lies in the fact that the set of SVI solutions is generally not convex even when the Minty condition holds. By contrast, the set of MVI solutions is convex, thereby being a better candidate on which to execute ellipsoid. But this approach also runs into an apparent difficulty: while a point $\vx \in \cX$ can be confirmed to be an $\epsilon$-SVI solution by invoking a (linear) optimization oracle, this is not so for MVIs. Indeed, as we show in this paper (\Cref{sec:introlowerbounds}), ascertaining MVI membership is \coNP-complete.

In summary, the set of SVI solutions admits an efficient membership oracle, while the set of MVI solutions is convex. A natural approach now presents itself: execute the ellipsoid with respect to the set of MVIs, but only verify SVI membership. To do so, the first key idea is this: if a point $\vx \in \cX$ is \emph{not} an $\epsilon$-SVI solution, then $F(\vx)$ yields a hyperplane separating $\vx$ from the set of MVIs. As a result, this allows us to run the ellipsoid algorithm with respect to the convex set of MVI solutions, with the peculiarity that we only test for SVI membership during the execution of the algorithm. So long as the algorithm fails to identify an approximate SVI solution, the volume of the ellipsoid shrinks geometrically.

\paragraph{Challenge 2: lack of full dimensionality} This now brings us to the next key challenge: the set of MVI solutions is generally not fully dimensional. It is therefore unclear whether the volume of the ellipsoid can be used as a yardstick to measure the progress of the algorithm. There is a standard approach for addressing this issue, at least for rational polyderal sets (for example,~\citealp[Chapter 6]{Grotschel12:Geometric}): restrict the execution of the ellipsoid to suitable subspaces whenever one of the ellipsoid's axis gets too small. However, this standard approach falls short in our problem; we provide a concrete numerical example in~\Cref{sec:noopt} illustrating that the usual ellipsoid algorithm fails to identify $\epsilon$-SVI solutions.

To address this problem, we introduce a new algorithmic idea. Namely, we show that we can produce, in polynomial time, what we refer to as a \emph{strict} separating hyperplane; this key ingredient underpinning~\Cref{theorem:mainMinty-informal} is summarized below (the precise version is~\Cref{lemma:strict1}).
(Technically, we need to allow the given point $\va$ in~\Cref{lemma:sviormvi} to be approximately in $\cX$ since we are dealing with general convex sets---in accordance with~\Cref{lemma:extendedsemi}---but we do not dwell on this issue in the introduction.)

\begin{lemma}[SVI membership or MVI \emph{strict} separation]
    \label{lemma:sviormvi}
    Given a point $\va \in \cX \cap \Q^d$ and $\epsilon \in \Q_{>0}$, there is a polynomial-time algorithm that either
    \begin{enumerate}
        \item  ascertains that $\vec{a}$ is an $\epsilon$-SVI solution or
        \item returns $\vec{c} \in \Q^d$, with $\| \vec{c} \|_\infty = 1$, such that $\langle \vec{c}, \vx \rangle \leq \langle \vec{c}, \va \rangle - \gamma$ for any point $\vx \in \cX$ that satisfies the Minty VI~\eqref{eq:MVI}, where $\gamma = \epsilon^2 \cdot \poly(R^{-1}, L^{-1}, B^{-1})$.\label{item:strictsep}
    \end{enumerate}
\end{lemma}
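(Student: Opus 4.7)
The plan is to use a single extragradient-style step from $\va$ to produce either a certificate that $\va$ is already an $\epsilon$-SVI or the desired strict separator from the convex set of MVI solutions. First, I would directly test the SVI condition by maximizing $\langle F(\va), \va - \vx' \rangle$ over $\vx' \in \cX$: a weak linear optimization oracle for $\cX$ is obtainable in polynomial time from the assumed weak separation oracle via the Gr\"otschel--Lov\'asz--Schrijver machinery. If the computed maximum is at most $\epsilon$, return case~1. Otherwise, keep a (near-)maximizer $\vx^{+} \in \cX$ with $\langle F(\va), \va - \vx^{+} \rangle > \epsilon$; this will serve as a quantitative witness that $\|F(\va)\|$ is not too small.

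To construct the strict separator in case~2, take one extragradient step: set $\vb := \proj_\cX(\va - \eta F(\va))$ with $\eta = 1/(2L)$ and propose $\vc := F(\vb)$, rescaled so that $\|\vc\|_\infty = 1$. The proof of correctness reduces to two quantitative estimates. \emph{(i) Displacement bound.} Applying the variational characterization of the projection $\eta \langle F(\va), \vy - \vb \rangle \geq \langle \va - \vb, \vy - \vb \rangle$ at $\vy = \vx^{+}$, and combining with $\langle F(\va), \va - \vx^{+} \rangle > \epsilon$, $\|\vx^{+} - \va\| \leq 2R$, $\|F(\va)\| \leq B$, and Cauchy--Schwarz, one obtains $\|\va - \vb\| \geq \eta \epsilon / (\eta B + 2R) = \Omega(\epsilon/(B + LR))$. \emph{(ii) Margin bound.} For any MVI solution $\vx^\star$, testing~\eqref{eq:MVI} at $\vb$ gives $\langle F(\vb), \va - \vx^\star \rangle \geq \langle F(\vb), \va - \vb \rangle$; splitting $F(\vb) = F(\va) + (F(\vb) - F(\va))$, using the projection inequality $\langle F(\va), \va - \vb \rangle \geq \|\va - \vb\|^2/\eta$ (obtained by testing the projection at $\vy = \va$), and the Lipschitz estimate $|\langle F(\vb) - F(\va), \va - \vb \rangle| \leq L \|\va - \vb\|^2$, yields $\langle F(\vb), \va - \vb \rangle \geq (1/\eta - L)\|\va - \vb\|^2 = L\|\va - \vb\|^2$. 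Chaining~(i) and~(ii) and normalizing by $\|F(\vb)\|_\infty \leq B$ produces the advertised gap $\gamma = \epsilon^2 \cdot \poly(R^{-1}, L^{-1}, B^{-1})$. Polynomial-time complexity is immediate, since each of $F(\va)$, $\vb$, and $F(\vb)$ requires only a constant number of calls to the underlying oracles.

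The main obstacle I anticipate is not in the clean algebra above but in threading the analysis through the \emph{approximate} primitives: as flagged in the introduction, $\va$ is allowed to lie only approximately in $\cX$, the separation oracle is weak, and $\proj_\cX$ can be computed only up to polynomial precision. Both the displacement lower bound in~(i) and the projection inequality used in~(ii) are sensitive to additive errors, so I would set the tolerances of the weak separation, weak linear maximization, and weak projection subroutines to be polynomially small in $1/R$, $1/L$, $1/B$, and $\epsilon$ so that their contributions are absorbed into the $\poly(R^{-1}, L^{-1}, B^{-1})$ factor of $\gamma$ without eroding the $\epsilon^2$ dependence. This perturbation bookkeeping---rather than the extragradient identity itself---is the delicate part of the argument.
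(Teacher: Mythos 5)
Your proposal is correct and follows essentially the same route as the paper: test the SVI condition via a linear optimization oracle, take one extragradient step $\vatil = \proj_{\cX}(\va - F(\va)/(2L))$, lower-bound the displacement $\|\va - \vatil\| \geq \epsilon/(B+4LR)$ using the projection's first-order optimality at the violating point, and combine with the MVI inequality at $\vatil$ plus Lipschitzness to get $\langle F(\vatil), \va - \vx\rangle \geq L\|\va-\vatil\|^2$, exactly as in the paper's Lemmas~\ref{lemma:semiseparation} and~\ref{lemma:strict1}. The perturbation bookkeeping for weak oracles that you flag as the delicate part is likewise deferred by the paper to its appendix (Lemma~\ref{lemma:extendedsemi}), where it is handled in the way you sketch.
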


\begin{figure}[thp]
    \centering
    \tikzset{every picture/.style={line width=0.75pt}} %set default line width to 0.75pt        

\begin{tikzpicture}[x=0.75pt,y=0.75pt,yscale=-1,xscale=1]
%uncomment if require: \path (0,492); %set diagram left start at 0, and has height of 492

\begin{scope}[scale=.9]

%Curve Lines [id:da17558083076508024] 
%Curve Lines [id:da17558083076508024] 
\draw [color={rgb, 255:red, 208; green, 2; blue, 27 }  ,draw opacity=1 ]   (97,172) .. controls (106.21,177.48) and (108.31,188.14) .. (108.98,204.51) ;
\draw [shift={(109.05,206.32)}, rotate = 268.03] [color={rgb, 255:red, 208; green, 2; blue, 27 }  ,draw opacity=1 ][line width=0.75]    (10.93,-3.29) .. controls (6.95,-1.4) and (3.31,-0.3) .. (0,0) .. controls (3.31,0.3) and (6.95,1.4) .. (10.93,3.29)   ;
%Straight Lines [id:da44962989847816015] 
\draw [color={rgb, 255:red, 74; green, 144; blue, 226 }  ,draw opacity=1 ] [dash pattern={on 4.5pt off 4.5pt}]  (38.5,293.1) -- (258,73.6) ;
%Straight Lines [id:da6441910288303807] 
\draw [color={rgb, 255:red, 74; green, 144; blue, 226 }  ,draw opacity=1 ][fill={rgb, 255:red, 74; green, 144; blue, 226 }  ,fill opacity=1 ]   (157.58,173.78) -- (182.13,198.33) ;
\draw [shift={(184.25,200.45)}, rotate = 225] [fill={rgb, 255:red, 74; green, 144; blue, 226 }  ,fill opacity=1 ][line width=0.08]  [draw opacity=0] (10.72,-5.15) -- (0,0) -- (10.72,5.15) -- (7.12,0) -- cycle    ;
%Straight Lines [id:da8079321265339972] 
\draw  [dash pattern={on 4.5pt off 4.5pt}]  (46.5,228) -- (287.5,191) ;
%Shape: Ellipse [id:dp4436702254604541] 
\draw  [fill={rgb, 255:red, 0; green, 0; blue, 0 }  ,fill opacity=0.05 ][line width=1.5]  (57.8,104.06) .. controls (93.97,67.89) and (170.65,85.91) .. (229.06,144.32) .. controls (287.46,202.72) and (305.48,279.4) .. (269.31,315.57) .. controls (233.13,351.75) and (156.46,333.73) .. (98.05,275.32) .. controls (39.64,216.91) and (21.62,140.24) .. (57.8,104.06) -- cycle ;
%Shape: Circle [id:dp74164855601712] 
% \draw  [draw opacity=0][fill={rgb, 255:red, 208; green, 2; blue, 27 }  ,fill opacity=1 ] (106.05,213.32) .. controls (106.05,211.66) and (107.4,210.32) .. (109.05,210.32) .. controls (110.71,210.32) and (112.05,211.66) .. (112.05,213.32) .. controls (112.05,214.97) and (110.71,216.32) .. (109.05,216.32) .. controls (107.4,216.32) and (106.05,214.97) .. (106.05,213.32) -- cycle ;
\node[text={rgb, 255:red, 208; green, 2; blue, 27}] at (109.05,213.32) {{\rotatebox{45}{\small$\blacksquare$}}};
%Straight Lines [id:da45711347148963655] 
\draw [color={rgb, 255:red, 0; green, 0; blue, 0 }  ,draw opacity=1 ][fill={rgb, 255:red, 74; green, 144; blue, 226 }  ,fill opacity=1 ]   (163.55,209.82) -- (167.54,235.44) ;
\draw [shift={(168,238.4)}, rotate = 261.16] [fill={rgb, 255:red, 0; green, 0; blue, 0 }  ,fill opacity=1 ][line width=0.08]  [draw opacity=0] (10.72,-5.15) -- (0,0) -- (10.72,5.15) -- (7.12,0) -- cycle    ;
%Shape: Circle [id:dp24431216266126587] 
\draw  [draw opacity=0][fill={rgb, 255:red, 0; green, 0; blue, 0 }  ,fill opacity=1 ] (160.55,209.82) .. controls (160.55,208.16) and (161.9,206.82) .. (163.55,206.82) .. controls (165.21,206.82) and (166.55,208.16) .. (166.55,209.82) .. controls (166.55,211.47) and (165.21,212.82) .. (163.55,212.82) .. controls (161.9,212.82) and (160.55,211.47) .. (160.55,209.82) -- cycle ;
%Shape: Circle [id:dp7319589783143292] 
\draw  [draw opacity=0][fill={rgb, 255:red, 74; green, 144; blue, 226 }  ,fill opacity=1 ] (154.58,173.78) .. controls (154.58,172.12) and (155.92,170.78) .. (157.58,170.78) .. controls (159.24,170.78) and (160.58,172.12) .. (160.58,173.78) .. controls (160.58,175.43) and (159.24,176.78) .. (157.58,176.78) .. controls (155.92,176.78) and (154.58,175.43) .. (154.58,173.78) -- cycle ;

% Text Node
\draw (57.5,134.5) node [anchor=north west][inner sep=0.75pt]  [color={rgb, 255:red, 208; green, 2; blue, 27 }  ,opacity=1 ] [align=left] {\begin{minipage}[lt]{37.88pt}\setlength\topsep{0pt}
\begin{center}
MVI\\solution
\end{center}

\end{minipage}};
% Text Node
\draw (143.05,188.82) node [anchor=north west][inner sep=0.75pt]    {$\boldsymbol{a}^{( t)}$};
% Text Node
\draw (162.35,235.1) node [anchor=north west][inner sep=0.75pt]  [color={rgb, 255:red, 0; green, 0; blue, 0 }  ,opacity=1 ]  {$F\left(\boldsymbol{a}^{( t)}\right)$};
% Text Node
\draw (175.35,166.6) node [anchor=north west][inner sep=0.75pt]  [color={rgb, 255:red, 74; green, 144; blue, 226 }  ,opacity=1 ]  {$F\left(\boldsymbol{\tilde{a}}^{( t)}\right)$};
% Text Node
\draw (94,297) node [anchor=north west][inner sep=0.75pt]    {$\mathcal{E}^{( t)}$};
% Text Node
\draw (142.05,147.82) node [anchor=north west][inner sep=0.75pt]  [color={rgb, 255:red, 74; green, 144; blue, 226 }  ,opacity=1 ]  {$\tilde{\boldsymbol{a}}^{( t)}$};

\end{scope}

%%%%%%%%%%%%%%%%%%%%%%%%%%
%%% ARROW 
%%%%%%%%%%%%%%%%%%%%%%%%%%%%%

\begin{scope}[xshift=8.0cm,yshift=4.0cm]
    \path (0,0) edge[black,->,bend right] (50,0);
\end{scope}

\begin{scope}[scale=.9,xshift=2cm]

%Straight Lines [id:da761496549177729] 
\draw [color={rgb, 255:red, 74; green, 144; blue, 226 }  ,draw opacity=1 ][fill={rgb, 255:red, 74; green, 144; blue, 226 }  ,fill opacity=1 ]   (511.55,213.82) -- (546.84,247.92) ;
\draw [shift={(549,250)}, rotate = 224.02] [fill={rgb, 255:red, 74; green, 144; blue, 226 }  ,fill opacity=1 ][line width=0.08]  [draw opacity=0] (10.72,-5.15) -- (0,0) -- (10.72,5.15) -- (7.12,0) -- cycle    ;
%Straight Lines [id:da2720809437624835] 
\draw [color={rgb, 255:red, 74; green, 144; blue, 226 }  ,draw opacity=1 ]   (401.8,323.57) -- (621.3,104.07) ;
%Straight Lines [id:da2364707614291648] 
\draw [color={rgb, 255:red, 74; green, 144; blue, 226 }  ,draw opacity=1 ] [dash pattern={on 4.5pt off 4.5pt}]  (386.5,297.1) -- (606,77.6) ;
%Shape: Ellipse [id:dp17342496625130777] 
\draw  [dash pattern={on 0.84pt off 2.51pt}] (405.8,108.06) .. controls (441.97,71.89) and (518.65,89.91) .. (577.06,148.32) .. controls (635.46,206.72) and (653.48,283.4) .. (617.31,319.57) .. controls (581.13,355.75) and (504.46,337.73) .. (446.05,279.32) .. controls (387.64,220.91) and (369.62,144.24) .. (405.8,108.06) -- cycle ;
%Shape: Circle [id:dp5894409629335362] 
% \draw  [draw opacity=0][fill={rgb, 255:red, 208; green, 2; blue, 27 }  ,fill opacity=1 ] (454.05,217.32) .. controls (454.05,215.66) and (455.4,214.32) .. (457.05,214.32) .. controls (458.71,214.32) and (460.05,215.66) .. (460.05,217.32) .. controls (460.05,218.97) and (458.71,220.32) .. (457.05,220.32) .. controls (455.4,220.32) and (454.05,218.97) .. (454.05,217.32) -- cycle ;
\node[text={rgb, 255:red, 208; green, 2; blue, 27}] at (457.05,217.32) {\rotatebox{45}{\small$\blacksquare$}};

%Shape: Circle [id:dp6715883567037448] 
\draw  [draw opacity=0][fill={rgb, 255:red, 0; green, 0; blue, 0 }  ,fill opacity=1 ] (508.55,213.82) .. controls (508.55,212.16) and (509.9,210.82) .. (511.55,210.82) .. controls (513.21,210.82) and (514.55,212.16) .. (514.55,213.82) .. controls (514.55,215.47) and (513.21,216.82) .. (511.55,216.82) .. controls (509.9,216.82) and (508.55,215.47) .. (508.55,213.82) -- cycle ;
%Shape: Ellipse [id:dp5832065325911993] 
\draw  [fill={rgb, 255:red, 0; green, 0; blue, 0 }  ,fill opacity=0.05 ][line width=1.5]  (406.28,107.58) .. controls (451.43,62.43) and (517.48,55.28) .. (553.82,91.62) .. controls (590.15,127.95) and (583,194) .. (537.85,239.15) .. controls (492.7,284.3) and (426.65,291.45) .. (390.32,255.12) .. controls (353.98,218.78) and (361.13,152.73) .. (406.28,107.58) -- cycle ;
%Straight Lines [id:da9558270432833564] 
\draw [color={rgb, 255:red, 74; green, 144; blue, 226 }  ,draw opacity=1 ]   (399.12,288.12) -- (410,299) -- (415.88,304.88) ;
\draw [shift={(418,307)}, rotate = 225] [fill={rgb, 255:red, 74; green, 144; blue, 226 }  ,fill opacity=1 ][line width=0.08]  [draw opacity=0] (8.93,-4.29) -- (0,0) -- (8.93,4.29) -- cycle    ;
\draw [shift={(397,286)}, rotate = 45] [fill={rgb, 255:red, 74; green, 144; blue, 226 }  ,fill opacity=1 ][line width=0.08]  [draw opacity=0] (8.93,-4.29) -- (0,0) -- (8.93,4.29) -- cycle    ;

% Text Node
\draw (350,255) node [anchor=north west][inner sep=0.75pt]    {$\mathcal{E}^{( t+1)}$};
% Text Node
\draw (495.55,189.82) node [anchor=north west][inner sep=0.75pt]    {$\boldsymbol{a}^{( t)}$};
% Text Node
\draw (393,299) node [anchor=north west][inner sep=0.75pt]  [color={rgb, 255:red, 74; green, 144; blue, 226 }  ,opacity=1 ]  {$\gamma $};
% Text Node
\draw (551,232.6) node [anchor=north west][inner sep=0.75pt]  [color={rgb, 255:red, 74; green, 144; blue, 226 }  ,opacity=1 ]  {$F\left(\boldsymbol{\tilde{a}}^{( t)}\right)$};

\end{scope}
\end{tikzpicture}
    \caption{One step of our ellipsoid algorithm---when the current ellipsoid center $\va^{(t)}$ is not already an $\epsilon$-SVI solution. While $F(\va^{(t)})$ separates $\va^{(t)}$ from the set of MVI solutions (in this case a single point), $F(\vatil^{(t)})$ yields a  $\gamma$-\emph{strict} separating hyperplane, which turns out to be crucial; see~\Cref{sec:noopt}.}
    \label{fig:ellipsoids}
\end{figure}

To obtain a hyperplane that \emph{strictly} separates $\va \in \cX$ from the set of MVI solutions (in the sense of~\Cref{item:strictsep}), assuming that $\va$ is not an $\epsilon$-SVI solution, we perform a descent step starting from $\va$. Since $\va$ is not an $\epsilon$-SVI solution, it can be shown that the resulting point, say $\vatil \in \cX$, is such that $F(\vatil)$ strictly separates $\va$ from the set of MVIs (\Cref{lemma:strict1}); this is illustrated in~\Cref{fig:ellipsoids}. 

Interestingly, this simple algorithmic maneuver is, at least conceptually, similar to the extra-gradient method~\citep{Korpelevich76:Extragradient} (and variants thereof), which is known to converge---albeit at an inferior rate that grows polynomially in $1/\epsilon$---under the Minty condition. Accordingly, we call the overall algorithm $\optellipsoid$ (\Cref{algo:optimisticellipsoid}); it is an incarnation of the central-cut ellipsoid endowed with additional descent steps. We stress again that this step cannot be avoided: the usual ellipsoid algorithm without the additional step fails (\Cref{sec:noopt}) due to its inability to generate strict separating hyperplanes. This phenomenon mirrors the behavior of first-order methods, where regular gradient descent fails to converge to an SVI solution---even in monotone problems---whereas the extra-gradient method succeeds~\citep{Korpelevich76:Extragradient}. As such, we find that there is an intriguing analogy between the behavior we uncover for ellipsoid-based algorithms and what has been known for decades pertaining to gradient-based algorithms.

To finish the proof of~\Cref{theorem:mainMinty-informal}, we observe that~\Cref{lemma:sviormvi}---and in particular the sequence of strict separating hyperplanes produced during the execution of the ellipsoid---implies that the volume of the ellipsoid cannot shrink too much (\Cref{lemma:largevol}). Indeed, since the ellipsoid always contains the set of MVI solutions, and the separating hyperplanes are {\em strict}, every MVI solution is in some sense {\em far} from the boundary of the ellipsoid, which in turn implies that the ellipsoid must have nontrivial volume. In other words, the algorithm will necessarily terminate with an $\epsilon$-SVI solution, as promised---so long as an MVI solution exists. 

If no MVI solution exists, the above algorithm might fail, that is, the volume may end up shrinking too much. But in this case, as we shall see, a small adaptation to $\optellipsoid$ can produce a polynomial \emph{certificate of MVI infeasibility} (\emph{cf.}~\Cref{theorem:maineither-informal}).

\subsubsection{A certificate of MVI infeasibility}
\label{subsec:EVIs}

We now treat the general setting in which the Minty condition can be altogether violated. Our next result shows how to employ $\optellipsoid$ so as to produce a certificate of MVI infeasibility. But how does such a ``certificate'' look like?

To answer this, we rely on the recently introduced concept of \emph{expected VIs (EVIs)} (\Cref{def:EVIs}). This is a relaxation of~\Cref{def:VI} that only imposes the SVI constraint \emph{in expectation} for points draw from a distribution. For readers familiar with equilibrium concepts in game theory, it is instructive to have in mind that EVIs are to SVIs what \emph{(average) coarse correlated equilibria (ACCEs)} (\Cref{def:acce}) are to Nash equilibria. The key point is that there is a strong duality between MVIs and EVIs (\Cref{prop:duality}); namely, the Minty condition holds if and only if no EVI solution with \emph{negative gap} exists---we refer to the latter object as a \emph{strict} EVI solution. In other words, the mere existence of a strict EVI exposes MVI infeasibility. This brings us to the following key refinement of~\Cref{theorem:mainMinty-informal}.

\begin{theorem}[SVI or strict EVI; precise version in~\Cref{theorem:maineither-precise}]
    \label{theorem:maineither-informal}
    There is an algorithm that runs in $\poly(d, \log(B/\epsilon), \log L)$ time and returns either 
    \begin{enumerate}
        \item an $\epsilon$-SVI solution or\label{item:eitherSVI}
        \item an $\Omega_\epsilon(\epsilon^2)$-strict EVI solution.\label{item:strictEVI}
    \end{enumerate}
\end{theorem}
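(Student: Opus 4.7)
The plan is to rerun $\optellipsoid$ from Theorem~\ref{theorem:mainMinty-informal}, now for $T = \poly(d, \log(B/\epsilon), \log L)$ iterations chosen so that the standard volume bound on the ellipsoid after $T$ steps falls strictly below the threshold guaranteed by Lemma~\ref{lemma:largevol} under the Minty condition. At each iteration $t$, Lemma~\ref{lemma:sviormvi} invoked on the center $\va^{(t)}$ either certifies that $\va^{(t)}$ is an $\epsilon$-SVI solution---in which case we halt and output it (case~\ref{item:eitherSVI})---or returns a $\gamma$-strict separating hyperplane of the form $F(\vatil^{(t)})$, where $\gamma = \epsilon^2 \cdot \poly(R^{-1}, L^{-1}, B^{-1}) = \Omega_\epsilon(\epsilon^2)$. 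If the $T$-iteration loop ends without ever entering the first case, the volume shrinkage witnesses that no MVI solution exists, so by the MVI--EVI duality (Proposition~\ref{prop:duality}) a strict EVI must exist; the task is then to produce one explicitly from the transcript.

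To that end, I would form the linear program over $(\vw, \delta) \in \Delta^{T-1} \times \R$ defined by
\begin{equation*}
    \max \; \delta \quad \text{subject to} \quad \sum_{t=1}^T w_t \langle F(\vatil^{(t)}), \vx' - \vatil^{(t)} \rangle \geq \delta \quad \text{for all } \vx' \in \cX.
\end{equation*}
Although the right-hand side ranges over all of $\cX$, a separation oracle for this LP reduces to minimizing the linear functional $\vx' \mapsto \langle \sum_t w_t F(\vatil^{(t)}), \vx' \rangle$ over $\cX$, which is available via the separation oracle for $\cX$ and yet another ellipsoid call. An optimal $(\vw^\star, \delta^\star)$ then yields the distribution $\mu = \sum_t w^\star_t \, \delta_{\vatil^{(t)}}$, which is an EVI solution with gap $-\delta^\star$.

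The main obstacle is the quantitative bound $\delta^\star = \Omega_\epsilon(\epsilon^2)$, which I view as the constructive counterpart of Proposition~\ref{prop:duality}. Lemma~\ref{lemma:sviormvi} asserts that at every iteration $t$, any candidate MVI solution $\vx$ satisfies $\langle F(\vatil^{(t)}), \va^{(t)} - \vx \rangle \geq \gamma$, and the volume collapse certifies that no single $\vx \in \cX$ satisfies all $T$ of these inequalities simultaneously. A Farkas-style argument then extracts a convex combination of the hyperplanes that is violated by margin $\Omega(\gamma)$ at every $\vx' \in \cX$; the small discrepancy between $\va^{(t)}$ and $\vatil^{(t)}$ is controlled via the Lipschitz continuity of $F$ and is absorbed into the $\poly$ factor already hidden in $\gamma$. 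As an alternative and conceptually cleaner route, the weights $\vw^\star$ can be read off directly from the transcript by the standard technique for recovering LP duals from ellipsoid executions (\emph{cf.}~\citealp[Chapter 6]{Grotschel12:Geometric}), obviating the need for a second LP.
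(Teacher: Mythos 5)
Your overall scaffold matches the paper's: run the extra-gradient ellipsoid, output any center certified as an $\epsilon$-SVI solution, and otherwise optimize the mixing weights of a distribution supported on the transcript points $\vatil^{(1)},\dots,\vatil^{(T)}$ via the bilinear saddle-point problem~\eqref{eq:strictEVI}. The gap is in the one step you label ``Farkas-style'': you claim that the volume collapse certifies that no single $\vx \in \cX$ satisfies all $T$ strict inequalities, and that Farkas then yields a convex combination violated by margin $\Omega(\gamma)$ everywhere. Mere infeasibility of a system is a qualitative statement and gives no quantitative dual margin; to conclude $\delta^\star = \Omega(\gamma)$ from the minimax theorem you need the \emph{pointwise quantitative} statement that for \emph{every} $\vx \in \cX$ there is some iteration $t$ with $\langle F(\vatil^{(t)}), \vx - \vatil^{(t)} \rangle \geq \gamma/2$, i.e., $\min_{\vx}\max_t \geq \gamma/2$, and only then does duality convert this into $\max_{\mu}\min_{\vx} \geq \gamma/2$. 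Your sketch never establishes this pointwise bound.

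Establishing it is the actual crux, and it does not follow from the ellipsoid's volume bound alone: a point $\vx$ lying inside the thin final ellipsoid was never cut off by \emph{any} hyperplane, so no recorded inequality says anything about it directly. The paper's \Cref{lemma:certificate} handles this by a geometric detour: for such $\vx$ one locates a nearby point $\vz \in \cX$ (within distance $\gamma/(2B)$, via \Cref{lemma:closeinters} and the fact that $\cB_1(\vec 0) \subseteq \cX$) lying \emph{outside} the final ellipsoid along its short axis; $\vz$ must have been cut off by some hyperplane, and since $\vz \in \cX$ that hyperplane must be an extra-gradient cut, giving $\langle F(\vatil^{(t)}), \vz - \va^{(t)} \rangle \geq 0$; combining with the progress bound $\langle F(\vatil^{(t)}), \va^{(t)} - \vatil^{(t)} \rangle \geq \gamma$ of \Cref{lem:progress} and $\|\vz - \vx\| \leq \gamma/(2B)$ yields the $\gamma/2$ margin at $\vx$. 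Note also that the discrepancy between $\va^{(t)}$ and $\vatil^{(t)}$ is not ``small'' (it can be of order $B/L$) and is not absorbed by Lipschitzness; it is controlled precisely because $\langle F(\vatil^{(t)}), \va^{(t)} - \vatil^{(t)} \rangle$ has a favorable sign and magnitude $\geq \gamma$. Finally, a minor point: some iterations may produce cuts from $\sep_\cX$ rather than extra-gradient steps, so the distribution must be supported only on the extra-gradient iterates (the argument above automatically selects such a $t$). Without these ingredients your LP is well-posed but you cannot certify that its optimum is $\Omega_\epsilon(\epsilon^2)$ rather than merely positive.
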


This clearly strengthens~\Cref{theorem:mainMinty-informal}: under the Minty condition no strict EVIs exist, so \Cref{item:strictEVI} in~\Cref{theorem:maineither-informal} will never arise under~\Cref{ass:Minty}. A key reference point here is a result by~\citet{Anagnostides22:Last}, who provided an algorithm with a similar output guarantee, but with complexity scaling polynomially---rather than logarithmically---in $1/\epsilon$; as such, \Cref{theorem:maineither-informal} yields again an exponential improvement over existing results.

The proof of~\Cref{theorem:maineither-informal} is based on an application of duality between MVIs and EVIs. In particular, the minimax theorem implies that, once the volume of the ellipsoid becomes sufficiently small, there is a distribution supported on $\vatil^{(1)}, \dots, \vatil^{(T)}$ that is a $\gamma/2$-strict EVI, where $\gamma$ is the strictness parameter per~\Cref{lemma:sviormvi} and $\vatil^{(t)}$ is obtained after a descent step starting from the center of the ellispoid at the $t$th iteration (\Cref{fig:ellipsoids}); coupled with~\Cref{lemma:sviormvi}, this explains the $\Omega_\epsilon(\gamma) = \Omega_\epsilon(\epsilon^2)$ strictness in~\Cref{item:strictEVI}. We are thus left with the simple problem of optimizing the mixing weights of a distribution with a polynomial support. This algorithmic maneuver closely resembles the celebrated ``ellipsoid against hope'' algorithm of~\citet{Papadimitriou08:Computing} (\emph{cf.}~\citealp{Jiang11:Polynomial,Farina24:Polynomial,Daskalakis24:Efficient}), which is also based on running ellipsoid on an infeasible program.

A strict EVI, besides certifying MVI infeasibility, is an interesting object in its own right. It is, by definition, a solution concept with negative gap, thereby being particularly stable---any possible deviation is not just suboptimal, but significantly so. Indeed, its incarnation in the context of games has been already used to address the equilibrium selection problem, as we explain more in~\Cref{sec:related}. Furthermore, in certain applications, the EVI gap translates to a performance guarantee in terms of some underlying objective function; the \emph{smoothness} framework of~\citet{Roughgarden15:Intrinsic}---and its extension for general VI problems given in~\Cref{def:smoothVI}---is a prime example of this in the context of multi-player games. It turns out that an EVI with a negative gap yields a \emph{strict} improvement over the bound predicted by the smoothness framework.

But there is something more that is especially notable about~\Cref{theorem:maineither-informal}: each of the computational problems in~\Cref{item:eitherSVI,item:strictEVI} is computationally hard on its own, yet~\Cref{theorem:maineither-informal} shows that their disjunction is easy! In particular, computing $\epsilon$-SVI solutions is a well-known \PPAD-complete problem. With regard to strict EVIs, we provide a characterization of its complexity in this paper, establishing $\NP$-completeness (\Cref{theorem:hardmintyconp-informal}).\footnote{Strict EVIs are dual to MVI solutions;  \Cref{theorem:hardmintyconp-informal} shows that deciding MVI feasibility is \coNP-complete, so deciding strict EVI existence is \NP-complete.} To put this into context, we highlight that there has been interest in characterizing the complexity of the union of two problems, especially in the realm of $\TFNP$. A notable contribution here is the work of~\citet{Daskalakis11:Continuous} that examined the complexity of problems in $\PPAD \cap \PLS$, where $\PLS$ stands for ``polynomial local search''~\citep{Johnson88:How}. They observed that if a problem $\textsc{A}$ is $\PPAD$-complete and $\textsc{B}$ is $\PLS$-complete, then the problem that must return \emph{either} a solution to $\textsc{A}$ or to $\textsc{B}$ is $\PPAD \cap \PLS$-complete---that is, $\CLS$-complete~\citep{Fearnley23:Complexity}; unlike~\Cref{theorem:maineither-informal}, this observation by~\citet{Daskalakis11:Continuous} assumes that $\textsc{A}$ and $\textsc{B}$ are defined with respect to different instances.
%; the hardness of $\CLS$ has been corroborated by various cryptographic assumptions~\citep{Choudhuri19:Finding,Hubacek20:Hardness,Jawale21:SNARGs}. 
In this context, \Cref{theorem:maineither-informal} provides an example in which the disjunction of two hard problems---one \PPAD-complete and the other \NP-complete---defined with respect to the \emph{same} instance is easy.

\paragraph{Comparison with earlier approaches for monotone VIs} The basic ingredient that underpins prior approaches for solving \emph{monotone} VIs is what we may call ``equilibrium collapse,'' which essentially means that an $\epsilon$-EVI solution induces an $O_\eps(\epsilon)$-SVI solution by taking the marginals. Earlier algorithms operate by running a cutting-plane algorithm, such as the ellipsoid, so as to identify an $\epsilon$-EVI solution. Now, if it so happens that an $\epsilon$-SVI solution is queried, the algorithm terminates; otherwise, an $\epsilon$-EVI solution induces an $O_\epsilon(\epsilon)$-SVI solution on account of the equilibrium collapse.\looseness-1

\citet{Rodomanov23:Subgradient} claim that such an equilibrium collapse holds ``for all known instances'' of problems adhering to the Minty condition, and \citet{Nemirovski10:Accuracy} claim that no methods are known for solving monotone VIs that do not amount to producing an $\eps$-EVI solution and using equilibrium collapse. However, we give an explicit counterexample in \Cref{sec:collapse} that shows that this equilibrium collapse does not hold assuming only the Minty condition. Thus, other techniques are required for Minty VIs. Indeed, the main result of our paper provides a method for solving VIs under the Minty condition (and hence also monotone VIs) without relying on equilibrium collapse.

\subsubsection{Implications and extensions}

Moving forward, we discuss several important consequences and extensions of our main results for optimization and game theory.

\paragraph{Quasar-convex optimization}

% Before we proceed, it is worth highlighting some immediate consequences of~\Cref{theorem:mainMinty-informal} in optimization and game theory. 
The first implication concerns optimizing \emph{quasar-convex} functions; this is a relaxation of convexity that has attracted significant interest recently (for example,~\citealp{Fu23:Accelerated,Hinder20:Near,Caramanis24:Optimizing,Hardt18:Gradient,Gower21:SGD,Danilova22:Recent,Wang23:Continuized}).\looseness-1

\begin{restatable}[Quasar-convexity]{definition}{quasarconvex}
    \label{def:quasarconvex}
    Let $\lambda \in (0, 1]$ and $\vx$ be a minimizer of a differentiable function $f : \cX \to \R$. We say that $f$ is \emph{$\lambda$-quasar-convex} with respect to $\vx$ if
    \begin{equation}
        \label{eq:quasarconvex}
        f(\vx) \geq f(\vx') + \frac{1}{\lambda} \langle \nabla f(\vx'), \vx - \vx' \rangle \quad \forall \vx' \in \cX.
    \end{equation}
\end{restatable}

(We elaborate more on how this relates to other properties in~\Cref{sec:optimization}.) Not only does $\VI(\cX, \nabla f)$ (under~\Cref{def:quasarconvex}) satisfy the Minty condition, but every approximate SVI solution is also an approximate global minimum of $f$ (\Cref{prop:quasarconvex}); combined with~\Cref{theorem:mainMinty-informal}, we obtain the first polynomial-time algorithm for globally minimizing smooth quasar-convex functions.

\begin{corollary}
    \label{cor:quasarconvex}
    There is a $\poly(d, \log(B/\epsilon), \log(1/\lambda), \log L)$-time algorithm that outputs a point $\vx \in \cX$ such that $f(\vx) \leq \min_{\vx' \in \cX} f(\vx') + \epsilon$ for any $\lambda$-quasar-convex function $f$.
\end{corollary}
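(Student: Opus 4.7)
The plan is a direct reduction to \Cref{theorem:mainMinty-informal}. The two ingredients I need are: (i) that $\VI(\cX, \nabla f)$ satisfies the Minty condition, so \Cref{theorem:mainMinty-informal} can be applied; and (ii) that an approximate SVI solution of $\nabla f$ transfers, with only a $1/\lambda$ loss, to an approximate global minimum of $f$. Both are immediate from the quasar-convexity inequality \eqref{eq:quasarconvex}.

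For the first ingredient, let $\vxstar \in \argmin_{\vx \in \cX} f(\vx)$. Applying \eqref{eq:quasarconvex} at the minimizer $\vx = \vxstar$ against an arbitrary $\vx' \in \cX$ and rearranging yields
\[
\langle \nabla f(\vx'), \vx' - \vxstar \rangle \;\geq\; \lambda\bigl(f(\vx') - f(\vxstar)\bigr) \;\geq\; 0,
\]
so $\vxstar$ is an exact MVI solution of $\VI(\cX, \nabla f)$ and \Cref{ass:Minty} is in force. I may therefore invoke \Cref{theorem:mainMinty-informal} with mapping $F = \nabla f$ and accuracy parameter $\epsilon' \defeq \lambda \epsilon$, producing in time
\[
\poly\bigl(d, \log(B/\epsilon'), \log L\bigr) \;=\; \poly\bigl(d, \log(B/\epsilon), \log(1/\lambda), \log L\bigr)
\]
a point $\hatvx \in \cX$ that satisfies $\langle \nabla f(\hatvx), \vx' - \hatvx \rangle \geq -\epsilon'$ for every $\vx' \in \cX$. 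The runtime matches the claimed bound since the dependence on the accuracy is logarithmic.

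For the second ingredient, I apply \eqref{eq:quasarconvex} once more, this time at the SVI solution (that is, with $\vx' = \hatvx$ and $\vx = \vxstar$), and chain it with the SVI guarantee evaluated at $\vx' = \vxstar$:
\[
f(\vxstar) \;\geq\; f(\hatvx) + \frac{1}{\lambda} \langle \nabla f(\hatvx), \vxstar - \hatvx \rangle \;\geq\; f(\hatvx) - \frac{\epsilon'}{\lambda} \;=\; f(\hatvx) - \epsilon,
\]
which rearranges to $f(\hatvx) \leq f(\vxstar) + \epsilon$, as required. There is no substantive obstacle beyond bookkeeping: the $1/\lambda$ blow-up in the quasar-convexity inequality forces me to pay a factor of $\lambda$ in the SVI accuracy, and this cost is absorbed by the logarithmic dependence on $1/\epsilon'$ in \Cref{theorem:mainMinty-informal}, thereby producing the extra $\log(1/\lambda)$ term in the final runtime. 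I am also implicitly using that $\nabla f$ inherits the prerequisites of \Cref{theorem:mainMinty-informal} (polynomial-time evaluation, Lipschitz continuity, bounded norm) from the standing assumptions on $f$ in this subsection; the footnoted strengthening of the corollary removes the Lipschitz requirement via a separate argument and is not needed here.
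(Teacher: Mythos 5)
Your proposal is correct and follows essentially the same route as the paper: the paper proves exactly your two ingredients as \Cref{prop:quasarconvex} (the global minimizer is an MVI solution, and any $\epsilon'$-SVI solution is an $\epsilon'/\lambda$-approximate minimizer) and then combines it with \Cref{theorem:mainMinty-informal} at accuracy $\lambda\epsilon$. The runtime bookkeeping, including the $\log(1/\lambda)$ term, matches as well.
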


This result can be generalized (\emph{cf.}~\Cref{prop:smoothVIs}) by considering a broader class of VI problems (\Cref{def:smoothVI}), beyond quasar-convex functions. Interestingly, this class of problems encompasses (a special case of)~\emph{smooth games}, famously introduced by~\citet{Roughgarden15:Intrinsic}; we explain this in more detail in~\Cref{sec:optimization}. 

Furthermore, as we shall now see, \Cref{cor:quasarconvex} can be significantly strengthened by relaxing the assumption that $\nabla f$ is continuous (\Cref{cor:improved-quasarconvex}). This follows a popular line of research in \emph{nonsmooth} optimization (\emph{e.g.},~\citealp{Zhang20:Complexity,Davis22:Gradient,Tian22:Finite,Jordan23:Deterministic}). In this context, we show that under quasar-convexity---and, more generally, its extension based on~\Cref{def:smoothVI}---it is possible to entirely eliminate the (logarithmic) dependence on $L$.\looseness-1

\begin{theorem}[Precise version in~\Cref{theorem:quasar-precise}]
    \label{cor:improved-quasarconvex}
    There is a $\poly(d, \log(B/\epsilon), \log(1/\lambda))$-time algorithm that outputs a point $\vx \in \cX$ such that $f(\vx) \leq \min_{\vx' \in \cX} f(\vx') + \epsilon$ for any $\lambda$-quasar-convex function $f$.
\end{theorem}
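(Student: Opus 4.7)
The plan is to instantiate the central-cut ellipsoid framework underlying \Cref{theorem:mainMinty-informal} with separating hyperplanes read off directly from $\nabla f$, bypassing the Lipschitz-dependent gradient-descent step of \Cref{lemma:sviormvi}. The key observation is that under $\lambda$-quasar-convexity, the gradient $\nabla f(\va)$ at the current ellipsoid center $\va$ already supplies a valid separating hyperplane between $\va$ and the minimizer $\vxstar$; moreover, its strictness scales linearly with $\lambda$ and with the suboptimality $f(\va)-f(\vxstar)$, with no factor of $L$ entering.

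Concretely, for any $\va\in\cX$, \Cref{def:quasarconvex} rearranges to $\langle \nabla f(\va), \va - \vxstar\rangle \geq \lambda(f(\va)-f(\vxstar))$, so whenever $f(\va) > f(\vxstar) + \epsilon$, the cut $\nabla f(\va)$ strictly separates $\va$ from $\vxstar$ with inner-product slack at least $\lambda\epsilon$. The algorithm starts from an ellipsoid $\cE^{(0)}$ containing $\cX$, and at each step $t$ either applies the separation oracle for $\cX$ (if $\va^{(t)}\notin\cX$) or the central cut $\nabla f(\va^{(t)})$ (otherwise); after $T$ iterations it outputs the best visited $\vy^\star = \argmin_t\{f(\va^{(t)}) : \va^{(t)}\in\cX\}$. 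Correctness is by contradiction: assuming $f(\vy^\star) > f(\vxstar) + \epsilon$, every in-$\cX$ iterate is more than $\epsilon$-suboptimal, so every gradient cut is $\lambda\epsilon$-strict with respect to $\vxstar$. The standard central-cut volume bound gives $\vol(\cE^{(T)}) \leq V_d\, R^d\, e^{-T/(2d+1)}$, which, contrasted against a lower bound on the volume of a convex region preserved throughout the run (constructed next), yields the desired contradiction once $T = \mathrm{poly}(d, \log(BR/(\lambda\epsilon)))$; using $R \leq \mathrm{poly}(d)$ from isotropy, this matches the claimed runtime $\mathrm{poly}(d, \log(B/\epsilon), \log(1/\lambda))$, independent of any Lipschitz constant on $\nabla f$.

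The main obstacle---and the reason this is not simply a one-line corollary---is exhibiting a convex region of nontrivial volume that survives every cut, since $\vxstar$ itself is just a point and may well lie on the boundary of $\cX$. I address this by leveraging the isotropy condition $\cB_1(\vec 0)\subseteq \cX$ to introduce a shifted interior surrogate $\hat\vx = (1-\tau)\vxstar + \tau\vec 0$ with $\tau = \Theta(\lambda\epsilon/(BR))$: convexity ensures $\cB_\tau(\hat\vx)\subseteq\cX$, and $B$-Lipschitzness of $f$ (from $\|\nabla f\|\leq B$) gives $f(\hat\vx) \leq f(\vxstar) + B\tau R = O(\lambda\epsilon)$. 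Decomposing $\langle \nabla f(\va^{(t)}), \hat\vx + \vdelta - \va^{(t)}\rangle$ for $\|\vdelta\|\leq\tau$ into the quasar-convex term at $\vxstar$ plus perturbative terms of size at most $B\tau R$ and $B\tau$, one checks that the leading $-\lambda\epsilon$ contribution dominates and the full ball $\cB_\tau(\hat\vx)$ is preserved by every cut. Its volume $\Omega(V_d(\lambda\epsilon/(BR))^d)$ is what drives the final volume contradiction, after adjusting the target accuracy by a constant factor to absorb the $O(\lambda\epsilon)$ slack between $f(\hat\vx)$ and $f(\vxstar)$.

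The extension to the broader smooth-VI class in \Cref{def:smoothVI} follows by the same template: the quasar-convex inequality is replaced by the structural inequality defining that class, which likewise produces a strict separating hyperplane directly from $F(\va^{(t)})$ without ever invoking a Lipschitz constant.
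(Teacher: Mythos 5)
Your algorithm is exactly the one the paper uses (Algorithm~\ref{algo:ellipsoidnoncont}): cut with $\nabla f(\va^{(t)})$ directly, note that quasar-convexity gives strictness $\lambda\epsilon$ whenever $f(\va^{(t)})>f(\vxstar)+\epsilon$ (this is precisely Lemma~\ref{lemma:strict-quasar}), and return the best center visited. Where you diverge is in how the volume contradiction is closed. The paper reuses the machinery of Section~\ref{sec:ouralgo}: if the final ellipsoid's short axis drops below $r$, then (via Claim~\ref{lemma:closeinters} and Lemma~\ref{lemma:certificate}) one finds a point $\vz\in\cX$ just outside the ellipsoid that must have been removed by a gradient cut, and the $\gamma$-strictness of that cut forces every remaining point of $\cX$ to strictly violate the inequality defining $\cK$, contradicting $\cK\neq\emptyset$. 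You instead exhibit an explicit full-dimensional ball $\cB_\tau(\hat\vx)$ around the interior surrogate $\hat\vx=(1-\tau)\vxstar+\tau\vec 0$, $\tau=\Theta(\lambda\epsilon/(BR))$, and verify it survives every cut: the quasar-convex inequality at $\vxstar$ contributes $-\lambda\epsilon$ while the perturbations $\hat\vx-\vxstar$ and $\vdelta$ contribute at most $B\tau(R+1)$, which is dominated for your choice of $\tau$, and the $\cX$-separation cuts preserve the ball since $\cB_\tau(\hat\vx)\subseteq\cX$ by convexity and $\cB_1(\vec 0)\subseteq\cX$. This preserved-ball argument is sound, is more self-contained for this special case than the paper's route, and makes transparent why no Lipschitz constant of $\nabla f$ enters (only the gradient bound $B$ is used, to control the perturbation terms). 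Both arguments yield the same $T=\poly(d,\log(BR/(\lambda\epsilon)))$ iteration count and hence the claimed runtime.
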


(Since we are considering additive approximations, a dependence on $B$ is necessary since one can always rescale $f$.) The key idea here is that quasar-convexity yields a strict separating hyperplane \emph{without} requiring an extra-gradient step, which is where the Lipschitz continuity of $F$ came into play in~\Cref{lemma:sviormvi}. \Cref{cor:improved-quasarconvex} should be compared with the result of~\citet{Lee16:Optimizing} pertaining to optimizing \emph{star-convex} functions---the special case of~\Cref{def:quasarconvex} where $\lambda = 1$.

\paragraph{Weak Minty condition} We also strengthen~\Cref{theorem:mainMinty-informal} along another axis. Perhaps the most immediate question is how far can one relax the assumption that $\VI(\cX, F)$ satisfies the Minty condition---while accepting the fact that, in light of the intractability of general VIs, imposing some assumptions is inevitable. Naturally, this question has received ample attention in contemporary research (\emph{cf.}~\Cref{sec:weakMinty}). One permissive condition that has emerged from that line of work is the \emph{weak} Minty property put forward by~\citet{Diakonikolas21:Efficient} in the unconstrained setting. In~\Cref{def:weakMinty}, we introduce a natural version of that notion for the constrained setting. And we show, in~\Cref{theorem:weakMinty}, that \Cref{theorem:mainMinty-informal} can be applied in a certain regime of the weak Minty condition.

\paragraph{Harmonic games}

We next discuss two implications and extensions of our main results for game theory. The first concerns \emph{harmonic games} (\Cref{def:harmonic}), a class of multi-player games at the heart of the seminal decomposition of~\citet{Candogan11:Flows}, covered in more detail in~\Cref{sec:harmonic}. Leveraging~\Cref{theorem:mainMinty-informal}, we obtain the first polynomial-time algorithm for computing $\epsilon$-Nash equilibria (per~\Cref{def:NE}, captured by $\epsilon$-SVIs) in multi-player harmonic games under the polynomial expectation property~\citep{Papadimitriou08:Computing}; this latter condition postulates that one can efficiently compute utility gradients (equivalently, the underlying mapping $F$ can be evaluated efficiently), which holds in most succinct classes of games.

\begin{corollary}
    \label{cor:harmonic}
    There is a polynomial-time algorithm for computing $\epsilon$-Nash equilibria in (succinct) multi-player harmonic games under the polynomial expectation property.
\end{corollary}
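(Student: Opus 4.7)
The plan is to reduce Nash equilibrium computation in a multi-player harmonic game to an SVI problem that satisfies the Minty condition, and then invoke \Cref{theorem:mainMinty-informal}. Let $\cX \defeq \prod_{i=1}^n \Delta(\cA_i)$ denote the product of simplices over each player's action set $\cA_i$, and define the game operator $F(\vx) \defeq -(\nabla_{\vx_i} u_i(\vx))_{i=1}^n$. By the standard first-order characterization of Nash equilibria in multilinear games, $\vx \in \cX$ is an $\epsilon$-Nash equilibrium (per the paper's \Cref{def:NE}) if and only if it is an $\epsilon'$-SVI solution to $\VI(\cX, F)$, where $\epsilon'$ and $\epsilon$ are related by a factor depending polynomially on the payoff range. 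Hence it suffices to produce such an SVI solution.

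The structural core of the argument is to establish that $\VI(\cX, F)$ satisfies the Minty condition (\Cref{ass:Minty}) whenever the underlying game is harmonic. For this I would invoke the Hodge-theoretic decomposition of \citet{Candogan11:Flows}: the defining flow-balance identity of a harmonic game forces $F$ to exhibit a ``circulating'' structure around a distinguished mixed profile $\vxstar \in \cX$---naturally the uniform mixed strategy $\vxstar_i = \op{unif}(\cA_i)$ on each simplex, under the standard harmonic normalization---at which the Minty inequality $\langle F(\vx'), \vx' - \vxstar \rangle \geq 0$ holds for every $\vx' \in \cX$. This is the only nontrivial ingredient of the proof and will be the main obstacle: it requires lifting the combinatorial harmonic property on the pure-strategy flow to the multilinear extension underlying $F$. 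I would expect the verification to go through a symmetrization argument centered at $\vxstar$, exploiting the fact that multilinearity makes $\langle F(\vx'), \vx' - \vxstar \rangle$ a polynomial in $\vx'$ whose coefficients are the combinatorial harmonic flows.

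With the Minty condition in hand, the remaining preconditions of \Cref{theorem:mainMinty-informal} are routine. Under the polynomial expectation property, each partial gradient $\nabla_{\vx_i} u_i(\vx)$, and hence the full operator $F$, can be evaluated in polynomial time. Because utilities are multilinear and $\cX$ is bounded, both the Lipschitz constant $L$ of $F$ and the norm bound $B$ on $\|F(\vx)\|$ are polynomial in the bit-complexity of the payoffs and in $n$ and $\max_i |\cA_i|$. The constraint set $\cX$ is an explicit polytope of polynomial description, so it admits an efficient separation oracle; applying the transformation of \Cref{sec:isotropic} places $\cX$ into isotropic position with $\cB_1(\vec{0}) \subseteq \cX \subseteq \cB_R(\vec{0})$ and $R = \poly(d)$. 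Invoking \Cref{theorem:mainMinty-informal} then returns an $\epsilon'$-SVI solution---equivalently, an $\epsilon$-Nash equilibrium---in time $\poly(d, \log(B/\epsilon), \log L)$, as claimed.
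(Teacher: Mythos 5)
The core step of your argument --- that the standard game operator $F(\vx) = -(\nabla_{\vx_i} u_i(\vx))_{i=1}^n$ on $\prod_i \Delta(\cA_i)$ satisfies the plain Minty condition at the uniform profile --- does not hold for the class of harmonic games the corollary is about, and this is precisely the difficulty the paper's proof is built to circumvent. Under \Cref{def:harmonic} (the generalized definition of \citet{Abdou22:Decomposition,Legacci24:No}), the harmonic identity only yields a \emph{weighted} Minty condition: there exist unknown positive weights $w_1,\dots,w_n$ and a fully mixed profile $\vx$ with $\sum_i w_i\,(u_i(\vx') - u_i(\vx_i,\vx'_{-i})) = 0$ for all $\vx'$, whereas the Minty condition for the standard operator requires the \emph{unweighted} sum $\sum_i (u_i(\vx_i,\vx'_{-i}) - u_i(\vx'))$ to be nonnegative. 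When the weights are unequal, the unweighted sum can be negative, so $\VI(\cX,F)$ need not satisfy \Cref{ass:Minty}. Your fallback --- the uniform profile under "the standard harmonic normalization" --- corresponds to the original definition of \citet{Candogan11:Flows}, under which (as the paper notes in a footnote) the uniform profile is already an exact Nash equilibrium, so the problem is trivial and the corollary would say nothing.

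The paper's actual route is different: it lifts the weights into the decision variable. It first bounds the weights away from zero (\Cref{lemma:bitcompl}, via a vertex/bit-complexity argument on the LP defined by~\eqref{eq:harmonic}), then defines a new VI problem $\VI(\mathcal{P}_{\geq\alpha}, G)$ over tuples $(w_1,\dots,w_n,w_1\vx_1,\dots,w_n\vx_n)$ with $\vec{w}\in\Delta(n)\cap\R^n_{\geq\alpha}$, for which the weighted identity becomes a genuine (unweighted) Minty condition (\Cref{prop:harmonic}). The price is that $G$ is only $2^{\poly}$-Lipschitz --- which is tolerable because \Cref{theorem:mainMinty-precise} depends only on $\log L$ --- and an $\epsilon$-SVI of the lifted problem must be translated back into an $\epsilon\cdot 2^{\poly}$-Nash equilibrium. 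None of this machinery appears in your proposal, and without it (or a computed set of weights, which for succinct games would require solving an LP with exponentially many constraints) the reduction to \Cref{theorem:mainMinty-informal} does not go through.
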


This algorithm is based on the observation that harmonic games satisfy a weighted version of the Minty condition. In particular, after applying a suitable transformation, we show that the induced VI problem satisfies the usual Minty condition under a Lipschitz continuous mapping (\Cref{prop:harmonic}), at which point~\Cref{cor:harmonic} follows from~\Cref{theorem:mainMinty-informal}. Crucial to this argument is the fact that the weights in harmonic games cannot be too close to $0$ (\Cref{lemma:bitcompl}), for otherwise the Lipschitz continuity parameter would blow up; this issue %will also reappear 
is the crux in our refinement for two-player games, which is the subject of the next paragraph.

\paragraph{Nash or strict coarse correlated equilibria in two-player games}

% \subsubsection{A strengthening for two-player games}
\label{sec:intro-twoplayer}
Our next application concerns equilibrium computation in two-player concave games.
As we alluded to, EVIs are closely related to the notion of a coarse correlated equilibrium (CCE) from game theory. More precisely, when the underlying VI problem corresponds to a multi-player game, the EVI gap equates to the \emph{sum} of the players' deviation benefits under a correlated distribution; this does not quite capture the usual notion of a CCE in which one bounds the \emph{maximum} of the deviation benefits (\Cref{sec:duality}). In light of this, we also provide the following refinement of~\Cref{theorem:maineither-informal} in two-player concave games.

\begin{theorem}[Precise version in~\Cref{th:2p-nash-or-scce}]
    \label{theorem:informal-twoplayerstrict}
    In a two-player concave game, there is an algorithm that runs in time $\poly(d, \log(B/\epsilon), \log L)$ and returns either
    \begin{enumerate}
        \item an $\epsilon$-Nash equilibrium or
        \item an $\Omega_\epsilon(\epsilon^6)$-strict CCE.
    \end{enumerate}
\end{theorem}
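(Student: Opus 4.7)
The plan is to combine \Cref{theorem:maineither-informal} with a one-dimensional minimax reduction tailored to two-player games. First, apply \Cref{theorem:maineither-informal} to the game. Since an $\epsilon$-SVI in a concave game is automatically an $\epsilon$-Nash equilibrium, an $\epsilon$-SVI output is the desired $\epsilon$-Nash. Otherwise, the algorithm returns an $\Omega_\epsilon(\epsilon^2)$-strict EVI $\pi$---a correlated distribution with $b_1(\pi) + b_2(\pi) \leq -\gamma$ for some $\gamma = \Omega_\epsilon(\epsilon^2)$---where $b_i(\pi) \defeq \max_{x_i' \in \cX_i}\E_{\pi}[u_i(x_i', x_{-i}) - u_i(\vx)]$ denotes player $i$'s expected deviation benefit under $\pi$. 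The central obstruction to declaring victory is that a strict EVI controls only the \emph{sum} $b_1 + b_2$, whereas a strict CCE asks for a uniform bound on $\max_i b_i$.

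I would bridge this gap as follows. For each $\lambda = (\lambda_1, \lambda_2) \in \Delta^1$, let $G_\lambda$ be the reweighted concave game with utilities $(\lambda_1 u_1, \lambda_2 u_2)$; the Nash equilibria of $G_\lambda$ for interior $\lambda$ coincide with those of the original game, and the EVI gap of $G_\lambda$ equals $\lambda_1 b_1(\pi) + \lambda_2 b_2(\pi)$. Since each $b_i$ is convex in $\pi$, Sion's minimax theorem yields
\[
\min_\pi \max_{i \in \{1,2\}} b_i(\pi) \;=\; \max_{\lambda \in \Delta^1} \min_\pi \bigl[\lambda_1 b_1(\pi) + \lambda_2 b_2(\pi)\bigr],
\]
so the optimal strict CCE gap equals the worst-case (over re-weightings) strict EVI gap of $G_\lambda$; moreover, at an interior maximizer $\lambda^*$, the envelope theorem forces $b_1(\pi^*) = b_2(\pi^*)$ for the corresponding minimizer $\pi^*$, so $\pi^*$ witnesses the strict CCE gap directly. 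The algorithm performs a one-dimensional ternary search over $\lambda \in \Delta^1$, invoking \Cref{theorem:maineither-informal} on each $G_\lambda$ with accuracy $\epsilon_0(\lambda) = \Theta(\epsilon \cdot \min_i \lambda_i)$. If any invocation returns an $\epsilon_0(\lambda)$-Nash of $G_\lambda$, the choice of $\epsilon_0$ ensures it is translatable into an $\epsilon$-Nash of the original game, which we output. Otherwise, we feed the collected strict-EVI witnesses $\{\pi(\lambda)\}$ into a polynomial-size LP over distributions on $\bigcup_\lambda \supp(\pi(\lambda))$ to extract the saddle-point distribution $\pi^*$, which will satisfy $\max_i b_i(\pi^*) \leq -\Omega_\epsilon(\epsilon^4)$.

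The hard part will be pinning down the $\epsilon^4$ dependence, which arises from two compounding quadratic losses: \Cref{theorem:maineither-informal} returns a strict EVI of $G_\lambda$ with gap $\Omega(\epsilon_0(\lambda)^2)$, and the translation from an $\epsilon_0$-Nash of $G_\lambda$ to an $\epsilon$-Nash of the original game requires $\epsilon_0 \leq \epsilon \cdot \min_i \lambda_i$. Taking $\epsilon_0(\lambda) = \epsilon \cdot \min_i \lambda_i$ and---using concavity of $V(\lambda) \defeq \min_\pi[\lambda_1 b_1(\pi) + \lambda_2 b_2(\pi)]$ in $\lambda$---restricting the ternary search to $\lambda$ with $\min_i \lambda_i \geq \epsilon$ then yields per-game strict EVI gap $\Omega(\epsilon^4)$ and hence strict CCE gap $\Omega(\epsilon^4)$. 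The delicate case is when the minimax value $V$ is maximized near the boundary of $\Delta^1$, corresponding to games with no genuinely strict CCE; in this regime the concavity of $V$ ensures that \Cref{theorem:maineither-informal} applied at a nearby interior $\lambda$ returns an $\epsilon_0$-Nash of $G_\lambda$, which is then translated back to the required $\epsilon$-Nash of the original game.
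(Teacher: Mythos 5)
Your reduction shares the paper's starting point---a strict EVI only controls the \emph{sum} of deviation benefits, and one must pass through a reweighting $\lambda_1 b_1 + \lambda_2 b_2$ and minimax duality to control the \emph{max}---but the implementation has a genuine gap at the boundary of the weight simplex, which is exactly where the difficulty of this theorem lives. Your plan restricts the search to $\min_i \lambda_i \ge \epsilon$ and, for the case where $V(\lambda) \defeq \min_\pi[\lambda_1 b_1(\pi) + \lambda_2 b_2(\pi)]$ is maximized near a vertex of $\Delta^1$, asserts that concavity of $V$ "ensures" that \Cref{theorem:maineither-informal} applied at a nearby interior $\lambda$ returns a Nash equilibrium. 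That assertion is unsupported: the SVI-or-strict-EVI algorithm promises to return \emph{one} of the two objects, and when both an approximate Nash and a strict EVI of $G_\lambda$ exist it may return either. Concretely, take $u_2 \equiv 0$ and $u_1$ such that some correlated $\pi$ has $b_1(\pi) < 0$ strictly. Then $V(1,0)=0$, so no strict CCE exists at all, yet $V(\lambda) < 0$ for every interior $\lambda$, so every invocation of \Cref{theorem:maineither-informal} on $G_\lambda$ is permitted to return a strict EVI witness. In that event your final LP over $\bigcup_\lambda \supp(\pi(\lambda))$ has no valid output: by the restricted minimax identity, the grid witnesses only certify $\max_{\lambda:\min_i\lambda_i\ge\epsilon} \min_{\pi} \langle\lambda, b(\pi)\rangle \le -\Omega(\epsilon^4)$, and extending this to all of $\Delta^1$ via Lipschitzness of $V$ costs an additive $O(BR\epsilon) \gg \epsilon^4$, which is vacuous.

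The paper closes exactly this hole by making the weights part of the decision variable rather than an outer search parameter: it lifts to the set $\cP_\alpha$ of points $(\lambda_1,\lambda_2,\lambda_1\vx_1,\lambda_2\vx_2)$ with operator $G$, so that the EVI \emph{deviator} chooses $\lambda'$, and a single strict EVI of $\VI(\cP_\alpha, G)$ already bounds each player's deviation benefit individually (by plugging in $\lambda'=(1-\alpha,\alpha)$). The boundary issue then resurfaces as a blow-up of the Lipschitz constant of $G$ near $\lambda_i = 0$, and is handled by a tailored semi-separation oracle (\Cref{lemma:semiseparation-twoplayer}): when one weight is below $\gamma/(4BR)$, the oracle does not rely on the ellipsoid stumbling upon a Nash---it \emph{explicitly computes best responses}, and either certifies an $\epsilon$-Nash outright or produces a point $\va'$ that strictly refutes the Minty constraint at $\va$. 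Some constructive step of this kind for the near-boundary weights is what your argument is missing; without it, the "delicate case" is not a technicality but a counterexample to the proposed algorithm's guarantee.
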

%\bhz{todo: should we state the exact bound? i think it's $O(d \log(1/\eps))$ iterations, if we assume $B = L = r = O(1)$}

Unlike~\Cref{theorem:maineither-informal}, which can be applied to multi-player games to find Nash or strict {\em ACCE}s, \Cref{th:2p-nash-or-scce} cannot be extended to games with more than two players---one can always include a third player who always obtains zero utility. \Cref{theorem:informal-twoplayerstrict} provides an exponential improvement over a known result by~\citet{Anagnostides22:Optimistic}, who gave a $\poly(d, B, L, 1/\eps)$-time algorithm for the same problem via \emph{optimistic mirror descent}.\looseness-1

As in harmonic games, the proof of~\Cref{theorem:informal-twoplayerstrict} is based on analyzing a weighted version of the Minty condition---this allows transitioning from the sum to the maximum deviation benefit. But, unlike harmonic games, here we need to handle the case where one of the weights is arbitrarily close to $0$, in which case the Lipschitz continuity parameter blows up, which in turn neutralizes the strict separation oracle of~\Cref{lemma:sviormvi}. We address this by providing a tailored separation oracle for two-player games when one of the weights gets too close to $0$ (\Cref{lemma:semiseparation-twoplayer}).

\subsubsection{Lower bounds}
\label{sec:introlowerbounds}

As promised, we complement~\Cref{theorem:maineither-informal} by proving that determining whether the Minty condition holds is $\coNP$-complete.

\begin{theorem}[Precise version in~\Cref{prop:conp minty inclusion,theorem:convexgame-hard,theorem:hardmintyconp}]
    \label{theorem:hardmintyconp-informal}
    Determining whether a VI problem satisfies the Minty condition (\Cref{ass:Minty}) is $\coNP$-complete. Hardness holds even for two-player concave games or multi-player (succinct) normal-form games and even when a constant approximation error $\eps$ is allowed.
\end{theorem}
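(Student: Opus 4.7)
\textbf{Membership in coNP (\Cref{prop:conp minty inclusion}).} The plan is to invoke the MVI--EVI duality (\Cref{prop:duality}): the Minty condition fails precisely when there exists a \emph{strict} EVI, namely a distribution $\mu$ over $\cX$ with $\max_{\vx\in\cX}\E_{\vx'\sim\mu}\langle F(\vx'),\vx'-\vx\rangle < 0$. Since the minimax program witnessing this is a linear program in $\mu$ with only $d+1$ effective constraints, a basic-feasible-solution / Carath\'eodory argument delivers a witness supported on $O(d)$ points with rational probabilities of polynomial bit length. Verification of such a certificate reduces to maximizing a linear functional in $\vx$ over $\cX$, which is polynomial via the weak separation oracle. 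The one subtlety is that because $\cX$ is accessed only weakly, strict negativity must be checked up to a polynomially small slack; the explicit gap in the duality statement lets us set the slack below this value and obtain a faithful verifier. This places non-Minty in \NP, and hence Minty feasibility in \coNP.

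\textbf{Hardness via reduction from \esat{}.} For the lower bound, the plan is to reduce from \esat{}, which is \NP-complete. Given a formula $\phi$ on $n$ variables and $m$ clauses, we construct an instance of the Minty feasibility problem---as a two-player concave game for \Cref{theorem:convexgame-hard} and, separately, as a succinct multi-player normal-form game for \Cref{theorem:hardmintyconp}---whose MVI structure encodes $\phi$. The common idea is to let one component of the strategy space represent a (fractional) assignment and another component represent a clause accuser, and to design $F$ so that (i) if $\phi$ is satisfiable, then every candidate MVI $\vx$ is refuted by a profile $\vx'$ pointing along a violated clause, and these refutations can be assembled into a strict EVI with constant gap---thereby certifying MVI infeasibility; while (ii) if $\phi$ is unsatisfiable, then an explicit symmetric profile (for example, an appropriately balanced mixture over literals) is an MVI, because the $5$-regularity of \esat{} makes every potential accusation cancel out in expectation. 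Concavity/bilinearity in each player's strategy is preserved by using inner-product payoffs over simplices; for the normal-form variant, the construction is kept succinct so that $F$ can be evaluated in polynomial time (polynomial expectation property).

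\textbf{The main obstacle.} The delicate step is the reduction rather than the \coNP\ containment, and it lies in meeting two demands simultaneously. In the \textsc{yes}-case, the certificate of non-Minty must be a strict EVI with an $\Omega(1)$ gap, so that hardness survives the constant approximation error $\epsilon$ permitted in the statement; this forces a careful payoff normalization and, if necessary, a quadratic boosting of the clause penalties. In the \textsc{no}-case, one must exhibit an honest MVI solution even after these payoff gadgets have been added, typically by verifying that the chosen symmetric profile is insensitive to every local deviation under $F$---this is precisely where the regular combinatorial structure of \esat{} is exploited. Once the payoff gadgets have been tuned to satisfy both requirements, the reduction runs in polynomial time, and the bit-complexity and approximation-robustness bookkeeping is routine.
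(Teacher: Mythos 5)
Your \coNP-membership argument matches the paper's: a Carath\'eodory/basic-feasible-solution bound gives a strict-EVI witness of support $\poly(d)$, verifiable with one call to a linear optimization oracle over $\cX$. That part is fine.

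The hardness part, however, has a genuine gap, and it sits exactly where you flag "the delicate step": the \textsc{no}-case of your reduction. You claim that when $\phi$ is unsatisfiable, "an appropriately balanced mixture over literals is an MVI, because the $5$-regularity of \esat{} makes every potential accusation cancel out in expectation." But the MVI condition is pointwise, not in expectation: you must verify $\langle F(\vx'), \vx'-\vx\rangle \ge 0$ for \emph{every} $\vx'\in\cX$, and cancellation "in expectation" is the EVI condition, not the MVI condition. Certifying that a specific point is an MVI solution is itself a \coNP-type task for a generic gadget, so a reduction cannot simply assert it. The paper's construction is engineered precisely to make this step trivial: it builds a two-player game in which only one player has nonzero (bilinear) utility, so that by bilinearity $\langle F(\vx'),\vx'-\vx\rangle = u_1(\vx_1,\vx_2') - u_1(\vx_1',\vx_2')$ and the Minty condition at $\vx$ collapses to "$\vx_1$ is a dominant strategy." The \textsc{no}-case MVI witness is then a strategy whose payoff is a constant $v$ against every opponent profile, so dominance (hence Minty) is immediate. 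Your proposal has no analogous mechanism for making the global MVI condition checkable at the designated point, and without one the reduction does not go through.

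A second, related issue is the source problem. You reduce from exact \esat{}, but the statement requires hardness to survive a \emph{constant} approximation error $\eps$, i.e., a constant gap between "Minty holds" and "an $\eps$-strict EVI exists." The paper obtains this constant by reducing from a gapped problem: constant-factor inapproximability of MAX-2-SAT (H\r{a}stad) yields an \NP-hard gap version of bilinear optimization ($f\ge v+\eps$ vs.\ $f\le v$), and that $\eps$ propagates to an $\eps/4$-strict EVI in the non-Minty case. "Quadratic boosting of the clause penalties" cannot manufacture a gap from an exact satisfiability instance; you would need either a PCP-style gapped starting point or an argument that a single fully satisfying assignment already yields constant-strength refutations of every candidate MVI point, and the latter is not established by your sketch.
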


Inclusion in $\coNP$ follows because a strict EVI solution is itself an efficiently verifiable witness of MVI infeasibility. %We give two Our hardness proof is based on a VI problem induced by a (succinct) multi-player game.
On the other hand, for explicitly represented (normal-form) games, the duality between EVIs and MVIs (\Cref{prop:duality}) enables us to show the following positive result.

\begin{proposition}
    There is a polynomial-time algorithm that determines whether an explicitly represented (normal-form) game satisfies the Minty condition.
\end{proposition}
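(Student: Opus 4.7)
The plan is to combine the duality between the Minty condition and strict EVIs (\Cref{prop:duality}) with the observation that, for explicitly represented normal-form games, strict-EVI existence can be decided by a polynomial-size LP. By \Cref{prop:duality}, the Minty condition fails if and only if there exists a distribution $\mu$ on $\cX$ with $\E_{\vx \sim \mu}[\langle F(\vx), \vx - \vx^\star\rangle] < 0$ for every $\vx^\star \in \cX$, so it suffices to decide strict-EVI existence for the game's VI operator.

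To turn this into an LP, I finitize both the test point $\vx^\star$ and the support of $\mu$. For the VI mapping $F(\vx) = (-\nabla_{\vx_i} u_i(\vx))_{i=1}^n$ of a normal-form game, a direct computation gives
\[
\langle F(\vx), \vx - \vx^\star\rangle \;=\; \sum_{i=1}^n \bigl(u_i(\vx_i^\star;\vx_{-i}) - u_i(\vx)\bigr),
\]
which is linear in $\vx^\star$, so the strict-EVI inequality needs only be checked at extreme points of $\cX = \prod_i \Delta(A_i)$---the pure profiles $\vx^\star \in A \defeq \prod_i A_i$. Moreover, because each $u_i$ is multilinear in the mixed strategies, one has
\[
\E_{\vx \sim \mu}[u_i(\vx)] = \E_{\va \sim \tilde\mu}[u_i(\va)], \qquad \E_{\vx \sim \mu}[u_i(a_i^\star;\vx_{-i})] = \E_{\va \sim \tilde\mu}[u_i(a_i^\star,\va_{-i})],
\]
where $\tilde\mu \in \Delta(A)$ is the distribution on pure profiles induced by first sampling $\vx \sim \mu$ and then $\va \sim \prod_j \vx_j$. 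Combining these two reductions, a strict EVI on $\cX$ exists if and only if some $\tilde\mu \in \Delta(A)$ satisfies
\[
\sum_{i=1}^n \E_{\va \sim \tilde\mu}\bigl[u_i(\va) - u_i(a_i^\star,\va_{-i})\bigr] \;>\; 0 \qquad \forall\,(a_1^\star,\dots,a_n^\star)\in A.
\]

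This is a linear feasibility problem with $|A|=\prod_i |A_i|$ nonnegative variables $\{\tilde\mu(\va)\}_{\va\in A}$, one normalization equality, and $|A|$ strict linear inequalities---all polynomial in the bit-length of the explicitly represented game. To handle strictness, one solves a single LP for
\[
t^\star \;\defeq\; \min_{\tilde\mu \in \Delta(A)}\ \max_{(a_1^\star,\dots,a_n^\star)\in A}\ \sum_{i=1}^n \E_{\va \sim \tilde\mu}\bigl[u_i(a_i^\star,\va_{-i}) - u_i(\va)\bigr]
\]
using any polynomial-time LP algorithm, and outputs ``Minty holds'' exactly when $t^\star \geq 0$. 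The main conceptual step is the reduction from arbitrary distributions over the continuous set $\cX$ to distributions over the polynomial-size pure-profile set $A$; this is where multilinearity of the $u_i$---i.e., the normal-form representation---is used crucially, and is precisely what prevents the same approach from working for the concave or succinctly represented games of \Cref{theorem:hardmintyconp-informal}.
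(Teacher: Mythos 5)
Your proposal is correct and follows essentially the same route as the paper: it invokes the MVI/strict-EVI duality (\Cref{prop:duality}), uses multilinearity to restrict both the deviation point and the support of $\mu$ to pure profiles (the ``revelation principle'' the paper cites), and then solves a polynomial-size LP for the minimum ACCE gap, declaring that the Minty condition holds exactly when that gap is nonnegative (equivalently zero, since a Nash equilibrium always yields a $0$-ACCE). No gaps.
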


In particular, for such problems, it is well-known that one can efficiently optimize over the polytope of EVI solutions, so one can in particular minimize the equilibrium gap.

One final natural question that arises from~\Cref{theorem:mainMinty-informal} concerns the complexity of computing \emph{Minty} VI solutions (per~\cref{def:MVI}), {\em when promised that such a solution exists}; by~\Cref{lemma:Minty}, this would be stronger than computing SVI solutions. With a straightforward construction, we observe that this is information-theoretically impossible.

\begin{proposition}[Precise version in~\Cref{theorem:Mintyexp,theorem:Mintyexp-d}]
    \label{theorem:Mintyexp-informal}
    Computing an $\epsilon$-MVI solution requires $\Omega(1/\eps)$ oracle evaluations to $F$ even when an MVI solution is guaranteed to exist and $d=1$. When $d$ is large, it requires $\Omega(2^d)$ oracle evaluations even when $\eps$ is an absolute constant.
\end{proposition}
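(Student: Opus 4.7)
The plan is to establish both lower bounds via adversarial oracle arguments of the ``hide-the-needle'' type. In both cases I will exhibit a family of mappings that are identically zero outside a tiny hidden region, each admitting an MVI solution in the corresponding region and having pairwise disjoint $\epsilon$-MVI sets. The adversary will respond with the zero vector on every query; since any single query lies in the ``nonzero support'' of at most one family member, each query will rule out at most one candidate, forcing the algorithm to probe essentially all of them.

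\textbf{One-dimensional case.} I partition $\cX = [0, 1]$ into $N = \Theta(1/\epsilon)$ closed intervals $I_k = [x_k - 2\epsilon, x_k + 2\epsilon]$ with disjoint interiors, where $x_k \defeq 2\epsilon + 4\epsilon(k-1)$, and define $F_k : [0,1] \to \R$ to be the continuous piecewise-linear ``saw-tooth'' supported on $I_k$ taking values $0, -1, 0, +1, 0$ at the five nodes $x_k - 2\epsilon,\, x_k - \epsilon,\, x_k,\, x_k + \epsilon,\, x_k + 2\epsilon$. A case analysis by sign shows that $x_k$ is an exact MVI solution of $F_k$. Moreover, for any $x \notin I_k$ one of the two witnesses $x' \in \{x_k - \epsilon, x_k + \epsilon\}$ (at which $F_k(x') = \mp 1$) yields $F_k(x')(x' - x) < -\epsilon$, so the $\epsilon$-MVI set of $F_k$ is contained in $I_k$.

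\textbf{High-dimensional case.} I take $\cX = [-1, 1]^d$, and for each $\vc \in \{-1, +1\}^d$ I set $\vy_{\vc} \defeq \vc/2$ and $N_{\vc} \defeq \{\vx \in \cX : \|\vx - \vy_{\vc}\|_\infty \leq 1/4\}$, so the $2^d$ needles $N_\vc$ are pairwise disjoint. I then define
\[
F_{\vc}(\vx) \defeq \phi_{\vc}(\vx)\,(\vx - \vy_{\vc}),
\]
where $\phi_{\vc}$ is a Lipschitz bump function with $\phi_\vc \equiv 1$ on the Euclidean ball of radius $1/8$ around $\vy_\vc$ and $\phi_\vc \equiv 0$ outside the Euclidean ball of radius $1/4$ around $\vy_\vc$ (so in particular $\mathrm{supp}(\phi_\vc) \subseteq N_\vc$). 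Since $\langle F_{\vc}(\vx'), \vx' - \vy_{\vc} \rangle = \phi_{\vc}(\vx')\|\vx' - \vy_{\vc}\|^2 \geq 0$ for every $\vx' \in \cX$, the point $\vy_\vc$ is an MVI solution of $F_\vc$. For any $\vx$ with $\|\vx - \vy_{\vc}\|_2 > 1/4$, the witness $\vx' \defeq \vy_{\vc} + \tfrac{1}{8}(\vx - \vy_{\vc})/\|\vx - \vy_{\vc}\|_2$ lies in $N_\vc \cap \cX$ with $\phi_\vc(\vx') = 1$, and a direct computation gives
\[
\langle F_{\vc}(\vx'), \vx' - \vx \rangle = \tfrac{1}{8}\bigl(\tfrac{1}{8} - \|\vx - \vy_{\vc}\|_2\bigr) < -\tfrac{1}{64},
\]
so for any absolute constant $\epsilon < 1/64$ the $\epsilon$-MVI set of $F_\vc$ is contained in $N_\vc$.

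\textbf{Adversary and conclusion.} The adversarial strategy is to answer $\vec 0$ on every query and maintain the set $\cS$ of hidden instances still consistent with all answers given so far. Because the nonzero supports of distinct family members are pairwise disjoint, any single query intersects the support of at most one instance and decreases $|\cS|$ by at most $1$; because the $\epsilon$-MVI sets are themselves pairwise disjoint, the algorithm's eventual output lies in the $\epsilon$-MVI set of at most one instance, so as long as $|\cS| \geq 2$ when the algorithm halts, the adversary can pick a consistent instance that falsifies the output. Hence the algorithm must make at least $N - 1 = \Omega(1/\epsilon)$ queries in one dimension and $2^d - 1 = \Omega(2^d)$ queries in $d$ dimensions. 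The main technical step---and the primary obstacle---is verifying the containment of each $\epsilon$-MVI set inside its hidden region; in higher dimensions this hinges on choosing the inner (bump-saturation) radius small enough that the witness $\vx'$ can simultaneously lie in $N_\vc \cap \cX$, be aligned with $\vx - \vy_\vc$, and hit the flat part of $\phi_\vc$, which the choice of inner radius $1/8$ accomplishes for $\cX = [-1,1]^d$.
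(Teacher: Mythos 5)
Your proof is correct and follows essentially the same needle-in-a-haystack strategy as the paper: in both cases one exhibits a family of mappings that vanish outside small, pairwise disjoint hidden regions, each admitting an exact MVI solution and having its approximate-MVI set confined to its region, so that a zero-answering adversary forces $\Omega(1/\epsilon)$ (respectively $\Omega(2^d)$) evaluations; your high-dimensional gadget is a cosmetic variant of the paper's $f_{\vc}(\vx) = -(\max\{1-\|\vc-\vx\|,0\})^2$. The one substantive difference is the one-dimensional construction: the paper uses the degree-eight polynomial bump $\phi_\epsilon$ precisely so that $L = O(\epsilon^2)$ and $B = O(\epsilon^3)$, which makes the lower bound hold even for very flat mappings and underlies the claim that the $\lambda$-dependence in quasar-convex optimization cannot be removed, whereas your sawtooth has $B = 1$ but Lipschitz constant $\Theta(1/\epsilon)$. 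This fully suffices for the informal statement, which imposes no bound on $L$, but it is weaker in that parameter regime than the precise version (\Cref{theorem:Mintyexp}). Two harmless nitpicks: adjacent intervals $I_k, I_{k+1}$ share an endpoint at which a point is a borderline $\epsilon$-MVI solution of both $F_k$ and $F_{k+1}$, so ``pairwise disjoint $\epsilon$-MVI sets'' needs either a strict inequality or small gaps between intervals (the counting argument survives either way, losing only a constant); and to cover randomized algorithms, as in the paper's ``constant probability'' phrasing, the adversary argument should be routed through a uniform prior over the hidden index, which is standard.
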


In particular, this lower bound implies that the dependence on $\lambda$ in~\Cref{cor:improved-quasarconvex,cor:quasarconvex} cannot be removed (\Cref{theorem:Mintyexp}).
\subsection{Further related work}
\label{sec:related}

We have already provided references establishing $\poly(d, 1/\epsilon)$-time algorithms (ignoring the dependency on other parameters of the problem) for solving $\epsilon$-SVIs under the Minty condition. Here, it is worth elaborating more on the prior results with complexity scaling polynomially in $\log(1/\epsilon)$; the upshot is that they all rest on restrictive assumptions that are stronger than the Minty condition. \citet{Luthi85:Solution} gave an algorithm for $\epsilon$-SVIs using the ellipsoid, but the analysis assumes that $F$ is \emph{strongly} monotone. As we have seen, the Minty condition is weaker than the assumption that $F$ is monotone, while strong monotonicity is a significantly stronger assumption still. \citet{Aghassi06:Solving} derive a compact convex formulation for a class of SVIs that includes monotone, affine VIs over polyhedral sets (\emph{cf.}~\citealp{Harker91:Polynomial}). \citet{Magnanti95:Unifying} established a polynomial-time algorithm for SVIs under what is today referred to as \emph{$\alpha$-cocoercivity}: $\langle F(\vx) - F(\vx'), \vx - \vx' \rangle \geq \alpha \|F(\vx) - F(\vx') \|^2$ for all $\vx, \vx' \in \cX$, where $\alpha > 0$. Relatedly, \citet{Goffin97:Analytic} provide a nonasymptotic analysis for a cutting-plane method under the assumption that $F$ is \emph{pseudo-cocoercive}---that is, $\langle F(\vx'), \vx - \vx' \rangle \geq 0 \implies \langle F(\vx), \vx - \vx' \rangle \geq \alpha \| F(\vx) - F(\vx') \|^2$ for all $\vx, \vx' \in \cX$. \citet{Song20:Optimistic} provided an algorithm with complexity scaling as $O_{\epsilon}(\log(1/\epsilon))$ under a strengthening of the Minty condition.\footnote{Some authors (\emph{e.g.},~\citealp{Song20:Optimistic,Zhou20:Convergence}) refer to the Minty condition as ``variational coherence.''} Linear convergence was also established by~\citet{Ye22:Infeasible} under a certain ``error bound,'' which is again significantly stronger than the Minty condition. Polynomial-time algorithms were known under monotonicity (\emph{e.g.}, we refer to~\citealp{Rodomanov23:Subgradient,Nemirovski10:Accuracy}), but, as we explained earlier, rely fundamentally on an approach that falls short under the Minty condition (\Cref{sec:collapse}). Another class of problems that are amenable to the ellipsoid algorithm concerns computing fixed points of non-expansive mappings~\citep{Sikorski93:Ellipsoid,Huang99:Approximating}, which in turn allows solving VIs under cocoercivity (\emph{e.g.},~\citealp{Diakonikolas20:Halpern}).

The Minty variational inequality problem also relates to the seminal concept of an \emph{evolutionary stable strategy (ESS)} from evolutionary game theory~\citep{Smith73:Logic}. We refer to~\citet{Migot21:Minty} for precise connections between MVIs and ESSs; the upshot is that a point $\vx \in \cX$ that satisfies the MVI \emph{strictly} for any $\vx' \neq \vx$ equates to a certain variant of ESS. Interestingly, \citet{Conitzer19:Exact} showed that even ascertaining the existence of an ESS in two-player (normal-form) games is $\SigmaP$-complete; by contrast, \Cref{prop:Mintyeasy} shows that determining whether the Minty property holds in games with a constant number of players can be solved in polynomial time. From a computational standpoint, this makes the Minty condition a more compelling criterion for ascertaining evolutionary plausibility.

Recent research has focused extensively on \emph{higher-order} methods for computing SVIs~\citep{Bullins22:Higher,Huang22:Cubic,Adil22:Optimal,Huang22:Approximation,Jiang22:Generalized,Lin24:Perseus}. For example, \citet{Lin24:Perseus} developed an algorithm with iteration complexity scaling with $O_\epsilon(\epsilon^{-2/(p+1)})$; the main caveat with those results is that implementing each iteration requires time that grows exponentially in $p$.

Moreover, computing the strictest (A)CCE has been used to address the equilibrium selection problem~\citep{Marris25:Deviation}; a game can have multiple CCEs of varying properties, so it is often not clear which one is to be preferred from either a prescriptive or descriptive point of view. The (A)CCE that minimizes the equilibrium gap---which can be negative (\emph{cf.}~\Cref{prop:duality})---offers one possible answer to that dilemma.

For further references pertaining to earlier developments on variational inequalities, we refer to the survey of~\citet{Harker90:Finite}.
\section{Preliminaries}
\label{sec:prels}

Before moving on, we lay out some basic notation and background on optimization, and then proceed to formally state our blanket assumptions.

\paragraph{Notation} We use lowercase boldface letters, such as $\vx$, to denote points in $\R^d$ and capital letters, such as $\mat{A} \in \R^{d \times d'}$, for matrices. The $j$th coordinate of $\vx$ is accessed by $\vx[j]$. For $\vx \in \R^{d}$ and $\vx' \in \R^{d'}$, $(\vx, \vx') \in \R^{d + d'}$ represents their concatenation. We denote by $\size(r)$ the \emph{encoding length} of a rational number $r \in \Q$ in binary. If $\langle \cdot, \cdot \rangle$ represents the standard inner product, $\|\vx\| \defeq \sqrt{\langle \vx, \vx\rangle} $ is the (Euclidean) $\ell_2$ norm of $\vx$, while $\|\vx\|_\infty \defeq \max_{1 \leq j \leq d} \vx[j]$ is its $\ell_\infty$ norm. For a matrix $\mat{A} \in \R^{d \times d'}$, $\|\mat{A}\|$ denotes its spectral norm. $\mat{I}_{d} \in \R^{d \times d}$ is the $d \times d$ identity matrix. $\cB_r(\vx)$ is the (closed) Euclidean ball centered at $\vx \in \R^d$ with radius $r > 0$. We use $\Delta(d)$ for the $d$-simplex: $\Delta(d) \defeq \{ \vx \in \R^d_{\geq 0} : \sum_{j=1}^d \vx[j] = 1 \}$. We use the notation $O(\cdot), \Theta(\cdot), \Omega(\cdot)$ to suppress absolute constants. We sometimes use $O_n(\cdot)$ to highlight the dependence only on the parameter $n$.

We distinguish between a VI problem, denoted by $\VI(\cX, F)$---which is given by a mapping $F: \cX \to \R^d$ that can be evaluated in polynomial time (\Cref{ass:polyeval}) and a constraint set that is implicitly accessed through a separation oracle (\Cref{def:sep})---and a solution thereof, be it an SVI (\Cref{def:VI}) or an MVI (\Cref{def:MVI}).

Let $\cX$ be a convex and compact set in $\R^d$. We define
\begin{equation}
    \label{eq:deepset}
    \cX^{-\epsilon} \defeq \{ \vx \in \cX: \cB_\epsilon(\vx) \subseteq \cX \},
\end{equation}
where we recall that $\cB_\epsilon(\vx)$ is the (closed) Euclidean ball centered at $\vx \in \R^d$ with radius $\epsilon > 0$. \eqref{eq:deepset} describes all points that are ``$\epsilon$-deep'' inside $\cX$. Similarly, we define
\begin{equation}
    \label{eq:approxset}
    \cX^{+\epsilon} \defeq \{ \vx' \in \R^d : \|\vx' - \vx \| \leq \epsilon \text{ for some } \vx \in \cX \} = \bigcup_{\vx \in \cX} \cB_\epsilon(\vx),
\end{equation}
the set of all points that are ``$\epsilon$-close'' to $\cX$.

We say that $\cX$ is in \emph{isotropic position} if for a uniformly sampled $\vx \sim \cX$, we have $\E[\vx] = 0$ and $\E[ \vx \vx^\top] = \mat{I}_{d \times d}$. There is a polynomial-time algorithm that brings any constraint set $\cX$ into isotropic position (for example,~\citealp{Lovasz06:Simulated}). This transformation does not affect our main result (\Cref{theorem:mainMinty-precise}) or implications thereof (\Cref{sec:isotropic}), so we assume it without loss of generality. If $\cX$ is in isotropic position, we can assume that $\cB_1(\vec{0}) \subseteq \cX \subseteq \cB_R(\vec{0})$ for some $R \leq \poly(d)$; in fact, all our positive results apply even when $R \leq \exp(\poly(d))$.

Continuing with some basic geometric definitions, we say that a set $\cE \subseteq \R^d$ is an \emph{ellipsoid} if there exists a vector $\vec{a} \in \R^d$ and a positive definite matrix $\mat{A} \in \R^{d\times d}$ such that
\begin{equation*}
    \cE = \cE(\mat{A}, \vec{a}) \defeq \{ \vx \in \R^d : \langle \vx - \vec{a}, \mat{A}^{-1} (\vx - \vec{a}) \rangle \leq 1 \};
\end{equation*}
above, we use the inverse of $\mat{A}$ so as to be consistent with~\citet{Grotschel12:Geometric}. $\vol(\cE)$ is the \emph{volume} of the ellipsoid $\cE$.

We are now ready to introduce some basic oracles, which are known to be (polynomial-time) equivalent when $\cX$ is well-bounded.\footnote{For simplicity, in the main body of the paper we posit the strong versions of the oracles. \Cref{sec:weak} explains how to generalize our analysis under weak oracles.} The first one simply ascertains membership.

\begin{definition}[Membership;~\citealp{Grotschel12:Geometric}]
    \label{def:mem}
    Given a point $\vx \in \Q^d$, decide whether $\vx \in \cX$.
\end{definition}

A separation oracle takes a step further: if the point is not in the set, it proceeds by producing a separating hyperplane, as defined next.

\begin{definition}[Separation;~\citealp{Grotschel12:Geometric}]
    \label{def:sep}
    Given a point $\vec{x} \in \Q^d$, either
    \begin{itemize}
        \item  assert that $\vx \in \cX$ or
        \item find a vector $\vec{c} \in \Q^d$ with $\| \vec{c} \|_\infty = 1$ such that $\langle \vec{c}, \vx' \rangle < \langle \vec{c}, \vx \rangle$ for every $\vx' \in \cX$.
    \end{itemize}
\end{definition}

The final useful oracle optimizes linear functions with respect to the constraint set; it enables verifying whether a point is an approximate SVI solution.

\begin{definition}[Optimization;~\citealp{Grotschel12:Geometric}]
    \label{def:opt}
    Given a vector $\vec{c} \in \Q^d$, find a vector $\vx \in \cX$ such that $\langle \vec{c}, \vec{x} \rangle \leq \langle \vec{c}, \vx' \rangle$ for all $\vx' \in \cX$. 
\end{definition}

Our main assumption concerning $\cX$ is that it is given implicitly through access to any of those computationally equivalent oracles. 

With regard to the mapping $F$ of the underlying variational inequality problem, we gather our assumptions below; some of our results, such as~\Cref{theorem:quasar-precise}, weaken these assumptions.

\begin{assumption}
    \label{ass:polyeval}
    The mapping $F : \cX \to \R^d$ satisfies the following:
    \begin{enumerate}
        \item for any rational $\vx \in \cX \cap \Q^d$, $F(\vx)$ is a rational number that can be evaluated exactly in $\poly(d)$ time, with $\size(F(\vx)) \leq \poly(\size(\vx))$;\label{item:polyeval}
        \item for any $\vx, \vx' \in \cX$, $\|F(\vx) - F(\vx') \| \leq L \|\vx - \vx' \|$ for some $L \in \Q_{>0}$; and\label{item:Lparam}
        \item for any $\vx \in \cX$, $\|F(\vx)\| \leq B$ for some $B \in \Q_{>0}$.\label{item:Bparam}
    \end{enumerate}
\end{assumption}
When specialized to multi-player games, \Cref{item:polyeval} is precisely the \emph{polynomial expectation property} of~\citet{Papadimitriou08:Computing}.

For simplicity, our algorithm takes as input $L$ and $B$, under the promise that $F$ satisfies~\Cref{ass:polyeval}; this is not necessary: one can run the algorithm starting from $L_0, B_0$; if the output does not satisfy the desired property, it suffices to repeat, setting $L_{1} = 2 L_0$ and $B_{1} = 2 B_0$, and so on.
\section{Consequences and manifestations of the Minty condition}
\label{sec:Mintyexamples}

The Minty condition (\Cref{ass:Minty}) is intimately related to several important and seemingly disparate concepts in optimization and game theory. This section gathers a number of such connections, most of which are new. In~\Cref{sec:optimization}, we begin by exploring connections in optimization, mostly revolving around quasar-convexity (\Cref{def:quasarconvex}) and a more general incarnation thereof in general VI problems (\Cref{def:smoothVI}). \Cref{sec:duality} establishes a duality between MVIs and a relaxation of SVIs called \emph{expected VIs} (\Cref{prop:duality}), which are closely related to the notion of coarse correlated equilibria from game theory. We then leverage this duality to show that in explicitly represented multi-player games, ascertaining whether the Minty condition holds can be phrased as a linear program of polynomial size; this is to be contrasted with our \coNP-hardness for succinct games (\Cref{theorem:hardmintyconp}). \Cref{sec:harmonic} examines classes of games that satisfy the Minty condition, which notably includes \emph{harmonic games}---after applying a suitable transformation (\Cref{prop:harmonic}).

\subsection{Optimization}
\label{sec:optimization}

We first turn our attention to optimizing a single function. Let $f : \cX \to \R$ be a differentiable function to be minimized.\footnote{We always assume that $f$ is differentiable on an open superset $\hat{\cX} \supset \cX$.} As we discussed earlier, an $\epsilon$-SVI solution $\vx \in \cX$ to the problem arising when $F \defeq \nabla f(\vx)$ is an approximate stationary point of gradient descent applied on $f$; namely, $\langle \nabla f(\vx), \vx' -\vx \rangle \geq - \epsilon$ for all $\vx' \in \cX$. In particular, if there exists $r > 0$ such that $\cB_r(\vx) \subseteq \cX$---that is, $\vx$ is in the interior of $\cX$---it follows that $\| \nabla f(\vx) \| \leq \epsilon/r$. Of course, when $f$ is nonconvex, $\epsilon$-SVIs are not generally approximate global minima---they can even be saddle points. This is in stark contrast to MVI solutions, although their existence is not guaranteed in general; indeed, the following was observed, for example, by~\citet[Theorem 2.10]{Huang23:Monotone}.

\begin{proposition}[\citealp{Huang23:Monotone}]
    Consider the optimization problem $\min_{\vx \in \cX} f(\vx)$, where $f$ is differentiable and $\cX$ is convex and compact. If $\vx \in \cX$ is an MVI solution with respect to $F \defeq \nabla f$, then $\vx$ is a global minimum of $f$.
\end{proposition}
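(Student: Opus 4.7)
The plan is to show that for an arbitrary $\vy \in \cX$, we have $f(\vy) \geq f(\vx)$, which would immediately yield that $\vx$ is a global minimum. First I would fix any such $\vy$ and, exploiting the convexity of $\cX$, consider the line segment $\vx_t \defeq \vx + t(\vy - \vx)$ for $t \in [0,1]$; every point on this segment lies in $\cX$. This is the natural geometric object to work with, because applying the MVI condition at these specific ``test points'' along the ray from $\vx$ to $\vy$ will produce one-dimensional directional information that can be integrated back into a bound on $f(\vy) - f(\vx)$.

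The main step is to invoke the fundamental theorem of calculus along this segment, which gives
\begin{equation*}
    f(\vy) - f(\vx) = \int_0^1 \langle \nabla f(\vx_t), \vy - \vx \rangle \, dt.
\end{equation*}
Then, for each $t \in (0,1]$, I would apply the MVI inequality~\eqref{eq:MVI} with $\vx' = \vx_t$, which yields $\langle \nabla f(\vx_t), \vx_t - \vx \rangle \geq 0$. Since $\vx_t - \vx = t(\vy - \vx)$ and $t > 0$, this is equivalent to $\langle \nabla f(\vx_t), \vy - \vx \rangle \geq 0$. The integrand in the display above is therefore nonnegative almost everywhere, so the integral is nonnegative and $f(\vy) \geq f(\vx)$.

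There is no substantive obstacle here; the statement is essentially a one-variable consequence of the MVI definition combined with the convexity of $\cX$. The only minor subtlety worth flagging is that the gradient $\nabla f$ needs to be defined and integrable along the segment, which is ensured by the blanket assumption that $f$ is differentiable on an open superset of $\cX$ (so in particular on a neighborhood of each $\vx_t$). Since $\vy \in \cX$ was arbitrary, the argument completes the proof.
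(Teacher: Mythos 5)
Your argument is correct. The paper itself states this proposition without proof, citing \citet[Theorem 2.10]{Huang23:Monotone}, so there is no in-paper argument to compare against; your write-up is a complete and standard proof of the claim. The only point worth tightening is the appeal to the fundamental theorem of calculus: the proposition assumes only that $f$ is differentiable, so $t \mapsto \langle \nabla f(\vx_t), \vy - \vx\rangle$ need not be continuous or even Riemann integrable. You can sidestep this entirely by setting $g(t) \defeq f(\vx_t)$ and observing that $g$ is differentiable on $[0,1]$ with $g'(t) = \langle \nabla f(\vx_t), \vy - \vx\rangle \geq 0$ for all $t \in (0,1]$ (by exactly your MVI computation), so $g$ is nondecreasing by the mean value theorem and hence $f(\vy) = g(1) \geq g(0) = f(\vx)$. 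With continuity of $\nabla f$ (as the paper elsewhere assumes via Lipschitzness), your integral version is fine as written.
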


On the other hand, a global minimum of $f$ is not necessarily an MVI solution---otherwise our main result (\Cref{theorem:mainMinty-precise}) would imply $\CLS \subseteq \P$ (\emph{cf.}~\citealp{Fearnley23:Complexity}); see~\citet[Remark 2.11]{Huang23:Monotone} for a concrete example.

One special case in which SVI solutions do correspond to global minima is when $f$ is \emph{quasar-convex}~\citep{Fu23:Accelerated,Hinder20:Near,Caramanis24:Optimizing,Hardt18:Gradient}, in the following sense (restated from the introduction).

\quasarconvex*

Quasar-convexity is a generalization of the usual notion of convexity: if~\eqref{eq:quasarconvex} holds for any $\vx, \vx' \in \cX$ and $\lambda = 1$, one recovers precisely convexity for differentiable functions. Further, when~\eqref{eq:quasarconvex} holds only with respect to $\vx$ (as in~\Cref{def:quasarconvex}) but with $\lambda = 1$, we get the notion of \emph{star-convexity}~\citep{Lee16:Optimizing,Nesterov06:Cubic}. It follows immediately from the definition that quasar-convexity is in fact a strengthening of the Minty condition, which further guarantees that all approximate SVI solutions are approximately global minima in terms of value---this latter property does not necessarily hold under solely the Minty condition.\footnote{\citet{Hinder20:Near} established a lower bound of $\Omega(\lambda^{-1} \epsilon^{-1/2} )$ in terms of the number of gradient evaluations required to minimize a $\lambda$-quasar convex function; this does not contradict~\Cref{cor:improved-quasarconvex} because their lower bound only applies if the dimension is large enough as a function of $\epsilon$; in particular, their lower bound targets ``dimension-free'' algorithms.}

\begin{proposition}
    \label{prop:quasarconvex}
    Let $F \defeq \nabla f$ for a differentiable, $\lambda$-quasar-convex function $f : \cX \to \R$ with respect to a global minimum $\vx \in \cX$ of $f$. Then, $\vx$ is a solution to the Minty VI problem. Furthermore, any $\epsilon$-SVI solution satisfies $f(\vx') \leq f(\vx) + \epsilon/\lambda$.
\end{proposition}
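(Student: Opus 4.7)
The proposition has two claims, both of which follow essentially immediately from chaining the quasar-convexity inequality with either the global optimality of $\vx$ or the $\epsilon$-SVI condition. My plan is to dispatch them in sequence.

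For the first claim, that the global minimum $\vx$ is an MVI solution, I would apply the quasar-convexity inequality~\eqref{eq:quasarconvex} at an arbitrary test point $\vx' \in \cX$: it reads $f(\vx) \geq f(\vx') + \tfrac{1}{\lambda}\langle \nabla f(\vx'), \vx - \vx'\rangle$. Since $\vx$ is a global minimum of $f$ on $\cX$, the left-hand side is at most $f(\vx')$, so rearranging forces $\langle \nabla f(\vx'), \vx - \vx'\rangle \leq 0$, i.e., $\langle F(\vx'), \vx' - \vx\rangle \geq 0$. This is exactly the MVI condition at $\vx$, and since $\vx'$ was arbitrary we are done.

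For the second claim, let $\vy \in \cX$ be any $\epsilon$-SVI solution. Specializing the SVI inequality $\langle \nabla f(\vy), \vx' - \vy\rangle \geq -\epsilon$ to the choice $\vx' = \vx$ gives $\langle \nabla f(\vy), \vx - \vy\rangle \geq -\epsilon$. On the other hand, invoking~\eqref{eq:quasarconvex} this time with the base point taken to be $\vy$ yields $f(\vx) \geq f(\vy) + \tfrac{1}{\lambda}\langle \nabla f(\vy), \vx - \vy\rangle$. Combining the two inequalities produces $f(\vx) \geq f(\vy) - \epsilon/\lambda$, which rearranges to the desired bound $f(\vy) \leq f(\vx) + \epsilon/\lambda$.

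There is really no substantive obstacle here; the argument amounts to matching the inner product appearing in the definition of quasar-convexity to one side of the MVI or SVI condition, and then using either $f(\vx) \leq f(\vx')$ or the $\epsilon$-slack in the SVI condition to close the estimate. The only point requiring care is keeping the direction of the inequalities consistent across the two specializations of~\eqref{eq:quasarconvex}, which differ in which of $\vx,\vy$ plays the role of the global minimizer and which plays the role of the base point.
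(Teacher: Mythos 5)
Your proof is correct and follows exactly the paper's argument: both parts chain the quasar-convexity inequality~\eqref{eq:quasarconvex} with, respectively, global optimality of $\vx$ and the $\epsilon$-SVI condition specialized to the deviation $\vx$. No issues.
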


\begin{proof}
    The fact that $\vx$ satisfies the Minty VI follows since
    \begin{equation*}
        \langle \nabla f(\vx'), \vx' - \vx \rangle \geq \lambda (f(\vx') - f(\vx)) \geq 0
    \end{equation*}
    for any $\vx' \in \cX$, where we used the fact that $f(\vx) \leq f(\vx')$ ($\vx$ is a global minimum of $f$). Now, let $\vx'$ be an $\epsilon$-SVI solution, which implies that $\langle \nabla f(\vx'), \vx - \vx' \rangle \geq - \epsilon$. Combining with~\eqref{eq:quasarconvex}, we have
    \begin{equation*}
        f(\vx') \leq f(\vx) + \frac{\epsilon}{\lambda},
    \end{equation*}
    as promised.
\end{proof}

\paragraph{Smooth VIs} The notion of quasar-convexity given in~\Cref{def:quasarconvex} can be significantly generalized to general VI problems, as observed recently by~\citet{Zhang25:Expected}; as we shall see, this captures a special case of the seminal notion of \emph{smoothness}, introduced by~\citet{Roughgarden15:Intrinsic}.\footnote{We caution that smoothness per~\Cref{def:smoothVI} is different than the usual notion of smoothness in optimization; we chose to overload notation so as to be consistent with the terminology of~\citet{Roughgarden15:Intrinsic}.} (For convenience, we take the perspective of maximization in the following definition.)

\begin{definition}[Smoothness for VIs; \citealp{Zhang25:Expected}]
    \label{def:smoothVI}
    Let $\lambda > 0$ and $\nu > -1$. Consider further a function $Q : \cX \to \R$ and a global maximum $\vx \in \cX$ of $Q$. A VI problem with respect to the mapping $F : \cX \to \R^d$ is called \emph{$(\lambda, \nu)$-smooth} with respect to $Q$ and $\vx$ if
    \begin{equation}
        \label{eq:smoothVIs}
        \langle F(\vx'), \vx' - \vx \rangle \geq \lambda Q(\vx) - (\nu+1) Q(\vx') \quad \forall \vx' \in \cX.
    \end{equation}
\end{definition}
In particular, $(\lambda, \lambda - 1)$-smoothness equates to $\lambda$-quasar-convexity when we define $Q \defeq - f$ and $F \defeq \nabla f$. Furthermore, in the special case of multi-player games, \Cref{def:smoothVI} is equivalent to the seminal concept of smoothness introduced by~\citet{Roughgarden15:Intrinsic}---in particular, \Cref{def:smoothVI} is a direct extension of the more general concept of ``local smoothness'' per~\citet{Roughgarden15:Local}.

\begin{definition}[Smoothness for games;~\citealp{Roughgarden15:Intrinsic}]
    \label{def:smoothagmes}
    Let $\lambda > 0$ and $\nu > -1$, and $\vx \in \argmax_{\vx' \in \cX} \sw(\vx')$ with respect to an $n$-player game $\Gamma$. $\Gamma$ is called \emph{$(\lambda, \nu)$-smooth} with respect to $\vx$ if
    \begin{equation}
        \label{eq:smoothgames}
        \sum_{i=1}^n u_i(\vx_i, \vx_{-i}') \geq \lambda \sw(\vx) - \nu \sw(\vx') \quad \forall \vx' \in \cX.
    \end{equation}
\end{definition}
Above, $\sw(\vx) \defeq \sum_{i=1}^n u_i(\vx)$ is the social welfare function. By setting $Q \defeq \sw$, we see that~\eqref{eq:smoothgames} is equivalent to~\eqref{eq:smoothVIs} due to multilinearity. The key motivation behind~\Cref{def:smoothagmes} is that it enables bounding the social welfare of any (A)CCE in terms of the optimal welfare~\citep{Roughgarden15:Intrinsic}. Smoothness manifests itself prominently in a host of important applications; for example, we refer to the survey of~\citet{Roughgarden17:Price}. For our purposes, the key point is that~\Cref{def:smoothVI} enables generalizing~\Cref{prop:quasarconvex} to a broader family of problems beyond (single-function) optimization:

\begin{proposition}
    \label{prop:smoothVIs}
    Let $\lambda > 0$, a function $Q : \cX \to \R$ with a global maximum at $\vx \in \cX$, and a mapping $F : \cX \to \R^d$. If the corresponding VI problem is $(\lambda, \lambda - 1)$-smooth with respect to $Q$ and $\vx$, then $\vx$ is a solution to the Minty VI problem. Furthermore, any $\epsilon$-SVI solution satisfies $Q(\vx') \leq Q(\vx) + \epsilon/\lambda$.
\end{proposition}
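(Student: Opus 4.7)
The plan is to mirror the two-line argument used in \Cref{prop:quasarconvex} but applied to the more general smoothness condition \eqref{eq:smoothVIs}. The first claim (that $\vx$ satisfies the Minty VI) is an immediate substitution, while the second claim (the approximation guarantee) follows by plugging the test point $\vx$ into the SVI inequality at $\vx'$ and chaining it with the smoothness inequality; the specific choice $\nu = \lambda - 1$ in \Cref{def:smoothVI} is precisely what makes the welfare terms combine into $\lambda(Q(\vx)-Q(\vx'))$, which is the analog of the quasar-convex identity $\langle \grad f(\vx'), \vx'-\vx\rangle \ge \lambda(f(\vx')-f(\vx))$.

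Concretely, for the first part I would fix an arbitrary $\vx' \in \cX$ and instantiate $(\lambda,\lambda-1)$-smoothness to write
\begin{equation*}
    \langle F(\vx'), \vx' - \vx\rangle \;\ge\; \lambda Q(\vx) - \lambda Q(\vx') \;=\; \lambda\bigl(Q(\vx) - Q(\vx')\bigr) \;\ge\; 0,
\end{equation*}
where the last step is the defining property of $\vx$ as a global maximum of $Q$. Since $\vx' \in \cX$ was arbitrary, this is exactly the MVI condition \eqref{eq:MVI} at $\vx$.

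For the second part, let $\vx'$ be any $\epsilon$-SVI solution. Plugging in the test point $\vx$ into \eqref{eq:SVI} yields $\langle F(\vx'), \vx - \vx'\rangle \ge -\epsilon$, i.e., $\langle F(\vx'), \vx' - \vx\rangle \le \epsilon$. Combining this with the smoothness lower bound from the display above (which used only $\vx' \in \cX$, not any MVI property of $\vx'$) gives
\begin{equation*}
    \lambda\bigl(Q(\vx) - Q(\vx')\bigr) \;\le\; \langle F(\vx'), \vx' - \vx\rangle \;\le\; \epsilon,
\end{equation*}
and dividing through by $\lambda > 0$ rearranges to the claimed bound on $Q(\vx')$ relative to $Q(\vx)$.

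I do not anticipate any real obstacle: the argument is essentially a one-step algebraic manipulation of \Cref{def:smoothVI}, and the only subtlety is making sure that the $\nu=\lambda-1$ coupling is used so that the $Q(\vx')$ terms line up correctly (rather than leaving a spurious $\nu Q(\vx')$ on the right). The proof is a verbatim generalization of \Cref{prop:quasarconvex}, so no new analytic ingredients are needed beyond the already-stated smoothness inequality and the SVI definition itself.
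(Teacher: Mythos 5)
Your proof is correct and is exactly the argument the paper intends (the paper simply says the proof is analogous to \Cref{prop:quasarconvex}, and yours is that verbatim generalization). One small remark: your chain yields $\lambda(Q(\vx)-Q(\vx'))\le\epsilon$, i.e.\ $Q(\vx')\ge Q(\vx)-\epsilon/\lambda$, which is the meaningful (near-optimality) direction; the inequality as literally written in the proposition, $Q(\vx')\le Q(\vx)+\epsilon/\lambda$, is trivially true since $\vx$ maximizes $Q$, so the statement evidently carries a sign typo inherited from the minimization phrasing of \Cref{prop:quasarconvex}.
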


The proof is analogous to that of~\Cref{prop:quasarconvex}.

\subsection{Expected VIs and duality with MVIs}
\label{sec:duality}

The next connection is that MVIs are, in a precise sense, duals of a certain relaxation of SVIs recently introduced by~\citet{Zhang25:Expected}, called \emph{expected VIs}; we refer to~\citet{Cai24:Tractable} and~\citet{Ahunbay25:First} for some precursors of that definition in the context of nonconcave games.

We begin by stating the definition of expected VIs; following~\citet{Zhang25:Expected}, we use the term ``expected VIs,'' as opposed to expected SVIs, although they relax the SVI problem of~\Cref{def:VI}.

\begin{definition}[\citealp{Zhang25:Expected,Nemirovski10:Accuracy}]
    \label{def:EVIs}
    In the context of~\Cref{def:VI}, the $\epsilon$-\emph{expected} VI (EVI) problem asks for a \emph{distribution}\footnote{In the context of a monotone VI problem, \citet{Nemirovski10:Accuracy} call an $\eps$-EVI an ``accuracy certificate.'' In this paper, we use the $\eps$-EVI terminology because it is more general and does not presuppose that an $\eps$-EVI is informative of a VI solution; {\em cf.} the discussion in \Cref{sec:collapse}.} $\mu \in \Delta(\cX)$ over $\cX$ such that
    \begin{equation}
        \label{eq:EVI}
        \E_{\vx \sim \mu} \langle F(\vx), \vx' - \vx \rangle \geq - \epsilon \quad \forall \vx' \in \cX.
    \end{equation}
\end{definition}

That is, in an expected VI, it suffices if the (S)VI constraint holds \emph{in expectation} when $\vx$ is drawn from a distribution $\mu$. Unlike SVIs, expected VIs can be solved in $\poly(d, \log(1/\epsilon))$ time for any approximation $\epsilon > 0$~\citep{Zhang25:Expected}. \Cref{def:EVIs} places no restriction on $\epsilon$ being nonnegative; indeed, expected VIs with a negative gap may exist (\emph{cf.}~\Cref{prop:duality})---this is obviously not possible for SVIs. For $\epsilon < 0$, an $\epsilon$-EVI solution will also be referred to as a \emph{$(-\epsilon)$-strict EVI} solution.

In this context, starting from~\eqref{eq:EVI}, we observe that
\begin{equation*}
    \max_{\mu \in \Delta(\cX)} \min_{\vx' \in \cX} \E_{\vx \sim \mu} \langle F(\vx), \vx' - \vx \rangle = \min_{\vx' \in \cX} \max_{\mu \in \Delta(\cX)} \E_{\vx \sim \mu} \langle F(\vx), \vx' - \vx \rangle,
\end{equation*}
where the equality follows by the minimax theorem~\citep{Sion58:On}; indeed, the function $\E_{\vx \sim \mu} \langle F(\vx), \vx - \vx' \rangle$ is bilinear in terms of $\mu$ and $\vx'$. Equivalently,
\begin{equation}
    \label{eq:dual}
    - \max_{\mu \in \Delta(\cX)} \min_{\vx' \in \cX} \E_{\vx \sim \mu} \langle F(\vx), \vx' - \vx \rangle = \max_{\vx' \in \cX} \min_{\vx \in \cX} \langle F(\vx), \vx - \vx' \rangle,
\end{equation}
where we used the fact that, for a given $\vx' \in \cX$, $\min_{\mu \in \Delta(\cX)} \E_{\vx \sim \mu} \langle F(\vx), \vx - \vx' \rangle = \min_{\vx \in \cX} \langle F(\vx), \vx - \vx' \rangle$. By~\eqref{eq:dual}, we arrive at the following characterization of the Minty condition.

\begin{proposition}
    \label{prop:duality}
    The Minty condition (\Cref{ass:Minty}) holds if and only if there is no $\epsilon$-expected VI solution (\Cref{def:EVIs}) with $\epsilon < 0$.
\end{proposition}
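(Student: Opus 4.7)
The plan is to derive both directions of the biconditional at once from the minimax duality already spelled out in \eqref{eq:dual}, by reading off the two sides of that identity as sign statements. First, the Minty condition asserts the existence of $\vx \in \cX$ with $\min_{\vx' \in \cX} \langle F(\vx'), \vx' - \vx \rangle \geq 0$; equivalently,
\begin{equation*}
V_{\mathrm{Minty}} \defeq \max_{\vx \in \cX} \min_{\vx' \in \cX} \langle F(\vx'), \vx' - \vx \rangle \geq 0.
\end{equation*}
Plugging $\vx' = \vx$ into the inner minimum always yields $0$, so $V_{\mathrm{Minty}} \leq 0$ unconditionally, and hence the Minty condition is actually equivalent to $V_{\mathrm{Minty}} = 0$. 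Dually, the existence of an $\epsilon$-EVI with $\epsilon < 0$ is the assertion
\begin{equation*}
V_{\mathrm{EVI}} \defeq \max_{\mu \in \Delta(\cX)} \min_{\vx' \in \cX} \E_{\vx \sim \mu}\langle F(\vx), \vx' - \vx \rangle > 0,
\end{equation*}
as any such $\mu$ achieves a uniformly positive inner value and, conversely, any $\mu$ with positive value yields a strict EVI by choosing $\epsilon$ slightly below zero.

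The identity \eqref{eq:dual} may now be read as $V_{\mathrm{EVI}} = -V_{\mathrm{Minty}}$, once the dummy variables on its right-hand side are relabeled back to the present notation (the inner $\min$ on that right-hand side plays the role of our Minty-candidate variable, and the outer $\max$ plays the role of the deviation). Consequently, a strict EVI exists iff $V_{\mathrm{EVI}} > 0$ iff $V_{\mathrm{Minty}} < 0$ iff the Minty condition fails (where in the last step we use $V_{\mathrm{Minty}} \leq 0$). Taking contrapositives yields the claimed equivalence.

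I do not anticipate a serious obstacle: the only nontrivial step is the minimax exchange underpinning \eqref{eq:dual}, which falls squarely within Sion's hypotheses (bilinearity, with $\cX$ compact convex). A small technical nuance worth spelling out is that $\Delta(\cX)$ is in general infinite-dimensional, but since the objective is linear in $\mu$, the outer maximum is attained at a Dirac mass; this reduction, together with the collapse $\max_{\mu}\E_{\vx \sim \mu}\langle F(\vx), \vx' - \vx\rangle = \max_{\vx \in \cX}\langle F(\vx), \vx' - \vx\rangle$, is what lets the minimax effectively be taken over the compact set $\cX \times \cX$.
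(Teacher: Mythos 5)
Your argument is correct and follows essentially the same route as the paper: both reduce the claim to the sign of the minimax value in \eqref{eq:dual}, with your version merely packaging the two directions symmetrically as $V_{\mathrm{EVI}} = -V_{\mathrm{Minty}}$ together with the (useful) observation that $V_{\mathrm{Minty}} \le 0$ always. One small slip in your closing remark: the Dirac-mass reduction applies to the \emph{inner} maximization $\max_{\mu}\E_{\vx\sim\mu}\langle F(\vx),\vx'-\vx\rangle$ of a linear objective for fixed $\vx'$ (after the swap), not to the outer maximum defining $V_{\mathrm{EVI}}$, whose objective is only concave in $\mu$; the displayed collapse you actually use is the correct one, so nothing breaks.
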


\begin{proof}
    If the Minty condition holds, there exists $\vx' \in \cX$ such that $\min_{\vx \in \cX} \langle F(\vx), \vx - \vx' \rangle \geq 0$. By~\eqref{eq:dual}, this implies that $\max_{\mu \in \Delta(\cX)} \min_{\vx' \in \cX} \E_{\vx \sim \mu} \langle F(\vx), \vx' - \vx \rangle \leq 0$, which means that every $\epsilon$-expected SVI solution $\mu$ must satisfy $\epsilon \geq 0$. The converse is also immediate.
\end{proof}

\paragraph{Coarse correlated equilibria in games} \Cref{prop:duality} has a particularly notable consequence in the context of $n$-player (normal-form) games. Here, each player $i \in [n]$ selects as (mixed) strategy a probability distribution $\vx_i \in \Delta(\cA_i) \eqqcolon \cX_i$ over a finite set of available actions $\cA_i$. Under a joint strategy $\vx = (\vx_1, \dots, \vx_n) \in \cX_1 \times \dots \times \cX_n \eqqcolon \cX$, we denote by $u_i(\vx)$ the expected utility of a player $i$. As noted by~\citet{Zhang25:Expected}, expected VIs (per~\Cref{def:EVIs}) correspond to the following equilibrium concept; this can be readily extended to general concave games (\Cref{sec:twoplayer} does so for two-player games).

\begin{definition}[Average CCE]
    \label{def:acce}
    For an $n$-player game, a distribution $\mu \in \Delta(\cA_1 \times \dots \times \cA_n)$ is an \emph{$\epsilon$-average coarse correlated equilibrium ($\epsilon$-ACCE)} if
    \begin{equation}
        \label{eq:acce}
        \sum_{i=1}^n  \E_{\vx \sim \mu} [ u_i(\vx_i', \vx_{-i}) - u_i(\vx) ] \leq \epsilon \quad \forall \vx_1', \dots, \vx_n' \in \cX_1 \times \dots \times \cX_n.
    \end{equation}
\end{definition}

Several remarks are in order. An ACCE is a relaxation of a CCE~\citep{Moulin78:Strategically}, which in turn relaxes \emph{correlated equilibria (CE)} \emph{\`a la}~\citet{Aumann74:Subjectivity}. The key difference between ACCEs and CCEs is that the former only insists on bounding the \emph{cumulative} deviation benefit over all players, whereas in a CCE one bounds the \emph{maximum} deviation benefit; the term ``average'' CCE is due to~\citet{Nadav10:Limits}, who also separated ACCEs from CCEs. Now, as we shall see, expected VIs are to ACCEs what SVIs are to \emph{Nash equilibria}; we now recall the latter definition.

\begin{definition}
    \label{def:NE}
    For an $n$-player game, a strategy $\vx \in \cX_1 \times \dots \times \cX_n$ is an \emph{$\epsilon$-Nash equilibrium} if\footnote{It is more common to bound the maximum deviation benefit (as opposed to the cumulative one), but---unlike CCEs---the two are equivalent up to a factor of $n$ in the approximation.}
    \begin{equation*}
        \sum_{i=1}^n ( u_i(\vx'_i, \vx_{-i}) - u_i(\vx) ) \leq \epsilon \quad \forall \vx_1', \dots, \vx_n' \in \cX_1 \times \dots \times \cX_n.
    \end{equation*}
\end{definition}

As we alluded to, \Cref{def:acce} can be naturally cast as an expected VI problem per~\Cref{def:EVIs}. Indeed, we define 
\begin{equation}
    \label{eq:gameoperator}
    F: \vx \mapsto ((- u_i(a_i, \vx_{-i}) )_{a_i \in \cA_i} )_{i=1}^n \text{ and } \cX \defeq \Delta(\cA_1) \times \dots \times \Delta(\cA_n).
\end{equation}
That~\eqref{eq:acce} is equivalent to the resulting expected VI problem follows immediately from the definitions, noting that one can without any loss of generality restrict $\mu$ to be a distribution over pure strategies; this is sometimes referred to as the ``revelation principle,'' which is interestingly not satisfied for more general versions of the expected VI problem~\citep{Zhang25:Expected}.

In this context, a direct consequence of~\Cref{prop:duality} is that there is a linear program with a number of variables and constraints polynomial in $\prod_{i=1}^n |\cA_i|$, whose output determines whether the Minty property holds. In particular, one can compute the ACCE that minimizes the equilibrium gap per~\eqref{eq:acce}. Let $\epsilon$ be the output of that linear program. By~\Cref{prop:duality}, the Minty condition---with respect to the corresponding mapping $F$---holds if and only if $\epsilon = 0$ (of course, there is always a $0$-ACCE simply because an exact Nash equilibrium exists).

\begin{proposition}
    \label{prop:Mintyeasy}
    For any $n$-player normal-form game, there is an algorithm polynomial in $\prod_{i=1}^n |\cA_i|$ (and the number of bits needed to encode the payoff tensor) that determines whether the Minty condition with respect to the corresponding VI problem per~\eqref{eq:gameoperator} holds.
\end{proposition}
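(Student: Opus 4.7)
The plan is to directly translate the duality of~\Cref{prop:duality} into a linear program of size polynomial in $N \defeq \prod_{i=1}^n |\cA_i|$, and then invoke any polynomial-time LP solver (or, since rational solutions are desired, rely on e.g. Khachiyan's or Karmarkar's algorithm with bit complexity polynomial in the payoff encoding).

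Concretely, I would introduce one variable $\mu(a) \geq 0$ for every pure strategy profile $a = (a_1,\ldots,a_n) \in \cA_1\times\cdots\times\cA_n$, together with a scalar gap variable $\eps \in \R$. The feasible set is defined by (i) the simplex constraints $\sum_a \mu(a) = 1$ and $\mu(a) \geq 0$, and (ii) the ACCE-style inequality from~\eqref{eq:acce},
\begin{equation*}
    \sum_{i=1}^n \E_{\vx \sim \mu}[u_i(a_i', \vx_{-i}) - u_i(\vx)] \leq \eps \quad \forall (a_1',\ldots,a_n') \in \cA_1\times\cdots\times\cA_n,
\end{equation*}
of which there are exactly $N$. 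Each such constraint is linear in $\mu$ because the expectations $\E_{\vx\sim\mu}[u_i(a_i',\vx_{-i})]$ and $\E_{\vx\sim\mu}[u_i(\vx)]$ are both linear combinations of the entries of $\mu$, with coefficients read off directly from the payoff tensor. By multilinearity, it suffices to take the deviations $(a_1',\ldots,a_n')$ to be pure, which is what lets us cap the number of constraints at $N$. The LP is to minimize $\eps$ subject to these constraints; it has $N+1$ variables, $N+1$ nontrivial linear constraints, and bit complexity polynomial in $N$ and the bit size of the payoffs.

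Let $\eps^\star$ denote the optimum. By the correspondence between ACCEs and expected VIs with respect to the game operator $F$ of~\eqref{eq:gameoperator} (as discussed just before the proposition), $\eps^\star$ equals the minimum EVI gap. The algorithm then outputs ``Minty holds'' if $\eps^\star \geq 0$ and ``Minty fails'' otherwise. Correctness is immediate from~\Cref{prop:duality}: a strict EVI (gap $< 0$) exists if and only if Minty fails. Moreover, since every normal-form game admits an exact Nash equilibrium, which in particular is a $0$-ACCE, we always have $\eps^\star \leq 0$, so in fact Minty holds precisely when $\eps^\star = 0$.

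There is no real obstacle, as the heavy lifting has already been done in establishing~\Cref{prop:duality}; the only care needed is to verify the reduction to finitely many pure-strategy deviation constraints (by multilinearity in each $\vx_i'$) and to observe that the objective coefficients of the LP are efficiently computable directly from the payoff tensor. The mild subtlety worth spelling out in the write-up is that the EVI definition~\eqref{eq:EVI} ranges $\vx'$ over the whole product of simplices $\cX$, whereas the LP only enforces pure deviations; linearity of $\vx' \mapsto \E_{\vx\sim\mu}\ip{F(\vx), \vx' - \vx}$ ensures these are equivalent.
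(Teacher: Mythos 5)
Your proposal is correct and is essentially the paper's own argument: set up the LP that minimizes the ACCE gap of~\eqref{eq:acce} over distributions on pure strategy profiles with pure deviations, and conclude via~\Cref{prop:duality} that the Minty condition holds iff the optimum is $0$ (noting it is always $\leq 0$ because a Nash equilibrium is a $0$-ACCE). The one point you leave implicit is that restricting $\mu$ to be supported on \emph{pure} profiles—rather than on mixed profiles as in~\Cref{def:EVIs}—is without loss of generality (the paper's ``revelation principle''); this follows from the same multilinearity argument you already give for the deviations, by replacing any $\mu$ over mixed profiles with the induced distribution over pure profiles.
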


As a result, there is a polynomial-time algorithm for determining whether the Minty condition holds in explicitly represented (normal-form) games, meaning that the input fully specifies each entry of the utility tensors; as we show in~\Cref{theorem:hardmintyconp}, this is no longer the case in succinct games (with the polynomial expectation property). Moreover, we also state the following immediate consequence.

\begin{proposition}
    \label{prop:MVIeasy}
    For any $n$-player game that satisfies the Minty condition, there is an algorithm polynomial in $\prod_{i=1}^n |\cA_i|$ that determines an MVI solution (per~\Cref{def:MVI}).
\end{proposition}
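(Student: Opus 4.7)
The plan is to exploit the multilinearity of the normal-form utilities to show that the MVI condition collapses to a finite system of linear inequalities of polynomial size, which is feasible by hypothesis and can thus be solved via linear programming.

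First, I would unpack the MVI condition using the game operator $F$ from~\eqref{eq:gameoperator}. For any candidate MVI solution $\vx^\star \in \cX = \prod_{i=1}^n \Delta(\cA_i)$ and any $\vx \in \cX$, a direct computation gives
\begin{equation*}
    \langle F(\vx), \vx - \vx^\star \rangle \;=\; \sum_{i=1}^n u_i(\vx^\star_i, \vx_{-i}) \;-\; \sum_{i=1}^n u_i(\vx),
\end{equation*}
since $F(\vx)[i, a_i] = -u_i(a_i, \vx_{-i})$ and $u_i$ is multilinear. Thus $\vx^\star$ solves~\eqref{eq:MVI} if and only if the quantity $g_{\vx^\star}(\vx) \defeq \sum_i u_i(\vx^\star_i, \vx_{-i}) - \sum_i u_i(\vx)$ is nonnegative for every $\vx \in \cX$.

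Next, I would observe that, with $\vx^\star$ fixed, the map $\vx \mapsto g_{\vx^\star}(\vx)$ is \emph{multilinear} in the coordinates $(\vx_1, \dots, \vx_n)$: both $u_i(\vx^\star_i, \vx_{-i})$ (which depends only on $\vx_{-i}$) and $u_i(\vx)$ are multilinear, since the game is in normal form. A multilinear function attains its minimum over the product of simplices at a vertex, so the MVI condition is equivalent to the finite system
\begin{equation*}
    \sum_{i=1}^n u_i(\vx^\star_i, a_{-i}) \;\geq\; \sum_{i=1}^n u_i(a) \qquad \forall a = (a_1, \dots, a_n) \in \cA_1 \times \cdots \times \cA_n.
\end{equation*}
For each fixed pure profile $a$, the left-hand side $\sum_i \sum_{a_i'} \vx^\star_i[a_i'] u_i(a_i', a_{-i})$ is linear in the unknowns $\vx^\star_i$, and the right-hand side is a constant readable from the payoff tensor.

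Finally, I would assemble these into a linear program: variables $\vx^\star_i \in \Delta(\cA_i)$ for each player $i$, with the $\prod_{i=1}^n |\cA_i|$ linear inequalities above. This LP has polynomially many variables and constraints in $\prod_i |\cA_i|$ and in the bit length of the payoff tensor, hence is solvable in polynomial time. Feasibility is guaranteed by the Minty condition (\Cref{ass:Minty}), and any feasible point is, by construction, an MVI solution. The only potential subtlety—which I expect to be minor—is merely spelling out that a pure-profile minimum of a multilinear function certifies the inequality for all mixed $\vx$; this is a standard vertex-of-the-simplex argument. Note also that this LP is precisely the dual of the one used in~\Cref{prop:Mintyeasy} to certify that the Minty condition holds, giving an alternative derivation via LP duality.
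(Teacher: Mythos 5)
Your proposal is correct and matches the argument the paper intends: the paper states this as an immediate consequence of the LP duality discussion surrounding \Cref{prop:Mintyeasy,prop:duality}, and your direct formulation---reducing the MVI constraints to the $\prod_{i=1}^n |\cA_i|$ pure-profile deviations via multilinearity and solving the resulting polynomial-size LP---is exactly the dual program in question. The vertex-of-the-simplex step you flag is indeed standard and closes the argument.
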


\paragraph{Games with nonnegative sum of regrets} Moreover, the condition that every $\epsilon$-expected VI solution has nonnegative gap (\Cref{prop:duality}) is precisely the condition put forward by~\citet{Anagnostides22:Last}, which was used to define the class of games with ``nonnegative sum of regrets.'' To be precise, the \emph{regret} of a player $i \in [n]$ who has observed the sequence of utilities $(\vu_i^{(t)}(\vx^{(t)}_{-i}) )_{1 \leq t \leq T}$ and has selected the sequence of strategies $(\vx_i^{(t)})_{1 \leq t \leq T}$ is defined as
\begin{equation}
    \label{eq:genreg}
    \reg_i^{(T)} \defeq \max_{\vx_i \in \cX_i} \sum_{t=1}^T \alpha^{(t)} \langle \vx_i - \vx_i^{(t)}, \vu_i^{(t)} \rangle,
\end{equation}
where $\alpha^{(1)}, \dots, \alpha^{(T)} \geq 0$ are weights such that $\sum_{t=1}^T \alpha^{(t)} = 1$; it is common to define regret when $\alpha^{(1)} = \dots = \alpha^{(T)} = \nicefrac{1}{T}$, but we will operate under the more flexible definition given in~\eqref{eq:genreg}.

\begin{observation}
    We say that a game $\Gamma$ has nonnegative sum of regrets if for any $T \in \N$, weights $(\alpha^{(t)})_{1 \leq t \leq T} \in \Delta(T)$, and sequence of joint strategies $(\vx^{(t)})_{1 \leq t \leq T} \in \cX^T$, it holds that $\sum_{i=1}^n \reg_i^{(T)} \geq 0$. This is equivalent to any $\epsilon$-ACCE in $\Gamma$ satisfying $\epsilon \geq 0$. By~\Cref{prop:duality}, it is also equivalent to the Minty property with respect to $\Gamma$.
\end{observation}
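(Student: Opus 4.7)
The plan is to verify a short chain of equivalences linking the three conditions: (i) nonnegativity of the sum of regrets for every weighted play sequence, (ii) nonnegativity of every ACCE gap, and (iii) the Minty property.

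For (i) $\Leftrightarrow$ (ii), I would expand the regret definition in~\eqref{eq:genreg} and pull the per-player maximum outside the sum over players. Since each player's deviation $\vx_i'$ is unconstrained by the others, this yields
\[
\sum_{i=1}^n \reg_i^{(T)} = \max_{(\vx_1', \ldots, \vx_n') \in \cX} \sum_{t=1}^T \alpha^{(t)} \sum_{i=1}^n \bigl( u_i(\vx_i', \vx_{-i}^{(t)}) - u_i(\vx^{(t)}) \bigr),
\]
where I used $\langle \vx_i, \vu_i^{(t)} \rangle = u_i(\vx_i, \vx_{-i}^{(t)})$ and $\langle \vx_i^{(t)}, \vu_i^{(t)} \rangle = u_i(\vx^{(t)})$. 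The right-hand side is exactly the ACCE gap (per~\Cref{def:acce}) of the empirical distribution $\mu = \sum_{t=1}^T \alpha^{(t)} (\vx_1^{(t)} \otimes \cdots \otimes \vx_n^{(t)})$. Hence if every ACCE has nonnegative gap, so does every such empirical distribution, forcing $\sum_i \reg_i^{(T)} \geq 0$. For the converse, I would note that any target distribution $\mu \in \Delta(\cA_1 \times \cdots \times \cA_n)$ can be realized as an empirical distribution by letting $\vx^{(t)}$ range over pure profiles and taking $\alpha^{(t)}$ to equal the assigned probabilities, so a negative ACCE gap would translate into a negative sum of regrets.

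For (ii) $\Leftrightarrow$ (iii), I would recall from~\Cref{sec:duality} that ACCEs coincide with EVIs (\Cref{def:EVIs}) for the game mapping and constraint set defined in~\eqref{eq:gameoperator}, with matching gap parameters. Specifically, the EVI inner product unfolds to $\langle F(\vx), \vx'-\vx \rangle = -\sum_i (u_i(\vx_i', \vx_{-i}) - u_i(\vx))$, so the EVI condition $\E_\mu \langle F(\vx), \vx'-\vx \rangle \geq -\epsilon$ is precisely the ACCE condition~\eqref{eq:acce}. Moreover, by multilinearity of utilities, distributions over mixed profiles and distributions over pure profiles induce the same family of achievable gaps, so the two formalisms deliver the same set of $\epsilon$-values. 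Applying~\Cref{prop:duality} then gives that the Minty condition holds iff no $\epsilon$-EVI (equivalently, $\epsilon$-ACCE) with $\epsilon < 0$ exists, which is (ii).

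There is no substantive obstacle; the only mild subtlety is the bookkeeping between distributions over mixed strategy profiles (as appearing in the EVI definition on $\cX = \prod_i \Delta(\cA_i)$) and distributions over pure profiles (as appearing in~\Cref{def:acce}), which is resolved by the multilinearity observation above.
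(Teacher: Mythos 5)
Your proposal is correct and follows essentially the same route the paper intends: the observation is stated without an explicit proof, its justification being exactly the identification of the weighted sum of regrets with the ACCE gap of the empirical distribution, the ACCE--EVI correspondence via~\eqref{eq:gameoperator}, and~\Cref{prop:duality}. Your write-up simply spells out these steps (including the correct handling of the max-over-independent-deviations and the pure-versus-mixed bookkeeping via multilinearity), so there is nothing to flag.
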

Among others implications, in such games it is possible to show that a broad class of no-regret dynamics---namely, ones satisfying the \emph{RVU bound} of~\citet{Syrgkanis15:Fast}, such as \emph{optimistic} mirror descent---guarantees optimal per-player (average) regret vanishing at a rate of $T^{-1}$; it remains an open question whether this is possible in general multi-player games (\emph{cf.}~\citealp{Daskalakis21:Near}).

\subsection{Harmonic games}
\label{sec:harmonic}

Moving forward, we observe that (a weighted version of) the Minty condition manifests itself in \emph{harmonic games}. This is a class of games introduced by~\citet{Candogan11:Flows}, who famously provided a decomposition of any game---based on Helmholtz
decomposition---into a direct sum of a \emph{potential} game and a harmonic game; this decomposition is unique up to an affine transformation that preserves the equilibria of the game. The potential component captures games with aligned interests, whereas the harmonic component captures games with conflicting interests.

Following recent follow-up work~\citep{Abdou22:Decomposition,Legacci24:No}, we give below a more general definition of harmonic games than the one introduced by~\citet{Candogan11:Flows}.\footnote{Under the original definition of~\citet{Candogan11:Flows}, the strategy profile in which each player mixes uniformly at random is a Nash equilibrium, thereby trivializing equilibrium computation.}

\begin{definition}[Harmonic games; \citealp{Abdou22:Decomposition,Legacci24:No,Candogan11:Flows}]
    \label{def:harmonic}
    A finite game $\Gamma$ is called \emph{harmonic} if for each player $i \in [n]$ there exists $\vec{\sigma}_i \in \R_{>0}^{\cA_i}$ such that
    \begin{equation}
        \label{eq:harmonic}
        \sum_{i=1}^n \sum_{a_i \in \cA_i} \vec{\sigma}_i[a_i] (u_i(\vec{a}') - u_i(a_i, \vec{a}'_{-i}) ) = 0 \quad \forall \vec{a}' \in \cA_1 \times \dots \times \cA_n.
    \end{equation}
\end{definition}
To cast this as a special case of the Minty property, we observe that~\eqref{eq:harmonic} can be equivalently reformulated as asking for a collection of (strictly) positive weights $w_1, \dots, w_n$ and fully mixed strategies $\vx_1 \in \Delta(\cA_1) \cap \R^{\cA_1}_{>0}, \dots, \vx_n \in \Delta(\cA_n) \cap \R_{>0}^{\cA_n}$ such that\footnote{Combining~\Cref{lemma:Minty} and~\Cref{prop:harmonic}, it follows that $\vx$ is an exact Nash equilibrium. In particular, this means that any harmonic game admits a fully mixed Nash equilibrium, but we are yet not aware of any prior polynomial-time algorithm for computing Nash equilibria in harmonic games. To elaborate on this point further, in certain classes of games, such as two-player (general-sum) games, knowing the support of the equilibrium reduces the problem to a linear system, which can be in turn solved in polynomial time (\emph{e.g.},~\citealp{Sandholm05:Mixed}). This is not so in multi-player games: \citet[Corollary 13]{Etessami07:Complexity} proved certain hardness results based on a three-player game promised to have a unique, fully mixed Nash equilibrium.}
\begin{equation}
    \label{eq:harm-equiv}
    \sum_{i=1}^n w_i ( u_i(\va') - u_i(\vx_i, \va'_{-i}) ) = 0 \quad \forall \va' \in \cA_1 \times \dots \times \cA_n.
\end{equation}

We can now recognize this as a special case of the Minty property after we suitably rescale the utilities in the definition of $F$. But there are two lingering issues with this observation: first, we do not know the weights $w_1, \dots, w_n$; and second, even if we could rescale the utilities, the complexity of the algorithm would be polynomial in $\log(\rho)$, where $\rho \defeq \max_{1 \leq i \leq n} w_i / \min_{1 \leq i \leq n} w_i$.

To address these issues, we first make a simple observation regarding the bit complexity of $\vec{\sigma}$ that satisfies~\eqref{eq:harmonic}.

\begin{lemma}
    \label{lemma:bitcompl}
    Let $\size(u_i(\va)) \leq \poly(n, \max_{1 \leq i \leq n} |\cA_i|)$ for all $i \in [n]$ and $\va \in \cA_1 \times \dots \times \cA_n$. Suppose further that~\eqref{eq:harmonic} is feasible. Then, it can be satisfied with respect to $\vec{\sigma} = (\vec{\sigma}_1, \dots, \vec{\sigma}_n) \in \R_{>0}^{\cA_1} \times \dots \times \R_{>0}^{\cA_n}$ such that $\size(\vec{\sigma}) \leq \poly(n, \max_{1 \leq i \leq n} |\cA_i|)$.
\end{lemma}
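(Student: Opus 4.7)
The plan is to encode the existence of a strictly positive solution to~\eqref{eq:harmonic} as the optimum of a polynomial-size linear program, and then invoke the classical polynomial bit-complexity bound for vertex solutions of rational LPs. Concretely, I would view~\eqref{eq:harmonic}, indexed by $\vec{a}' \in \cA_1 \times \dots \times \cA_n$, as a homogeneous linear system $\mat{M} \vec{\sigma} = \vec{0}$ in the unknowns $(\vec{\sigma}_i[a_i])_{i \in [n], a_i \in \cA_i}$, whose entries are utility differences $u_i(\vec{a}') - u_i(a_i, \vec{a}'_{-i})$ and hence of encoding length $\poly(n, \max_i |\cA_i|)$ by hypothesis. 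The total number of unknowns is $\sum_{i=1}^n |\cA_i| \leq n \cdot \max_i |\cA_i|$, which is polynomial in the same parameters.

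To enforce strict positivity while keeping the feasible region bounded, I would then consider the linear program
\begin{equation*}
    \max \; t \quad \text{s.t.} \quad \mat{M} \vec{\sigma} = \vec{0}, \quad \vec{\sigma} \geq t \cdot \vec{1}, \quad \sum_{i=1}^n \sum_{a_i \in \cA_i} \vec{\sigma}_i[a_i] = 1.
\end{equation*}
By hypothesis, there exists a strictly positive $\vec{\sigma}^{(0)}$ satisfying $\mat{M} \vec{\sigma}^{(0)} = \vec{0}$; normalizing it to have unit $\ell_1$-norm and taking $t$ to be its minimum coordinate yields a feasible point with $t > 0$, so the LP is feasible with optimum $t^\star > 0$.

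Finally, by standard bit-complexity bounds for rational LPs (for example, via Cramer's rule combined with Hadamard's inequality; see~\citealp[Chapter 6]{Grotschel12:Geometric}), any vertex of the feasible polytope has coordinates whose encoding length is polynomial in the number of variables and in the size of the individual entries of the constraint matrix. Since both quantities are $\poly(n, \max_i |\cA_i|)$ here, any optimal vertex $(\vec{\sigma}^\star, t^\star)$ yields a $\vec{\sigma}^\star$ satisfying~\eqref{eq:harmonic} with $\vec{\sigma}^\star \geq t^\star \cdot \vec{1} > \vec{0}$ and $\size(\vec{\sigma}^\star) \leq \poly(n, \max_i |\cA_i|)$, as desired. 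I do not anticipate a substantive obstacle beyond carefully writing down the LP; the only mild subtlety is that $\mat{M}$ itself has $\prod_i |\cA_i|$ rows, which can be exponential in $n$, but this does not affect the bit complexity of a vertex solution---only the number of variables and the size of individual coefficients do, both of which remain polynomial.
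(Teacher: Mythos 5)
Your proof is correct and takes essentially the same approach as the paper: both reduce to the observation that a vertex of an LP with polynomially many variables is determined by polynomially many active constraints of polynomial bit complexity, regardless of the (exponential) number of constraints. Your explicit $\max t$ formulation is in fact slightly more careful than the paper's terse argument, since it guarantees that the returned vertex is strictly positive rather than merely nonnegative.
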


\begin{proof}
    \eqref{eq:harmonic} induces a linear program with a polynomial number of variables (and exponential number of constraints). The number of active constraints required to define a vertex is thus polynomial. Since the coefficients of each constraint have polynomial bit complexity (by assumption), the claim follows.\footnote{For explicitly represented (normal-form) games, it is evident from~\eqref{eq:harmonic} that there is a polynomial-time algorithm for computing $\vec{\sigma}$, and hence a Nash equilibrium of that game. Our focus here is on succinct games (with the polynomial expectation property), in which case the LP induced by~\eqref{eq:harmonic} has exponentially many constraints.}
\end{proof}

Returning to~\eqref{eq:harm-equiv}, we can assume that $\sum_{i=1}^n w_i = 1$ (by rescaling); \Cref{lemma:bitcompl} implies that $w_i \geq 1/2^{ \poly(n, \max_{1 \leq i \leq n} |\cA_i|)}$. Having established this lower bound, we consider the mapping
\begin{equation}
    \label{eq:G}
    G : \mathcal{P}_{\geq \alpha} \ni (w_1, \dots, w_n, w_1 \vx_1, \dots, w_n \vx_n) \defeq (u_1(\vx), \dots, u_n(\vx), - (u_i(a_i, \vx_{-i})_{a_i \in \cA_i})_{i=1}^n),
\end{equation}
where
\begin{equation}
    \label{eq:Pa}
    \mathcal{P}_{\geq \alpha} \defeq \left\{ (w_1, \dots, w_n, w_1 \vx_1, \dots, w_n \vx_n) : \vec{w} \in \Delta(n) \cap \R^n_{\geq \alpha}, \vx_1 \in \Delta(\cA_1), \dots, \vx_n \in \Delta(\cA_n) \right\}
\end{equation}
for a sufficiently small $\alpha = 1/2^{ \poly(n, \max_{1 \leq i \leq n} |\cA_i|)}$ (per~\Cref{lemma:bitcompl}). The following now follows directly from the definitions.

\begin{proposition}
    \label{prop:harmonic}
    Consider any harmonic game $\Gamma$ per~\Cref{def:harmonic}. If we define $\VI(\mathcal{P}_{\geq \alpha}, G)$ per~\eqref{eq:G} and~\eqref{eq:Pa}, the following properties hold:
    \begin{itemize}
        \item $\VI(\mathcal{P}_{\geq \alpha}, G)$ satisfies the Minty condition;
        \item $G$ is $2^{\poly(n, \max_{1 \leq i \leq n} |\cA_i|)}$-Lipschitz continuous; and
        \item an $\epsilon$-SVI of $\VI(\mathcal{P}_{\geq \alpha}, G)$ is an $\epsilon \cdot 2^{\poly(n, \max_{1 \leq i \leq n} |\cA_i|)}$-Nash equilibrium of $\Gamma$.
    \end{itemize}
\end{proposition}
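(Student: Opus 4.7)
The proof decomposes into three independent claims, matching the three bullet points of the proposition.

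\emph{Minty condition.} I would take the candidate MVI solution to be $\vec{z}^\star \defeq (\vec{w}^\star, w_1^\star \vx_1^\star, \ldots, w_n^\star \vx_n^\star) \in \mathcal{P}_{\geq \alpha}$, where $(\vec{w}^\star, \vx_1^\star, \ldots, \vx_n^\star)$ are the weights and fully mixed strategies from~\eqref{eq:harm-equiv}, rescaled so that $\vec{w}^\star \in \Delta(n)$ (possible since~\eqref{eq:harm-equiv} is homogeneous in $\vec{w}$); \Cref{lemma:bitcompl} then ensures $w_i^\star \geq \alpha$. For any competitor $\vec{z} = (\vec{w}, w_1\vx_1, \ldots, w_n \vx_n) \in \mathcal{P}_{\geq \alpha}$, expanding $\langle G(\vec{z}), \vec{z} - \vec{z}^\star \rangle$ via~\eqref{eq:G} produces four sums; invoking the multilinearity identity $u_i(\vx) = \sum_{a_i} u_i(a_i, \vx_{-i}) \vx_i[a_i]$ makes two of them cancel, collapsing the expression to $\sum_i w_i^\star (u_i(\vx_i^\star, \vx_{-i}) - u_i(\vx))$. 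Finally, averaging~\eqref{eq:harm-equiv} over pure profiles $\va \sim \prod_j \vx_j$ and applying multilinearity once more shows this quantity vanishes identically, so in fact the MVI at $\vec{z}^\star$ holds with equality.

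\emph{Lipschitz continuity.} The reparameterization $(\vec{w}, \vec{y}) \mapsto (\vx_1, \ldots, \vx_n)$ with $\vx_i \defeq \vec{y}_i / w_i$ is smooth on $\mathcal{P}_{\geq \alpha}$ with Jacobian entries bounded by $1/\alpha$, since each entry is either $1/w_i$ or $-\vec{y}_i[a_i]/w_i^2$ and $\vec{y}_i[a_i] \leq w_i$. Under the polynomial expectation property, the components $u_i(\vx)$ and $u_i(a_i, \vx_{-i})$ of $G$ are $\poly(n, \max_i |\cA_i|)$-Lipschitz as functions of $\vx$ and have bounded range. Composing via the chain rule yields an overall Lipschitz constant of $\poly(n, \max_i |\cA_i|) / \alpha = 2^{\poly(n, \max_i |\cA_i|)}$, using the lower bound $\alpha \geq 1/2^{\poly(n, \max_i |\cA_i|)}$ from~\Cref{lemma:bitcompl}.

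\emph{From $\epsilon$-SVI to approximate Nash.} Let $\vec{z} = (\vec{w}, w_1 \vx_1, \ldots, w_n \vx_n) \in \mathcal{P}_{\geq \alpha}$ be an $\epsilon$-SVI solution. For each player $i$ and deviation $\vx_i' \in \Delta(\cA_i)$, I would test against the perturbed point $\vec{z}' \defeq (\vec{w}, \vec{y}_1, \ldots, w_i \vx_i', \ldots, \vec{y}_n) \in \mathcal{P}_{\geq \alpha}$, which differs from $\vec{z}$ only in the $i$-th strategy block. A direct calculation gives $\langle G(\vec{z}), \vec{z}' - \vec{z} \rangle = w_i \bigl( u_i(\vx) - u_i(\vx_i', \vx_{-i}) \bigr)$, so the $\epsilon$-SVI inequality yields $u_i(\vx_i', \vx_{-i}) - u_i(\vx) \leq \epsilon/w_i \leq \epsilon/\alpha$. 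Summing over $i$ then produces $\sum_i (u_i(\vx_i', \vx_{-i}) - u_i(\vx)) \leq n\epsilon/\alpha = \epsilon \cdot 2^{\poly(n, \max_i |\cA_i|)}$, matching~\Cref{def:NE}. The one conceptually delicate step is the Minty claim: one must reconcile~\eqref{eq:harm-equiv}---stated for \emph{unnormalized} positive weights $\vec{\sigma}$ over \emph{pure} profiles---with the geometry of $\mathcal{P}_{\geq \alpha}$, in which $\vec{w}$ lies on the simplex and $\vx$ is mixed; this is handled, respectively, by homogeneity of~\eqref{eq:harm-equiv} in $\vec{w}$ and by a single averaging step that extends the identity to mixed profiles via multilinearity.
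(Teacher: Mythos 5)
Your proof is correct and follows exactly the intended route: the paper itself gives no proof of \Cref{prop:harmonic} (it asserts the claims "follow directly from the definitions," relying on the analogous explicit computation $\ip{G(\vz),\vz}=0$ and $\ip{G(\vz),\vz'}=\sum_i w_i'(u_i(\vx)-u_i(\vx_i',\vx_{-i}))$ spelled out for the two-player case in \Cref{sec:twoplayer}), and your three verifications—the cancellation via multilinearity plus averaging \eqref{eq:harm-equiv} over mixed profiles for the Minty point, the $1/\alpha$-bounded Jacobian of the deconvexification for Lipschitzness, and the single-block deviation $\vz'$ for the SVI-to-Nash step—are precisely the omitted details. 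No gaps.
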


As a result, our main result (\Cref{theorem:mainMinty-precise}) implies a polynomial-tme algorithm for computing $\epsilon$-Nash equilibria in harmonic games.

Some simple examples of games that adhere to~\Cref{def:harmonic} include ``cyclic games,'' in the sense of~\citet{Hofbauer00:Sophisticated}, the buyer-seller game of~\citet{Friedman91:Evolutionary}, and the crime deterrence game analyzed by~\citet{Cressman98:Evolutionary}.

\paragraph{Polymatrix zero-sum games and beyond} We continue with a related but distinct class of games known as \emph{polymatrix games}~\citep{Cai16:Zero}; in particular, it is easy to see that any two-player zero-sum game with a fully mixed Nash equilibrium is harmonic per~\Cref{def:harmonic}. We first make an observation with regard to general MVI problems, providing a sufficient condition under which~\Cref{ass:Minty} holds.

\begin{proposition}
    \label{prop:polymatrix}
    Consider a problem $\VI(\cX, F)$ such that $F$ is linear and $\langle F(\vx), \vx \rangle = 0$ for all $\vx \in \cX$. Then, the Minty condition (\Cref{ass:Minty}) holds.
\end{proposition}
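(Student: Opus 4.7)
The plan is to show that any Stampacchia VI solution to the problem is already a Minty VI solution, which works thanks to the hypothesis that $F$ is linear and $\langle F(\vx), \vx\rangle = 0$ identically on $\cX$. Existence of an SVI solution follows from the standard Brouwer-based argument cited in the excerpt (applied to the continuous mapping $F$ on the convex compact set $\cX$), so the real work is verifying the MVI inequality for that SVI solution.

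The key algebraic lemma I would establish first is an ``antisymmetry'' property on pairs of points of $\cX$. Write $F(\vx)=M\vx$. For any $\vx,\vy\in\cX$ and $t\in(0,1)$, convexity gives $t\vx+(1-t)\vy\in\cX$, so expanding the hypothesis $\langle M(t\vx+(1-t)\vy),\,t\vx+(1-t)\vy\rangle=0$ and using $\langle M\vx,\vx\rangle=\langle M\vy,\vy\rangle=0$ yields
\begin{equation*}
t(1-t)\bigl(\langle M\vx,\vy\rangle+\langle M\vy,\vx\rangle\bigr)=0,
\end{equation*}
hence $\langle M\vx,\vy\rangle=-\langle M\vy,\vx\rangle$ for all $\vx,\vy\in\cX$. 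This is the replacement for a global skew-symmetry of $M$; although $\cX$ need not be full-dimensional (or contain the origin), this bilinear identity is all we need.

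Now let $\vx^\star\in\cX$ be any SVI solution, which exists by continuity of $F$ and compactness/convexity of $\cX$. Expanding $\langle F(\vx^\star),\vx'-\vx^\star\rangle\geq 0$ and using $\langle F(\vx^\star),\vx^\star\rangle=0$ gives
\begin{equation*}
\langle M\vx^\star,\vx'\rangle \geq 0 \quad \forall\vx'\in\cX.
\end{equation*}
Applying the antisymmetry identity with $\vx\leftarrow\vx'$, $\vy\leftarrow\vx^\star$ converts this into $\langle M\vx',\vx^\star\rangle\leq 0$ for all $\vx'\in\cX$. Combining with $\langle F(\vx'),\vx'\rangle=0$ gives
\begin{equation*}
\langle F(\vx'),\vx'-\vx^\star\rangle=-\langle M\vx',\vx^\star\rangle\geq 0\quad\forall\vx'\in\cX,
\end{equation*}
so $\vx^\star$ satisfies the MVI, establishing the Minty condition.

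The only delicate point is the antisymmetry lemma, since $\cX$ may not be full-dimensional and we do not assume $0\in\cX$, so one cannot simply conclude $M+M^\top=0$ globally. The convex-combination trick above is what sidesteps this obstacle: it is carried out entirely \emph{inside} $\cX$, producing only the weaker pairwise identity, which is nevertheless enough to transport the SVI inequality into the desired MVI inequality. No other step is subtle.
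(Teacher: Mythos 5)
Your proof is correct, but it takes a genuinely different route from the paper's. The paper observes that $\langle F(\vx'), \vx' - \vx \rangle = -\langle F(\vx'), \vx \rangle$ is bilinear in $(\vx, \vx')$ and vanishes on the diagonal, so $\min_{\vx'} \max_{\vx} \langle F(\vx'), \vx' - \vx\rangle \ge 0$ trivially (take $\vx = \vx'$), and Sion's minimax theorem then swaps the order of the operators to conclude $\max_{\vx}\min_{\vx'} \ge 0$, which is exactly the Minty condition (with attainment from compactness). You instead prove the stronger statement that \emph{every} SVI solution is already an MVI solution: your polarization trick recovers the skew-symmetry identity $\langle M\vx, \vy\rangle = -\langle M\vy, \vx\rangle$ on $\cX \times \cX$ without needing $\cX$ to be full-dimensional or to contain the origin (a point you correctly flag as the only delicate one), and this identity transports the SVI inequality at $\vx^\star$ into the MVI inequality. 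Your route costs an appeal to Brouwer for existence of an SVI solution where the paper only needs the minimax theorem for a bilinear function, but it buys an explicit description of an MVI solution and, in effect, re-derives the ``equilibrium collapse'' phenomenon that the paper attributes to \citet{Cai16:Zero} for zero-sum polymatrix games. One minor caveat: you read ``linear'' as homogeneous ($F(\vx)=M\vx$), which matches the paper's intended application; if affine maps were allowed, your polarization identity would change to $\langle M\vx,\vy\rangle + \langle M\vy,\vx\rangle = \langle M\vx,\vx\rangle + \langle M\vy,\vy\rangle$, but the same transport argument still goes through, so nothing essential is lost.
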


\begin{proof}
    The Minty condition is equivalent to $\max_{\vx \in \cX} \min_{\vx' \in \cX} \langle F(\vx'), \vx' - \vx \rangle \geq 0$. But under our assumptions, the function $(\vx, \vx') \mapsto \langle F(\vx'), \vx' - \vx \rangle$ is bilinear, which in turn implies that $$\max_{\vx \in \cX} \min_{\vx' \in \cX} \langle F(\vx'), \vx' - \vx \rangle = \min_{\vx' \in \cX} \max_{\vx \in \cX} \langle F(\vx'), \vx' - \vx \rangle \geq 0,$$ 
    by the minimax theorem~\citep{Sion58:On}.
\end{proof}

When specialized to multi-player games, the two preconditions of~\Cref{prop:polymatrix} are satisfied when i) the game is (globally) zero-sum, meaning that $\sw(\vx) \defeq \sum_{i=1}^n u_i(\vx) = - \langle F(\vx), \vx \rangle = 0$ (by multilinearity) for all $\vx$, and ii) the utility gradient of each player is linear in the joint strategy. Those two assumptions are satisfied in zero-sum polymatrix games~\citep{Cai16:Zero}. A polynomial-time algorithm for computing Nash equilibria in zero-sum polymatrix games was obtained by~\citet{Cai16:Zero}, who observed that taking the marginals of any CCE yields a Nash equilibrium; this approach falls short more generally if one merely assumes that the Minty condition holds (\Cref{prop:equilcollapse}). On the other hand, without the zero-sum restriction, computing $\epsilon$-Nash equilibria in polymatrix games is \PPAD-hard~\citep{Rubinstein15:Inapproximability,Deligkas23:Tight}.
\section{Solving SVIs under the Minty condition}

In this section, we establish our main result. To begin with, we gather some basic facts about the central-cut ellipsoid in~\Cref{sec:centralcut}, without assuming that the underlying convex set is fully dimensional (\Cref{theorem:basicellipsoid}). We then show how to adapt this basic paradigm (in \Cref{algo:optimisticellipsoid}) by introducing some new key ideas, arriving at our main result in~\Cref{theorem:mainMinty-precise}: a polynomial-time algorithm for computing $\epsilon$-SVI solutions under the Minty condition. \Cref{sec:weakMinty,sec:noncont} concern two basic extensions of our main result: the former relaxes the Minty condition following the weaker property put forward by~\citet{Diakonikolas21:Efficient}, while the latter relaxes the assumption that the mapping $F$ is continuous, imposing instead an assumption generalizing quasar-convexity---itself a strengthening of the Minty condition (\Cref{prop:smoothVIs}). Finally, \Cref{sec:strict-EVIs} deals with the most general setting wherein the Minty condition (and relaxations thereof per~\Cref{sec:weakMinty}) can be altogether violated. It shows how the execution of our main algorithm (\Cref{algo:optimisticellipsoid}) can produce a polynomial certificate---in the form of a \emph{strict} EVI solution---that the Minty condition is violated.

\subsection{Central-cut ellipsoid}
\label{sec:centralcut}

We will use the following standard result concerning one incarnation of the central-cut ellipsoid~\citep[Theorem 3.2.1]{Grotschel12:Geometric}. It is suited to our purposes as it does not rest on the usual assumption that the underlying constraint set is fully dimensional.

\begin{theorem}[\citealp{Grotschel12:Geometric}]
    \label{theorem:basicellipsoid}
    Let $\epsilon \in \Q_{> 0}$ and $\cK \subseteq \cB_R(\vec{0})$ be a circumscribed closed and convex set, with $R \geq 1$, given by a polynomial-time oracle $\sep_\cK$ such that for any $\vx \in \Q^d$ and $\delta \in \Q_{>0}$, either asserts that $\vx \in \cK^{+\delta}$ or finds a vector $\vec{c} \in \Q^d$ with $\|\vec{c} \|_\infty = 1$ with $\langle \vec{c}, \vx' \rangle \leq \langle \vec{c}, \vx \rangle + \delta$ for every $\vx' \in \cK$. There is a polynomial-time algorithm that returns one of the following:
    \begin{itemize}
        \item a point in $\cK^{+\epsilon}$ or
        \item an ellipsoid $\cE \subseteq \R^d$, described by a positive definite matrix $\mat{A} \in \Q^{d \times d}$ and a point $\vec{a} \in \Q^d$, such that $\cK \subseteq \cE$ and $\vol(\cE) \leq \epsilon$.
    \end{itemize}
\end{theorem}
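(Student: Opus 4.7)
The plan is to implement the central-cut ellipsoid method initialized with $\cE_0 \defeq \cB_R(\vec{0})$, which contains $\cK$ by hypothesis. At each iteration $t$, I would query the separation oracle $\sep_\cK(\va^{(t)}, \delta)$ at the current center $\va^{(t)} \in \Q^d$ with a precision parameter $\delta \leq \epsilon$ (to be fixed). If the oracle certifies $\va^{(t)} \in \cK^{+\delta} \subseteq \cK^{+\epsilon}$, terminate and return $\va^{(t)}$ as the first output. Otherwise, the oracle returns $\vec{c} \in \Q^d$ with $\|\vec{c}\|_\infty = 1$ such that $\cK$ lies in the half-space $H_t \defeq \{\vx : \langle \vec{c}, \vx \rangle \leq \langle \vec{c}, \va^{(t)} \rangle + \delta\}$. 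Then I would update $\cE_{t+1}$ to be (an inflation of) the minimum-volume L\"owner--John ellipsoid containing the half-ellipsoid $\cE_t \cap H_t$, and iterate.

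The classical closed-form formulas for $\va^{(t+1)}$ and $\mA^{(t+1)}$ involve square roots, so they are not rational in general. The standard remedy is to compute rational approximations to $p$ bits of precision and then inflate the resulting ellipsoid by a factor $1+\eta$ to absorb the rounding error; with $p = \Theta(d^2 \log(R d/\epsilon))$ and $\eta$ sufficiently small, one obtains (i) the covering invariant $\cE_t \cap H_t \subseteq \cE_{t+1}$, and (ii) the geometric volume reduction $\vol(\cE_{t+1}) \leq e^{-1/(4(d+1))} \vol(\cE_t)$. Starting from $\vol(\cE_0) \leq (2R)^d$, after $T = O\!\left(d^2 \log\!\left(R d / \epsilon\right)\right)$ iterations the volume drops below $\epsilon$, and the bit-complexity of $(\mA^{(t)}, \va^{(t)})$ remains polynomial throughout, giving overall polynomial runtime.

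If no iteration found a point in $\cK^{+\epsilon}$, I would output the final pair $(\mA^{(T)}, \va^{(T)})$ describing $\cE_T$. By induction the invariant $\cK \subseteq \cE_t$ is preserved at every step: assuming $\cK \subseteq \cE_t$, the separating half-space $H_t$ contains $\cK$ by the guarantee of $\sep_\cK$, so $\cK \subseteq \cE_t \cap H_t \subseteq \cE_{t+1}$ where the second inclusion holds by the choice of inflation. Consequently $\cK \subseteq \cE_T$ and $\vol(\cE_T) \leq \epsilon$, producing the second output.

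The main obstacle is the careful interplay of the three parameters: the oracle-precision $\delta$, the rounding precision $p$ (or equivalently $\eta$), and the overall tolerance $\epsilon$. They must be chosen simultaneously so that (a) containment $\cK \subseteq \cE_{t+1}$ is preserved despite rounding and the $\delta$-slack in the oracle's half-space, (b) the net volume still decays geometrically at the rate above, and (c) any point declared ``in $\cK^{+\delta}$'' is in particular in $\cK^{+\epsilon}$. This is the classical but delicate part of the Gr\"otschel--Lov\'asz--Schrijver analysis; the normalization $\|\vec{c}\|_\infty = 1$ in the oracle's output is precisely what makes the $\delta$-shift commensurable across coordinates and thus manageable, and without full dimensionality of $\cK$ the algorithm may never terminate with a point---but the volume-shrinkage branch remains valid, which is exactly the dichotomy in the theorem's conclusion.
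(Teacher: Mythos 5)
Your proposal is correct and follows essentially the same route as the paper, which simply instantiates the Grötschel--Lovász--Schrijver central-cut ellipsoid method (Algorithm~\ref{algo:centalcut}) with truncation to $p$ bits, the inflation factor $(2d^2+3)/(2d^2)$ playing the role of your $1+\eta$, and the cited GLS lemma supplying the containment invariant and the per-step volume ratio $e^{-1/(5d)}$ before concluding via the volume bound $\vol(\cE^{(0)}) \leq (2R)^d$ and the choice of $T$. The only cosmetic difference is that you describe the update as covering the $\delta$-shifted half-space directly, whereas the paper uses the central cut and lets the inflation absorb the $\delta$-slack; these are equivalent for $\delta = 2^{-p}$.
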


\begin{algorithm}[!ht]
\SetAlgoLined
\KwInput{A separation oracle $\sep_\cK$ for $\cK$ per~\Cref{theorem:basicellipsoid}, rational $\epsilon > 0$.}
\KwOutput{A point in $\cK^{+\epsilon}$ or an ellipsoid $\cE \subseteq \R^d$ such that $\cK \subseteq \cE$ and $\vol(\cE) \leq \epsilon$.}

\BlankLine
Set the maximum number of iterations as $T \defeq \lceil 5 d \log(1/\epsilon) + 5d^2 \log(2R) \rceil$\;\label{line:T}
Set the precision parameter as $p \defeq 8 T$\;
Set the error tolerance for $\sep_\cK$ as $\delta \defeq 2^{-p}$\;\label{line:deltaset}
Initialize the ellipsoid $\cE^{(0)} \defeq \cE^{(0)}(\mat{A}^{(0)}, \vec{a}^{(0)})$ as $\vec{a}^{(0)} \defeq \vec{0}$ and $\mat{A}^{(0)} \defeq R^2 \mat{I}_{d}$\;
\For{$t = 0, \dots, T-1$}{
    Invoke $\sep_\cK$ with input $\vec{a}^{(t)}$ and error $\delta$\;
    \If{$\vec{a}^{(t)} \in \cK^{+\delta}$}{
        \Return{$\vec{a}^{(t)}$}\;\label{line:stop}
    }
    \Else{
        $\sep_\cK$ has returned $\vec{c} \in \Q^d$, with $\|\vec{c}\|_\infty = 1$, such that $\langle \vec{c}, \vx \rangle \leq \langle \vec{c}, \vec{a}^{(t)} \rangle + \delta$ for all $\vx \in \cK$\;
        Update the ellipsoid:
        $$\vec{a}^{(t+1)} \approx_p \vec{a}^{(t)} - \frac{1}{d+1} \frac{\mat{A}^{(t)} \vec{c} }{ \sqrt{ \langle \vec{c}, \mat{A}^{(t)} \vec{c} \rangle } }  \text{ and } \mat{A}^{(t+1)} \approx_p \frac{2d^2 + 3}{2 d^2} \left( \mat{A}^{(t)} - \frac{2}{d+1} \frac{ \mat{A}^{(t)} \vec{c} \vec{c}^\top \mat{A}^{(t)}}{ \sqrt{ \langle \vec{c}, \mat{A}^{(t)} \vec{c} \rangle } } \right).$$\label{line:precision}
    }
}
\BlankLine
\Return{$\cE^{(T)}$}\;
\caption{Central-cut ellipsoid~\citep{Grotschel12:Geometric}}
\label{algo:centalcut}
\end{algorithm}

\Cref{theorem:basicellipsoid} is based on the \emph{central-cut} ellipsoid method (\Cref{algo:centalcut}). It produces a sequence of ellipsoids, $\cE^{(0)}, \cE^{(1)}, \dots, \cE^{(T)}$, each of which contains the underlying set $\cK$, such that either at least one of their centers belongs to $\cK^{+\epsilon}$, or the last ellipsoid $\cE^{(T)}$ has volume at most $\epsilon$. We clarify that, in~\Cref{line:precision}, we use the notation $\approx_p$ to mean that the left-hand side is obtained by truncating the binary expansions of the numbers on the right-hand side after $p$ digits behind the binary point. The correctness of~\Cref{algo:centalcut} boils down to the following lemma.

\begin{lemma}[\citealp{Grotschel12:Geometric}]
    At every iteration $t$ of~\Cref{algo:centalcut}, the following properties hold:
    \begin{itemize}
        \item the matrix $\mat{A}^{(t)}$ is positive definite with $\|\vec{a}^{(t)} \| \leq R 2^t$, $\|\mat{A}^{(t)} \| \leq R^2 2^t$, and $\|(\mat{A}^{(t)})^{-1} \| \leq R^{-2} 4^t$;
        \item $\cK \subseteq \cE^{(t)}$; and
        \item $\vol(\cE^{(t+1)}) / \vol(\cE^{(t)}) \leq e^{-1/(5d)}$.
    \end{itemize}
\end{lemma}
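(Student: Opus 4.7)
The plan is to prove all three properties simultaneously by induction on $t$. The base case $t = 0$ is immediate from the initialization: $\mat{A}^{(0)} = R^2 \mat{I}_d$ is positive definite with $\|\mat{A}^{(0)}\| = R^2$ and $\|(\mat{A}^{(0)})^{-1}\| = R^{-2}$; $\vec{a}^{(0)} = \vec{0}$ so $\|\vec{a}^{(0)}\| = 0 \leq R$; and $\cK \subseteq \cB_R(\vec{0}) = \cE^{(0)}$ by the hypothesis on $\cK$ in \Cref{theorem:basicellipsoid}.

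For the inductive step, I would first analyze the \emph{exact} (unrounded) central-cut update
\[
\vec{a}_{\mathrm{ex}}^{(t+1)} = \vec{a}^{(t)} - \frac{1}{d+1}\frac{\mat{A}^{(t)} \vec{c}}{\sqrt{\langle \vec{c}, \mat{A}^{(t)} \vec{c}\rangle}}, \qquad \mat{A}_{\mathrm{ex}}^{(t+1)} = \frac{2d^2+3}{2d^2}\left(\mat{A}^{(t)} - \frac{2}{d+1}\frac{\mat{A}^{(t)} \vec{c} \vec{c}^\top \mat{A}^{(t)}}{\langle \vec{c}, \mat{A}^{(t)} \vec{c}\rangle}\right).
\]
The classical analysis of Khachiyan's ellipsoid method shows that $\mat{A}_{\mathrm{ex}}^{(t+1)}$ is positive definite, the ellipsoid $\cE_{\mathrm{ex}}^{(t+1)}$ contains the half-ellipsoid $\cE^{(t)} \cap \{\vx : \langle \vec{c}, \vx\rangle \leq \langle \vec{c}, \vec{a}^{(t)}\rangle\}$, and its volume shrinks by a factor of at most $e^{-1/(2(d+1))}$ compared to $\cE^{(t)}$. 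Combined with the inductive hypothesis $\cK \subseteq \cE^{(t)}$ and the property of $\sep_\cK$ that $\langle \vec{c}, \vx\rangle \leq \langle \vec{c}, \vec{a}^{(t)}\rangle + \delta$ for all $\vx \in \cK$, this exact update would already preserve $\cK \subseteq \cE_{\mathrm{ex}}^{(t+1)}$ modulo a slack of $\delta$.

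Next, I would absorb the rounding error $\|\vec{a}^{(t+1)} - \vec{a}_{\mathrm{ex}}^{(t+1)}\|, \|\mat{A}^{(t+1)} - \mat{A}_{\mathrm{ex}}^{(t+1)}\| \lesssim 2^{-p} \poly(d)$ together with the $\delta$-slack from $\sep_\cK$ into the expansion factor $\tfrac{2d^2+3}{2d^2} > 1$. The choice $\delta = 2^{-p}$ with $p = 8T$ is specifically large enough so that these combined errors are dominated by the extra room in the scaling factor; this preserves positive definiteness of $\mat{A}^{(t+1)}$ (by the lower bound on the minimum eigenvalue coming from the $\|(\mat{A}_{\mathrm{ex}}^{(t+1)})^{-1}\|$ estimate) and guarantees $\cK \subseteq \cE^{(t+1)}$. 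Since the extra inflation cost is at most a factor of $1 + O(1/d^2)$ in each linear dimension, the overall volume ratio degrades only mildly, from $e^{-1/(2(d+1))}$ to at worst $e^{-1/(5d)}$.

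Finally, the norm bounds follow by tracking both the exact update and the rounding: the exact center satisfies $\|\vec{a}_{\mathrm{ex}}^{(t+1)} - \vec{a}^{(t)}\| \leq \tfrac{\|\mat{A}^{(t)}\|^{1/2}}{d+1}$, the exact matrix satisfies $\|\mat{A}_{\mathrm{ex}}^{(t+1)}\| \leq \tfrac{2d^2+3}{2d^2}\|\mat{A}^{(t)}\|$ and analogously $\|(\mat{A}_{\mathrm{ex}}^{(t+1)})^{-1}\| \leq \tfrac{2(d+1)^2}{(2d-1)(d+1)}\|(\mat{A}^{(t)})^{-1}\|$, and adding $2^{-p}$ per-entry rounding errors (which are absorbed into the single-step multiplicative factors $2$ and $4$) yields the stated geometric-growth bounds. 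The main obstacle is the precision bookkeeping in the inductive step: proving that $p = 8T$ bits are simultaneously enough to (i) keep the rounded $\mat{A}^{(t+1)}$ positive definite given that $\|(\mat{A}^{(t)})^{-1}\|$ itself grows like $R^{-2} 4^t$, and (ii) absorb both the $\delta$-slack and the rounding error into the $\tfrac{2d^2+3}{2d^2}$ inflation while still retaining a volume contraction of $e^{-1/(5d)}$. For this, I would verify the needed inequalities term by term in the spirit of Chapter 3 of \citet{Grotschel12:Geometric}, from which the precise constants in the lemma are taken.
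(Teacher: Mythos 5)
The paper gives no proof of this lemma---it is imported verbatim from \citet{Grotschel12:Geometric} (the analysis of the central-cut ellipsoid method in Chapter 3), and your inductive plan---exact L\"owner--John update first, then absorbing the $2^{-p}$ rounding error and the $\delta$-slack of the separation oracle into the $\tfrac{2d^2+3}{2d^2}$ inflation factor, with the norm bounds tracked geometrically---is exactly the argument given there. The outline is correct; the only caveat is that all of the substance lies in the precision bookkeeping you explicitly defer (showing $p = 8T$ suffices simultaneously for positive definiteness, containment of $\cK$, and the degraded volume ratio $e^{-1/(5d)}$), but since the paper itself defers to the cited source for precisely these calculations, this is an acceptable level of detail.
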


Armed with this lemma, \Cref{theorem:basicellipsoid} follows by noting that $\vol(\cE^{(T)}) \leq e^{-T/(5d)} \vol(\cE^{(0)})$ and $\vol(\cE^{(0)}) \leq (2R)^d$; by the choice of $T$ in~\Cref{line:T}, we conclude that, if the algorithm failed to terminate (in~\Cref{line:stop}) with a point in $\cK^{+\epsilon}$ (the value of $\delta$ in~\Cref{line:deltaset} implies that~$\cK^{+\delta} \subseteq \cK^{+\epsilon}$), we have $\vol(\cE^{(T)}) \leq \epsilon$, as promised.

\subsection{Our algorithm and its analysis}
\label{sec:ouralgo}

We will now show how to leverage~\Cref{algo:centalcut} to compute $\epsilon$-SVI solutions under the Minty property. To do so, we are first faced with an immediate concern: the set of SVI solutions is not necessarily convex even when the Minty property holds (see the function behind~\Cref{theorem:Mintyexp}). On the other hand, while the set of MVI solutions is convex (\Cref{claim:convexity}), it is hard to verify whether a point satisfies the Minty VI, as we show in~\Cref{theorem:hardmintyconp,theorem:convexgame-hard}.

We address this by executing the following hybrid version of the ellipsoid. We let $\cK$ be the set of MVI solutions---points that satisfy~\eqref{eq:MVI}; for now, we assume that $\cK \neq \emptyset$, although we will relax that assumption later (\Cref{sec:weakMinty,sec:strict-EVIs}). At each iteration, we evaluate whether the center of the ellipsoid---when it belongs to $\cX$---is an $\epsilon$-SVI solution, which boils down to a call to the optimization oracle (\Cref{def:opt}); if not, the key observation is that we can \emph{strictly} separate that point from the set of MVIs---in the sense of~\Cref{def:strictsep}.  

\subsubsection{Strict separation oracle}

The basic building block of our algorithm is what we refer to as a \emph{strict} separation oracle---a strengthening of the second item of~\cref{def:sep}:

\begin{definition}[Strict separation]
    \label{def:strictsep}
    Given a point $\vx \in \R^d$ and a rational number $\gamma \in \Q_{>0}$, we say that a vector $\vec{c} \in \Q^d$, with $\|\vec{c}\|_\infty = 1$, \emph{$\gamma$-strictly separates} $\vx'$ from a convex set $\cK$ if $\langle \vec{c}, \vec{x}' \rangle \leq \langle \vec{c}, \vx \rangle - \gamma$ for all $\vx \in \cK$.
\end{definition}

The upshot is that a strict separation oracle for the set of MVIs can be indeed implemented in polynomial time assuming that the point to be separated is in $\cX$ but is \emph{not} an approximate SVI solution.

\begin{lemma}[Strict semi-separation oracle]
    \label{lemma:semiseparation}
    Given a point $\va \in \cX \cap \Q^d$ and $\epsilon \in \Q_{>0}$, there is a polynomial-time algorithm that either
    \begin{itemize}
        \item  ascertains that $\vec{a}$ is an $\epsilon$-SVI solution; or
        \item returns $\vec{c} \in \Q^d$, with $\| \vec{c} \|_\infty = 1$, such that $\langle \vec{c}, \vx \rangle \leq \langle \vec{c}, \va \rangle - \gamma$ for any point $\vx$ that satisfies the Minty VI~\eqref{eq:MVI}, where $\gamma \defeq \epsilon^2 L B^{-1} / (B + 4 R L)^2$.
    \end{itemize}
\end{lemma}

\begin{figure}[htbp]
  \centering % Center the TikZ plot
  \begin{tikzpicture}[>=Stealth]
    % Parameters
    \def\a{3}      % Major axis length
    \def\b{1}      % Minor axis length
    \def\margin{0.5} % Margin
    \def\angle{30} % Rotation angle

    % Draw the rotated ellipsoid
    \draw[blue, thick, rotate=\angle] (0,0) ellipse (\a cm and \b cm);

    % Tangent line at y = \b (top) after rotation
    \draw[gray, thick, rotate=\angle] (-\a-0.5, \b) -- (\a+0.5, \b) node[right] {};

    % Inner line offset by γ after rotation
    \draw[gray, thick, rotate=\angle] (-\a-0.5, \b-\margin) -- (\a+0.5, \b-\margin) node[right] {};

    % Margin γ indicator after rotation
    \draw[->, thick, rotate=\angle] (-\a-1, \b) -- (-\a-1, \b-\margin) node[midway, left] {$\gamma$};

    % Calculate the intersection point x' between the tangent line and the ellipsoid
    \coordinate (x') at ($(-.72,.75)$);
    \filldraw[rotate=\angle, black] (x') circle (1.5pt) node[above] {$\vec{x}'$};

    % Point x in K, perpendicular to x'
    \coordinate (x) at ($(x') + (+0.25, -0.43)$);
    \filldraw[rotate=\angle, black] (x) circle (1.5pt) node[below right] {$\vec{x} \in \mathcal{K}$};

    % Perpendicular line from x to x'
    \draw[rotate=\angle, dashed, thick] (x') -- (x);
  \end{tikzpicture}
  \caption{A sequence of $\gamma$-strict separating hyperplanes implies that any $\vx \in \cK$ is far from the boundary of the ellipsoid (depicted in blue), assuming that the closest point outside the ellipsoid, labeled $\vx'$, belongs to $\cX$. \Cref{lemma:closeinters} shows how to make this argument when $\vx' \notin \cX$ by considering instead a point $\vz \in \cX$ that is close to $\vx'$.}% Optional caption
  \label{fig:ellipsoid-margin} % Optional label for referencing
\end{figure}

We proceed with the proof of this lemma. We first determine whether $\va \in \cX$ is an $\epsilon$-SVI solution; this can be done in polynomial time by invoking an optimization oracle for $\cX$ (\Cref{def:opt}). If so, the algorithm can terminate since $\va$ is an $\epsilon$-SVI solution. Otherwise, we define $\vatil \in \Q^d$ per the descent step $\proj_{\cX}(\va - \eta F(\va))$; such $\vatil$ guarantees the following, establishing~\cref{lemma:semiseparation}.

\begin{lemma}
    \label{lemma:strict1}\label{lem:progress}
    Suppose that $\va \in \cX$ is not an $\epsilon$-SVI solution. If $\vatil \defeq \proj_{\cX}(\va - \eta F(\va))$ with $\eta = 1/(2L)$, then $\langle F(\vatil), \va - \vx \rangle \geq \gamma$ for any $\vx \in \cX$ that satisfies the Minty VI~\eqref{eq:MVI}, where $\gamma \defeq \epsilon^2 L / (B + 4 R L)^2$. Furthermore, $\langle F(\tilde \va), \va - \tilde \va \rangle \ge \gamma$.
\end{lemma}

\begin{proof}
    By the first-order optimality conditions, we have
    \begin{equation*}
    \left\langle F(\va) + \frac{1}{\eta} ( \vatil - \va ), \va - \vatil \right\rangle \geq 0, 
    \end{equation*}
    which in turn implies that
    \begin{equation}
        \label{eq:first-opt}
        \langle F(\va), \va - \vatil \rangle \geq \frac{1}{\eta} \| \va - \vatil \|^2.
    \end{equation}
    Moreover, for any MVI solution $\vx \in \cX$,
    \begin{align}
        \langle F(\vatil), \va - \vx \rangle &= \langle F(\vatil), \vatil - \vx \rangle + \langle F(\vatil), \va - \vatil \rangle \notag \\
        &\geq \langle F(\vatil), \va - \vatil \rangle = \langle F(\va), \va - \vatil \rangle + \langle F(\vatil) - F(\va), \va - \vatil \rangle  \label{align:Mintyatil} \\
        &\geq \frac{1}{\eta} \|\va - \vatil \|^2 - \| F(\vatil) - F(\va) \| \|\va - \vatil \| \label{align:first} \\
        &\geq \frac{1}{\eta} \|\va - \vatil \|^2 - L \|\va - \vatil \|^2 \label{align:Lips} \\
        &\geq \frac{1}{2\eta} \| \va - \vatil \|^2,\label{align:final}
    \end{align}
where~\eqref{align:Mintyatil} uses the fact that $\vx$ satisfies~\eqref{eq:MVI}; \eqref{align:first} follows from~\eqref{eq:first-opt} and Cauchy-Schwarz; and~\eqref{align:Lips} uses the fact that $F$ is $L$-Lipschitz continuous. Finally, using again the first-order optimality conditions, we have that for any $\vx \in \cX$,
\begin{equation}
    \label{eq:fo-start}
    \left\langle F(\va) + \frac{1}{\eta} (\vatil - \va), \vx - \vatil \right\rangle \geq 0 \implies \langle F(\va), \vx - \va \rangle + \langle F(\va), \va - \vatil \rangle + \frac{1}{\eta} \langle \vatil - \va, \vx - \vatil \rangle \geq 0.
\end{equation}
But $\va \in \cX$ is not an $\epsilon$-SVI solution, which implies that there exists $\vx \in \cX$ such that $\langle F(\va), \vx - \va \rangle < - \epsilon$. So, continuing from~\eqref{eq:fo-start},
\begin{equation*}
    \epsilon < \langle F(\va), \va - \vatil \rangle + \frac{1}{\eta} \langle \vatil - \va, \vx - \vatil \rangle \leq B \|\va - \vatil \| + \frac{2R}{\eta} \|\va - \vatil \|,
\end{equation*}
which in turn yields
\begin{equation*}
    \|\va - \vatil \| \geq \left( B + 4 L R \right)^{-1} \epsilon.
\end{equation*}
Combining with~\eqref{align:final}, the proof follows.
\end{proof}

\Cref{lemma:semiseparation} now follows from~\Cref{lemma:strict1} since $\|F(\vatil) \| \leq B$.

\paragraph{The algorithm} We are now ready to describe our main construction, given as~\Cref{algo:optimisticellipsoid}. It is based on the central-ellipsoid we saw in~\Cref{algo:centalcut}. In every iteration, it checks whether the center of the current ellipsoid is an $\epsilon$-SVI solution. If so, the algorithm can terminate (\Cref{line:terminate}). Otherwise, if the center of the current ellipsoid lies in $\cX$, it proceeds by producing a $\gamma$-strict separating hyperplane with respect to the set of MVIs (\Cref{line:strict1,line:strict2})---by taking an extra-gradient step per~\Cref{lemma:strict1}. If the center does not belong to $\cX$, it suffices to invoke the separation oracle of $\cX$ (\Cref{line:sepX}). The algorithm continues by updating the ellipsoid (\Cref{line:update}). 

Having analyzed our semi-separation oracle (\Cref{lemma:semiseparation}), we conclude the analysis by showing that the number of iterations prescribed in~\Cref{line:newT}, which is polynomial in all relevant parameters, suffices to identify an $\epsilon$-SVI solution.

\begin{algorithm}[!ht]
\SetAlgoLined
\KwInput{ 
\begin{itemize}[noitemsep]
    \item Oracle access to a convex, compact set $\cX \subseteq \cB_R(\vec{0})$ in isotropic position;
    \item oracle access to $F : \cX \to \R^d$ satisfying~\Cref{ass:polyeval};
    \item rational $\epsilon > 0$.
\end{itemize}
}
\KwOutput{An $\epsilon$-SVI solution per~\Cref{def:VI}.}
\BlankLine
Set the strictness parameter $\gamma \defeq \epsilon^2 L / (B + 4 R L)^2$\;
Set the termination volume $v \defeq r^d/d^{d}$, where $r \defeq \gamma/(16 R B)$\;\label{line:v-r}
Set the maximum number of iterations as $T \defeq \lceil 5 d \log(1/v) + 5d^2 \log(2R) \rceil$\;\label{line:newT}
Set the precision parameter as $p \defeq 8 T$\;
Initialize the ellipsoid $\cE^{(0)} \defeq \cE^{(0)}(\mat{A}^{(0)}, \vec{a}^{(0)})$ as $\vec{a}^{(0)} \defeq \vec{0}$ and $\mat{A}^{(0)} \defeq R^2 \mat{I}_{d}$\;
\For{$t = 0, \dots, T-1$}{
    \If{$\vec{a}^{(t)}$ is an $\epsilon$-SVI solution}{
        \Return{$\vec{a}^{(t)}$}\;\label{line:terminate}
    }
    \Else{
        \If{ $\va^{(t)} \notin \cX$}{
            Let $\vec{c} \in \Q^d$ be a hyperplane separating $\va^{(t)}$ from $\cX$\;\label{line:sepX}
        }
        \Else{
        Compute $\vatil^{(t)} \defeq \proj_{\cX}( \va^{(t)} - \eta F(\va^{(t)}) )$, where $\eta \defeq 1/(2L)$\;\label{line:strict1}
        Set $\vec{c} \defeq F(\vatil^{(t)})/\|F(\vatil^{(t)})\|$\;\label{line:strict2}
        }
        Update \label{line:update}
        $$\vec{a}^{(t+1)} \approx_p \vec{a}^{(t)} - \frac{1}{d+1} \frac{\mat{A}^{(t)} \vec{c} }{ \sqrt{ \langle \vec{c}, \mat{A}^{(t)} \vec{c} \rangle } }  \text{ and } \mat{A}^{(t+1)} \approx_p \frac{2d^2 + 3}{2 d^2} \left( \mat{A}^{(t)} - \frac{2}{d+1} \frac{ \mat{A}^{(t)} \vec{c} \vec{c}^\top \mat{A}^{(t)}}{ \sqrt{ \langle \vec{c}, \mat{A}^{(t)} \vec{c} \rangle } } \right).$$
    }
}
\BlankLine
\Return{\textsf{``there are no MVI solutions''} }\;
\caption{$\optellipsoid$}
\label{algo:optimisticellipsoid}
\end{algorithm}

\subsubsection{Putting everything together}

The set of MVIs, denoted by $\cK \neq \emptyset$,%\todo{Just an idea, but to avoid the clash with generic K, we could denote the set of Minty point as $\mathcal{M}$} 
is generally not fully dimensional; nevertheless, by virtue of having a \emph{strict} separating hyperplane throughout the execution of~\Cref{algo:optimisticellipsoid} (whenever the center of the ellipsoid belongs to $\cX$), we will show that the volume of the ellipsoid can indeed by used as a yardstick to track the progress of the algorithm. The basic idea is that every axis of the ellipsoid needs to have a non-trivial length (\Cref{fig:ellipsoid-margin})---as dictated by the strictness parameter $\gamma$, thereby implying that the volume of the ellipsoid cannot shrink too much. Formally, our proof of correctness will show the following.

\begin{lemma}
    \label{lemma:largevol}
    Suppose that $\cK \neq \emptyset$---that is, the Minty condition (\Cref{ass:Minty}) holds. For any $t$ during the execution of~\Cref{algo:optimisticellipsoid}, the ellipsoid $\cE^{(t)}$ contains a (Euclidean) ball of radius $r \defeq \gamma/(16 R B)$, where $\gamma > 0$ is the strictness parameter per~\Cref{lemma:strict1}.
\end{lemma}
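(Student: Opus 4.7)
The plan is to prove by induction on $t$ the stronger invariant that $\cE^{(t)} \supseteq \cB_r(\vy^\star)$ for a single carefully chosen point $\vy^\star$, which immediately yields the lemma. I would fix any $\vx^\star \in \cK$ (which exists by the Minty condition) and set $\vy^\star \defeq (1-r)\vx^\star$---a small shift of $\vx^\star$ toward the origin. Since $\cB_1(\vec{0}) \subseteq \cX$ (which follows from the isotropic-position assumption) and $\vx^\star \in \cX$, the convexity of $\cX$ yields $\conv(\cB_1(\vec{0}) \cup \{\vx^\star\}) \subseteq \cX$, and the standard ``ice-cream cone'' geometry then gives $\cB_r(\vy^\star) \subseteq \cX$: the cross-section of the cone at distance $1-r$ from the apex $\vx^\star$ is a ball of radius $r$ around $\vy^\star$.

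For the base case, $\cE^{(0)} = \cB_R(\vec{0}) \supseteq \cX \supseteq \cB_r(\vy^\star)$. For the inductive step I would use the standard property of the central-cut ellipsoid that $\cE^{(t+1)} \supseteq \cE^{(t)} \cap H^{(t)}$, where $H^{(t)} \defeq \{\vx : \langle \vec{c}, \vx - \va^{(t)} \rangle \leq 0\}$ is the half-space corresponding to the cut at step $t$. Assuming $\cB_r(\vy^\star) \subseteq \cE^{(t)}$, it suffices to verify $\cB_r(\vy^\star) \subseteq H^{(t)}$, and I would split on how $\vec{c}$ was produced. If $\va^{(t)} \notin \cX$, then $\vec{c}$ separates $\va^{(t)}$ from all of $\cX$, so $\cX \subseteq H^{(t)}$ and $\cB_r(\vy^\star) \subseteq \cX \subseteq H^{(t)}$. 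If instead $\va^{(t)} \in \cX$ and $\va^{(t)}$ is not already an $\epsilon$-SVI, then $\vec{c} = F(\vatil^{(t)})/\|F(\vatil^{(t)})\|$; applying \Cref{lemma:strict1} to the MVI $\vx^\star$ and dividing by $\|F(\vatil^{(t)})\| \leq B$ gives $\langle \vec{c}, \vx^\star - \va^{(t)} \rangle \leq -\gamma/B$, so for any $\vy \in \cB_r(\vy^\star)$, Cauchy--Schwarz combined with $\|\vec{c}\| = 1$ and $\|\va^{(t)}\| \leq R$ (since $\va^{(t)} \in \cX \subseteq \cB_R(\vec{0})$) yields
\[
\langle \vec{c}, \vy - \va^{(t)} \rangle \;\leq\; r \;+\; (1-r)\langle \vec{c}, \vx^\star - \va^{(t)} \rangle \;-\; r\langle \vec{c}, \va^{(t)} \rangle \;\leq\; r(1 + R) - (1-r)\gamma/B,
\]
which is strictly negative for $r = \gamma/(16 R B)$ and $R \geq 1$. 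Hence $\vy \in H^{(t)}$, closing the induction.

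The main subtlety lies in the second case above: the point $\vy^\star$ we track is \emph{not} itself an MVI solution, so \Cref{lemma:strict1} does not apply to $\vy^\star$ directly. The small shift $\vy^\star = (1-r)\vx^\star$ toward the origin is designed precisely to balance two competing requirements---it keeps a ball of radius $r$ around $\vy^\star$ inside $\cX$ (so the non-strict $\cX$-separation cuts do not kill it), while introducing only an $O(rR)$ loss in the inner product against $\vec{c}$ (which is dominated by the strict-separation margin $\gamma/B$ from \Cref{lemma:strict1} once $r$ is chosen small enough relative to $\gamma/(RB)$, which is why the algorithm sets $r = \gamma/(16 R B)$). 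The bound $\|\va^{(t)}\| \leq R$ used in the strict-separation case is essential and available precisely because the extra-gradient hyperplane is only produced when $\va^{(t)} \in \cX$; in the other case the $\cX$-separation cut does not threaten $\vy^\star$ at all, since $\cB_r(\vy^\star)$ was placed inside $\cX$ by design.
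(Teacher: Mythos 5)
Your proof is correct, but it takes a genuinely different route from the paper's. The paper never proves \Cref{lemma:largevol} by exhibiting a ball that survives the cuts; instead it argues by contradiction inside the proof of \Cref{theorem:mainMinty-precise}: if the algorithm runs all $T$ rounds, the short axis of $\cE^{(T)}$ drops below $r$, and then for \emph{any} point $\vx$ remaining in the ellipsoid one finds (via \Cref{lemma:closeinters}) a nearby point $\vz \in \cX$ at distance $2r$ along the short axis that must have been cut off by some extra-gradient hyperplane, whence $\langle F(\vatil^{(t)}), \vx - \vatil^{(t)}\rangle \geq \gamma/2$---so $\vx$ strictly violates the Minty inequality, contradicting $\cK \subseteq \cE^{(T)}$ when $\cK \neq \emptyset$. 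Your argument instead runs a direct induction showing $\cB_r((1-r)\vxstar) \subseteq \cE^{(t)}$ for a fixed MVI solution $\vxstar$, using the half-ellipsoid containment $\cE^{(t+1)} \supseteq \cE^{(t)} \cap H^{(t)}$ and splitting on the two cut types; the shift toward the origin is exactly the right device to keep the ball inside $\cX$ (so the $\cX$-separation cuts are harmless) while sacrificing only $O(rR)$ against the strict margin $\gamma/B$, and your arithmetic with $r = \gamma/(16RB)$ checks out. Your version is more constructive and self-contained for this lemma; the paper's contradiction argument is doing double duty, since the same computation (\Cref{lemma:certificate}) is reused verbatim in \Cref{sec:strict-EVIs} to extract the strict-EVI certificate of MVI infeasibility, which your ball-tracking argument would not yield. (Both approaches lean equally on the finite-precision guarantees of the central-cut ellipsoid, so you are on the same footing as the paper there.)
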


This will be established as part of the proof of~\Cref{theorem:mainMinty-precise} below; we state~\Cref{lemma:largevol} to highlight the key invariance of the analysis. In this context, we are ready to complete the proof of correctness of~\Cref{algo:optimisticellipsoid}, summarized in the theorem below.

\begin{theorem}
    \label{theorem:mainMinty-precise}
    Let $\cX$ be a convex and compact set in isotropic position to which we have oracle access, $F : \cX \to \R^d$ a mapping satisfying~\Cref{ass:polyeval}, and $\epsilon \in \Q$ a rational number. Under the Minty condition (\Cref{ass:Minty}), \Cref{algo:optimisticellipsoid} can be implemented in time $\poly(d, \log(B/\epsilon), \log L)$ and returns an $\epsilon$-SVI solution to $\VI(\cX, F)$.
\end{theorem}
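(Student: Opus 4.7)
The proof combines the semi-separation oracle (\Cref{lemma:semiseparation}) with the geometric invariant of \Cref{lemma:largevol} through the standard volume argument for the central-cut ellipsoid (\Cref{theorem:basicellipsoid}). I will argue by contradiction: if \Cref{algo:optimisticellipsoid} completes all $T$ iterations without ever returning at \Cref{line:terminate}, then the ellipsoid shrinks geometrically and its volume falls below $v$, yet \Cref{lemma:largevol} forces $\cE^{(T)}$ to contain a Euclidean ball of radius $r$, whose volume is at least $r^d/d^d = v$. Incompatibility of these two bounds forces some iteration $t$ to execute \Cref{line:terminate}, at which point the algorithm returns the desired $\epsilon$-SVI solution.

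\textbf{Correctness.} At every iteration where \Cref{line:terminate} is skipped, a separating hyperplane $\vec{c}$ is produced either by the separation oracle for $\cX$ when $\va^{(t)} \notin \cX$ (\Cref{line:sepX}) or by the extra-gradient construction of \Cref{lemma:strict1} when $\va^{(t)} \in \cX$ (\Cref{line:strict1,line:strict2}). In both cases, $\langle \vec{c}, \vx \rangle \le \langle \vec{c}, \va^{(t)} \rangle$ holds for every $\vx \in \cK$, since $\cK \subseteq \cX$ in the first case and by the $\gamma$-strict separation of \Cref{lemma:strict1} in the second. A straightforward induction then yields $\cK \subseteq \cE^{(t)}$ throughout, and the standard central-cut volume shrinkage $\vol(\cE^{(T)}) \le (2R)^d e^{-T/(5d)} \le v$ from \Cref{theorem:basicellipsoid} applies under the chosen $T$.

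\textbf{Runtime.} Each iteration performs a constant number of oracle calls---a weak optimization query to decide $\epsilon$-SVI membership, a weak membership/separation query to test $\va^{(t)} \in \cX$, and at most one evaluation of $F$ plus one projection onto $\cX$ for the extra-gradient step---followed by the rank-one ellipsoid update truncated to $p = 8T$ bits. Each of these runs in time $\poly(d, p, \size(F(\va^{(t)})))$, with $\size(F(\va^{(t)})) \le \poly(\size(\va^{(t)}))$ by \Cref{ass:polyeval}. To bound $T$, I unroll the definitions in \Cref{line:v-r,line:newT}: $\gamma^{-1} = O((B+RL)^2/(\epsilon^2 L))$, so $\log(1/r) = O(\log(RBL/\epsilon))$ and $\log(1/v) = O(d \log(RBL/\epsilon) + d \log d)$, yielding $T = O(d^2 \log(RBL/\epsilon) + d^2 \log d)$. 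Because $R \le \poly(d)$ under the isotropic-position assumption, both $T$ and $p$ are $\poly(d, \log(B/\epsilon), \log L)$, matching the claimed complexity.

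\textbf{Main obstacle.} The most delicate point is to ensure the joint validity of the two different separation regimes within the geometric invariant of \Cref{lemma:largevol}. The strict-separation guarantee of \Cref{lemma:strict1} is tailored to $\va^{(t)} \in \cX$; when $\va^{(t)} \notin \cX$, only a weak hyperplane against $\cX$ is available, not a $\gamma$-strict cut against $\cK$. This is nevertheless compatible with \Cref{lemma:largevol} because the witness ball of radius $r$ can be placed inside $\cX$ far enough from its boundary that any valid cut against $\cX$ leaves it intact. A secondary concern is the rounding precision $p$: the approximate updates $\approx_p$ perturb the ellipsoid data, but the standard tolerance $p = 8T$ inherited from \Cref{theorem:basicellipsoid} preserves both the shrinkage rate and the containment invariant throughout execution.
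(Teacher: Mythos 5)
Your top-level structure—run the ellipsoid against $\cK$, maintain $\cK \subseteq \cE^{(t)}$, and derive a contradiction between the geometric volume shrinkage and a lower bound on $\vol(\cE^{(T)})$—matches the paper, and your runtime accounting is fine. The problem is that you have outsourced the entire crux to \Cref{lemma:largevol} without proving it. The paper states that lemma without a standalone proof precisely because its proof \emph{is} the body of the proof of \Cref{theorem:mainMinty-precise} (via \Cref{lemma:closeinters} and \Cref{lemma:certificate}); citing it as a black box inside a proof of the theorem is circular. This would be a forgivable presentational choice if your sketch of why the invariant holds were right, but it is not: you claim the witness ball of radius $r$ "can be placed inside $\cX$ far enough from its boundary that any valid cut against $\cX$ leaves it intact." That placement only protects the ball from the cuts issued in \Cref{line:sepX}. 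The strict extra-gradient cuts are halfspaces through the ellipsoid center with normal $F(\vatil^{(t)})$, and they can slice through any fixed region deep inside $\cX$—indeed they must, since $\cK$ may be a single point anywhere in $\cX$ and the cuts converge on it. A ball that survives \emph{all} cuts has to be anchored at an MVI solution $\vxstar$: the $\gamma$-strictness of \Cref{lemma:strict1} keeps $\vxstar$ at distance at least $\gamma/B$ from each extra-gradient cutting hyperplane, and one then has to pull the ball slightly toward the unit ball $\cB_1(\vec 0) \subseteq \cX$ (shrinking its radius by a factor of order $1/R$, which is where $r = \gamma/(16RB)$ comes from) so that it also survives the $\cX$-cuts when $\vxstar$ lies on the boundary of $\cX$. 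Without the Minty condition there is no such ball at all, which your justification never uses.

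For comparison, the paper closes this gap in contrapositive form rather than by exhibiting a witness ball: if the algorithm runs all $T$ rounds, the short axis of $\cE^{(T)}$ drops below $r$, and then for \emph{any} $\vx \in \cX \cap \cE^{(T)}$ one can find (via \Cref{lemma:closeinters}, using $\cB_1(\vec 0) \subseteq \cX$ and convexity) a point $\vz \in \cX$ with $\norm{\vz - \vx} \le \gamma/(4B)$ that lies outside $\cE^{(T)}$. Since $\vz \in \cX$, the cut that removed $\vz$ must have been an extra-gradient cut $\ip{F(\vatil^{(t)}), \vz - \va^{(t)}} \ge 0$, and combining this with $\ip{F(\vatil^{(t)}), \va^{(t)} - \vatil^{(t)}} \ge \gamma$ and $\norm{F} \le B$ yields $\ip{F(\vatil^{(t)}), \vx - \vatil^{(t)}} \ge \gamma/2$, so $\vx$ strictly violates the MVI constraint—contradicting \Cref{ass:Minty}. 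Some version of this argument (or the dual witness-ball construction sketched above) is needed; as written, your proposal is missing the step that actually uses the strictness $\gamma$, which is the whole point of the extra-gradient modification.
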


\begin{proof}
    That~\Cref{algo:optimisticellipsoid} can be implemented in time $\poly(d, \log(B/\epsilon), \log L)$ is immediate. We thus focus on proving correctness. For the sake of contradiction, suppose that the algorithm never identified an $\epsilon$-SVI solution. The volume of a $d$-dimensional Euclidean ball with radius $r > 0$ is given by
\begin{equation*}
    \frac{\pi^{d/2}}{\Gamma( \frac{d}{2} + 1 )} r^d > \frac{1}{d^d} r^d,
\end{equation*}
where $\Gamma(\cdot)$ is the gamma function. By our choice of parameters in~\Cref{line:v-r,line:newT} and~\Cref{theorem:basicellipsoid}, it follows that the short axis of the $T$th ellipsoid will have length strictly smaller than $r = \gamma/(16 R B)$. Let $\vx \in \cX$ be any point inside the final ellipsoid and $\vc$ the unit vector in the direction of the short axis of the ellipsoid. We will show that $\vx$ strictly violates the MVI constraint. Thus, assuming $\cK \neq \emptyset$, this will imply that the algorithm must have terminated at some earlier iteration with an $\epsilon$-SVI solution, establishing~\Cref{lemma:largevol}.

Let $\cZ$ be the union of the two $(d-1)$-dimensional disks of points $\vz$ lying in the planes $\ip{\vc, \vz - \vx} = \pm 2r$, and within $r' := \gamma/(4B)$ of $\vx$. That is, 
$$
    \cZ = \qty{ \vz \in\R^d : |\ip{\vc, \vz - \vx}| = 2r, \norm{\vx - \vz} \le r'}.
$$
\begin{claim}
    \label{lemma:closeinters}
$\cZ$ intersects $\cX$. 
\end{claim}
\begin{proof}
Since $\cX$ contains a ball of radius $1$, there must be a point $\vy \in \cX$ such that $\abs{\ip{\vc, \vy - \vx}} = 1$. Assume $\ip{\vc, \vy - \vx} = 1$ (The case $\ip{\vc, \vy - \vx} = -1$ is symmetric). Let $\vz$ be the point on line segment $[\vx, \vy]$ such that $\ip{\vc, \vz - \vx} = 2r$, that is, let $\vz = \vx + 2r(\vy - \vx)$. Since $\cX$ is convex, we have $\vz \in \cX$. Since $\cX \subseteq \cB_R(0)$, we have $\norm{\vy - \vx} \le 2R$. Thus, $\norm{\vz - \vx} \le 2r \cdot 2R = \gamma/(4B) = r'$, so $\vz \in \cZ$.
\end{proof}

\begin{lemma}
    \label{lemma:certificate}
    If the algorithm fails to return an $\epsilon$-SVI solution after $T$ rounds, where $T$ is as defined in~\Cref{line:newT}, any point $\vx \in \cX$ strictly violates the MVI constraint. In particular, there exists a timestep $t$ such that $\tilde \va^{(t)} \in \cX$ and $\langle F(\tilde \va^{(t)}), \tilde \va^{(t)} - \vx \rangle \le - \gamma/2$.
\end{lemma}

\begin{proof}
    It suffices to consider $\vx \in \cE^{(T)}$. Let $\vz \in \cZ \cap \cX$, which must exist by~\Cref{lemma:closeinters}. By definition, we have $\norm{\vz - \vx} \le r' < \gamma/(2B)$. 
    Since the short axis of the final ellipsoid has radius less than $r$ and $| \ip{\vec{c}, \vz - \vx} | = 2r$, it follows that $\vz$ is not in the ellipsoid. Thus, at some point, there must have been a separating hyperplane that has $\vz$ on the opposite side. This separating hyperplane could not have come from the separation oracle of $\cX$, because $\vz \in \cX$. Thus, it must have come from an extra-gradient step, \ie, the separating hyperplane must have the form 
    $$
        \ip{F(\tilde \va^{(t)}), \vz - \va^{(t)}} \ge 0.
    $$
    for some timestep $t$.
    From \Cref{lemma:strict1} and the construction of $\tilde\va^{(t)}$, we also have $\ip{F(\tilde \va^{(t)}), \va^{(t)} - \tilde \va^{(t)}} \ge \gamma.$ Thus, we have
    \begin{align*}
        \ip{F(\tilde\va^{(t)}), \vx - \tilde\va^{(t)}} &= \ip{F(\tilde\va^{(t)}), \vz - \va^{(t)}} + \ip{F(\tilde\va^{(t)}), \va^{(t)} - \tilde\va^{(t)}} + \ip{F(\tilde\va^{(t)}), \vx - \vz} 
        \\&\ge \gamma - B \cdot \frac{\gamma}{2B} \ge \frac{\gamma}{2}. \tag*\qedhere
    \end{align*} 
\end{proof}
This concludes the proof of~\Cref{lemma:certificate}, and~\Cref{theorem:mainMinty-precise} follows.
\end{proof}

\subsection{Weak Minty condition}
\label{sec:weakMinty}

Moving forward, we first observe that the previous analysis can be extended beyond the Minty condition (\Cref{ass:Minty}). In particular, we lean on the more permissive assumption put forward by~\citet{Diakonikolas21:Efficient}; we will shortly discuss how it relates to other conditions. Below, we use the notation $\svigap : \cX \ni \vx \mapsto \max_{\vx' \in \cX} \langle F(\vx), \vx - \vx' \rangle$, so that~\Cref{def:VI} can be equivalently expressed as $\svigap(\vx) \leq \epsilon$.

\begin{definition}[Weak Minty]
    \label{def:weakMinty}
    A problem $\VI(\cX, F)$ satisfies the \emph{$\rho$-weak Minty condition}, with $\rho > 0$, if there exists $\vx \in \cX$ such that
    \begin{equation}
        \label{eq:weakMinty}
        \langle F(\vx'), \vx' - \vx \rangle \geq - \rho (\svigap(\vx'))^2 \quad  \forall \vx' \in \cX.
    \end{equation}
\end{definition}
\citet{Diakonikolas21:Efficient} focused on the unconstrained setting, positing that the right-hand side of~\eqref{eq:weakMinty} instead reads $- \rho \|F(\vx') \|^2$ (we have removed a factor of $2$ compared to the definition given by~\citealp{Diakonikolas21:Efficient}, which amounts to simply rescaling $\rho$); \Cref{def:weakMinty} can be seen as a natural counterpart of that condition to the constrained setting. For the unconstrained setting, this is weaker than another well-studied condition; namely, $F$ is called \emph{$(-\rho)$-comonotone}~\citep{Bauschke21:Generalized} (see also cohypomonotone operators per~\citealp{Combettes04:Proximal}) if for all $\vx, \vx' \in \R^d$, $\langle F(\vx) - F(\vx'), \vx - \vx' \rangle \geq - \rho \|F(\vx) - F(\vx') \|^2$; since $F(\vx) = 0$ for any SVI solution (in the unconstrained setting), the condition of~\citet{Diakonikolas21:Efficient} is weaker.

In this context, the purpose of this subsection is to show that our previous analysis can be readily extended under~\Cref{def:weakMinty}---in place of~\Cref{ass:Minty}. For completeness, we begin with a simple claim showing that the set of points satisfying the $\rho$-weak Minty condition is convex.

\begin{claim}
    \label{claim:convexity}
    Let $\cK$ be the set of solutions to~\eqref{eq:weakMinty}. $\cK$ is a convex set.
\end{claim}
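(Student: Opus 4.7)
The plan is to observe that $\cK$ is an intersection of half-spaces (one for each $\vx' \in \cX$) with the convex set $\cX$, and thus is convex as an intersection of convex sets.

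More precisely, for each fixed $\vx' \in \cX$, the quantities $F(\vx')$ and $\svigap(\vx')$ depend only on $\vx'$ and not on the point $\vx$ whose membership in $\cK$ we wish to check. Consequently, the constraint
\[
    \langle F(\vx'), \vx' - \vx \rangle \geq - \rho (\svigap(\vx'))^2
\]
is affine (in fact, linear up to a constant) in the variable $\vx$, and therefore its solution set $H_{\vx'} \defeq \{\vx \in \R^d : \langle F(\vx'), \vx' - \vx \rangle \geq - \rho (\svigap(\vx'))^2\}$ is a closed half-space. I would then write
\[
    \cK = \cX \cap \bigcap_{\vx' \in \cX} H_{\vx'},
\]
which is an intersection of convex sets (the convex compact set $\cX$ together with the half-spaces $H_{\vx'}$) and is therefore itself convex.

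There is no real obstacle here; the only subtlety to flag is that the right-hand side $-\rho(\svigap(\vx'))^2$ is a nonlinear function of $\vx'$, which might superficially suggest the constraint is nonconvex. The point is that for the purpose of deciding membership of a candidate $\vx$ in $\cK$, we freeze $\vx'$ and treat its dependent quantities as constants, so the apparent nonlinearity is irrelevant. This is the same reason that the set of MVI solutions (the $\rho = 0$ case) is convex, and the proof here is a direct generalization.
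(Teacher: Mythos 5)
Your proof is correct and rests on the same observation as the paper's: for each fixed $\vx'$, the constraint is affine in the candidate point $\vx$, so $\cK$ is convex. The paper verifies this directly by taking a convex combination of two points of $\cK$, whereas you phrase it as an intersection of half-spaces with $\cX$ — these are the same argument in different clothing.
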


\begin{proof}
    Suppose $\cK \neq \emptyset$. Let $\vx_1, \vx_2 \in \cK$ and $\lambda \in [0, 1]$. We need to show that for all $\vx' \in \cX$,
    \begin{equation*}
    \lambda \langle F(\vx'), \vx' - \vx_1 \rangle + (1 - \lambda) \langle F(\vx'), \vx' - \vx_2 \rangle \geq - \rho ( \svigap(\vx') )^2.
    \end{equation*}
    This follows since $\langle F(\vx'), \vx' - \vx_1 \rangle \geq - \rho (\svigap(\vx'))^2 $ and $\langle F(\vx'), \vx' - \vx_2 \rangle \geq - \rho (\svigap(\vx'))^2$.
\end{proof}

Now, to show that an $\epsilon$-SVI solution can be computed in polynomial time even under the weak Minty condition, for small enough $\rho$, it suffices to adjust~\Cref{lemma:strict1} according to the statement below.

\begin{lemma}
    \label{lemma:strictweakMinty}
    Suppose that $\va \in \cX$ is not an $\epsilon$-SVI solution. Suppose further that $\vatil \defeq \proj_\cX(\va - \eta F(\va))$ is also not an $\epsilon$-SVI solution. Then, $\langle F(\vatil), \va - \vx \rangle \geq \epsilon ^2 L / (B + 4 R L )^2 - \rho \epsilon^2$ for any $\vx \in \cX$ that satisfies the $\rho$-weak Minty condition of~\Cref{def:weakMinty}.
\end{lemma}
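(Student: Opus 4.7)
I would mimic the proof of \Cref{lemma:strict1} step by step, changing only the place where the Minty condition was invoked. Begin with the same decomposition
\[
\langle F(\vatil), \va - \vx \rangle = \langle F(\vatil), \vatil - \vx \rangle + \langle F(\vatil), \va - \vatil \rangle,
\]
and handle the two summands separately. The second summand does not interact with the Minty hypothesis at all, so the chain \eqref{align:first}--\eqref{align:final} carries over unchanged: the first-order optimality of the projection $\vatil = \proj_{\cX}(\va - \eta F(\va))$ gives $\langle F(\va), \va - \vatil\rangle \geq \|\va-\vatil\|^2/\eta$; adding the Lipschitz correction $\langle F(\vatil)-F(\va), \va-\vatil\rangle \geq -L\|\va-\vatil\|^2$ and using $\eta = 1/(2L)$ yields $\langle F(\vatil), \va - \vatil\rangle \geq L\|\va-\vatil\|^2$. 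The hypothesis that $\va$ is not an $\epsilon$-SVI solution then reproduces the bound $\|\va-\vatil\| \geq \epsilon/(B+4LR)$ exactly as in \Cref{lemma:strict1} (starting from \eqref{eq:fo-start}), so this term contributes at least $\epsilon^2 L/(B+4LR)^2$.

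The only genuine change is in the first summand. Where \Cref{lemma:strict1} concluded $\langle F(\vatil), \vatil - \vx\rangle \geq 0$ from the Minty inequality at $\vx$, I would now apply \Cref{def:weakMinty} to the deviator $\vx' = \vatil$ to obtain
\[
\langle F(\vatil), \vatil - \vx \rangle \geq -\rho \, (\svigap(\vatil))^2.
\]
Adding the two bounds produces the intermediate inequality
\[
\langle F(\vatil), \va - \vx \rangle \geq \frac{\epsilon^2 L}{(B+4LR)^2} - \rho \,(\svigap(\vatil))^2,
\]
and the claim then reduces to controlling $\svigap(\vatil)^2$ by $\epsilon^2$. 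Since the role of the second hypothesis is to pin down $\vatil$ as the object with which we separate (rather than return as an approximate solution), this remaining step is the only interesting ingredient beyond \Cref{lemma:strict1}.

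The main obstacle is precisely the control of $\svigap(\vatil)$. The most transparent route is to observe that the weak Minty condition produces a useful separation only when the penalty $\rho \,(\svigap(\vatil))^2$ is dominated by the $\epsilon^2 L/(B+4LR)^2$ gain from the extra-gradient step; this identifies the admissible regime of $\rho$ (roughly $\rho \lesssim L/(B+4LR)^2$) in which the downstream argument of \Cref{algo:optimisticellipsoid} goes through with the reduced strictness parameter $\gamma' \defeq \epsilon^2 L/(B+4LR)^2 - \rho\epsilon^2 > 0$. Outside this regime the weak Minty hypothesis is too permissive for the ellipsoid-based approach, matching the threshold behavior already documented in the unconstrained analysis of \citet{Diakonikolas21:Efficient}.
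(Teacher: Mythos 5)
Your skeleton matches what the paper intends---the paper itself only says the proof is ``similar to that of \Cref{lemma:strict1}''---but your proposal stops exactly where the new work begins, and the step you defer cannot be carried out in the form you describe. You reduce the claim to ``controlling $\svigap(\vatil)^2$ by $\epsilon^2$.'' That reduction is a dead end: the lemma's second hypothesis is that $\vatil$ is \emph{not} an $\epsilon$-SVI solution, i.e.\ $\svigap(\vatil) > \epsilon$, so the inequality you would need points in the wrong direction, and the only a priori bound available is $\svigap(\vatil) \le 2RB$, which would leave a penalty of order $\rho R^2 B^2$ rather than $\rho\epsilon^2$. (The role of that extra hypothesis---the one thing the paper explicitly flags as new relative to \Cref{lemma:strict1}---is algorithmic: if $\vatil$ were an $\epsilon$-SVI solution the algorithm would simply return it. It is not a tool for upper-bounding $\svigap(\vatil)$, and your write-up neither uses nor explains it.)

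The missing ingredient is to bound $\svigap(\vatil)$ by a multiple of $\|\va - \vatil\|$, so that the weak-Minty penalty is quadratic in the same quantity as the extra-gradient gain and is absorbed by it for small $\rho$. Concretely, first-order optimality of the projection gives $\langle F(\va), \vatil - \vx''\rangle \le \frac{1}{\eta}\langle \va - \vatil, \vatil - \vx''\rangle \le \frac{2R}{\eta}\|\va - \vatil\|$ for every $\vx'' \in \cX$, and Lipschitz continuity adds at most $2RL\|\va - \vatil\|$, so $\svigap(\vatil) \le 2R(\eta^{-1}+L)\|\va - \vatil\| = 6RL\|\va - \vatil\|$ for $\eta = 1/(2L)$. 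Plugging this into your intermediate inequality yields $\langle F(\vatil), \va - \vx\rangle \ge (L - 36\rho R^2 L^2)\|\va - \vatil\|^2$, and combining with $\|\va - \vatil\| \ge \epsilon/(B+4RL)$ gives strict separation precisely in the regime $\rho \lesssim 1/(R^2 L)$ required by \Cref{theorem:weakMinty} (the exact constants then differ slightly from those displayed in the lemma, but the structure of the bound---the extra-gradient gain minus an $O(\rho\epsilon^2)$ penalty---is recovered). Your final paragraph gestures at this trade-off but never supplies the bound that makes it rigorous, so as written the proposal does not prove the lemma.
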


The proof is similar to that of~\Cref{lemma:strict1}; unlike~\Cref{lemma:strict1}, \Cref{lemma:strictweakMinty} further assumes that $\vatil \in \cX$ is not an $\epsilon$-SVI solution, which can be again ascertained during the execution of the algorithm by invoking a linear optimization oracle. 

\begin{proof}[Proof of~\Cref{lemma:strictweakMinty}]
By the first-order optimality conditions, we have
\begin{equation*}
\left\langle F(\va) + \frac{1}{\eta} ( \vatil - \va ), \va - \vatil \right\rangle \geq 0,
\end{equation*}
which in turn implies that
\begin{equation}
\label{eq:first-opt-weak}
\langle F(\va), \va - \vatil \rangle \geq \frac{1}{\eta} \| \va - \vatil \|^2.
\end{equation}
Moreover, for any $\vx \in \cX$ that satisfies the $\rho$-weak Minty condition of~\Cref{def:weakMinty},
\begin{align}
\langle F(\vatil), \va - \vx \rangle &= \langle F(\vatil), \vatil - \vx \rangle + \langle F(\vatil), \va - \vatil \rangle \notag \\
&\geq - \rho \epsilon^2 + \langle F(\vatil), \va - \vatil \rangle = - \rho \epsilon^2 + \langle F(\va), \va - \vatil \rangle + \langle F(\vatil) - F(\va), \va - \vatil \rangle \label{align:weakMintyatil} \\
&\geq - \rho \epsilon^2 + \frac{1}{\eta} \|\va - \vatil \|^2 - \| F(\vatil) - F(\va) \| \|\va - \vatil \| \label{align:first-weak} \\
&\geq - \rho \epsilon^2 + \frac{1}{\eta} \|\va - \vatil \|^2 - L \|\va - \vatil \|^2 \label{align:Lips-weak} \\
&\geq - \rho \epsilon^2 + \frac{1}{2\eta} \| \va - \vatil \|^2,\label{align:final-weak}
\end{align}
where~\eqref{align:weakMintyatil} uses the fact that $\vx$ satisfies~\Cref{def:weakMinty} and that $\vatil$ is not an $\epsilon$-SVI solution; \eqref{align:first-weak} follows from~\eqref{eq:first-opt-weak} and Cauchy-Schwarz; and~\eqref{align:Lips-weak} uses the fact that $F$ is $L$-Lipschitz continuous. Finally, using again the first-order optimality conditions, we have that for any $\vx \in \cX$,
\begin{equation}
\label{eq:fo-start-weak}
\left\langle F(\va) + \frac{1}{\eta} (\vatil - \va), \vx - \vatil \right\rangle \geq 0 \implies \langle F(\va), \vx - \va \rangle + \langle F(\va), \va - \vatil \rangle + \frac{1}{\eta} \langle \vatil - \va, \vx - \vatil \rangle \geq 0.
\end{equation}
But $\va \in \cX$ is not an $\epsilon$-SVI solution, which implies that there exists $\vx \in \cX$ such that $\langle F(\va), \vx - \va \rangle < - \epsilon$. So, continuing from~\eqref{eq:fo-start-weak},
\begin{equation*}
\epsilon < \langle F(\va), \va - \vatil \rangle + \frac{1}{\eta} \langle \vatil - \va, \vx - \vatil \rangle \leq B \|\va - \vatil \| + \frac{2R}{\eta} \|\va - \vatil \|,
\end{equation*}
which in turn yields
\begin{equation*}
\|\va - \vatil \| \geq \left( B + 4 L R \right)^{-1} \epsilon.
\end{equation*}
Combining with~\eqref{align:final-weak}, the proof follows.
\end{proof}

Assuming that $B = L$, \Cref{lemma:strictweakMinty} yields a strict separating hyperplane when $\rho \leq C (1 + 4 R)^{-2}/L$ for some constant $C < 1$. The rest of the argument is analogous to~\Cref{theorem:mainMinty-precise}.

\begin{theorem}
    \label{theorem:weakMinty}
    Let $\cX$ be a convex and compact set in isotropic position to which we have oracle access, $F : \cX \to \R^d$ a mapping satisfying~\Cref{ass:polyeval}, and $\epsilon \in \Q$ a rational number. Under the $\rho$-weak Minty condition (\Cref{def:weakMinty}) with $\rho \leq C L / (B + 4 R L)^2$, for some constant $C < 1$, there is a $\poly(d, \log(B/\epsilon), \log L)$-time algorithm that returns an $\epsilon$-SVI solution to $\VI(\cX, F)$.
\end{theorem}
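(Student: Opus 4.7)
The plan is to run Algorithm~\ref{algo:optimisticellipsoid} nearly verbatim against the set $\cK \defeq \{\vx \in \cX : \vx \text{ satisfies}~\eqref{eq:weakMinty}\}$, convex by~\Cref{claim:convexity}, with two minimal modifications. First, I would rescale the strictness parameter from $\gamma = \epsilon^2 L/(B+4RL)^2$ down to $\gamma' \defeq \epsilon^2 L/(B+4RL)^2 - \rho \epsilon^2$; the hypothesis $\rho \le C L/(B+4RL)^2$ with $C<1$ yields $\gamma' \ge (1-C)\,\epsilon^2 L/(B+4RL)^2 > 0$, so the downstream quantities---the termination radius $r = \gamma'/(16RB)$, the termination volume $v = r^d/d^d$, and the iteration budget $T = \lceil 5d\log(1/v) + 5d^2 \log(2R)\rceil$---all remain polynomial in $d$, $\log(B/\epsilon)$, and $\log L$. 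Second, at each iteration I would test \emph{both} $\va^{(t)}$ and $\vatil^{(t)} \defeq \proj_\cX(\va^{(t)} - \eta F(\va^{(t)}))$ for the $\epsilon$-SVI property via the optimization oracle on $\cX$ before invoking the extra-gradient separating direction; this double test is necessary because~\Cref{lemma:strictweakMinty} requires both hypotheses to hold.

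Given these modifications, the analysis proceeds exactly as in the proof of~\Cref{theorem:mainMinty-precise}. Whenever $\va^{(t)} \in \cX$ and neither $\va^{(t)}$ nor $\vatil^{(t)}$ is an $\epsilon$-SVI solution,~\Cref{lemma:strictweakMinty} delivers the unit direction $\vc \defeq F(\vatil^{(t)})/\|F(\vatil^{(t)})\|$ satisfying $\langle \vc, \vx\rangle \le \langle \vc, \va^{(t)}\rangle - \gamma'/B$ for every $\vx \in \cK$, which is a legitimate $\gamma'/B$-strict separating hyperplane in the sense of~\Cref{def:strictsep}. When $\va^{(t)} \notin \cX$, I fall back to the separation oracle of $\cX$ exactly as in the original algorithm. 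The volume lower bound of~\Cref{lemma:largevol} then replays with $\gamma'$ in place of $\gamma$, so every intermediate ellipsoid contains a Euclidean ball of radius $r$; combining this with the shrinkage guarantee of~\Cref{theorem:basicellipsoid} forces the algorithm to terminate within $T$ iterations with an $\epsilon$-SVI solution, since otherwise the final ellipsoid would have volume below $v$, contradicting that it still contains the (nonempty) set $\cK$. The ``close-intersection'' and ``certificate'' sub-arguments inside the proof of~\Cref{theorem:mainMinty-precise} carry over verbatim: they depend only on the strictness parameter being positive, not on which of the Minty or weak-Minty conditions is in force.

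The main obstacle---though not a serious one---is to verify that the margin $\gamma'$ stays bounded below by $\Omega(\gamma)$ uniformly in the inputs, so that the polynomial runtime of~\Cref{theorem:mainMinty-precise} is preserved. This hinges entirely on the strict inequality $C<1$ in the hypothesis, which gives $\gamma' \ge (1-C)\gamma$; hence all polynomial factors of $1/\gamma$ appearing in the iteration count, the precision parameter $p = 8T$, and the termination volume inflate by only a constant factor depending on $1/(1-C)$. Everything else---bit-complexity bookkeeping, weak-oracle handling, and the overall $\poly(d, \log(B/\epsilon), \log L)$ runtime---propagates from the proof of~\Cref{theorem:mainMinty-precise} without modification, completing the argument.
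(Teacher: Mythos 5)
Your proposal is correct and follows essentially the same route as the paper: substitute \Cref{lemma:strictweakMinty} for \Cref{lemma:strict1} (which requires additionally testing $\vatil^{(t)}$ for the $\epsilon$-SVI property via the optimization oracle), observe that the hypothesis $\rho \leq C L/(B+4RL)^2$ with $C<1$ keeps the strictness margin bounded below by $(1-C)\gamma > 0$, and replay the volume argument of \Cref{theorem:mainMinty-precise} against the convex set of weak-Minty points. The paper's own treatment is terser but identical in substance.
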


\subsection{Relaxing continuity}
\label{sec:noncont}

Another natural question is whether one can relax the assumption that $F$ is Lipschitz continuous. Under an additional assumption---namely, a generalization of quasar-convexity (\Cref{def:quasarconvex}) based on~\Cref{def:smoothVI}---we show that this is indeed possible by obviating the need for the extra-gradient step in~\Cref{algo:optimisticellipsoid} (\Cref{lemma:strict1}), which is where Lipschitz continuity was used. In particular, the following lemma shows that, under a suitable strengthening of the Minty condition, $F(\va)$ already yields a strict separating hyperplane. We again call attention to the fact that smoothness per~\Cref{def:smoothVI}, which accords with the terminology of~\citet{Roughgarden15:Intrinsic}, is different from the usual notion of a smooth function in optimization.

\begin{lemma}
    \label{lemma:strict-quasar}
    Let $\VI(\cX, F)$ be a $(\lambda, \lambda-1)$-smooth VI problem (\Cref{def:smoothVI}) with respect to $Q$ and a global maximizer $\vx \in \cX$ thereof. If $\va \in \cX$ is such that $Q(\va') \leq Q(\vx) - \epsilon$, then
    \begin{equation*}
        \langle F(\va), \va - \vx \rangle \geq \lambda \epsilon.
    \end{equation*}
\end{lemma}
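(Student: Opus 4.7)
The proof is essentially a direct unpacking of the definition of $(\lambda,\nu)$-smoothness with $\nu = \lambda - 1$. The plan is to instantiate the smoothness inequality~\eqref{eq:smoothVIs} at the point $\vx' = \va$, observe that $(\nu+1) = \lambda$ simplifies the right-hand side nicely, and then apply the hypothesis on the suboptimality of $Q(\va)$.

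Concretely, I would first recall from \Cref{def:smoothVI} that $(\lambda, \lambda-1)$-smoothness with respect to a global maximizer $\vx$ of $Q$ means
\begin{equation*}
    \langle F(\vx'), \vx' - \vx \rangle \;\geq\; \lambda\, Q(\vx) - \lambda\, Q(\vx') \;=\; \lambda\bigl(Q(\vx) - Q(\vx')\bigr) \quad \forall \vx' \in \cX.
\end{equation*}
Setting $\vx' \defeq \va$ in this inequality yields $\langle F(\va), \va - \vx \rangle \geq \lambda\bigl(Q(\vx) - Q(\va)\bigr)$. The hypothesis (reading $\va'$ in the statement as $\va$) is precisely that $Q(\vx) - Q(\va) \geq \epsilon$, so the right-hand side is at least $\lambda \epsilon$, which gives the claim.

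There is no real obstacle here: the statement is designed to be the analog of the trivial observation that in a quasar-convex problem $F(\va) = \grad f(\va)$ already yields a strict separating hyperplane between $\va$ and the minimizer $\vx$ whenever $\va$ is $\epsilon$-suboptimal in value, with strictness parameter $\lambda \epsilon$. The only subtlety worth flagging is the small typo in the hypothesis ($\va'$ versus $\va$); the intended reading is clearly $Q(\va) \leq Q(\vx) - \eps$, since otherwise the conclusion would not be about $F(\va)$ at all. Once this is fixed, the proof is a single line, and the lemma slots into the rest of \Cref{sec:noncont} as the replacement for \Cref{lemma:strict1} that removes the need for an extra-gradient step (and hence for Lipschitz continuity of $F$) when generating strict separating hyperplanes inside $\optellipsoid$.
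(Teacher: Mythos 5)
Your proof is correct and is exactly the paper's argument: instantiate the $(\lambda,\lambda-1)$-smoothness inequality at $\vx'=\va$, note that $\nu+1=\lambda$, and apply the $\epsilon$-suboptimality of $Q(\va)$ (with $\va'$ in the statement indeed being a typo for $\va$). Nothing further is needed.
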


\begin{proof}
    By~\Cref{def:smoothVI} (with $\nu = \lambda - 1$), we have $\langle F(\va), \va - \vx \rangle \geq \lambda ( Q(\vx) - Q(\va)) \geq \lambda \epsilon$.
\end{proof}

In this case, we define $\cK$ to contain any point $\vx \in \cX$ such that $\langle F(\vx'), \vx' - \vx \rangle \geq \lambda ( \opt - Q(\vx'))$ for all $\vx' \in \cX$, where $\opt$ is the maximum value attained by $Q$. This is still a convex set. Further, $\cK \neq \emptyset$---in particular, this means that the Minty condition is satisfied.

We give the overall construction in~\Cref{algo:ellipsoidnoncont}. Compared to~\Cref{algo:optimisticellipsoid}, we call attention to the following differences. First, we do not check in each iteration $t$ whether the center of the current ellipsoid $\va^{(t)}$ satisfies $Q(\va^{(t)}) \geq \opt - \epsilon$; we do not know the value of $\opt$, so instead the algorithm eventually returns the point attaining the highest value throughout the execution (\Cref{line:returnmax}). The second difference is that $F(\va^{(t)})$ (\Cref{line:quasar}) already yields a strict separating hyperplane (\Cref{lemma:strict-quasar}), so there is no need for an extra-gradient step.

By our previous analysis in~\Cref{sec:ouralgo} and~\Cref{lemma:strict-quasar}, it follows that there must be some iteration such that $Q(\va^{(t)}) \geq \opt - \epsilon$, for otherwise the underlying promise---namely, $(\lambda, \lambda-1)$-smoothness per~\Cref{def:smoothVI}---would be violated; \Cref{line:returnmax} returns such a point. We summarize the guarantee of~\Cref{algo:ellipsoidnoncont} below.

\begin{theorem}
    \label{theorem:quasar-precise}
    Let $\cX$ be a convex and compact set in isotropic position to which we have oracle access, $F : \cX \to \R^d$ a mapping satisfying~\Cref{item:Bparam,item:polyeval} of~\Cref{ass:polyeval}, and $\epsilon \in \Q$ a rational number. If $\VI(\cX, F)$ is $(\lambda, \lambda - 1)$-smooth (\Cref{def:smoothVI}) with respect to $Q$, \Cref{algo:ellipsoidnoncont} can be implemented in time $\poly(d, \log(B/\epsilon), \log (1/\lambda))$ and returns a point $\va \in \cX$ such that $Q(\va) \geq \max_{\vx \in \cX} Q(\vx) - \epsilon$.
\end{theorem}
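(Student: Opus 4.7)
The plan is to mirror the proof of Theorem~\ref{theorem:mainMinty-precise}, with Lemma~\ref{lemma:strict-quasar} replacing the role of Lemma~\ref{lemma:strict1}. The fundamental shift is that under $(\lambda,\lambda-1)$-smoothness, the gradient $F(\va^{(t)})$ itself already yields a strict separating hyperplane from the convex set $\cK \defeq \{\vx \in \cX : \langle F(\vx'), \vx' - \vx\rangle \geq \lambda(\opt - Q(\vx'))\ \forall \vx' \in \cX\}$, so no extra-gradient step is needed, which is precisely what lets us sidestep Lipschitz continuity. Here $\opt \defeq \max_{\vx \in \cX} Q(\vx)$, and by hypothesis a global maximizer $\vxstar$ lies in $\cK$, so $\cK \neq \emptyset$ and convex. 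Algorithm~\ref{algo:ellipsoidnoncont} runs central-cut ellipsoid for $T$ iterations with the same ``if $\va^{(t)} \notin \cX$ use the $\cX$-separation oracle'' branch as in Algorithm~\ref{algo:optimisticellipsoid}; otherwise it uses the normalized cut $\vec{c} \defeq F(\va^{(t)})/\|F(\va^{(t)})\|$. Because $\opt$ is unknown, the algorithm cannot test the stopping condition $Q(\va^{(t)}) \geq \opt - \epsilon$ inside the loop; instead, it simply returns $\argmax_t Q(\va^{(t)})$ over centers that lie in $\cX$.

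The core step is a contradiction argument. Assume every iterate satisfies $Q(\va^{(t)}) < \opt - \epsilon$. Then, for every $t$ with $\va^{(t)} \in \cX$, Lemma~\ref{lemma:strict-quasar} gives $\langle F(\va^{(t)}), \va^{(t)} - \vx\rangle \geq \lambda\epsilon$ for every $\vx \in \cK$; after normalizing by $\|F(\va^{(t)})\| \leq B$, this yields a $\gamma$-strict separation between $\va^{(t)}$ and $\cK$ in the sense of Definition~\ref{def:strictsep}, with $\gamma \defeq \lambda\epsilon/B$. I then replay the volume argument of Lemma~\ref{lemma:largevol} and the geometric argument behind Lemma~\ref{lemma:certificate} essentially verbatim: for any $\vx$ inside the final ellipsoid $\cE^{(T)}$, convexity of $\cX$ and the inclusion $\cB_1(\vec{0}) \subseteq \cX$ produce a nearby point $\vz \in \cX$ at distance $O(\gamma/B)$ lying outside $\cE^{(T)}$; so $\vz$ must have been cut off at some iteration $t$; and since $\vz \in \cX$, this cut cannot have come from the $\cX$-separation branch, so it came from a normalized $F(\va^{(t)})$ cut with $\va^{(t)} \in \cX$, which is $\gamma$-strict by hypothesis. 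Calibrating $T$ so that $\vol(\cE^{(T)})$ drops below the volume of a ball of radius $\Theta(\gamma/B)$ around $\vxstar \in \cK$ contradicts the strictness guarantee, forcing some iterate to satisfy $Q(\va^{(t)}) \geq \opt - \epsilon$, and the $\argmax$ output inherits this bound.

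The runtime is polynomial in $d$, $\log(B/\epsilon)$, and $\log(1/\lambda)$: the strictness parameter $\gamma = \lambda\epsilon/B$ involves no Lipschitz constant, so $T = \poly(d, \log(B/(\lambda\epsilon)))$, and each iteration only evaluates $F$ once and calls $\cX$'s oracles once. The main obstacle is not a deep new insight but rather the care needed to propagate the volume lower bound through both branches of the cut: the $\cX$-separation cuts are not strict at all, yet the ``closest point in $\cX$'' construction of Lemma~\ref{lemma:closeinters} ensures that the obstructing cut always arises from an $F$-cut, at which point $(\lambda,\lambda-1)$-smoothness supplies the strictness. A minor secondary subtlety is handling weak oracles and the gap between $\cX$ and $\cX^{+\delta}$, but this is absorbed into the standard precision bookkeeping of Algorithm~\ref{algo:centalcut} and Remark~\ref{remark:-delta}, as in the proof of Theorem~\ref{theorem:mainMinty-precise}.
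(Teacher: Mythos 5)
Your proposal is correct and matches the paper's proof: the paper likewise defines $\cK$ via the $(\lambda,\lambda-1)$-smoothness constraint, invokes \Cref{lemma:strict-quasar} to get a $\lambda\epsilon$-strict cut directly from $F(\va^{(t)})$ (no extra-gradient step), replays the volume/strict-separation argument of \Cref{sec:ouralgo} for a contradiction, and returns $\argmax_t Q(\va^{(t)})$ since $\opt$ is unknown. Your bookkeeping (absorbing the $\|F\|\le B$ normalization into $\gamma$ rather than into the radius $r$) is an immaterial difference, and your handling of the two cut branches via the \Cref{lemma:closeinters}-style argument is exactly what the paper's terse reference to "our previous analysis" intends.
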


\begin{algorithm}[!ht]
\SetAlgoLined
\KwInput{ 
\begin{itemize}[noitemsep]
    \item Oracle access to a convex, compact set $\cX \subseteq \cB_R(\vec{0})$ in isotropic position;
    \item oracle access to $F : \cX \to \R^d$ satisfying~\Cref{item:polyeval,item:Bparam} of~\Cref{ass:polyeval};
    \item oracle access to $Q : \cX \to \R$ such that $\langle F(\vx'), \vx' - \vx \rangle \geq \lambda ( Q(\vx) - Q(\vx')$ for all $\vx' \in \cX$, where $\vx \in \cX$ is some global maximum of $Q$ and $\lambda \in \Q \cap (0, 1]$;
    \item rational $\epsilon > 0$.
\end{itemize}
}
\KwOutput{An point $\va \in \cX$ such that $Q(\va) \geq \max_{\vx \in \cX} Q(\vx) - \epsilon$.}
\BlankLine
Set the strictness parameter $\gamma \defeq 
\lambda \epsilon$\;
Set the termination volume $v \defeq r^d/d^{d}$, where $r \defeq \gamma/(16 R B)$\;
Set the maximum number of iterations as $T \defeq \lceil 5 d \log(1/v) + 5d^2 \log(2R) \rceil$\;
Set the precision parameter as $p \defeq 8 T$\;
Initialize the ellipsoid $\cE^{(0)} \defeq \cE^{(0)}(\mat{A}^{(0)}, \vec{a}^{(0)})$ as $\vec{a}^{(0)} \defeq \vec{0}$ and $\mat{A}^{(0)} \defeq R^2 \mat{I}_{d}$\;
\For{$t = 0, \dots, T-1$}{
    
    \If{ $\va^{(t)} \notin \cX$}{
        Set $\vec{c} \in \Q^d$ be a hyperplane separating $\va^{(t)}$ from $\cX$\;
    }
    \Else{
    Set $\vec{c} \defeq F(\va^{(t)})/\|F(\va^{(t)})\|$\;\label{line:quasar}
    }
    Update
    $$\vec{a}^{(t+1)} \approx_p \vec{a}^{(t)} - \frac{1}{d+1} \frac{\mat{A}^{(t)} \vec{c} }{ \sqrt{ \langle \vec{c}, \mat{A}^{(t)} \vec{c} \rangle } }  \text{ and } \mat{A}^{(t+1)} \approx_p \frac{2d^2 + 3}{2 d^2} \left( \mat{A}^{(t)} - \frac{2}{d+1} \frac{ \mat{A}^{(t)} \vec{c} \vec{c}^\top \mat{A}^{(t)}}{ \sqrt{ \langle \vec{c}, \mat{A}^{(t)} \vec{c} \rangle } } \right).$$
    
}
\BlankLine
\Return{ $\argmax_{ 1 \leq t \leq T} Q(\va^{(t)})$ }\;\label{line:returnmax}
\caption{Ellipsoid for $(\lambda, \lambda-1)$-smooth VIs.}
\label{algo:ellipsoidnoncont}
\end{algorithm}

\subsection{A certificate of MVI infeasibility: strict EVIs}
\label{sec:strict-EVIs}

Computing $\epsilon$-SVI solutions is generally \PPAD-hard, so certain VI problems violate the Minty conditions (and relaxations thereof per~\Cref{sec:weakMinty}). In such cases, the transcript of~\Cref{algo:optimisticellipsoid} itself provides a certificate of MVI infeasibility. In fact, as we observe in this subsection, the certificate of infeasibility can be expressed as an expected VI solution (in the sense of~\Cref{def:EVIs}) \emph{with negative gap}; the mere existence of such an object implies that the Minty condition is violated due to the duality between EVIs and MVIs  (\Cref{prop:duality}).

To make this argument formal, we first observe that if~\Cref{algo:optimisticellipsoid} fails to identify an $\epsilon$-SVI solution, \Cref{lemma:certificate} implies that
\begin{equation*}
    \min_{\vx \in \cX} \max_{t \in [T]}\ip*{F(\tilde \va^{(t)}), \vx - \tilde \va^{(t)}} \ge \frac{\gamma}{2},
\end{equation*}
which, by strong duality, is equivalent to
\begin{equation}
    \label{eq:strictEVI}
    \max_{\mu \in \Delta(T)} \min_{\vx \in \cX} \E_{t \sim \mu} \ip*{F(\tilde \va^{(t)}), \vx - \tilde \va^{(t)}} \ge \frac{\gamma}{2}.
\end{equation}
In particular, a distribution $\mu$ over $[T]$ that satisfies~\eqref{eq:strictEVI} is, by definition, a $\gamma/2$-strict EVI solution, a certificate of MVI infeasibility. Further, such a distribution can be computed in polynomial time (\emph{e.g.},~\citealp{Jiang20:Improved}): \eqref{eq:strictEVI} is a bilinear saddle-point problem over convex sets that admit efficient oracle access. The crucial point here is that the maximization in~\eqref{eq:strictEVI} simply optimizes over the mixing weights with respect to a fixed support of size $T$, which is polynomial. This approach resembles the celebrated ``ellipsoid against hope'' algorithm of~\citet{Papadimitriou08:Computing}, which applies the ellipsoid algorithm on a certain program guaranteed to be infeasible; similarly to our case, the certificate of infeasibility produces a correlated equilibrium.

The resulting construction is~\Cref{algo:sviorstrictevi}. It is almost the same as~\Cref{algo:optimisticellipsoid}; the key difference is that, when the algorithm fails to identify an $\epsilon$-SVI solution, the last step (\Cref{item:reoptimize}) performs an additional optimization to compute a $\gamma/2$-strict EVI solution. The guarantee of~\Cref{algo:sviorstrictevi} is given below; it is the precise version of~\Cref{theorem:maineither-informal}.

\begin{theorem}
    \label{theorem:maineither-precise}
    Let $\cX$ be a convex and compact set in isotropic position to which we have oracle access, a mapping $F : \cX \to \R^d$ satisfying~\Cref{ass:polyeval}, and a rational number $\epsilon \in \Q^d$. \Cref{algo:sviorstrictevi} can be implemented in time $\poly(d, \log(B/\epsilon), \log L)$ and returns either
    \begin{itemize}
        \item   an $\epsilon$-SVI solution to $\VI(\cX, F)$ or
        \item an $\Omega_\epsilon(\epsilon^2)$-\emph{strict} EVI solution to $\VI(\cX, F)$.
    \end{itemize}
\end{theorem}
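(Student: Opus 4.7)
\textbf{Proof proposal for Theorem~\ref{theorem:maineither-precise}.} The plan is to run \Cref{algo:optimisticellipsoid} essentially as before, with only a minor modification in the failure branch. So long as the algorithm terminates in~\Cref{line:terminate} of \Cref{algo:optimisticellipsoid} with some center $\va^{(t)}$ that is an $\epsilon$-SVI solution, we simply output that point. The interesting case is when the ellipsoid's volume has shrunk below the threshold $v$ without this happening: in that case, \Cref{lemma:certificate} already gives us the key structural fact, namely that
\[
\min_{\vx \in \cX}\, \max_{t \in [T]} \,\langle F(\tilde\va^{(t)}), \vx - \tilde\va^{(t)}\rangle \;\geq\; \frac{\gamma}{2},
\]
where the iterates $\tilde\va^{(1)}, \dots, \tilde\va^{(T)} \in \cX$ are the extra-gradient points produced along the execution of \Cref{algo:optimisticellipsoid} (we can restrict to the iterations $t$ in which the center $\va^{(t)}$ actually belonged to $\cX$; the other iterations did not produce a $\tilde\va^{(t)}$ and are irrelevant for this argument). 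Since $\gamma = \epsilon^2 L/(B+4RL)^2 = \Omega_\epsilon(\epsilon^2)$, this already delivers the promised $\Omega_\epsilon(\epsilon^2)$ strictness parameter, modulo the fact that we must turn an ``adversarial-$\vx$'' guarantee into a single mixture $\mu \in \Delta(T)$ that works against all $\vx \in \cX$ simultaneously.

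This conversion is the content of the minimax theorem. The bilinear function $(\mu, \vx) \mapsto \E_{t \sim \mu}\langle F(\tilde\va^{(t)}), \vx - \tilde\va^{(t)}\rangle$ is defined on a product of compact convex sets ($\Delta(T)$ and $\cX$), so by Sion's minimax theorem
\[
\max_{\mu \in \Delta(T)}\, \min_{\vx \in \cX}\, \E_{t \sim \mu}\langle F(\tilde\va^{(t)}), \vx - \tilde\va^{(t)}\rangle \;=\; \min_{\vx \in \cX}\, \max_{t \in [T]}\, \langle F(\tilde\va^{(t)}), \vx - \tilde\va^{(t)}\rangle \;\geq\; \frac{\gamma}{2}.
\]
Any distribution $\mu^\star \in \Delta(T)$ attaining the outer maximum is, by definition, a $\gamma/2$-strict EVI solution supported on $\{\tilde\va^{(1)}, \dots, \tilde\va^{(T)}\}$, hence a witness certifying MVI infeasibility in view of the duality result \Cref{prop:duality}.

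The last step is to actually compute such a $\mu^\star$ in polynomial time, which is~\Cref{item:reoptimize} of \Cref{algo:sviorstrictevi}. The feasibility problem ``find $\mu \in \Delta(T)$ with $\E_{t \sim \mu}\langle F(\tilde\va^{(t)}), \vx - \tilde\va^{(t)}\rangle \geq \gamma/2 - \epsilon'$ for every $\vx \in \cX$'' is a convex program in the $T$-dimensional variable $\mu$, whose constraints---one for every $\vx \in \cX$---can be checked with a single call to a linear optimization oracle over $\cX$: given a candidate $\mu$, minimize $\vx \mapsto \sum_t \mu[t]\langle F(\tilde\va^{(t)}), \vx\rangle$ over $\cX$ and compare to $\sum_t \mu[t]\langle F(\tilde\va^{(t)}), \tilde\va^{(t)}\rangle + \gamma/2 - \epsilon'$; any violating $\vx$ produces a linear separating hyperplane in $\mu$-space. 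Running the central-cut ellipsoid (or any off-the-shelf LP/convex-feasibility solver with a separation oracle) on $\Delta(T)$ then yields $\mu^\star$ in $\poly(T, \log(1/\epsilon'), \ldots)$ time. Since $T = \poly(d, \log(B/\epsilon), \log L)$ by the choice in \Cref{line:newT}, the overall runtime is polynomial in the stated parameters, with a negligible $\epsilon'$ slack that can be absorbed into the $\Omega_\epsilon(\epsilon^2)$ guarantee by setting, \eg, $\epsilon' = \gamma/4$.

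The only delicate point—and the main obstacle in the argument—is ensuring that the loss of precision in the inner oracle for $\cX$ does not destroy the strict EVI guarantee. Because $\cX$ is accessed only through a weak separation/optimization oracle (\Cref{def:weakopt}), the inner minimization returns a point approximately in $\cX^{+\delta}$ rather than in $\cX$, and the resulting $\mu^\star$ certifies the strict-EVI inequality only up to additive error $\poly(B, R, \delta)$. Since $\delta$ can be made inverse-exponentially small in polynomial time, and the target margin is $\gamma/2 = \Omega(\epsilon^2/(RLB))$, this error can be driven well below $\gamma/4$ while keeping the overall runtime $\poly(d, \log(B/\epsilon), \log L)$, preserving both the claimed complexity and the $\Omega_\epsilon(\epsilon^2)$ strictness; the bookkeeping is exactly the same kind of rescaling already invoked in \Cref{remark:-delta}. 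This completes the plan.
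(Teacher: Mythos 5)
Your proposal is correct and follows essentially the same route as the paper: invoke \Cref{lemma:certificate} to get $\min_{\vx \in \cX}\max_{t}\langle F(\tilde\va^{(t)}), \vx - \tilde\va^{(t)}\rangle \ge \gamma/2$ upon failure, apply the minimax theorem to extract a $\gamma/2$-strict EVI supported on the polynomially many extra-gradient iterates, and solve the resulting mixing-weight optimization in polynomial time. Your added detail on implementing \Cref{item:reoptimize} via a separation oracle over $\Delta(T)$ and on absorbing the weak-oracle error into the $\Omega_\epsilon(\epsilon^2)$ margin is consistent with what the paper delegates to a citation and to \Cref{sec:weak}.
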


\begin{algorithm}[!ht]
\SetAlgoLined
\KwInput{ 
\begin{itemize}[noitemsep]
    \item Oracle access to a convex, compact set $\cX \subseteq \cB_R(\vec{0})$ in isotropic position;
    \item oracle access to $F : \cX \to \R^d$ satisfying~\Cref{ass:polyeval};
    \item rational $\epsilon > 0$.
\end{itemize}
}
\KwOutput{An $\epsilon$-SVI solution per~\Cref{def:VI} \emph{or} a strict EVI solution per~\Cref{def:EVIs}.}
\BlankLine
Set the strictness parameter $\gamma \defeq \epsilon^2 L / (B + 4 R L)^2$\;
Set the termination volume $v \defeq r^d/d^{d}$, where $r \defeq \gamma/(16 R B)$\;
Set the maximum number of iterations as $T \defeq \lceil 5 d \log(1/v) + 5d^2 \log(2R) \rceil$\;
Set the precision parameter as $p \defeq 8 T$\;
Initialize the ellipsoid $\cE^{(0)} \defeq \cE^{(0)}(\mat{A}^{(0)}, \vec{a}^{(0)})$ as $\vec{a}^{(0)} \defeq \vec{0}$ and $\mat{A}^{(0)} \defeq R^2 \mat{I}_{d}$\;
\For{$t = 0, \dots, T-1$}{
    \If{$\vec{a}^{(t)}$ is an $\epsilon$-SVI solution}{
        \Return{$\vec{a}^{(t)}$}\;
    }
    \Else{
        \If{ $\va^{(t)} \notin \cX$}{
            Set $\vec{c} \in \Q^d$ be a hyperplane separating $\va^{(t)}$ from $\cX$\;
        }
        \Else{
        Compute $\Q^d \ni \vatil^{(t)} \defeq \proj_{\cX}( \va^{(t)} - \eta F(\va^{(t)}) )$, where $\eta \defeq 1/(2L)$\;
        Set $\vec{c} \defeq F(\vatil^{(t)})/\|F(\vatil^{(t)})\|$\;
        }
        Update
        $$\vec{a}^{(t+1)} \approx_p \vec{a}^{(t)} - \frac{1}{d+1} \frac{\mat{A}^{(t)} \vec{c} }{ \sqrt{ \langle \vec{c}, \mat{A}^{(t)} \vec{c} \rangle } }  \text{ and } \mat{A}^{(t+1)} \approx_p \frac{2d^2 + 3}{2 d^2} \left( \mat{A}^{(t)} - \frac{2}{d+1} \frac{ \mat{A}^{(t)} \vec{c} \vec{c}^\top \mat{A}^{(t)}}{ \sqrt{ \langle \vec{c}, \mat{A}^{(t)} \vec{c} \rangle } } \right).$$
    }
}
\BlankLine
\Return{$\argmax_{\mu \in \Delta([T])} \min_{\vx \in \cX} \E_{t \sim \mu} \ip*{F(\tilde \va^{(t)}), \vx - \tilde \va^{(t)}}$}\;\label{item:reoptimize}
\caption{$\sviorstrictevi$}
\label{algo:sviorstrictevi}
\end{algorithm}
\section{Two-player smooth concave games}
\label{sec:twoplayer}

This section provides a refinement of~\Cref{theorem:maineither-precise} in the context of two-player games. We begin by formally introducing this class of problems.

\begin{definition}[Two-player smooth concave games]
    A {\em two-player concave game} is given by two convex and compact strategy sets $\cX_1 \subseteq \R^{d_1}$, $\cX_2 \subseteq \R^{d_2}$, and two utility functions $u_1, u_2 : \cX_1 \times \cX_2 \to \R$, such that the utility of each player is concave (in the player's own strategy), differentiable, and $L$-smooth. Formally,  $u_1(\cdot, \vx_2) : \cX_1 \to \R$ is concave for every fixed $\vx_2$, the gradient operator $\grad_{\vx_1} u_1 : \cX_1 \times \cX_2 \to \R^{d_1}$ is $L$-Lipschitz continuous, and the symmetric guarantees hold for the second player as well.
\end{definition}
A {\em (pure) strategy profile} is a pair $(\vx_1, \vx_2) \in \cX_1 \times \cX_2$. A strategy profile is an {\em $\eps$-Nash equilibrium} if each player is $\eps$-best responding to the other player; that is,
\begin{equation*}
    \max_{\vx_1' \in \cX_1} u_1(\vx_1', \vx_2) -  u_1(\vx_1, \vx_2) \le\eps \qq{and}  \max_{\vx_2' \in \cX_2} u_2(\vx_1, \vx_2') - u_2(\vx_1, \vx_2) \le \eps.
\end{equation*}
A {\em correlated strategy profile} is a distribution $\mu \in \Delta(\cX_1\times\cX_2)$. A correlated strategy profile is a {\em $\eps$-coarse correlated equilibrium} (CCE) if no player can profit by more than $\eps$ using a {\em unilateral} deviation, that is,
\begin{equation*}
    \max_{\vx_1' \in \cX_1} \E_{(\vx_1, \vx_2)\sim\mu}[ u_1(\vx_1', \vx_2) -  u_1(\vx_1, \vx_2)] \le\eps \qq{and}  \max_{\vx_2' \in \cX_2} \E_{(\vx_1, \vx_2)\sim\mu}[u_2(\vx_1, \vx_2') - u_2(\vx_1, \vx_2)] \le \eps.
\end{equation*}
We will call a $(-\eps)$-CCE an $\eps$-{\em strict CCE}. In this section, we will show the following result for smooth concave games. 
\begin{theorem}\label{th:2p-nash-or-scce}
    Assume that $\cX_1$ and $\cX_2$ are in isotropic position and given by separation oracles. Assume also the gradient operators $\grad_{\vx_1} u_1 : \cX_1 \times \cX_2 \to \R^{d_1}$ and $\grad_{\vx_2} u_2 : \cX_1 \times \cX_2 \to \R^{d_2}$ satisfy \Cref{ass:polyeval}. Let $d = d_1 + d_2$. Then there exists a $\poly(d) \cdot \polylog(B, L, R, 1/\eps)$-time algorithm that outputs {\em either} an $\eps$-Nash equilibrium {\em or} an $\eps'$-strict CCE, where $\eps' \ge \eps^6 / \poly(B, L, R)$.
\end{theorem}
We first note that this result is {\em not} an immediate corollary of \Cref{theorem:maineither-precise}. $\eps$-Nash equilibria indeed correspond to $\eps$-SVI  solutions. However, as per the discussion in \Cref{sec:Mintyexamples}, \Cref{theorem:maineither-precise} would only give a strict {\em average} CCE (\Cref{def:acce}), not a strict CCE. Circumventing this problem therefore requires a few new insights.

First, we need to modify the SVI problem so that strict EVI solutions correspond to strict CCEs. Consider the set
\[
    \cP := \qty{ \mqty(\lambda_1 \\ \lambda_2 \\ \lambda_1\vx_1 \\ \lambda_2\vx_2) : \mqty(\lambda_1 \\ \lambda_2) \in \Delta(2), \vx_1 \in \cX_1, \vx_2 \in \cX_2} \subseteq \R^{2+d}.
\]
It is easy to see that $\cP$ is convex and compact, since $\cX_1$ and $\cX_2$ are. Moreover, consider the operator $G : \cP \to \R^{2+d}$ given by 
\[
    G\mqty(\lambda_1 \\ \lambda_2 \\ \lambda_1\vx_1 \\ \lambda_2\vx_2) := \mqty( \ip{\grad_{\vx_1} u_1(\vx_1, \vx_2), \vx_1} \\ \ip{\grad_{\vx_2} u_2(\vx_1, \vx_2), \vx_2}\\ -\grad_{\vx_1} u_1(\vx_1, \vx_2) \\ -\grad_{\vx_2} u_2(\vx_1, \vx_2)).
\]
We now immediately encounter our first problem: $G$ is not Lipschitz, or even well-defined, when $\lambda_1 = 0$ or $\lambda_1 = 1$. The solution to this problem is to restrict ourselves to the slightly smaller set
\[
    \cP_\alpha := \qty{ \mqty(\lambda_1 \\ \lambda_2 \\ \lambda_1\vx_1 \\ \lambda_2\vx_2) : \mqty(\lambda_1 \\ \lambda_2) \in \Delta(2) \cap \R_{\ge \alpha}^2 , \vx_1 \in \cX_1, \vx_2 \in \cX_2}
\]
where $\alpha \in (0, 1)$ is a constant to be selected later. Over this domain, $G : \cP_\alpha \to \R^{2+d}$ is indeed well-defined and bounded. Moreover, $G$ can be expressed as $G = G_1 \circ G_2$, where
\[
    G_1\mqty(\vx_1 \\ \vx_2) := \mqty( \ip{\grad_{\vx_1} u_1(\vx_1, \vx_2), \vx_1} \\ \ip{\grad_{\vx_2} u_2(\vx_1, \vx_2), \vx_2}\\ -\grad_{\vx_1} u_1(\vx_1, \vx_2) \\ -\grad_{\vx_2} u_2(\vx_1, \vx_2)), \qq{} G_2\mqty(\lambda_1 \\ \lambda_2 \\ \vv_1 \\ \vv_2) = \mqty(\vv_1 / \lambda_1 \\ \vv_2 / \lambda_2).
\]
Note that $G_1$ has Lipschitz constant $\poly(B, L, R)$, and $G_2$ has Lipschitz constant $O(R/\alpha^2)$, so their composition has Lipschitz constant $\poly(B, L, R)/\alpha^2$.

We start by noting that, for any $\vz = (\lambda_1, \lambda_2, \lambda_1\vx_1, \lambda_2\vx_2), \vz' = (\lambda_1', \lambda_2', \lambda_1'\vx_1', \lambda_2'\vx_2')\in \cP_\alpha$, we have
    \begin{align*}
        -\ip{G(\vz), \vz'} &= \mqty(  \ip{\grad_{\vx_1} u_1(\vx_1, \vx_2), \vx_1} \\ \ip{\grad_{\vx_2} u_2(\vx_1, \vx_2), \vx_2} \\ -\grad_{\vx_1} u_1(\vx_1, \vx_2) \\ -\grad_{\vx_2} u_2(\vx_1, \vx_2))^\top \mqty(\lambda_1' \\ \lambda_2' \\ \lambda_1'\vx_1' \\ \lambda_2'\vx_2')
        \\&= \lambda_1' \ip{\grad_{\vx_1} u_1(\vx_1, \vx_2), \vx_1' - \vx_1} +\lambda_2' \ip{\grad_{\vx_2} u_2(\vx_1, \vx_2), \vx_2' - \vx_2}.
    \end{align*}
    That is, $-\ip{G(\vz), \vz'}$ is the weighted sum of deviation benefits for the two players if they deviate to $\vz'$ from $\vz$. In particular, $\ip{G(\vz), \vz} = 0$ for all $\vz$. 
\begin{lemma}
    For any $\alpha < 1$, any $3\alpha B R$-strict EVI solution to $\VI(\cP_\alpha, G)$ is a $\alpha B R$-strict CCE.
\end{lemma}
\begin{proof}
    Let $\mu \in \Delta(\cP_\alpha)$ be a $3\alpha BR$-strict EVI solution. Then, for any $\vz' = (\lambda_1', \lambda_2' \lambda_1'\vx_1', \lambda_2'\vx_2') \in \cP_\alpha$, we have
    \begin{align*}
        -3\alpha BR &\ge \E_{\vz \sim \mu} \ip{G(\vz), \vz - \vz'}
        = \lambda_1' \E_{\vz \sim \mu} \ip{\grad_{\vx_1} u_1(\vx_1, \vx_2), \vx_1' - \vx_1} + \lambda_2' \E_{\vz \sim \mu}\ip{\grad_{\vx_2} u_2(\vx_1, \vx_2), \vx_2' - \vx_2}.
    \end{align*}
    Since this holds for any $\vz'$, in particular it holds if we set $\vec\lambda' = (1-  \alpha, \alpha)$, which gives 
    \begin{align*}
        (1 - \alpha) \E_{\vz \sim \mu} \ip{\grad_{\vx_1} u_1(\vx_1, \vx_2), \vx_1' - \vx_1}  \le -3\alpha BR - \alpha \E_{\vz \sim \mu}\ip{\grad_{\vx_2} u_2(\vx_1, \vx_2), \vx_2' - \vx_2} \le -\alpha BR,
    \end{align*}
    where the final inequality follows from Cauchy-Schwarz, since $\norm{\grad_{\vx_2} u_2(\vx_1, \vx_2)} \le B$ and $\norm{\vx_2' - \vx_2} \le 2R$. Dividing by $1 - \alpha$ and noting that $1 - \alpha \le 1$ completes the proof. 
\end{proof}
It may now be tempting to try to run \Cref{algo:optimisticellipsoid} on $\VI(\cP_\alpha, G)$ with $\eps$ large enough to recover $2\alpha BR$-strict EVI solutions via \Cref{theorem:maineither-precise}. Unfortunately, this is impossible, since the Lipschitz constant $L'$ of $G$ scales as  $O_\alpha(1/\alpha^2)$. So, we would be required to take $\eps$ to be at least as large as a (game-dependent) constant, which defeats our purpose. To account for this, we directly modify the semi-separation oracle (\Cref{lemma:semiseparation}). Intuitively, the problematic case is when $\lambda_1$ or $\lambda_2$ is close to $0$. For intuition, let us discuss an extreme case, $\lambda_2 = 0$.\footnote{Since $\alpha > 0$, it is actually impossible for $\lambda_2 =0$ to arise in our algorithm; nonetheless, studying this case will be instrutive for intuition.} Let $\va = (1, 0, \vx_1, \vec 0) \in \cP$. Our goal is to either (1) find a Nash equilibrium, or (2) separate $\va$ strictly from the set of Minty solutions to $\VI(\cP_\alpha, G)$. Consider the following process. Let $\vx_2'$ be a best response to $\vx_1$. If $\vx_1$ is also a best response to $\vx_2'$, then $(\vx_1, \vx_2')$ is a Nash equilibrium, and we are done. Otherwise, let $\vx_1' = \Pi_{\cX_1}(\vx_1 + \eta \grad_{\vx_1} u_1(\vx_1, \vx_2'))$. Then, any point of the form $\va' := (\lambda_1, \lambda_2, \lambda_1\vx_1', \lambda_2\vx_2')$ with $\lambda_1, \lambda_2 > 0$ certifies that $\va$ is not Minty: the Minty constraint induced by $\va'$ is 
\[
    \ip{G(\va'), \va' - \va} = \ip{\grad_{\vx_1} u_1(\vx_1', \vx_2'), \vx_1 - \vx_1'} \ge 0,
\]
but this constraint can be refuted by choosing the step size $\eta$ small enough that $\grad_{\vx_1} u_1(\vx_1', \vx_2') \approx \grad_{\vx_1} u_1(\vx_1, \vx_2')$. We now formalize this intuition.
\begin{lemma}
    \label{lemma:semiseparation-twoplayer}
    There is a polynomial-time algorithm that, given $\va = (\lambda_1, \lambda_2, \lambda_1\vx_1, \lambda_2\vx_2) \in \cP_\alpha$ and $\eps > 0$, either
    \begin{itemize}
        \item  outputs an $\eps$-Nash equilibrium (not necessarily $(\vx_1, \vx_2)$), or
        \item returns $\va' \in \cP_\alpha$ such that $\ip{G(\va'), \va' - \va} \ge \tilde\gamma := \eps^6/\poly(B, L, R)$.
    \end{itemize}
\end{lemma}
\begin{proof}
    We split the analysis into two cases.

    {\em Case 1.} $\min\{\lambda_1, \lambda_2\} \le  \gamma/(4BR)$, where $\gamma = \eps^2 L / (B + 4RL)^2$ is the constant in \Cref{lem:progress}. Assume without loss of generality that $\lambda_2 \le  \gamma/(4BR)$ (and $\lambda_2 \le 1/2$). Compute an $\eps$-approximate response $\vx_2'$ to $\vx_1$, and check if $(\vx_1, \vx_2')$ is an $\eps$-Nash equilibrium by computing an approximate best response to $\vx_2'$. (These are both convex optimization problems, solvable using standard techniques.) If so, output $(\vx_1, \vx_2')$. Otherwise,  let $\vx_1' = \Pi_{\cX_1}(\vx_1 + \eta \grad_{\vx_1} u_1(\vx_1, \vx_2'))$ with $\eta = 1/(2L)$ as before. By \Cref{lem:progress} applied to the VI problem $(\cX_1, -\grad_{\vx_1} u_1(\cdot, \vx_2'))$, we have
    \begin{equation*}
        \ip{\grad_{\vx_1} u_1(\vx_1', \vx_2'), \vx_1' - \vx_1} \ge \gamma
    \end{equation*}
    But then, setting $\va' = (\lambda_1', \lambda_2', \lambda_1' \vx_1', \lambda_2' \vx_2')$ (for any $\lambda_1', \lambda_2' > 0$), we have
    \begin{align*}
        \ip{G(\va'), \va' - \va} &= \lambda_1 \ip{\grad_{\vx_1} u_1(\vx_1', \vx_2'), \vx_1 - \vx_1'} + \lambda_2 \ip{\grad_{\vx_2} u_1(\vx_1', \vx_2'), \vx_2 - \vx_2'}
        \le -\lambda_1 \cdot \gamma + \lambda_2 \cdot 2BR
    \end{align*}
    Thus, for $\lambda_2 \le \min\{ 1/2, \gamma/(4BR) \}$ we have $\ip{G(\va'), \va' - \va} \le -\gamma/4$.

    {\em Case 2.} $\min\{\lambda_1, \lambda_2\} > \gamma/(4BR)$. Then let $\eta = 1/(2L')$, where $L' \le \poly(B, L, R)/\gamma^2$ is the Lipschitz constant of $G$ on $\cP_{\alpha}$, where $\alpha := \gamma^2/8BR$. Assume WLOG $L' \ge 4B^2 R/\gamma^2$ so that $\eta \le \gamma^2 / (8B^2R)$. Let $\va' = (\lambda_1', \lambda_2', \lambda_1' \vx_1', \lambda_2' \vx_2') = \Pi_{\cP_\alpha}(\va - \eta G(\va))$. Then, since $\eta \le \gamma/(8B^2R)$, we have $\min\{\lambda_1', \lambda_2'\} \ge \gamma/(8BR)$, so $\va, \va' \in \cP_{\alpha}$. Since $L'$ is the Lipschitz constant of $G$ on $\cP_{\alpha}$, \Cref{lem:progress} implies
    \begin{equation}
        \ip{G(\va'), \va - \va'} \ge \frac{\eps^2 L'}{(B + 4RL')^2} \ge \frac{\eps^2\gamma^2}{\poly(B, L, R)} \ge \tilde\gamma. \tag*{\qedhere}
    \end{equation}
    % \todo{brian:fix the end of this proof}
\end{proof}
Following the remainder of the analysis of \Cref{theorem:maineither-precise} with $\gamma$ replaced by $\tilde\gamma$ yields \Cref{th:2p-nash-or-scce}.

It should be noted that~\Cref{th:2p-nash-or-scce} only applies for games with two players. This restriction is fundamental. Indeed, given any two-player game $\Gamma$, consider the $(n+1)$-player game $\Gamma'$ created by adding a player to $\Gamma$ whose utility is identically zero. Then the CCE gap for player $n+1$ is always zero, so $\Gamma'$ has no strict CCEs, but a Nash equilibrium of $\Gamma'$ would immediately also yield a Nash equilibrium of $\Gamma$. Thus, solving the $\eps$-Nash-or-strict-CCE problem for $n+1$ players is at least as hard as finding $n$-player Nash equilibria.
\section{Lower bounds}

We now turn to proving certain hardness results concerning MVIs. In \Cref{sec:gameshard}, we will show that deciding whether the Minty condition holds is \coNP-complete, with \coNP-hardness holding even for two-player concave games (\Cref{theorem:convexgame-hard}) or succinct multi-player (normal-form) games (\Cref{theorem:hardmintyconp}) when a constant error $\eps$ is allowed. Our results complement~\Cref{prop:Mintyeasy}, which showed that in \textit{explicitly} represented games there is a polynomial-time algorithm for that problem. In particular, \Cref{theorem:convexgame-hard,theorem:hardmintyconp} imply that determining the EVI that minimizes the equilibrium gap---that is, the strictest ACCE---is intractable.
% \Cref{sec:gameshard} shows that determining whether the Minty condition holds in VI problems is hard, even for \textit{succinct}, multi-player (normal-form) games with the polynomial expectation property is $\coNP$-complete (\Cref{theorem:hardmintyconp}); this complements~\Cref{prop:Mintyeasy} we saw earlier, which showed that in \textit{explicitly} represented games there is a polynomial-time algorithm for that problem. In particular, \Cref{theorem:hardmintyconp} implies that determining the EVI that minimizes the equilibrium gap---that is, the strictest ACCE---is intractable. In fact, \Cref{sec:hardtwoplayer} shows hardness even in two-player games and $\epsilon = \Theta(1)$. 
From the point of view of the duality exposed in~\Cref{prop:duality}, this hardness result is perhaps surprising: an EVI can always be computed in polynomial time through an adaptation of the ellipsoid against hope algorithm~\citep{Zhang25:Expected}---but the dual program turns out to be hard. \Cref{sec:simplehard} provides a simpler proof that deciding the Minty condition is hard---albeit not applicable to games---based on the hardness of deciding membership in the copositive cone. Finally, in~\Cref{sec:hardMVI}, we formalize a straightforward query hardness result for solving MVIs under the promise that the Minty condition holds.

\subsection{Existence of MVI solutions}
\label{sec:gameshard}

We first characterize the complexity of deciding whether a VI problem satisfies the Minty condition (\Cref{ass:Minty}). We will begin by showing membership in \coNP, then show hardness results even in the special case of games.
\begin{definition}
    The {\em $\eps$-approximate Minty decision problem} is the following. Given a VI problem $\VI(\cX, F)$ where $\cX$ is assumed to be bounded and in isotropic position and $F$ satisfies \Cref{ass:polyeval}, and a precision parameter $\eps > 0$, decide whether $(+)$ $\VI(\cX, F)$ satisfies the Minty condition, or $(-)$ an $\eps$-strict EVI solution exists. 
\end{definition}
\begin{proposition}\label{prop:conp minty inclusion}
    The $\eps$-approximate Minty decision problem is in \coNP.
\end{proposition}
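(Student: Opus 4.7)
The plan is to place the problem in \coNP\ by exhibiting, for any $(-)$-instance, a polynomial-size witness that can be verified in polynomial time. The witness will be a discretely-supported distribution $\mu = \sum_{i=1}^{d+2} w_i \delta_{\vx_i}$ with rational support points $\vx_i$ and rational weights $w_i$ of polynomial bit complexity. The verifier then proceeds as follows: (i) for each $i$, invoke the weak membership oracle of~\Cref{def:weakmem} to confirm $\vx_i \in \cX^{+\tau}$ for a prescribed polynomially small $\tau$; (ii) evaluate $F(\vx_i)$ exactly in polynomial time by~\Cref{ass:polyeval}; and (iii) solve the linear optimization problem $\max_{\vx' \in \cX^{-\tau}} \sum_i w_i \langle F(\vx_i), \vx' - \vx_i \rangle$ in polynomial time via the weak optimization oracle of~\Cref{def:weakopt}, accepting iff the optimum is at most $-\eps/2$.

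To show that such a witness always exists when an $\eps$-strict EVI does, I would apply two reductions to any $\eps$-strict EVI $\mu^\star$. First, since the $\eps$-strict EVI condition of~\Cref{def:EVIs} depends on $\mu^\star$ only through the $(d+1)$-dimensional moment $\bigl(\E_{\vx\sim\mu^\star} F(\vx),\ \E_{\vx\sim\mu^\star}\langle F(\vx), \vx\rangle\bigr)$, Carath\'eodory's theorem allows us to assume $|\supp(\mu^\star)| \le d+2$ without changing the maximum deviation value over $\cX$. Second, using the fact that $F$ is $L$-Lipschitz with $\|F\|\le B$ and that $\cX \subseteq \cB_R(\vec 0)$ with $R \le \poly(d)$ from the isotropic position assumption, I would perturb the at most $d+2$ support points and weights to rational approximants of polynomial bit complexity; choosing the perturbation magnitude $\poly(\eps, L^{-1}, R^{-1}, B^{-1})$ degrades the strictness parameter by at most $\eps/2$, yielding the desired rational $(\eps/2)$-strict EVI.

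The main obstacle is the rationalization step, since $\cX$ may contain only irrational points, so the support of $\mu^\star$ may need to be perturbed slightly outside $\cX$. This is why the witness operates in $\cX^{+\tau}$ rather than $\cX$, and the verifier's linear optimization is over $\cX^{-\tau}$; the standard rescaling from~\Cref{remark:-delta} translates a bound holding on $\cX^{-\tau}$ to one holding on all of $\cX$ up to additive error $O(R\|\sum_i w_i F(\vx_i)\|\tau/r)$, which is absorbed by taking $\tau$ polynomially small. Completeness follows because whenever an $\eps$-strict EVI exists, the construction above produces a valid witness; soundness follows because any accepted witness is itself, by~\Cref{prop:duality}, a certificate that the Minty condition is violated.
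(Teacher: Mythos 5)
Your proof is correct and follows essentially the same route as the paper's: Carath\'eodory's theorem (applied to the $(d+1)$-dimensional moment that the EVI constraint depends on) yields a polynomially supported witness, and verification reduces to a single linear optimization over $\cX$. The paper states this in two sentences; you supply the rationalization and weak-oracle details it leaves implicit, but the argument is the same.
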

\begin{proof}
    By Carath\'eodory's theorem on convex hulls, an $\eps$-approximate EVI solution can always have support $\poly(d)$. Further, an approximate EVI solution can be checked in polynomial time with a single call to a linear optimization oracle over $\cX$.
\end{proof}
Having established \coNP-membership, we now turn to hardness. 

\subsubsection{Hardness for two-player concave games}
\label{sec:hardtwoplayer}
To begin with, we show that deciding this problem is hard even for two-player games if the strategy sets $\cX_1, \cX_2$ are allowed to be arbitrary polytopes.

\begin{theorem}
    \label{theorem:convexgame-hard}
    Consider a problem $\VI(\cX_1 \times \cX_2, F)$ associated with a two-player concave game per~\eqref{eq:gameoperator} such that $F$ satisfies~\Cref{ass:polyeval}, where $\cX_1 \subseteq \R^{d_1}$ and $\cX_2 \subseteq \R^{d_2}$ are the strategy sets of the two players. Then the $\eps$-approximate Minty decision problem is \coNP-hard, even when $\cX_1, \cX_2$ are polytopes given by explicit linear constraints, the utility functions $u_1, u_2 : \cX_1 \times \cX_2 \to [-1, 1]$ are bilinear, $\eps$ is an absolute constant, and we are promised that if there is a Minty point then $(\vec 0, \vec 0) \in \cX_1 \times \cX_2$ is Minty.
\end{theorem}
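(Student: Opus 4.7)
The plan is to reduce from a gap version of 3SAT---specifically gap-\esat, whose constant-factor inapproximability follows from the PCP theorem---to the complement of the $\eps$-approximate Minty decision problem. Given a formula $\phi$ on $n$ variables with $m$ clauses, I will construct a two-player concave game $\Gamma_\phi$ with bilinear utilities $u_1(\vx, \vy) = \vx^\top \mat{A} \vy$ and $u_2(\vx, \vy) = \vx^\top \mat{B} \vy$ taking values in $[-1, 1]$, whose polytope strategy sets $\cX_1, \cX_2$ are specified by explicit linear constraints and both contain $\vec{0}$.

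The structural identity I exploit is that for bilinear utilities the induced VI operator is $F(\vx, \vy) = (-\mat{A}\vy, -\mat{B}^\top \vx)$, so a direct calculation shows that $(\vec{0}, \vec{0})$ is a Minty point of $\Gamma_\phi$ if and only if $\vx^\top (\mat{A} + \mat{B}) \vy \le 0$ for every $(\vx, \vy) \in \cX_1 \times \cX_2$. Under the promise in the theorem statement---which I will arrange to hold on every instance produced by the reduction---deciding whether $\Gamma_\phi$ admits any Minty point collapses to this bilinear non-positivity question. The remaining task is to design $(\cX_1, \cX_2, \mat{A}, \mat{B})$ so that $\max_{(\vx, \vy) \in \cX_1 \times \cX_2} \vx^\top (\mat{A} + \mat{B}) \vy$, once rescaled into $[-1, 1]$, is a constant-gap affine transformation of the maximum fraction of clauses of $\phi$ that are simultaneously satisfiable.

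The concrete construction uses a ``homogenization'' trick to simulate affine terms within strictly bilinear utilities. Each player's strategy space carries a scaling coordinate $t \in [0, 1]$ (resp.\ $s \in [0, 1]$), with the remaining coordinates lying in $t \cdot P_{\mathrm{asg}}$ (resp.\ $s \cdot \Delta_m$), where $P_{\mathrm{asg}}$ is a product-of-simplices polytope with one coordinate per literal encoding a fractional truth assignment. Both polytopes contain $\vec{0}$ as the vertex $t = 0$ (resp.\ $s = 0$); at full scale $t = s = 1$, the bilinear form reads off, up to normalization, whether $\vy$ can locate a clause left unsatisfied under the assignment $\vx$. Thus a satisfying assignment of $\phi$ produces $(\vx, \vy) \in \cX_1 \times \cX_2$ with $\vx^\top(\mat{A}+\mat{B})\vy$ strictly positive by an absolute constant; conversely, the NO instances of gap-\esat\ force $\vx^\top(\mat{A}+\mat{B})\vy \le 0$ everywhere, making $(\vec{0}, \vec{0})$ a genuine Minty point with a constant margin against any candidate $\eps$-strict EVI.

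The main obstacle is working within strict bilinearity, which forbids constant or single-argument linear terms; the homogenization trick sidesteps this by letting the bilinear terms that involve the two scaling coordinates (either coupled with each other, or coupled with the content coordinates of the opposing player) play the role of constants and affine offsets. A secondary subtlety is arranging the instance so that the theorem's promise is automatically satisfied---namely, that if any Minty point exists then $(\vec{0}, \vec{0})$ is one. I plan to handle this by symmetrizing $\mat{A}$ and $\mat{B}$ on the content coordinates (making the game essentially identical-interest there), so that the Minty condition at any candidate $(\vx^\star, \vy^\star)$ reduces to the one at the origin. The remaining checks---that $\mat{A}, \mat{B}$ have polynomial bit-complexity, that the gradient operators satisfy~\Cref{ass:polyeval}, and that the constant PCP gap survives rescaling into $[-1, 1]$---are routine.
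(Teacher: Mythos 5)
Your reduction has a genuine gap in the direction that maps satisfiable formulas to instances where the Minty condition fails. Establishing that $\max_{(\vx,\vy)} \vx^\top(\mat{A}+\mat{B})\vy$ exceeds a positive constant only shows that $(\vec 0,\vec 0)$ is not a Minty point; to place the instance in case $(-)$ of the decision problem you must rule out \emph{every} Minty point — equivalently, exhibit an $\eps$-strict EVI via \Cref{prop:duality}. You try to shortcut this with the promise, but the promise is a property your reduction must \emph{produce}, not one you may assume, so the argument is circular unless you independently prove that in your instances the existence of any Minty point forces $(\vec 0,\vec 0)$ to be Minty. Your proposed fix — symmetrizing so the game is identical-interest on the content coordinates — does not deliver this and in fact can break the reduction. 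Take $\cX_1=\cX_2=[0,1]$ and $u_1=u_2=xy$: the Minty condition at $(1,1)$ reads $x+y-2xy=x(1-y)+y(1-x)\ge 0$, which holds for all $x,y\in[0,1]$, so $(1,1)$ is a Minty point even though $\max xy=1>0$ and $(0,0)$ is \emph{not} Minty. Hence an identical-interest bilinear game can satisfy the Minty condition (at the maximizer of the bilinear form) precisely in the regime where your argument needs it to fail, and it simultaneously violates the promise.

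The paper's proof avoids both problems by making player 2's utility identically zero, so that the Minty condition for the whole game collapses to ``player 1 has an (approximately) dominant strategy'' — a condition quantified over \emph{all} candidate Minty points at once. The gadget with the scaling coordinates $s,t$ and the terms $(1-s)v-2(1-t)s$ is engineered so that in the NO case of bilinear optimization the added strategy is exactly dominant (hence $(\vec 0,\vec 0)$ is Minty), while in the YES case \emph{no} strategy is $\eps/4$-dominant, which yields an $\eps/4$-strict EVI and certifies the promise vacuously. To salvage your construction you would need an analogous global argument, e.g., degenerating one player's utility or directly bounding $\max_{(\vx^\star,\vy^\star)}\min_{(\vx,\vy)}\langle F(\vx,\vy),(\vx,\vy)-(\vx^\star,\vy^\star)\rangle$ above by a negative constant on YES instances; the bilinear non-positivity criterion at the origin alone does not suffice.
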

    We reduce from the following problem.
    \begin{restatable}[Bilinear optimization is hard]{lemma}{lemBilinearHard}\label{lem:bilinear-hard}
        There exists an absolute constant $\eps > 0$ for which the following problem is \NP-complete: given a bilinear map $f : [0, 1]^{d_1} \times [0, 1]^{d_2} \to [-1, 1]$ and target value $v \in [-1, 1]$, decide whether $(+)$ there exists $(\vx_1, \vx_2) \in [0, 1]^{d_1} \times [0, 1]^{d_2}$ such that $f(\vx_1, \vx_2) \ge  v + \eps$, or $(-)$ for all $(\vx_1, \vx_2) \in [0, 1]^{d_1} \times [0, 1]^{d_2}$, we have $f(\vx_1, \vx_2) \le v$.
    \end{restatable}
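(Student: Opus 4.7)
The plan is to establish this via a gap-preserving polynomial-time reduction from the \emph{bipartite} Max-2-SAT problem. Inclusion in \NP is immediate: since $f$ is linear in each argument separately, its maximum over the box is attained at a $\{0,1\}$-valued vertex, which serves as a polynomial-size witness that can be evaluated in polynomial time. For hardness, I plan to use H\aa stad's inapproximability of Max-2-SAT: there is an absolute constant $\alpha > 0$ for which distinguishing 2-CNF formulas in which all $m$ clauses are simultaneously satisfiable from those in which at most $(1-\alpha)m$ are is \NP-hard. A standard gap-preserving bipartization---first reducing to bounded-occurrence Max-2-SAT via expander-replacement gadgets, then splitting each variable into a left/right copy tethered by constant-weight equality clauses---transfers this absolute-constant gap to bipartite Max-2-SAT, in which every clause contains exactly one literal over a left variable set $V_1$ and one over a right variable set $V_2$.

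Given such a bipartite 2-CNF instance $\phi$ with clauses $C_k = \ell_{k,1} \vee \ell_{k,2}$, I will construct the map
\begin{equation*}
    f(\vx_1, \vx_2) = \frac{1}{m}\sum_{k=1}^m\bigl(\ell_{k,1}(\vx_1) + \ell_{k,2}(\vx_2) - \ell_{k,1}(\vx_1)\,\ell_{k,2}(\vx_2)\bigr),
\end{equation*}
where $\ell_{k,j}(\vx)$ denotes the coordinate $\vx[i]$ or $1 - \vx[i]$ according to the literal's polarity. Each summand is the multilinear extension $1 - (1-\ell_{k,1})(1-\ell_{k,2})$ of the clause's truth value and lies in $[0,1]$ whenever $\ell_{k,1}, \ell_{k,2} \in [0,1]$, so $f$ indeed maps the box into $[0,1] \subseteq [-1,1]$. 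The function is affine-bilinear; a standard homogenization introducing dummy coordinates $\vx_1[0], \vx_2[0]$ together with a dominating reward $\vx_1[0]\vx_2[0]$ forces both to $1$ at the optimum and converts $f$ into a strictly bilinear form without affecting the gap. At any $\{0,1\}$-valued vertex, the value of $f$ equals precisely $\textrm{MAX-SAT}(\phi)/m$. A YES instance of the bipartite gap problem thus yields $\max f = 1$, while a NO instance yields $\max f \le 1-\alpha$; setting $v = 1 - \alpha/2$ and $\eps = \alpha/2$ completes the reduction.

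The main obstacle is verifying that the bipartization step preserves an absolute-constant gap: the naive variable-doubling reduction with heavy equality clauses shrinks the gap polynomially in $n$, because the total number of clauses (dominated by equality clauses) grows while the "interesting" fraction stays constant. Restricting first to bounded-occurrence Max-2-SAT is what saves us, since then each variable needs only a constant number of equality clauses to enforce consistency without dominating the original objective, and gap-preserving expander-based degree reduction for Max-2-SAT is classical. A cleaner (albeit less elementary) alternative is to invoke the PCP theorem directly in its bipartite 2-CSP form over a boolean alphabet, where the gap is natively an absolute constant, and to express the resulting constraints as bipartite 2-CNF clauses via standard gadget encoding.
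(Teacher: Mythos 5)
Your reduction is workable in outline but takes a genuinely different route from the paper's, and it contains one concrete error plus one step that carries most of the technical burden. The error: you invoke H\aa stad as saying it is \NP-hard to distinguish 2-CNF formulas in which \emph{all} $m$ clauses are simultaneously satisfiable from those in which at most $(1-\alpha)m$ are. That distinguishing problem is in \P, because 2-SAT satisfiability is polynomial-time decidable; the correct gap version of MAX-2-SAT hardness necessarily has imperfect completeness (distinguish ``at least $cm$ clauses satisfiable'' from ``at most $(c-\alpha)m$ satisfiable'' for absolute constants $c<1$ and $\alpha>0$). Consequently your claim that a YES instance yields $\max f = 1$ is false, and the parameters $v = 1-\alpha/2$, $\eps=\alpha/2$ must be replaced by, say, $v = c-\alpha$ and $\eps = \alpha$. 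This is a routine fix that does not change the structure of your argument.

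The more substantive difference is architectural. You reduce to \emph{bipartite} MAX-2-SAT and take $f$ to be the average of the multilinear clause extensions $\ell_{k,1}+\ell_{k,2}-\ell_{k,1}\ell_{k,2}$; this forces you to first establish gap-preserving hardness of bipartite MAX-2-SAT (bounded-occurrence reduction via expanders, then variable splitting with equality clauses), which you only sketch and which is where essentially all the work lies. The paper sidesteps bipartization entirely: it keeps the assignment variables $\vx$ on one side and introduces a \emph{clause-selector} variable $\vc_j$ per clause on the other side, with $j$th summand $(\vc_0-\vc_j)\,\ell_{j1}(\vx)+\vc_j\,\ell_{j2}(\vx)$; maximizing over $\vc_j\in\{0,1\}$ picks out a true literal of clause $j$ if one exists, so $\max_{\vc} f(\vc,\vx)$ equals exactly the fraction of clauses satisfied by $\vx$, and the standard (non-bipartite) MAX-2-SAT inapproximability applies directly. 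If you keep your route, you must actually carry out the bipartization with an absolute-constant gap or invoke a bipartite 2-CSP form of the PCP theorem as you suggest; otherwise the selector-variable trick eliminates that step entirely.
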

    This result is easy to show via reduction from MAX-2-SAT; in the interest of completeness, we include a proof in \Cref{appendix:proofs}.
    
    \begin{proof}[Proof of \Cref{theorem:convexgame-hard}]
        
    We will reduce from bilinear optimization. Given an instance $(f, v)$, construct the following two-player game. $\cX_1 = \{ (\vx_1, s) \in [0, 1]^{d_1} \times [0, 1] : \vec 0 \le \vx \le \vec 1 s\}$, $\cX_2$ is defined similarly, and the utility functions are given by 
    \begin{align*}
        u_2((\vx_1, s), (\vx_2, t)) = 0, \qq{and}
        u_1((\vx_1, s), (\vx_2, t)) = f(\vx_1, \vx_2) + (1-s)v - 2(1-t) s.
    \end{align*}
    \newcommand{\tv}[1]{\tilde{\vec #1}}
    Notationally, we will use $\tilde\vx = (\vec x, s)$ and $\tilde\vy= (\vec y, t)$. 
    This corresponds to taking the normal-form game in which each player's strategy set is $\{0, 1\}^{d_i}$ and P1's utility function is $f$, and adding to it one strategy $\tv0$ for each player such that $u_1(\tv0, \tilde\vx_2) = v$ for all $\tilde\vx_2$, and $u_1(\tilde\vx_1, \tv0) = -2$ for all pure strategies (vertices of $\cX_1$) $\tilde\vx_1 \ne \tv0$. Let $F$ be the operator corresponding to this game. Then deciding if $\VI(\cX_1 \times \cX_2, F)$ satisfies the Minty condition amounts to deciding whether P1 has a dominant strategy.
    
    % Suppose that $\tilde\vx_1 := (\vec x, s)$ is dominant, \ie, $\VI(\cX_1 \times \cX_2, F)$ satisfies the Minty condition. Then $u_1(\tilde\vx_1, \tilde\vx_2) \le v - \eps$ for every $(\vec x, \vec y)$, which is true if and only if $\vec x^\top \mat A \vec y \le v - \eps$ for every $(\vx_1, \vx_2)$. 

   Suppose first that $f(\vx_1, \vx_2) \le v$ for every $(\vx_1, \vx_2)$. Then we see that $\tv0$ is dominant; indeed, $u_1(\tv0, \tilde\vx_2) = v  \ge u_1(\tilde\vx_1, \tilde\vx_2)$ for every $(\tilde\vx_1, \tilde\vx_2)$. Thus, in this case, $\VI(\cX_1 \times \cX_2, F)$ satisfies the Minty condition.   Conversely, suppose that there is some $(\vx_1^*, \vx_2^*)$ such that $f(\vx_1^*, \vx_2^*) \ge v + \eps$. We claim  here that P1 has no $\eps/4$-approximately dominant (hereafter $\eps/4$-dominant) strategy. Suppose for contradiction that $\tilde\vx_1 := (\vx_1, s)$ were $\eps/4$-dominant. Then in particular we have \[v(1-s)-2s = u_1(\tilde\vx_1, \tv0) \ge u_1(\tv0, \tv0) - \eps/4 = v-\eps/4,\] so $s \le \eps/4$. But then we have
   \begin{equation*}
       u_1(\tilde\vx_1, (\vx_2^*, 1)) \le s + (1-s) v \le v + \eps/2 < u_1((\vx_1^*, 1), (\vx_2^*, 1)) - \eps/4,
   \end{equation*}
   so $\tilde\vx_1$ is not $\eps/4$-dominant. Thus, in this case, $\VI(\cX_1 \times \cX_2, F)$ admits an $\eps/4$-strict EVI solution.
    \end{proof}
    
\subsubsection{Hardness for multi-player normal-form games}
\label{sec:hardsuccinct}

Next, we show that, even if the games are normal form, that is, each player's strategy set $\cX_i$ is a simplex $\Delta(\cA_i)$, deciding the Minty condition becomes hard when the number of players is large.
\begin{theorem}
    \label{theorem:hardmintyconp}
    Consider a problem $\VI(\cX, F)$ associated with a multi-player normal-form game per~\eqref{eq:gameoperator} such that $F$ satisfies~\Cref{ass:polyeval}.  Then the $\eps$-approximate Minty decision problem is \coNP-hard, even when each player has two actions, $\eps$ is an absolute constant, and ``everyone plays their first action'' is a Minty solution if a Minty solution exists. 
\end{theorem}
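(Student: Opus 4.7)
The plan is to reduce from 3-SAT in the spirit of \Cref{theorem:convexgame-hard}, replacing the bilinear objective there by a multilinear polynomial over all players' actions. Given a 3-CNF formula $\phi$ on $n$ variables, let $p \colon [0,1]^n \to [0,1]$ denote the standard multilinear extension of its satisfaction indicator: since each clause has constant arity, $p$ factors as a product of $O(m)$ low-degree terms, and both $p$ and its partial derivatives are evaluable in polynomial time. The reduction outputs a normal-form game on $n$ players (up to a polynomial padding of $\phi$ discussed below), each with two actions $\{0,1\}$, where action $1$ is designated as the first action so that ``everyone plays their first action'' corresponds to $\vec 1$. Player $i$ receives utility
\[
    u_i(\vx) \;=\; \vx_i\bigl(\beta - 2\, p(1, \vx_{-i})\bigr)
\]
for a fixed constant $\beta \in (0,2)$, where $(1, \vx_{-i})$ denotes the evaluation of $p$ with its $i$th coordinate fixed to $1$. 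This $u_i$ is multilinear in $\vx$, and $u_i$ together with $\grad u_i$ is polynomial-time computable, so \Cref{ass:polyeval} is satisfied.

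By multilinearity, the Minty condition at $\vec 1$ reduces on pure profiles to
\[
    \sum_{i\,:\,\vx'_i = 0}\bigl(\beta - 2\, p(1, \vx'_{-i})\bigr) \;\ge\; 0 \qquad \forall\,\vx'\in\{0,1\}^n.
\]
If $\phi$ is unsatisfiable then $p \equiv 0$ and this collapses to $\beta\,|\{i \colon \vx'_i = 0\}| \ge 0$, so $\vec 1$ is Minty; moreover the VI operator becomes linear in this regime, and a short argument shows $\vec 1$ is in fact the \emph{unique} Minty solution, respecting the promise. If $\phi$ is satisfiable, a suitable single-flip test profile $\vx'$ (whose existence is arranged by the auxiliary padding below) drives the sum to at most $\beta - 2 < 0$, so $\vec 1$ is not Minty. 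Taking $\mu$ to be the uniform distribution over such single-flip test profiles and evaluating the bilinear expected-gain expression against each deviation yields an $\Omega(1)$-strict EVI; by \Cref{prop:duality} this simultaneously certifies satisfiability of $\phi$ and precludes any Minty solution anywhere in the game, so the promise is vacuously met in the satisfiable case.

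The hard part will be the auxiliary padding: we must replace $\phi$ by a polynomial-size equisatisfiable formula $\phi'$ whose structure guarantees that the prescribed uniform-over-single-flip distribution has constant-gap EVI regardless of where the satisfying assignments of $\phi$ lie, and that the only candidate Minty point in the unsatisfiable case remains $\vec 1$. This is the multi-player analogue of the ``switch''-variable gadget of \Cref{theorem:convexgame-hard}, adding a constant-overhead clause gadget per original variable of $\phi$ to enforce the required single-flip witness structure while preserving both NP-hardness and the promise; verifying that the strictness parameter survives for every deviation---\emph{i.e.}, that $\eps$ really is an absolute constant, independent of $n$---is the main bookkeeping step in the reduction.
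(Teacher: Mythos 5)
Your proposal has two genuine gaps, and they sit at the core of the construction rather than in the deferred ``padding.'' First, the mapping $p$ you use is not polynomial-time evaluable: the multilinear extension of the satisfaction indicator of a 3-CNF formula does \emph{not} factor as a product of per-clause terms (clauses share variables, so the product of the clause extensions is not multilinear and is not equal to the extension of the conjunction), and evaluating the true multilinear extension at a general point is \#P-hard (at $\vx=(1/2,\dots,1/2)$ it equals $\#\mathrm{SAT}(\phi)/2^n$). So either you take $p$ to be the true multilinear extension, in which case \Cref{ass:polyeval} fails and the game is not a valid succinct normal-form game, or you take the computable product of clause extensions, in which case $u_i$ is not multilinear, the object is no longer a normal-form game per~\eqref{eq:gameoperator}, and your reduction of the Minty check to pure profiles (which relies on multilinearity to restrict to vertices) is invalid. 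Second, the soundness direction is not established: when $\phi$ is satisfiable, your ``single-flip test profile'' only produces a term $\beta - 2p(\va^*)=\beta-2<0$ if the satisfying assignment $\va^*$ is Hamming-adjacent to $\vec 1$; for a general $\va^*$ the sum $\sum_{i:\vx_i'=0}(\beta-2p(1,\vx'_{-i}))$ mixes one negative term with up to $n-1$ terms equal to $+\beta$, so it need not be negative, and no equisatisfiable padding can force a satisfying assignment adjacent to $\vec 1$ without trivializing satisfiability. The claimed $\Omega(1)$-strict EVI (needed both for \coNP-hardness of the \emph{approximate} decision problem and, via \Cref{prop:duality}, to verify the promise that no Minty point exists anywhere in the satisfiable case) is asserted but never verified against arbitrary deviations.

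The paper's proof avoids both obstacles by a different design: it reduces from constant-gap bilinear optimization (\Cref{lem:bilinear-hard}), assigns the two variable blocks to two groups of binary-action players plus two ``switch'' players $s,t$, and---crucially---gives nonzero utility to \emph{only one} player, $u_s(\vx_1,\vx_2,s,t)=f(\vx_1,\vx_2)\,st+(1-s)v-2(1-t)s$. Because all other components of $F$ vanish, $\langle F(\vx'),\vx'-\vx\rangle$ collapses to player $s$'s deviation benefit, so the Minty condition is exactly equivalent to $s$ having a dominant strategy; the degree-$4$ multilinear utility is trivially polynomial-time evaluable, and the constant additive gap is inherited directly from the hardness of bilinear optimization. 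If you want to salvage your approach, the lesson to import is to keep the objective of bounded multilinear degree and to concentrate the nonzero utility in a single player so that the universal quantifier in the Minty condition becomes a dominance statement with a checkable constant gap.
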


\begin{proof}
    %\todo{this is a proof sketch of my idea -brian}
    We again reduce from bilinear optimization (\Cref{lem:bilinear-hard}). Given a bilinear map $f : [0,1]^{d_1} \times [0,1]^{d_2} \to [-1, +1]$, create a game as follows. There are $n := d_1 + d_2 + 2$ players, and two actions per player. We will use $\vx_1[i] \in [0, 1]$ to denote the mixed strategy of Player $i \in \{1, \dots, d_1\}$, $\vx_2[i] \in [0, 1]$ to denote the mixed strategy of $i + d_1 \in \{d_1 + 1, \dots, d_1 + d_2\}$, and $s, t \in [0, 1]$ to denote the mixed strategy of the remaining two players. We will overload notation and also use $s, t$ to refer to these two final players. The utility of every player except player $s$ is $0$. The utility of player $s$ is given by
    \[
    u_s(\vx_1, \vx_2, s, t) =  f(\vx_1, \vx_2) \cdot st + (1-s)v - 2(1-t)s.
    \]
    (This is precisely the utility function given in the previous proof, except reparameterized.) The proof now proceeds similarly to the proof of the previous result. Since only $s$ has nonzero utility, the Minty condition is equivalent to $s$ having a dominant strategy.

    Suppose first that $f(\vx_1, \vx_2) \le v$ for every $(\vx_1, \vx_2)$. Then we also have $u_s(\vx_1, \vx_2, s, t) \le v = u_s(\vx_1, \vx_2, 0, t)$ for all ($\vx_1, \vx_2, s, t)$, so $s=0$ is dominant. Conversely, suppose that there is some $(\vx_1^*, \vx_2^*)$ such that $f(\vx_1^*, \vx_2^*) \ge v + \eps$. Suppose for contradiction that $s \in [0, 1]$ is $\eps/4$-dominant. Then \[
    u_s(\vx_1^*, \vx_2^*, s, 0) = v(1-s) - 2s \ge u_1(\vx_1^*, \vx_2^*, 0, 0) - \eps/4 = v - \eps/4,
    \]
    so $s \le \eps/4$. But then, once again, we have
    \[
    u_s(\vx_1^*, \vx_2^*, s, 1) \le s + (1-s) v \le v + \eps/2 < u_1(\vx_1^*, \vx_2^*, 1, 1) - \eps/4,
    \]
    so $s$ is not $\eps/4$-dominant. Thus, in this case, $\VI(\cX_1 \times \cX_2, F)$ admits an $\eps/4$-strict EVI solution.
\end{proof}

To put this into context, we saw earlier in~\Cref{prop:Mintyeasy} that in explicitly given normal-form games, there is a polynomial-time algorithm---one whose running time scales polynomially in $\prod_{i=1}^n |\cA_i|$---that decides whether the Minty property holds. As such, \Cref{theorem:hardmintyconp} separates the complexity of deciding the Minty condition under succinct descriptions from that under explicitly-represented games.

\subsubsection{Hardness for MVI membership beyond games}
\label{sec:simplehard}

Beyond VI problems induced by games, treated in~\Cref{sec:hardtwoplayer,sec:hardsuccinct}, there is an immediate, simpler hardness argument for deciding MVI membership based on the complexity of deciding membership to the \emph{copositive cone}~\citep{Dickinson14:Computational,Murty87:Some}. In particular, a matrix $\mat{A} \in \R^{d \times d}$ is said to be copositive if for all nonnegative vectors $\vx \in \R^d$, we have $\langle \vx, \mat{A} \vx \rangle \geq 0$; this is a $\coNP$-complete problem. We can then construct the mapping $F : [0, 1]^d \ni \vx \mapsto \vx (\langle \vx, \mat{A} \vx \rangle)$. Then, verifying whether  $\vec{0} \in [0, 1]^d$ is an MVI solution is equivalent to checking whether $\mat{A}$ is copositive.

\subsection{Solving Minty VIs}
\label{sec:hardMVI}

Another natural question is whether one can compute Minty VI solutions---as opposed to SVI solutions, treated in~\Cref{theorem:mainMinty-precise}---in polynomial time under the promise that the set of MVI solutions is non-empty. In this subsection, we show lower bounds on this problem in both $\eps$ and $d$. The upcoming lower bounds are straightforward; it is likely that they have appeared elsewhere, but we include them for completeness since we are not aware of an explicit reference.

\subsubsection{Lower bound on \texorpdfstring{$\epsilon$}{epsilon} in one dimension}

Our next hardness result gives an exponential lower bound (in $\log(1/\epsilon)$) for that problem, even in a single dimension.

For a rational $\epsilon \ll 1$, we define 
\begin{equation}
    \label{eq:basefun}
    \phi_\epsilon : [\epsilon, 3 \epsilon] \ni x \mapsto (x - \epsilon)^2 (x - 3 \epsilon)^2 ( x^2(x^2 - 8\epsilon^2) - 1).
\end{equation}

We begin with an elementary calculation that characterizes the derivative of $\phi_\epsilon$.

\begin{claim}
    \label{claim:deriv}
    For any $x \in (\epsilon, 2\epsilon)$ it holds that $\phi'_\epsilon(x) < 0$, whereas for any $x \in (2\epsilon, 3\epsilon)$ it holds that $\phi_\epsilon'(x) > 0$. In particular, $\phi_\epsilon$ obtains its minimum at $x = 2 \epsilon$, with $\phi_\epsilon(2\epsilon) = - \epsilon^4 ( 1 + 16 \epsilon^4)$.
\end{claim}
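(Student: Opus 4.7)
The plan is to factor $\phi'_\epsilon$ so that the sign on $(\epsilon, 3\epsilon)$ is controlled entirely by $(x - 2\epsilon)$. Writing $\phi_\epsilon(x) = p(x)\,q(x)$ with $p(x) \defeq (x-\epsilon)^2 (x-3\epsilon)^2$ and $q(x) \defeq x^4 - 8\epsilon^2 x^2 - 1$, the product rule gives
\begin{equation*}
    \phi'_\epsilon(x) = p'(x)\, q(x) + p(x)\, q'(x).
\end{equation*}
The key observation is that both $p'$ and $q'$ carry an explicit factor of $(x - 2\epsilon)$: since $\epsilon$ and $3\epsilon$ are double roots of $p$, a direct computation yields $p'(x) = 4(x-\epsilon)(x-3\epsilon)(x-2\epsilon)$; and since $q$ is a quadratic in $x^2$, we also have $q'(x) = 4x(x-2\epsilon)(x+2\epsilon)$.

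Pulling out the common factor $4(x-2\epsilon)$, I would arrive at
\begin{equation*}
    \phi'_\epsilon(x) = 4\,(x - 2\epsilon)\,(x-\epsilon)(x-3\epsilon)\, R(x),
\end{equation*}
where $R(x) \defeq q(x) + x(x+2\epsilon)(x-\epsilon)(x-3\epsilon)$. Expanding this out gives the quartic $R(x) = 2x^4 - 2\epsilon x^3 - 13\epsilon^2 x^2 + 6\epsilon^3 x - 1$. The next step is to bound $R$ on $[\epsilon, 3\epsilon]$: since $x \le 3\epsilon$, each non-constant monomial is of order $O(\epsilon^4)$, so a crude term-by-term estimate yields $|R(x) + 1| \le C\epsilon^4$ for an absolute constant $C$, and hence $R(x) < 0$ throughout the interval for all sufficiently small $\epsilon$ (which is our standing regime $\epsilon \ll 1$).

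With these two facts in hand, the claim follows immediately. On $(\epsilon, 3\epsilon)$ the factor $(x-\epsilon)(x-3\epsilon)$ is strictly negative, and $R(x)$ is strictly negative, so their product is strictly positive; thus $\sgn \phi'_\epsilon(x) = \sgn(x-2\epsilon)$, which yields both sign statements. The minimum of $\phi_\epsilon$ on $[\epsilon, 3\epsilon]$ therefore occurs at $x = 2\epsilon$, and $\phi_\epsilon(2\epsilon) = \epsilon^2 \cdot \epsilon^2 \cdot (4\epsilon^2(-4\epsilon^2) - 1) = -\epsilon^4(1 + 16\epsilon^4)$ by direct substitution. The only step that requires a bit of care is the sign of $R(x)$; everything else reduces to polynomial bookkeeping once the shared $(x-2\epsilon)$ factor has been pulled out.
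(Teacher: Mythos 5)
Your proof is correct and follows essentially the same route as the paper: apply the product rule and show that every piece of $\phi'_\epsilon$ carries the sign of $(x-2\epsilon)$ on $(\epsilon,3\epsilon)$. The only difference is organizational—you pull the common factor $4(x-2\epsilon)(x-\epsilon)(x-3\epsilon)$ out explicitly and bound the residual quartic $R(x)<0$ quantitatively, whereas the paper argues the sign of the two groups of terms separately; your version is, if anything, slightly more explicit about the smallness-of-$\epsilon$ requirement.
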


\begin{proof}
    The derivative $\phi'_\epsilon(x)$ can be expressed as
    \begin{equation*}
         2(x - \epsilon) (x - 3 \epsilon)^2 (x^2(x^2 - 8\epsilon^2) - 1) + 2(x-\epsilon)^2 (x - 3 \epsilon) (x^2 (x^2 - 8 \epsilon^2) - 1) + (x - \epsilon)^2(x - 3 \epsilon)^2 (4x^3 - 16 x \epsilon^2).
    \end{equation*}
    For $x \in (\epsilon, 3 \epsilon)$, we have
    \begin{equation*}
        2(x - \epsilon)(x - 3 \epsilon)^2 (x^2(x^2 - 8\epsilon^2) - 1) < - 2(x-\epsilon)^2 (x - 3 \epsilon) (x^2 (x^2 - 8 \epsilon^2) - 1) \iff (3 \epsilon - x) >  (x - \epsilon )
    \end{equation*}
    and
    \begin{equation*}
        (x - \epsilon)^2(x - 3 \epsilon)^2 (4x^3 - 16 x \epsilon^2) \leq 0 \iff x \leq 2 \epsilon.
    \end{equation*}
\end{proof}
Now, let $\alpha \in [-\epsilon, 1 - 3 \epsilon]$. We extend~\eqref{eq:basefun} as follows.
\begin{equation*}
    f_{\epsilon, \alpha}: [0, 1] \ni x \mapsto 
    \begin{cases}
        \phi_\epsilon(x - \alpha) & \text{if } \epsilon \leq x - \alpha \leq 3 \epsilon, \\
        0 & \text{otherwise}.
    \end{cases}
\end{equation*}

\begin{figure}[ht] % Figure environment for captioning
    \centering % Centering the figure
    \begin{tikzpicture}
        \begin{axis}[
            samples=200,
            domain=0:1,
            ymin=-0.0002, ymax=0.0002, % Set y-axis limits
            width=12cm, height=4cm, % Adjust aspect ratio (wider x-axis, smaller y-axis)
            axis x line=middle,
            axis y line=middle,
            xlabel={$x$},
            ylabel={$f_{\epsilon, \alpha}(x)$},
            xtick={0.1,0.2,0.3},
            xticklabels={},
            ytick=\empty,
            enlargelimits=true
        ]
            \addplot[blue, very thick, domain=0.1:0.3] 
            { (x - 0.1)^2 * (x - 0.3)^2 * (x^2*(x^2 - 8*0.1^2) - 1) };
            \addplot[blue, very thick, domain=0:0.1] {0};
            \addplot[blue, very thick, domain=0.3:1] {0};
            
            % Manually add labels above x-axis
            \node[anchor=south] at (axis cs:0.1,0) {$\epsilon$};
            \node[anchor=south] at (axis cs:0.2,0) {$2\epsilon$};
            \node[anchor=south] at (axis cs:0.3,0) {$3\epsilon$};
        \end{axis}
    \end{tikzpicture}
    \caption{The function $f_{\epsilon, \alpha}(x)$, with $\epsilon = 0.1$ and $\alpha = 0$, over the domain $[0,1]$; the y-axis is at a larger scale for the sake of the illustration.}
    \label{fig:feps_alpha}
\end{figure}
An example of $f_{\epsilon, \alpha}$ is illustrated in~\Cref{fig:feps_alpha}. We then define
\begin{equation}
    \label{eq:Fepsalpha}
    F_{\epsilon, \alpha}: [0, 1] \ni x \mapsto 
    \begin{cases}
        \phi'_\epsilon(x - \alpha) & \text{if } \epsilon < x - \alpha < 3 \epsilon, \\
        0 & \text{otherwise}.
    \end{cases}
\end{equation}

As we show next, the induced VI problem satisfies the Minty condition and is also Lipschitz continuous.

\begin{lemma}
    $\VI([0,1], F_{\epsilon, \alpha})$ satisfies the Minty condition. Furthermore, $F_{\epsilon, \alpha}$ is $O(\epsilon^2)$-Lipschitz continuous.
\end{lemma}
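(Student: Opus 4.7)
The plan is to handle the two assertions separately. For the Minty condition, I would simply exhibit the candidate solution $x^\star = \alpha + 2\epsilon$, which lies in $[0,1]$ since $\alpha \in [-\epsilon, 1 - 3\epsilon]$. The sign analysis already performed in the preceding claim says that $\phi'_\epsilon(y) < 0$ on $(\epsilon, 2\epsilon)$ and $\phi'_\epsilon(y) > 0$ on $(2\epsilon, 3\epsilon)$, so after the shift by $\alpha$ we have $F_{\epsilon,\alpha}(x) \leq 0$ on $[0, \alpha + 2\epsilon]$ and $F_{\epsilon,\alpha}(x) \geq 0$ on $[\alpha + 2\epsilon, 1]$, while $F_{\epsilon,\alpha}$ vanishes outside $[\alpha+\epsilon,\alpha+3\epsilon]$. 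In every case the product $F_{\epsilon,\alpha}(x)(x - x^\star)$ is nonnegative, verifying~\eqref{eq:MVI} at $x^\star$.

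For the Lipschitz bound, the key observation is that $(y-\epsilon)$ and $(y-3\epsilon)$ appear as \emph{double} roots in $\phi_\epsilon$, so $\phi_\epsilon$, $\phi'_\epsilon$, and therefore $F_{\epsilon,\alpha}$ itself all vanish at the endpoints $y = \epsilon$ and $y = 3\epsilon$. Thus $F_{\epsilon,\alpha}$ is continuous on $[0,1]$ and differentiable away from those two endpoints, with derivative $\phi''_\epsilon(x-\alpha)$ on the interior of $[\alpha+\epsilon,\alpha+3\epsilon]$ and $0$ elsewhere. The Lipschitz constant therefore boils down to a uniform bound on $|\phi''_\epsilon|$ on $[\epsilon,3\epsilon]$, which I would estimate via the factorization $\phi_\epsilon = g \cdot h$ with $g(y) \defeq (y-\epsilon)^2(y-3\epsilon)^2$ and $h(y) \defeq y^4 - 8\epsilon^2 y^2 - 1$. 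Straightforward bounds yield $g = O(\epsilon^4)$, $g' = O(\epsilon^3)$, $g'' = O(\epsilon^2)$, while $h = O(1)$, $h' = O(\epsilon^3)$, $h'' = O(\epsilon^2)$; expanding $\phi''_\epsilon = g''h + 2g'h' + gh''$ gives $|\phi''_\epsilon(y)| = O(\epsilon^2)$ on $[\epsilon, 3\epsilon]$.

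Finally, I would tidy up the boundary: for two points $x_1, x_2 \in [0,1]$ with $x_1$ inside the support of $F_{\epsilon,\alpha}$ and $x_2$ outside, using $F_{\epsilon,\alpha}(x_2) = 0$ and $\phi'_\epsilon(\epsilon) = \phi'_\epsilon(3\epsilon) = 0$ together with the mean value theorem gives $|F_{\epsilon,\alpha}(x_1) - F_{\epsilon,\alpha}(x_2)| \leq O(\epsilon^2) \cdot |x_1 - x_2|$. I do not expect any genuine obstacle here; the slightly delicate point is just verifying that the two $O(\epsilon^6)$ cross- and end-terms in $\phi''_\epsilon$ are indeed dominated by the $g''h$ contribution, which is an entirely routine estimate.
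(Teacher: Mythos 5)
Your proposal is correct and follows essentially the same route as the paper: the same witness $x^\star = \alpha + 2\epsilon$ verified via the sign analysis of $\phi'_\epsilon$, and the same case split at the boundary of the support for the Lipschitz bound. The only difference is cosmetic--where the paper expands $\phi'_\epsilon$ into an explicit degree-$7$ polynomial and reads off the $O(\epsilon^2)$ slope, you bound $\phi''_\epsilon = g''h + 2g'h' + gh''$ by the product rule, which is a cleaner way to see that the $g''h = O(\epsilon^2)$ term dominates the two $O(\epsilon^6)$ terms.
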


\begin{proof}
    We claim that $x \defeq \alpha + 2\epsilon$ is a solution to the Minty VI problem with respect to $F_{\epsilon, \alpha}$. Indeed, consider any $x' \neq x$. When $x' - \alpha \geq 3 \epsilon$ or $x' - \alpha \leq \epsilon$, it follows that $F_{\epsilon, \alpha}(x')(x' - x) = 0$. Suppose $x' - \alpha \in (\epsilon, 2 \epsilon)$. By~\Cref{claim:deriv}, we have $F_{\epsilon, \alpha}(x') < 0$ while $x' - x < 0$, in turn implying that $F_{\epsilon, \alpha}(x')(x' - x) > 0$. Similarly, when $x' - \alpha \in (2\epsilon, 3 \epsilon)$, we have $F_{\epsilon, \alpha}(x') > 0$ while $x' - x > 0$; this shows that $x = \alpha + 2 \epsilon$ is indeed a solution to the Minty VI problem.

    We continue with the argument that $F_{\epsilon, \alpha}$ is Lipschitz continuous. Let $x, x' \in [0, 1]$ such that $x < x'$. We consider the following cases:

    \begin{itemize}
        \item If $x \leq \alpha + \epsilon$ and $x' \geq \alpha + 3 \epsilon$, it follows that $F_{\epsilon, \alpha}(x) = F_{\epsilon, \alpha}(x') = 0$.
        \item If $x \leq \alpha + \epsilon$ and $x' \in (\alpha + \epsilon, \alpha + 3 \epsilon)$, it suffices to show that $| \phi'_\epsilon(x' - \alpha)| \leq L |x' - \epsilon |$ since $|x' - \epsilon| \leq |x' - x|$. By the definition of $\phi_\epsilon'$, this holds with $L = O(\epsilon^2)$.
        \item If $x \in (\alpha + \epsilon, \alpha + 3 \epsilon)$ and $x' \geq \alpha + 3 \epsilon$, it suffices to show that $| \phi_\epsilon'(x - \alpha)| \leq L | x - 3 \epsilon|$ since $|x - 3 \epsilon| \leq |x - x'|$. This again holds with $L = O(\epsilon^2)$.
        \item Finally, we treat the case where $x, x' \in (\alpha + \epsilon, \alpha + 3 \epsilon)$. We can expand $\phi'_\epsilon(x)$ as $24 \epsilon^3 - x(144 \epsilon^6 + 44 \epsilon^2) + x^2(576 \epsilon^5 + 24 \epsilon) - x^3 (668 \epsilon^4 + 4) + 200 \epsilon^3 x^4 + 84 \epsilon^2 x^5 - 56 \epsilon x^6 + 8 x^7$. From this expression it is easy to see that $| \phi_\epsilon'(x - \alpha) - \phi'_\epsilon(x' - \alpha) | \leq L |x - x'|$ for some $L = O(\epsilon^2)$.
    \end{itemize}
\end{proof}

It is worth noting that identifying an exact \emph{Stampacchia} VI solution to $\VI([0, 1], F_{\epsilon, \alpha})$ is trivial: any point outside the region $(\alpha + \epsilon, \alpha + 3 \epsilon)$ suffices since $F_{\epsilon, \alpha}$ is defined as $0$ in such points. On the other hand, any algorithm that outputs a non-trivial approximation to the global minimum of $f$ needs to output a point in the region $(\alpha + \epsilon, \alpha + 3 \epsilon)$. Now, if $\alpha$ is selected initially unbeknownst to the algorithm, it follows that any algorithm that succeeds with constant probability needs to submit $\Omega(1/\epsilon) = \Omega( 2^{\log(1/\epsilon)})$ queries to the evaluation oracle for $F$. We arrive at the following information-theoretic lower bound.

\begin{proposition}
    \label{theorem:Mintyexp}
    For any sufficiently small $\epsilon > 0$, any algorithm that computes with constant probability a point $\vx \in \cX$ such that $f(\vx) \leq \min_{\vx' \in \cX} f(\vx') + \epsilon^4$ of a function whose associated VI problem satisfies the Minty condition requires $\Omega(1/\epsilon)$ gradient evaluations, even when $\cX = [0, 1]$, $L= O(\epsilon^2)$ and $B = O(\epsilon^3)$.
\end{proposition}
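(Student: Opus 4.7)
The plan is an information-theoretic query lower bound using the family $\{F_{\epsilon,\alpha}\}$ already constructed in the excerpt. Its decisive property is that both $f_{\epsilon,\alpha}$ and $F_{\epsilon,\alpha}$ vanish identically outside the active interval $I_\alpha \defeq (\alpha+\epsilon,\, \alpha+3\epsilon)$, while on $I_\alpha$ the function $f_{\epsilon,\alpha}$ attains its unique global minimum $-\epsilon^4(1+16\epsilon^4) = -\Theta(\epsilon^4)$ at $\alpha + 2\epsilon$. Hence any output $\hat x$ achieving $f_{\epsilon,\alpha}(\hat x) \leq \min f_{\epsilon,\alpha} + \epsilon^4 = -16\epsilon^8 < 0$ must belong to $I_\alpha$.

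For the lower bound proper, I would fix a grid $\alpha_1 < \cdots < \alpha_M$ of $M = \lfloor 1/(4\epsilon) \rfloor = \Omega(1/\epsilon)$ shifts inside $[-\epsilon,\, 1-3\epsilon]$ whose active intervals $I_{\alpha_1}, \dots, I_{\alpha_M}$ are pairwise disjoint (spacing $4\epsilon$ suffices), and draw the hidden shift $\alpha$ uniformly from the grid. Disjointness is the linchpin: every query $x \in [0,1]$ lies in at most one $I_{\alpha_j}$, so the oracle's response is either $0$---identical to the response for every shift whose active interval misses $x$---or nonzero, in which case $\alpha$ is revealed outright. Viewing an adaptive deterministic algorithm as a decision tree, after $q$ queries the probability that the oracle has ever returned a nonzero value is at most $q/M$; conditioned on this event not occurring, the posterior over $\alpha$ is uniform on a subset of size $\geq M - q$. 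Since the algorithm's output $\hat x$ (a deterministic function of the all-zero transcript in this conditional world) lies in at most one $I_{\alpha_j}$, the conditional success probability is at most $1/(M - q)$, giving a total success probability of at most $q/M + 1/(M - q)$. Setting $q = \lfloor M/3 \rfloor = \Omega(1/\epsilon)$ keeps this strictly below $1$, and Yao's minimax principle transfers the bound to randomized algorithms.

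To close, I would verify the regularity parameters by direct inspection of $\phi'_\epsilon$ on $[\epsilon, 3\epsilon]$: each of the three additive terms in the expansion of $\phi'_\epsilon$ is $O(\epsilon^3)$, since the factors $(x - \epsilon)$ and $(x - 3\epsilon)$ each contribute at most $2\epsilon$, the polynomial $x^2(x^2 - 8\epsilon^2) - 1$ is $O(1)$, and $4x^3 - 16 x \epsilon^2$ is $O(\epsilon^3)$; hence $B = O(\epsilon^3)$, while the bound $L = O(\epsilon^2)$ is precisely what the preceding lemma established. The main delicate point is the adaptive-adversary bookkeeping, but it reduces to the clean combinatorial observation that disjoint active intervals ensure each query-response pair either solves the problem outright (with probability at most $1/M$ per query) or eliminates exactly one candidate shift from the posterior, leaving the remaining mass uniform.
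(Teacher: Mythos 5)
Your proposal is correct and follows essentially the same route as the paper: the paper's argument for this proposition is exactly the informal observation that any $\epsilon^4$-approximate minimizer of $f_{\epsilon,\alpha}$ must land in the hidden active interval $(\alpha+\epsilon,\alpha+3\epsilon)$, outside of which all oracle responses are zero, so a uniformly hidden $\alpha$ forces $\Omega(1/\epsilon)$ queries. Your decision-tree/Yao bookkeeping merely formalizes what the paper leaves implicit, and your parameter checks ($B=O(\epsilon^3)$, $L=O(\epsilon^2)$) match the paper's.
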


Furthermore, suppose that one is instead looking for an approximate MVI solution $x \in [0, 1]$; namely, $F(x') (x' - x) \geq - C \epsilon^4$ for all $x' \in [0, 1]$, where $C$ is a sufficiently small constant. We claim that this forces $x$ to belong in $(\alpha + \epsilon, \alpha + 3 \epsilon)$, at which point the hardness of~\Cref{theorem:Mintyexp} kicks in. For the sake of contradiction, suppose first that $x \geq \alpha + 3 \epsilon$. Taking then $x' \defeq \alpha + 2.5 \epsilon$ leads to a violation since $x' - x < 0, F(x') > 0$ (by \Cref{claim:deriv}), and $|x' - x| = \Theta(\epsilon), |F(x')| = \Theta(\epsilon^3)$. Similar reasoning applies if $x \leq \alpha + \epsilon$ by deviating to $x' \defeq \alpha + 1.5 \epsilon$.

\begin{corollary}
    For any sufficiently small $\epsilon > 0$, any algorithm that computes with constant probability a point $\vx \in \cX$ such that $\langle F(\vx'), \vx' - \vx \rangle \geq - C \epsilon^4$, for a sufficiently small constant $C > 0$, of a problem $\VI(\cX, F)$ that satisfies the Minty condition requires $\Omega(1/\epsilon)$ gradient evaluations, even when $\cX = [0, 1]$, $L= O(\epsilon^2)$ and $B = O(\epsilon^3)$.
\end{corollary}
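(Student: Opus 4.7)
The plan is to reuse the hard instance $\VI([0,1], F_{\epsilon,\alpha})$ from the construction leading to~\Cref{theorem:Mintyexp}, where $\alpha \in [-\epsilon, 1-3\epsilon]$ is a hidden offset and $F_{\epsilon,\alpha}$ is supported on $(\alpha+\epsilon, \alpha+3\epsilon)$ per~\eqref{eq:Fepsalpha}. The key reduction is to show that any $C\epsilon^4$-approximate MVI solution must lie inside the narrow ``active'' window $(\alpha+\epsilon, \alpha+3\epsilon)$; at that point, the query lower bound for locating this window (which underpins~\Cref{theorem:Mintyexp}) immediately transfers to the MVI problem.

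\textbf{Step 1: Localization of approximate MVI solutions.} I will verify the two case analyses sketched in the paragraph preceding the statement. Suppose $x \in [0,1]$ is a candidate $C\epsilon^4$-MVI solution, i.e.\ $F_{\epsilon,\alpha}(x')(x'-x) \geq -C\epsilon^4$ for all $x' \in [0,1]$. If $x \geq \alpha + 3\epsilon$, then instantiating the deviation $x' \defeq \alpha + 2.5\epsilon$ gives $x'-x \leq -\epsilon/2 < 0$, while by~\Cref{claim:deriv} we have $F_{\epsilon,\alpha}(x') = \phi'_\epsilon(2.5\epsilon) > 0$, with $|\phi'_\epsilon(2.5\epsilon)| = \Theta(\epsilon^3)$ (a direct calculation from the explicit expansion of $\phi'_\epsilon$ given in the previous proof). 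Hence $F_{\epsilon,\alpha}(x')(x'-x) \leq -c\epsilon^4$ for some absolute constant $c > 0$; for $C < c$ this is a contradiction. Symmetrically, if $x \leq \alpha + \epsilon$, the deviation $x' \defeq \alpha + 1.5\epsilon$ yields $x'-x > 0$ and $F_{\epsilon,\alpha}(x') < 0$ with $|F_{\epsilon,\alpha}(x')| = \Theta(\epsilon^3)$, again producing a term of order $-\Theta(\epsilon^4)$. Thus any approximate MVI solution must satisfy $x \in (\alpha+\epsilon, \alpha+3\epsilon)$.

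\textbf{Step 2: Query lower bound via hidden-offset argument.} I will then draw $\alpha$ uniformly from a grid of $\Theta(1/\epsilon)$ disjoint candidate offsets in $[-\epsilon, 1-3\epsilon]$ whose associated active windows are pairwise disjoint. Outside its window, $F_{\epsilon,\alpha}$ is identically zero, so each gradient query reveals information about at most one window (whichever window, if any, contains the queried point). Any algorithm making $o(1/\epsilon)$ queries therefore leaves, with high probability over $\alpha$, every queried point outside $(\alpha+\epsilon, \alpha+3\epsilon)$, and the transcript is indistinguishable from the all-zero oracle. By Step~1, with constant probability no output of such an algorithm can be a $C\epsilon^4$-MVI solution, contradicting the success guarantee. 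This yields the $\Omega(1/\epsilon)$ lower bound. The Lipschitz and boundedness bounds $L = O(\epsilon^2)$ and $B = O(\epsilon^3)$ are inherited verbatim from the analysis of $F_{\epsilon,\alpha}$.

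\textbf{Main obstacle.} There is no real obstacle here: both ingredients (the Yao-style hidden-offset lower bound and the $\Theta(\epsilon^3)$ size of $|F_{\epsilon,\alpha}|$ on $(\alpha+\epsilon, \alpha+3\epsilon)$) are already established in the previous proof. The only point requiring mild care is calibrating the constant $C$ so that the deviations $\alpha + 1.5\epsilon$ and $\alpha + 2.5\epsilon$ produce gap violations strictly larger than $C\epsilon^4$; this reduces to a one-line numerical bound on $|\phi'_\epsilon(1.5\epsilon)|$ and $|\phi'_\epsilon(2.5\epsilon)|$, both of which are $\Theta(\epsilon^3)$ by inspection of the polynomial expansion of $\phi'_\epsilon$.
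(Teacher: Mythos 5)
Your proposal is correct and follows essentially the same route as the paper: the paper likewise localizes any $C\epsilon^4$-approximate MVI solution to the window $(\alpha+\epsilon,\alpha+3\epsilon)$ using exactly the deviations $x'=\alpha+2.5\epsilon$ (when $x\ge\alpha+3\epsilon$) and $x'=\alpha+1.5\epsilon$ (when $x\le\alpha+\epsilon$), exploiting $|F_{\epsilon,\alpha}(x')|=\Theta(\epsilon^3)$ and $|x'-x|=\Theta(\epsilon)$, and then invokes the hidden-offset query lower bound underlying \Cref{theorem:Mintyexp}. Your Step~2 merely makes explicit the Yao-style argument that the paper leaves implicit; nothing is missing.
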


\subsubsection{Lower bound in high dimensions}

We now address the problem of finding an approximate Minty VI solution in high dimensions, given the promise that such a solution exists. %As in the previous subsection, it will suffice to use polynomial optimization problems.
\begin{proposition}\label{theorem:Mintyexp-d}
    There exists an absolute constant $\eps > 0$ for which the following holds: any algorithm that computes with constant probability an $\eps$-approximate global optimum of a function $f : \cX \to \R$ whose associated VI problem satisfies the Minty condition requires $\Omega(2^d)$ gradient evaluations, even when $\cX = [-1, 1]^d$ and $\eps, L, B$ are absolute constants.
\end{proposition}
\begin{proof}
Let  $\vc \in \{-1, +1\}^d$, and  
$
f_\vc : [-1, 1]^d \to \R
$ be defined by 
$
f_\vc(\vx) \defeq -\qty(\max \qty{ 1 - \norm{\vx - \vc}^2, 0})^2
$.
Then $f_\vc$ has global minimum $-1$ at $\vx = \vc$, and its gradient is given by
\begin{align*}
    F_\vc(\vx) = \grad f(\vx) = \begin{cases}
        4(\vx - \vc) (1 - \norm{\vx - \vc}) &\qif \norm{\vx - \vc} \le 1,\\
        0 &\qq{otherwise.}
    \end{cases}
\end{align*}
which indeed has absolute constant $L$ and $B$. Finally, if $\sgn(\vx) \ne \vc$, then $f_\vc(\vx) = 0$ and $F_\vc(\vx) = \vec 0$. Therefore, any algorithm that optimizes this family of functions $f_\vc$ learns no information about $\vc$ until it queries a point with $\sgn(\vx) = \vc$, which takes $\Omega(2^d)$ gradient evaluations in expectation if $\vc$ is chosen uniformly at random.
\end{proof}

\subsubsection{On equilibrium collapse}
\label{sec:collapse}

The class of instances behind~\Cref{theorem:Mintyexp} also precludes a natural approach for computing SVI solutions under the Minty condition; namely, the positive result of~\citet{Cai16:Zero} (\emph{cf.}~\citealp{Kalogiannis23:Zero,Park23:Multi}) concerning zero-sum, polymatrix games is based on the observation that there is an ``equilibrium collapse,'' meaning that taking the marginals of any CCE results in a Nash equilibrium; in fact, it is easy to see that this holds more generally under the preconditions of~\Cref{prop:polymatrix}, and more generally under the assumption of monotonicity. To be more precise, by ``equilibrium collapse'' in general VI problems we mean the following.

\begin{definition}[Equilibrium collapse]
    A VI problem $\VI(\cX, F)$ exhibits \emph{equilibrium collapse} if for any $\epsilon$-EVI solution $\mu \in \Delta(\cX)$ (per~\Cref{def:EVIs}), $\E_{\vx \sim \mu} \vx$ is guaranteed to be a $p(\epsilon)$-SVI solution, for some polynomial $p$ in $\epsilon$.
\end{definition}

This phenomenon of equilibrium collapse is the main ingredient in the aforementioned $\polylog(1/\eps)$-time algorithms for solving {\em monotone} VIs (\eg, \citet{Nemirovski10:Accuracy} and citations therein). Those algorithms essentially work as follows: run a cutting-plane algorithm like the ellipsoid algorithm, attempting to compute an $\eps$-EVI solution. If at any point the query point is an $\eps$-SVI solution, terminate immediately; otherwise, use equilibrium collapse to conclude that the $\eps$-EVI solution is in fact an approximate SVI solution.

Here, we show an explicit counterexample that demonstrates that equilibrium collapse is not true in general under the Minty property.

\begin{proposition}\label{prop:equilcollapse}
    There is a VI problem $\VI([-1, 1]^2, F)$ that satisfies the Minty condition, but that has an exact expected VI solution $\mu \in \Delta([-1, 1]^2)$ such that 1) $\E_{\vx\sim\mu} \vx$ is not an SVI solution, and 2) no point in the support of $\mu$ is an SVI solution.
\end{proposition}
\begin{proof}
    Let $F(x, y) = (2+y)(-y, x)$ (see \Cref{fig:vector-field-collapse-counterexample}). Then $(0, 0)$ is the unique Minty point, and $\mu  = \frac{3}{4}(0, -1) + \frac{1}{4}(0, 1)$ is an exact EVI solution, but $(0, -1), (0, 1)$, and $\E_{\vx\sim\mu}\vx = (-1/2, 0)$ are not SVI solutions. 
\end{proof}

\begin{figure}
    \centering
    \includegraphics[width=0.5\linewidth]{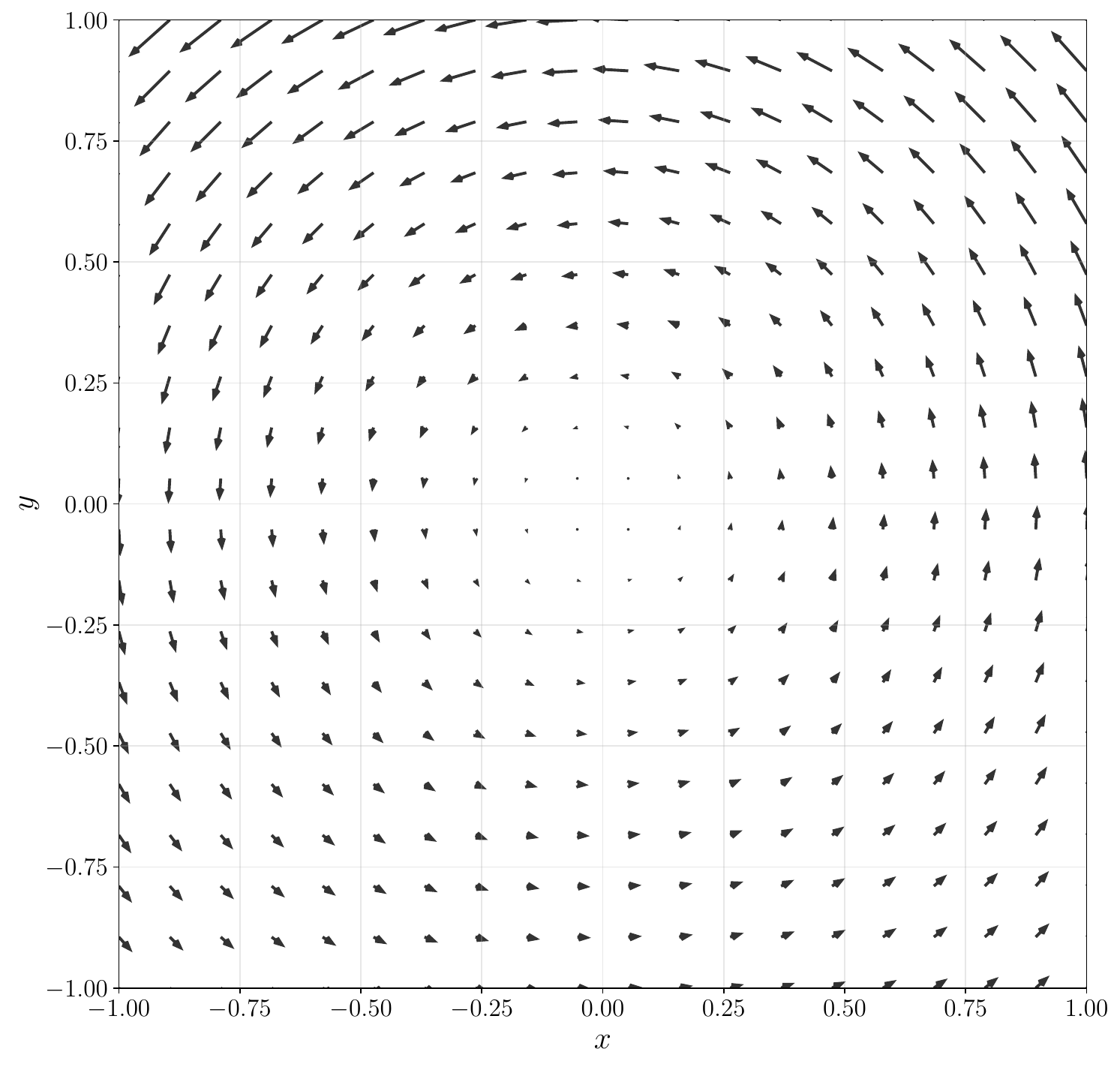}
    \caption{The vector field in the proof of \Cref{prop:equilcollapse}.}
    \label{fig:vector-field-collapse-counterexample}
\end{figure}

% \begin{proposition}
%     \label{prop:equilcollapse}
%     There is a problem $\VI([0, 1], F)$ that satisfies the Minty condition, but there exists an exact expected VI solution $\mu \in \Delta([0, 1])$ such that $\E_{\vx \sim \mu} \vx$ is not a $\Theta(1)$-SVI solution.
% \end{proposition}

% In proof, we take $F \defeq F_{\epsilon, 0}$ per~\eqref{eq:Fepsalpha}, where $\epsilon = \Theta(1)$. We consider the distribution $\mu \in \Delta([0, 1])$ that samples uniformly between $x_1 \defeq 0.1 \epsilon$ and $x_2 \defeq 3.1 \epsilon$. By definition, we have $F(x_1) = F(x_2) = 0$. This implies that $\mu$ is an exact expected VI (per~\Cref{def:EVIs}). However, it is clear that there exists a constant $C = C(\epsilon)$ such that $\E_{\vx \sim \mu} \vx = 1.7 \epsilon$ is not a $C$-SVI solution, as claimed in~\Cref{prop:equilcollapse}.

\subsection{Ellipsoid without extra-gradient}\label{sec:noopt}

\begin{figure}
    \centering
    \includegraphics[width=0.32\linewidth]{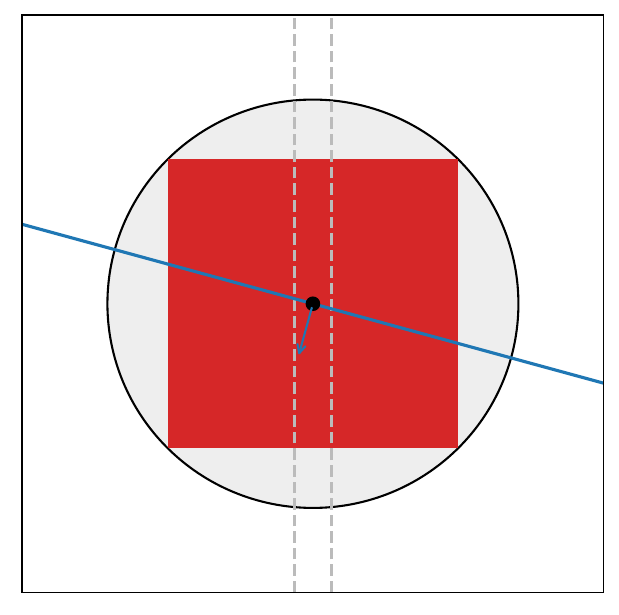}
    \includegraphics[width=0.32\linewidth]{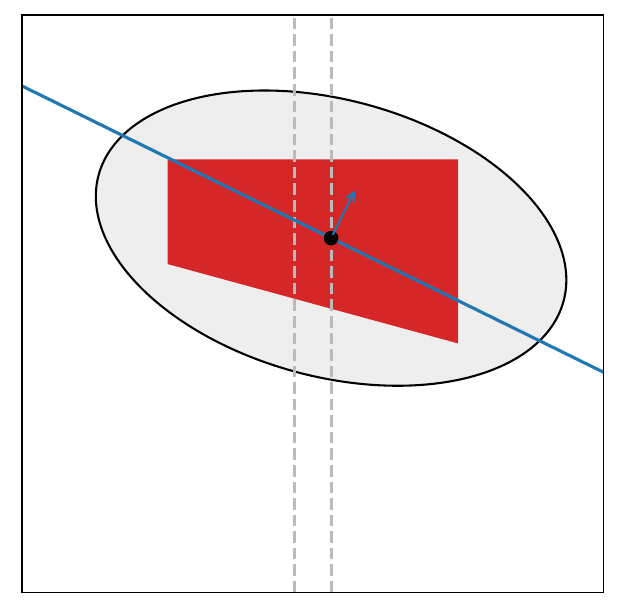}
    \includegraphics[width=0.32\linewidth]{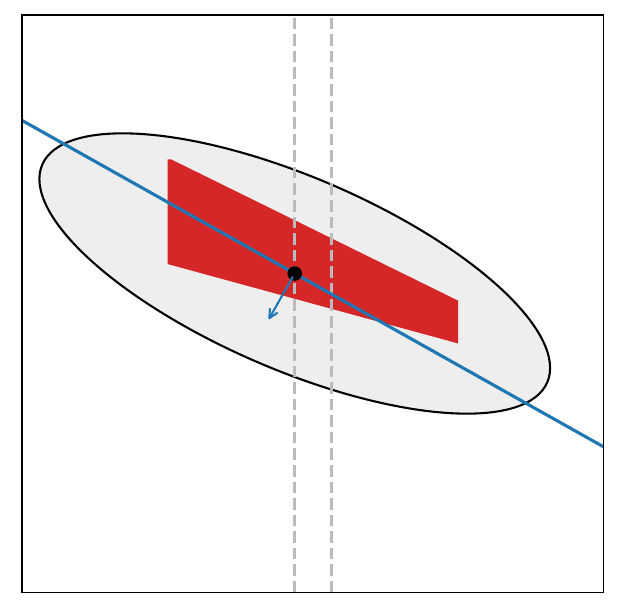}
    \\
    \includegraphics[width=0.32\linewidth]{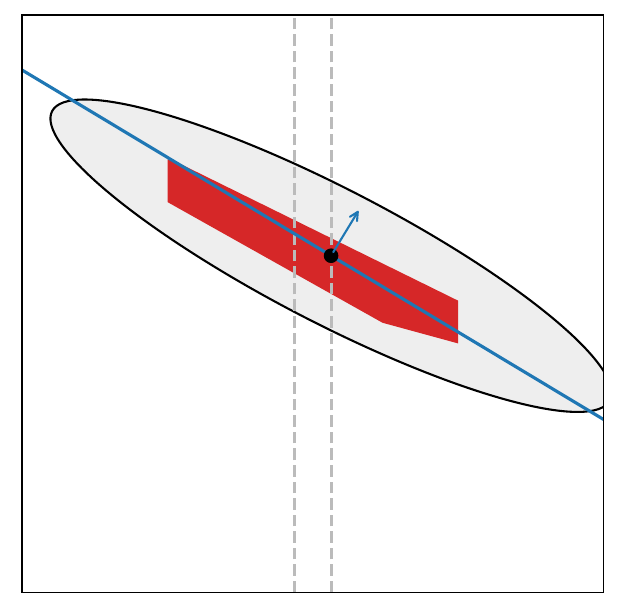}
    \includegraphics[width=0.32\linewidth]{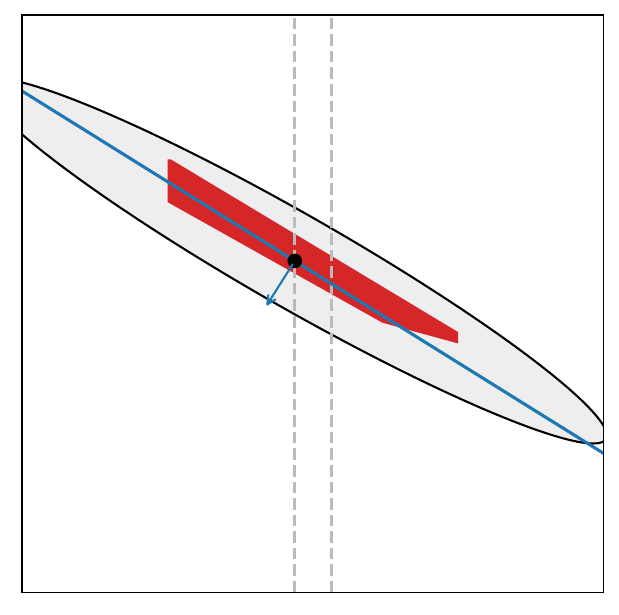}
    \includegraphics[width=0.32\linewidth]{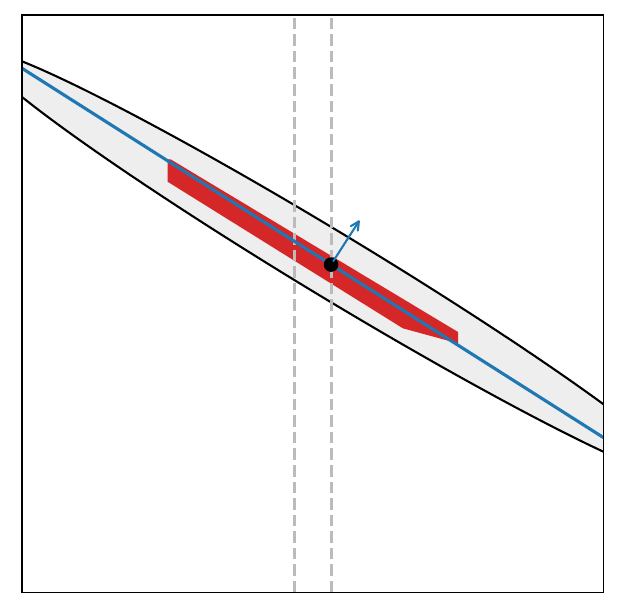}
    \\
    \includegraphics[width=0.32\linewidth]{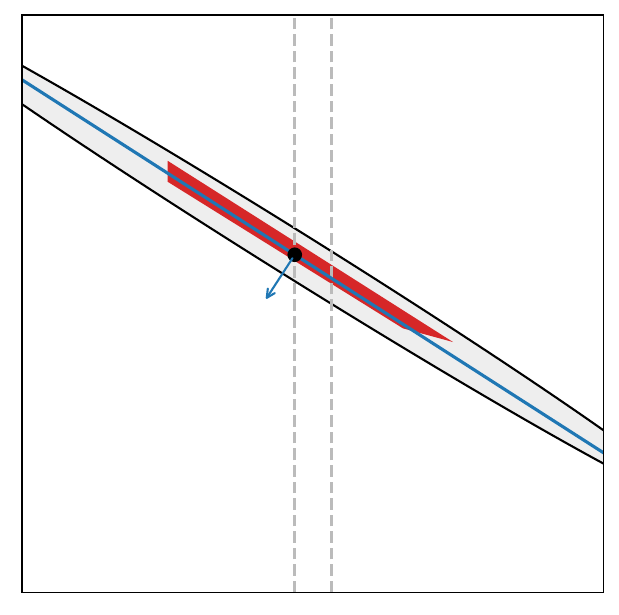}
    \includegraphics[width=0.32\linewidth]{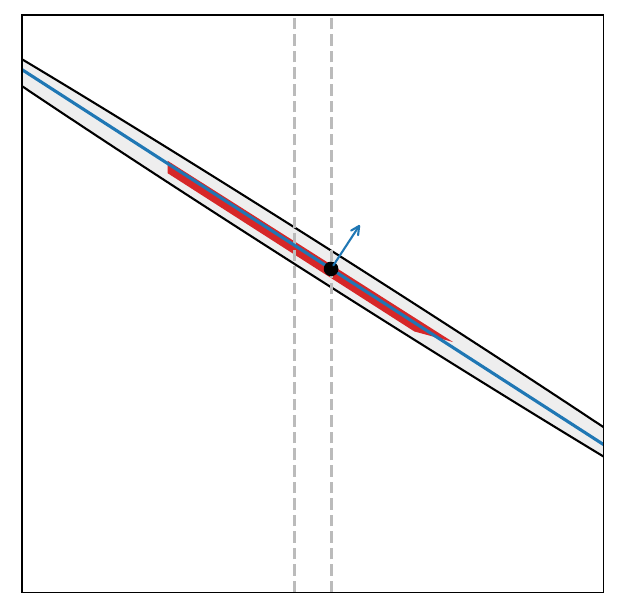}
    \includegraphics[width=0.32\linewidth]{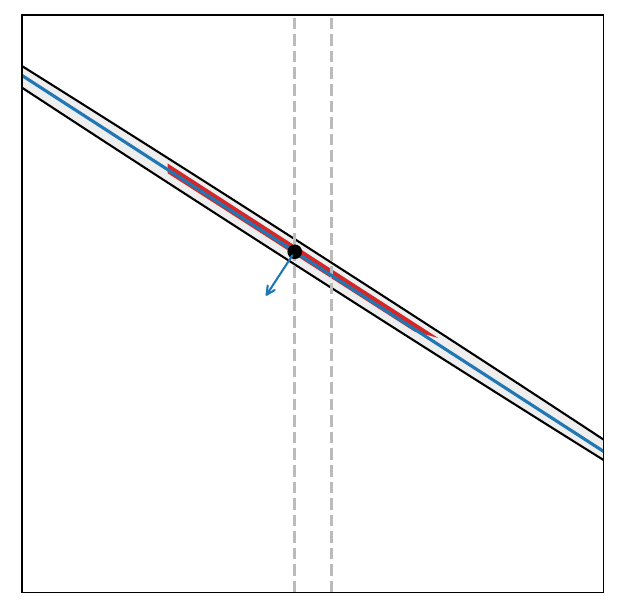}
    \caption{The counterexample for the ellipsoid algorithm without extra-gradient as described in \Cref{sec:noopt}, after $t$ iterations, for $t = 0, 1, \dots, 7, 8$. In each plot, the red polygon is the feasible region where Minty points might still lie, and the blue line/arrow indicates the separating direction $F(\va^{(t)})$. The two dashed vertical lines correspond to~$x = \pm 1/8$. }
    \label{fig:noopt-counterexample}
\end{figure}

It is reasonable to ask whether the extra-gradient step in our main algorithm (\Cref{algo:optimisticellipsoid}) is necessary. In other words, if the extra-gradient step were to be removed, would we still be able to have $\log(1/\eps)$ convergence toward an SVI solution under the Minty condition? In this section, we give strong computational evidence that such a guarantee is impossible. We ran the ellipsoid algorithm on polytope $\cX = [-1, 1]^2$ and starting ellipse $B_{\sqrt{2}}(\vec 0)$, which is the smallest ellipse containing $\cX$.  At each timestep, given ellipsoid center $\va^{(t)} \in \cX$, we generated a separating direction $F(\va^{(t)})$ such that:
\begin{itemize}
    \item when $t$ is even, $F(\va^{(t)})$ has positive $y$-component, and $\va^{(t+1)}$ has $x$-component $1/8$, and
    \item  when $t$ is odd, $F(\va^{(t)})$ has negative $y$-component, and $\va^{(t+1)}$ has $x$-component $-1/8$, and
\end{itemize}
As shown in \Cref{fig:noopt-counterexample}, the ellipse quickly grows extremely  thin along one axis. 
After $T = 1293$ iterations,\footnote{We ran this algorithm with $2048$-bit floating-point arithmetic, and numerical precision issues arose at this point.} we observe the following properties.
\begin{itemize}
    \item The algorithm has not proven that no Minty solution exists. That is, the red polytope in \Cref{fig:noopt-counterexample} remains nonempty, and in fact the current center $\va^{(T)}$ is a candidate Minty solution.
    \item The radius of the short axis of the ellipse is less than $1.498 \times 10^{-219}$. 
    \item No approximate SVI point has been found: $\norm{F(\va^{(t)})} = 1$ and $\norm{\va^{(t)}} < 0.481$ for every timestep $t$, so the minimum SVI gap of any queried point is at least $0.519$. 
    \item The algorithm never queries the same point twice; therefore, the operator $F$ is well-defined, and 
    \item No Lipschitz violation has been found:  for every $0 \le s < t \le T$, we have \[\frac{\norm{F(\va^{(s)}) - F(\va^{(t)})}}{\norm{\va^{(s)} - \va^{(t)}}} < 7.997.\]
    
\end{itemize} 

A common idea when using the ellipsoid method for low-dimensional sets is to, once the shortest axis of the ellipse gets small, simply project the problem onto a subspace of smaller dimension and continue. However, this will not work in our setting. Indeed, after such a projection is made, it is possible that $F(\va^{(t)})$ has large magnitude in a direction orthogonal to the subspace (so that $\va^{(t)}$ is not an SVI point) and yet $F(\va^{(t)})$ gives no separation direction, so ellipsoid cannot proceed. 

Thus, even in two dimensions, this counterexample demonstrates that ellipsoid fails to find an approximate SVI solution even when the Minty property holds and the given function is Lipschitz. This demonstrates the need for the additional ideas we introduce in~\Cref{sec:ouralgo} (namely, the extra-gradient step) beyond the naive ellipsoid method.

\section{Conclusion and future research}

In summary, we established the first algorithm for computing $\epsilon$-SVI solutions under the Minty condition that has polynomial dependence on both $d$ and $\log(1/\eps)$. We also provided several new applications of our main results in optimization and game theory, including the first polynomial-time algorithm for quasar-convex optimization. Our results raise a number of interesting questions for future work. First, one of our main contributions was a technique to tackle the lack of full dimensionality when using the ellipsoid algorithm by using strict separation; it is possible that our approach can be employed to other problems as well. Furthermore, our result concerning strict CCEs only applies to two-player games; while it cannot be generalized to games with more than two players or to tighter equilibrium concepts such as correlated equilibrium~\citep{Aumann74:Subjectivity}, it would still be interesting to characterize classes of games and solution concepts for which such extensions are possible. Indeed, our main result provides further motivation for characterizing what problems satisfy the Minty condition and related concepts. Finally, are there other polynomial-time algorithms besides $\optellipsoid$ for solving VIs under the Minty condition? For example, is there some ellipsoid-based variant of the proximal method of~\citet{Nemirovski04:Prox} that yields a similar guarantee to $\optellipsoid$?

\section*{Acknowledgments}

G.F. is supported by the National Science Foundation grant CCF-2443068. T.S. is supported by the Vannevar Bush Faculty Fellowship ONR N00014-23-1-2876, National Science Foundation grants RI-2312342 and RI-1901403, ARO award W911NF2210266, and NIH award A240108S001. This work was performed while B.H.Z. was supported by the CMU Computer Science Department Hans Berliner PhD Student Fellowship. We thank Pablo Parrilo for helpful discussions.

\bibliography{dairefs}

@String{AAAI = "AAAI Conference on Artificial Intelligence (AAAI)"}

@String{ICLR = "International Conference on Learning Representations (ICLR)"}

@String{AAAIold = "AAAI Conference on Artificial Intelligence (AAAI)"}

@String{AISTATS = "International Conference on
									Artificial Intelligence and Statistics (AISTATS)"}

@String{C =      "IEEE Transactions on Computers"}

@String{COLT = "Conference on Learning Theory (COLT)"}

@String{CU =     "Cambridge University Press"}

@String{ECold = "Proceedings of the ACM Conference on Electronic Commerce (EC)"}

@String{FOCS = "Symposium on Foundations of
Computer Science (FOCS)"}

@String{ICML = "International Conference on Machine Learning (ICML)"}

@String{JACM =   "Journal of the ACM"}

@String{JAIR =    "Journal of Artificial Intelligence Research"}

@String{JMLR =   "Journal of Machine Learning Research"}

@String{NeurIPS = "Neural Information
                  Processing Systems (NeurIPS)"}

@String{OR =     "Operations Research"}

@String{SIAM =    "SIAM Journal on Computing"}

@String{SODA = "ACM-SIAM Symposium on Discrete Algorithms (SODA)"}

@String{STOC = "Symposium on Theory of
Computing (STOC)"}

@String{WINE = "International Workshop On Internet And Network Economics (WINE)"}

@string{jun = "June"}

@article{Migot21:Minty,
title = {On {M}inty-variational inequalities and evolutionary stable states of generalized monotone games},
journal = {Operations Research Letters},
volume = {49},
number = {1},
pages = {96-100},
year = {2021},
author = {Tangi Migot and M.-G. Cojocaru}
}

@article{Luthi85:Solution,
 author = {Hans-Jakob Lüthi},
 journal = {Mathematics of Operations Research},
 number = {3},
 pages = {515--522},
 publisher = {INFORMS},
 title = {On the Solution of Variational Inequalities by the Ellipsoid Method},
 volume = {10},
 year = {1985}
}

@article{Minty67:Generalization,
author = {George J. Minty},
title = {{On the generalization of a direct method of the calculus of variations}},
volume = {73},
journal = {Bulletin of the American Mathematical Society},
number = {3},
publisher = {American Mathematical Society},
pages = {315 -- 321},
year = {1967},
}

@inproceedings{Deligkas23:Tight,
  title={Tight inapproximability for graphical games},
  author={Deligkas, Argyrios and Fearnley, John and Hollender, Alexandros and Melissourgos, Themistoklis},
  booktitle=AAAI,
  year={2023}
}

@article{Fearnley23:Complexity,
  author       = {John Fearnley and
                  Paul Goldberg and
                  Alexandros Hollender and
                  Rahul Savani},
  title        = {The Complexity of Gradient Descent: {CLS} = {PPAD} {\(\cap\)} {PLS}},
  journal      = JACM,
  volume       = {70},
  number       = {1},
  pages        = {7:1--7:74},
  year         = {2023}
}

@inproceedings{Legacci24:No,
  author       = {Davide Legacci and
                  Panayotis Mertikopoulos and
                  Christos H. Papadimitriou and
                  Georgios Piliouras and
                  Bary S. R. Pradelski},
  title        = {No-regret Learning in Harmonic Games: Extrapolation in the Face of
                  Conflicting Interests},
  booktitle    = NeurIPS,
  year         = {2024}
}

@inproceedings{Anagnostides22:Last,
  author       = {Ioannis Anagnostides and
                  Ioannis Panageas and
                  Gabriele Farina and
                  Tuomas Sandholm},
  title        = {On Last-Iterate Convergence Beyond Zero-Sum Games},
  booktitle    = ICML,
  year         = {2022}
}

@article{Abdou22:Decomposition,
  author       = {Joseph M. Abdou and
                  Nikolaos Pnevmatikos and
                  Marco Scarsini and
                  Xavier Venel},
  title        = {Decomposition of Games: Some Strategic Considerations},
  journal      = {Mathematics of Operations Research},
  volume       = {47},
  number       = {1},
  pages        = {176--208},
  year         = {2022}
}

@article{Candogan11:Flows,
  author       = {Ozan Candogan and
                  Ishai Menache and
                  Asuman E. Ozdaglar and
                  Pablo A. Parrilo},
  title        = {Flows and Decompositions of Games: Harmonic and Potential Games},
  journal      = {Mathematics of Operations Research},
  volume       = {36},
  number       = {3},
  pages        = {474--503},
  year         = {2011}
}

@article{Hardt18:Gradient,
author = {Hardt, Moritz and Ma, Tengyu and Recht, Benjamin},
title = {Gradient descent learns linear dynamical systems},
year = {2018},
volume = {19},
number = {1},
journal = JMLR,
pages = {1025–1068}
}

@inproceedings{Caramanis24:Optimizing,
author = {Caramanis, Constantine and Fotakis, Dimitris and Kalavasis, Alkis and Kontonis, Vasilis and Tzamos, Christos},
title = {Optimizing solution-samplers for combinatorial problems: the landscape of policy-gradient methods},
year = {2024},
booktitle = NeurIPS
}

@article{Nesterov06:Cubic,
  title={Cubic regularization of {N}ewton method and its global performance},
  author={Nesterov, Yurii and Polyak, Boris T},
  journal={Mathematical programming},
  volume={108},
  number={1},
  pages={177--205},
  year={2006},
  publisher={Springer}
}

@inproceedings{Gower21:SGD,
  author       = {Robert M. Gower and
                  Othmane Sebbouh and
                  Nicolas Loizou},
  title        = {{SGD} for Structured Nonconvex Functions: Learning Rates, Minibatching
                  and Interpolation},
  booktitle    = {International Conference on Artificial Intelligence and Statistics (AISTATS)},
  year         = {2021}
}

@inproceedings{Hinder20:Near,
  author       = {Oliver Hinder and
                  Aaron Sidford and
                  Nimit Sharad Sohoni},
  title        = {Near-Optimal Methods for Minimizing Star-Convex Functions and Beyond},
  booktitle    = COLT,
  year         = {2020}
}

@inproceedings{Fu23:Accelerated,
  author       = {Qiang Fu and
                  Dongchu Xu and
                  Ashia Camage Wilson},
  title        = {Accelerated Stochastic Optimization Methods under Quasar-convexity},
  booktitle    = ICML,
  year         = {2023}
}

@inproceedings{Rubinstein15:Inapproximability,
  title={Inapproximability of {N}ash equilibrium},
  author={Rubinstein, Aviad},
  booktitle=STOC,
  year={2015}
}

@inproceedings{Daskalakis24:Efficient,
      title={Efficient Learning and Computation of Linear Correlated Equilibrium in General Convex Games}, 
      author={Constantinos Daskalakis and Gabriele Farina and Maxwell Fishelson and Charilaos Pipis and Jon Schneider},
    booktitle = STOC,
      year={2025}
}

@book{Facchinei03:Finite,
  title={Finite-dimensional variational inequalities and complementarity problems},
  author={Francisco Facchinei and Jong-Shi Pang},
  year={2003},
  publisher={Springer}
}

@inproceedings{Diakonikolas20:Halpern,
  title={Halpern iteration for near-optimal and parameter-free monotone inclusion and strong solutions to variational inequalities},
  author={Diakonikolas, Jelena},
  booktitle=COLT,
  year={2020}
}

@article{Sikorski93:Ellipsoid,
  title={An ellipsoid algorithm for the computation of fixed points},
  author={Sikorski, K and Tsay, Chey-Woei and Wo{\'z}niakowski, H},
  journal={Journal of Complexity},
  volume={9},
  number={1},
  pages={181--200},
  year={1993}
}

@article{Huang99:Approximating,
  title={Approximating fixed points of weakly contracting mappings},
  author={Huang, Z and Khachiyan, Leonid and Sikorski, Krzysztof},
  journal={Journal of complexity},
  volume={15},
  number={2},
  pages={200--213},
  year={1999}
}

@article{Korpelevich76:Extragradient,
  title={The extragradient method for finding saddle points and other problems},
  author={Korpelevich, Galina M},
  journal={Matecon},
  volume={12},
  pages={747--756},
  year={1976}
}

@article{Aumann74:Subjectivity,
  author  = {Robert Aumann},
  title   = {Subjectivity and Correlation in Randomized Strategies},
  journal = {Journal of Mathematical Economics},
  year    = {1974},
  volume  = {1},
  pages   = {67--96}
}

@article{Nash50:Equilibrium,
  author  = {John Nash},
  title   = {Equilibrium points in N-person games},
  journal = {Proceedings of the National Academy of Sciences},
  year    = {1950},
  volume  = {36},
  pages   = {48--49}
}

@article{Roughgarden15:Intrinsic,
  author  = {Tim Roughgarden},
  title   = {Intrinsic Robustness of the Price of Anarchy},
  journal = JACM,
  volume  = {62},
  number  = {5},
  pages   = {32:1--32:42},
  year    = {2015}
}

@inproceedings{Lee16:Optimizing,
  author       = {Jasper C. H. Lee and
                  Paul Valiant},
  title        = {Optimizing Star-Convex Functions},
  booktitle    = FOCS,
  year         = {2016}
}

@article{Papadimitriou94:On,
  author    = {Papadimitriou, Christos H.},
  title     = {On the complexity of the parity argument and other inefficient proofs of existence},
  journal   = {Journal of Computer and system Sciences},
  year      = {1994},
  volume    = {48},
  number    = {3},
  pages     = {498--532},
  publisher = {Elsevier}
}

@article{Friedman91:Evolutionary,
 author = {Daniel Friedman},
 journal = {Econometrica},
 number = {3},
 pages = {637--666},
 publisher = {[Wiley, Econometric Society]},
 title = {Evolutionary Games in Economics},
 volume = {59},
 year = {1991}
}

@book{Bensoussan11:Applications,
  title={Applications of variational inequalities in stochastic control},
  author={Bensoussan, Alain and Lions, J-L},
  volume={12},
  year={2011},
  publisher={Elsevier}
}

@article{Cressman98:Evolutionary,
  title={On the evolutionary dynamics of crime},
  author={Cressman, Ross and Morrison, William G and Wen, Jean-Fran{\c{c}}ois},
  journal={Canadian Journal of Economics},
  pages={1101--1117},
  year={1998}
}

@article{Hofbauer00:Sophisticated,
  title={Sophisticated imitation in cyclic games},
  author={Hofbauer, Josef and Schlag, Karl H},
  journal={Journal of Evolutionary Economics},
  volume={10},
  number={5},
  pages={523--543},
  year={2000}
}

@article{Marris25:Deviation,
      title={Deviation Ratings: A General, Clone-Invariant Rating Method}, 
      author={Luke Marris and Siqi Liu and Ian Gemp and Georgios Piliouras and Marc Lanctot},
      year={2025},
journal={arXiv:2502.11645}
}

@inproceedings{Mertikopoulos18:Optimistic,
  title={Optimistic mirror descent in saddle-point problems: Going the extra (gradient) mile},
  author={Mertikopoulos, Panayotis and Lecouat, Bruno and Zenati, Houssam and Foo, Chuan-Sheng and Chandrasekhar, Vijay and Piliouras, Georgios},
  booktitle=ICLR,
  year={2018}
}

@inproceedings{Mertikopoulos18:Cycles,
  author       = {Panayotis Mertikopoulos and
                  Christos H. Papadimitriou and
                  Georgios Piliouras},
  title        = {Cycles in Adversarial Regularized Learning},
  booktitle    = SODA,
  year         = {2018}
}

@book{Boyd04:Convex,
  title     = {Convex Optimization},
  publisher = CU,
  year      = {2004},
  author    = {Stephen Boyd and Lieven Vandenberghe}
}

@inproceedings{Patris24:Learning,
  author       = {Nikolas Patris and
                  Ioannis Panageas},
  title        = {Learning {N}ash Equilibria in Rank-1 Games},
  booktitle    = {International Conference on Learning Representations (ICLR)},
  year         = {2024}
}

@inproceedings{Daskalakis20:Independent,
  author       = {Constantinos Daskalakis and
                  Dylan J. Foster and
                  Noah Golowich},
  title        = {Independent Policy Gradient Methods for Competitive Reinforcement
                  Learning},
  booktitle    = NeurIPS,
  year         = {2020}
}

@inproceedings{Jiang20:Improved,
  author       = {Haotian Jiang and
                  Yin Tat Lee and
                  Zhao Song and
                  Sam Chiu{-}wai Wong},
  title        = {An improved cutting plane method for convex optimization, convex-concave
                  games, and its applications},
  booktitle    = STOC,
  year         = {2020}
}

@inproceedings{Daskalakis21:Near,
  author  = {Constantinos Daskalakis and
             Maxwell Fishelson and
             Noah Golowich},
  title   = {Near-Optimal No-Regret Learning in General Games},
  booktitle = NeurIPS,
  year    = {2021}
}

@inproceedings{Daskalakis11:Continuous,
  title={Continuous local search},
  author={Daskalakis, Constantinos and Papadimitriou, Christos},
  booktitle=SODA,
  year={2011}
}

@article{Johnson88:How,
title = {How easy is local search?},
journal = {Journal of Computer and System Sciences},
volume = {37},
number = {1},
pages = {79-100},
year = {1988},
author = {David S. Johnson and Christos H. Papadimitriou and Mihalis Yannakakis}
}

@inproceedings{Cai24:Accelerated,
  author       = {Yang Cai and
                  Argyris Oikonomou and
                  Weiqiang Zheng},
  title        = {Accelerated Algorithms for Constrained Nonconvex-Nonconcave Min-Max
                  Optimization and Comonotone Inclusion},
  booktitle    = ICML,
  year         = {2024}
}

@inproceedings{Anagnostides22:Optimistic,
  author       = {Ioannis Anagnostides and
                  Gabriele Farina and
                  Ioannis Panageas and
                  Tuomas Sandholm},
  title        = {Optimistic Mirror Descent Either Converges to Nash or to Strong Coarse
                  Correlated Equilibria in Bimatrix Games},
  booktitle    = NeurIPS,
  year         = {2022}
}

@inproceedings{Wang23:Continuized,
  author       = {Jun{-}Kun Wang and
                  Andre Wibisono},
  title        = {Continuized Acceleration for Quasar Convex Functions in Non-Convex
                  Optimization},
  booktitle    = {International Conference on Learning Representations (ICLR)},
  year         = {2023}
}

@incollection{Danilova22:Recent,
  title={Recent theoretical advances in non-convex optimization},
  author={Danilova, Marina and Dvurechensky, Pavel and Gasnikov, Alexander and Gorbunov, Eduard and Guminov, Sergey and Kamzolov, Dmitry and Shibaev, Innokentiy},
  booktitle={High-Dimensional Optimization and Probability: With a View Towards Data Science},
  pages={79--163},
  year={2022},
  publisher={Springer}
}

@inproceedings{Diakonikolas21:Efficient,
  author       = {Jelena Diakonikolas and
                  Constantinos Daskalakis and
                  Michael I. Jordan},
  title        = {Efficient Methods for Structured Nonconvex-Nonconcave Min-Max Optimization},
  booktitle    = {International Conference on Artificial Intelligence and Statistics (AISTATS)},
  year         = {2021}
}

@inproceedings{Cai24:Tractable,
  author       = {Yang Cai and
                  Constantinos Daskalakis and
                  Haipeng Luo and
                  Chen{-}Yu Wei and
                  Weiqiang Zheng},
  title        = {On Tractable {$\Phi$}-Equilibria in Non-Concave Games},
  booktitle      = NeurIPS,
  year         = {2024}
}

@article{Combettes04:Proximal,
author = {Combettes, Patrick L. and Pennanen, Teemu},
title = {Proximal Methods for Cohypomonotone Operators},
journal = {SIAM Journal on Control and Optimization},
volume = {43},
number = {2},
pages = {731-742},
year = {2004}
}

@article{Bernasconi24:Role,
      title={On the Role of Constraints in the Complexity of Min-Max Optimization}, 
      author={Martino Bernasconi and Matteo Castiglioni and Andrea Celli and Gabriele Farina},
    journal={arXiv:2411.03248},
      year={2024}
}

@article{Kapron24:Computational,
  title={The Computational Complexity of Variational Inequalities and Applications in Game Theory},
  author={Kapron, Bruce M and Samieefar, Koosha},
  year={2024},
journal={arXiv:2411.04392}
}

@article{Bauschke21:Generalized,
  author       = {Heinz H. Bauschke and
                  Walaa M. Moursi and
                  Xianfu Wang},
  title        = {Generalized monotone operators and their averaged resolvents},
  journal      = {Mathematical Programming},
  volume       = {189},
  number       = {1},
  pages        = {55--74},
  year         = {2021}
}

@inproceedings{Etessami07:Complexity,
  author    = {Kousha Etessami and Mihalis Yannakakis},
  title     = {On the Complexity of {N}ash Equilibria and Other Fixed Points (Extended Abstract)},
  booktitle = FOCS,
  year      = {2007}
}

@article{Mertikopoulos19:Learning,
  author       = {Panayotis Mertikopoulos and
                  Zhengyuan Zhou},
  title        = {Learning in games with continuous action sets and unknown payoff functions},
  journal      = {Mathematical Programming},
  volume       = {173},
  number       = {1-2},
  pages        = {465--507},
  year         = {2019}
}

@article{Moulin78:Strategically,
  author  = {H. Moulin and J.-P. Vial},
  title   = {Strategically zero-sum games: The class of games whose completely mixed equilibria cannot be improved upon},
  journal = {International Journal of Game Theory},
  year    = {1978},
  volume  = {7},
  number  = {3-4},
  pages   = {201--221}
}

@inproceedings{Sandholm05:Mixed,
  author    = {Tuomas Sandholm and Andrew Gilpin and Vincent Conitzer},
  title     = {Mixed-Integer Programming Methods for Finding {N}ash Equilibria},
  booktitle = AAAIold,
  year      = {2005}
}

@article{Adil22:Optimal,
  title={Optimal methods for higher-order smooth monotone variational inequalities},
  author={Adil, Deeksha and Bullins, Brian and Jambulapati, Arun and Sachdeva, Sushant},
  year={2022},
journal={arXiv:2205.06167},
}

@article{Huang22:Approximation,
  title={An approximation-based regularized extra-gradient method for monotone variational inequalities},
  author={Huang, Kevin and Zhang, Shuzhong},
  year={2022},
journal={arXiv:2210.04440}
}

@article{Jiang22:Generalized,
  title={Generalized optimistic methods for convex-concave saddle point problems},
  author={Jiang, Ruichen and Mokhtari, Aryan},
  year={2022},
journal={arXiv:2202.09674}
}

@inproceedings{Kalogiannis23:Zero,
  author       = {Fivos Kalogiannis and
                  Ioannis Panageas},
  title        = {Zero-sum Polymatrix Markov Games: Equilibrium Collapse and Efficient
                  Computation of Nash Equilibria},
  booktitle    = NeurIPS,
  year         = {2023}
}

@article{Murty87:Some,
author = {Murty, Katta G. and Kabadi, Santosh N.},
title = {Some {NP}-complete problems in quadratic and nonlinear programming},
year = {1987},
publisher = {Springer-Verlag},
volume = {39},
number = {2},
journal = {Math. Program.},
month = jun,
pages = {117–129},
numpages = {13}
}

@article{Dickinson14:Computational,
  title={On the computational complexity of membership problems for the completely positive cone and its dual},
  author={Dickinson, Peter JC and Gijben, Luuk},
  journal={Computational optimization and applications},
  volume={57},
  pages={403--415},
  year={2014}
}

@inproceedings{Park23:Multi,
  title={Multi-player zero-sum {M}arkov games with networked separable interactions},
  author={Park, Chanwoo and Zhang, Kaiqing and Ozdaglar, Asuman},
  booktitle=NeurIPS,
  year={2023}
}

@article{Zhou20:Convergence,
  author       = {Zhengyuan Zhou and
                  Panayotis Mertikopoulos and
                  Nicholas Bambos and
                  Stephen P. Boyd and
                  Peter W. Glynn},
  title        = {On the Convergence of Mirror Descent beyond Stochastic Convex Programming},
  journal      = {{SIAM} Journal of Optimization},
  volume       = {30},
  number       = {1},
  pages        = {687--716},
  year         = {2020}
}

@article{Ye22:Infeasible,
author = {Ye, Minglu},
title = {An infeasible projection type algorithm for nonmonotone variational inequalities},
year = {2022},
volume = {89},
number = {4},
journal = {Numer. Algorithms},
pages = {1723–1742},
numpages = {20}
}

@inproceedings{Song20:Optimistic,
  title={Optimistic dual extrapolation for coherent non-monotone variational inequalities},
  author={Song, Chaobing and Zhou, Zhengyuan and Zhou, Yichao and Jiang, Yong and Ma, Yi},
  booktitle=NeurIPS,
  year={2020}
}

@article{Lei21:Extragradient,
  title={An extragradient method for solving variational inequalities without monotonicity},
  author={Lei, Ming and He, Yiran},
  journal={Journal of Optimization Theory and Applications},
  volume={188},
  pages={432--446},
  year={2021}
}

@article{Burachik2020:Projection,
  title={A projection algorithm for non-monotone variational inequalities},
  author={Burachik, Regina S and Millan, R Diaz},
  journal={Set-Valued and Variational Analysis},
  volume={28},
  number={1},
  pages={149--166},
  year={2020}
}

@article{Lin24:Perseus,
  title={Perseus: A simple and optimal high-order method for variational inequalities},
  author={Lin, Tianyi and Jordan, Michael I},
  journal={Mathematical Programming},
  pages={1--42},
  year={2024}
}

@article{Huang22:Cubic,
  title={Cubic regularized Newton method for the saddle point models: A global and local convergence analysis},
  author={Huang, Kevin and Zhang, Junyu and Zhang, Shuzhong},
  journal={Journal of Scientific Computing},
  volume={91},
  number={2},
  pages={60},
  year={2022}
}

@article{Bullins22:Higher,
  title={Higher-order methods for convex-concave min-max optimization and monotone variational inequalities},
  author={Bullins, Brian and Lai, Kevin A},
  journal={SIAM Journal on Optimization},
  volume={32},
  number={3},
  pages={2208--2229},
  year={2022}
}

@article{Martinet70:Breve,
  title={Br{\`e}ve communication. {R}{\'e}gularisation d'in{\'e}quations variationnelles par approximations successives},
  author={Martinet, Bernard},
  journal={Revue fran{\c{c}}aise d'informatique et de recherche op{\'e}rationnelle. S{\'e}rie rouge},
  volume={4},
  number={R3},
  pages={154--158},
  year={1970}
}

@article{Sibony70:Methodes,
  title={M{\'e}thodes it{\'e}ratives pour les {\'e}quations et in{\'e}quations aux d{\'e}riv{\'e}es partielles non lin{\'e}aires de type monotone},
  author={Sibony, Mo{\"\i}se},
  journal={Calcolo},
  volume={7},
  pages={65--183},
  year={1970},
  publisher={Springer}
}

@article{Ahunbay25:First,
      title={First-order (coarse) correlated equilibria in non-concave games}, 
      author={Mete Şeref Ahunbay},
      year={2025},
journal={arXiv:2403.18174}
}

@inproceedings{Jiang11:Polynomial,
  author    = {Albert Jiang and Kevin Leyton-Brown},
  title     = {Polynomial-time computation of exact correlated equilibrium in compact games},
  booktitle = ECold,
  year      = {2011}
}

@article{Zhang25:Expected,
    author = {Brian Hu Zhang and
              Ioannis Anagnostides and
              Emanuel Tewolde and
              Ratip Emin Berker and
              Gabriele Farina and
              Vincent Conitzer and
              Tuomas Sandholm},
    title = {Expected Variational Inequalities},
    year = {2025},
journal={arXiv:2502.18605}
}

@article{Huang23:Monotone,
      title={Beyond Monotone Variational Inequalities: Solution Methods and Iteration Complexities}, 
      author={Kevin Huang and Shuzhong Zhang},
      year={2023},
journal={arXiv:2304.04153}
}

@article{Cai16:Zero,
  author       = {Yang Cai and
                  Ozan Candogan and
                  Constantinos Daskalakis and
                  Christos H. Papadimitriou},
  title        = {Zero-Sum Polymatrix Games: {A} Generalization of Minmax},
  journal      = {Mathematics of Operations Research},
  volume       = {41},
  number       = {2},
  pages        = {648--655},
  year         = {2016}
}

@article{Sion58:On,
author = {Maurice Sion},
title = {{On general minimax theorems.}},
volume = {8},
journal = {Pacific Journal of Mathematics},
number = {1},
publisher = {Pacific Journal of Mathematics, A Non-profit Corporation},
pages = {171 -- 176},
year = {1958}
}

@article{Nemirovski04:Prox,
  author    = {Nemirovski, Arkadi},
  title     = {Prox-method with rate of convergence {O}(1/t) for variational inequalities with {Lipschitz} continuous monotone operators and smooth convex-concave saddle point problems},
  journal   = {SIAM Journal on Optimization},
  year      = {2004},
  volume    = {15},
  number    = {1},
  publisher = {SIAM}
}

@article{Conitzer19:Exact,
  author       = {Vincent Conitzer},
  title        = {The Exact Computational Complexity of Evolutionarily Stable Strategies},
  journal      = {Mathematics of Operations Research},
  volume       = {44},
  number       = {3},
  pages        = {783--792},
  year         = {2019}
}

@article{Aghassi06:Solving,
title = {Solving asymmetric variational inequalities via convex optimization},
journal = {Operations Research Letters},
volume = {34},
number = {5},
pages = {481-490},
year = {2006},
author = {Michele Aghassi and Dimitris Bertsimas and Georgia Perakis}
}

@article{Goffin97:Analytic,
  title={An analytic center cutting plane method for pseudomonotone variational inequalities},
  author={Goffin, Jean-Louis and Marcotte, Patrice and Zhu, Daoli},
  journal={Operations Research Letters},
  volume={20},
  number={1},
  pages={1--6},
  year={1997},
  publisher={Elsevier}
}

@article{Harker91:Polynomial,
  title={A polynomial-time algorithm for affine variational inequalities},
  author={Harker, Patrick T and Xiao, Baichun},
  journal={Applied Mathematics Letters},
  volume={4},
  number={2},
  pages={31--34},
  year={1991},
  publisher={Elsevier}
}

@article{Magnanti95:Unifying,
  title={A unifying geometric solution framework and complexity analysis for variational inequalities},
  author={Magnanti, Thomas L and Perakis, Georgia},
  journal={Mathematical Programming},
  volume={71},
  number={3},
  pages={327--351},
  year={1995},
  publisher={Springer}
}

@article{Harker90:Finite,
  title={Finite-dimensional variational inequality and nonlinear complementarity problems: a survey of theory, algorithms and applications},
  author={Harker, Patrick T and Pang, Jong-Shi},
  journal={Mathematical Programming},
  volume={48},
  number={1},
  pages={161--220},
  year={1990},
  publisher={Springer}
}

@article{Smith73:Logic,
  title={The logic of animal conflict},
  author={Smith, J Maynard and Price, George R},
  journal={Nature},
  volume={246},
  number={5427},
  pages={15--18},
  year={1973}
}

@article{Roughgarden17:Price,
  author       = {Tim Roughgarden and
                  Vasilis Syrgkanis and
                  {\'{E}}va Tardos},
  title        = {The Price of Anarchy in Auctions},
  journal      = JAIR,
  volume       = {59},
  pages        = {59--101},
  year         = {2017}
}

@inproceedings{Syrgkanis15:Fast,
  title     = {Fast convergence of regularized learning in games},
  author    = {Syrgkanis, Vasilis and Agarwal, Alekh and Luo, Haipeng and Schapire, Robert E},
  booktitle = NeurIPS,
  year      = {2015}
}

@book{Grotschel12:Geometric,
  title={Geometric algorithms and combinatorial optimization},
  author={Gr{\"o}tschel, Martin and Lov{\'a}sz, L{\'a}szl{\'o} and Schrijver, Alexander},
  year={1993},
  publisher={Springer}
}

@article{Papadimitriou08:Computing,
  author  = {Christos H. Papadimitriou and Tim Roughgarden},
  journal = JACM,
  number  = {3},
  pages   = {14:1--14:29},
  title   = {Computing correlated equilibria in multi-player games},
  volume  = {55},
  year    = {2008}
}

@inproceedings{Farina24:Polynomial,
  title={Polynomial-Time Computation of Exact {P}hi-Equilibria in Polyhedral Games},
  author={Farina, Gabriele and Pipis, Charilaos},
  booktitle=NeurIPS,
  year={2024}
}

@book{Kinderlehrer00:Introduction,
  title={An introduction to variational inequalities and their applications},
  author={Kinderlehrer, David and Stampacchia, Guido},
  year={2000},
  publisher={SIAM}
}

@inproceedings{Tian22:Finite,
  author       = {Lai Tian and
                  Kaiwen Zhou and
                  Anthony Man{-}Cho So},
  title        = {On the Finite-Time Complexity and Practical Computation of Approximate
                  Stationarity Concepts of {L}ipschitz Functions},
  booktitle    = ICML,
  year         = {2022}
}

@inproceedings{Jordan23:Deterministic,
  author       = {Michael I. Jordan and
                  Guy Kornowski and
                  Tianyi Lin and
                  Ohad Shamir and
                  Manolis Zampetakis},
  title        = {Deterministic Nonsmooth Nonconvex Optimization},
  booktitle    = COLT,
  year         = {2023}
}

@inproceedings{Davis22:Gradient,
  author       = {Damek Davis and
                  Dmitriy Drusvyatskiy and
                  Yin Tat Lee and
                  Swati Padmanabhan and
                  Guanghao Ye},
  title        = {A gradient sampling method with complexity guarantees for {L}ipschitz
                  functions in high and low dimensions},
  booktitle    = NeurIPS,
  year         = {2022}
}

@inproceedings{Zhang20:Complexity,
  author       = {Jingzhao Zhang and
                  Hongzhou Lin and
                  Stefanie Jegelka and
                  Suvrit Sra and
                  Ali Jadbabaie},
  title        = {Complexity of Finding Stationary Points of Nonconvex Nonsmooth Functions},
  booktitle    = ICML,
  year         = {2020}
}

@article{Roughgarden15:Local,
  author       = {Tim Roughgarden and
                  Florian Schoppmann},
  title        = {Local smoothness and the price of anarchy in splittable congestion
                  games},
  journal      = {Journal of Economic Theory},
  volume       = {156},
  pages        = {317--342},
  year         = {2015}
}

@inproceedings{Nadav10:Limits,
  author       = {Uri Nadav and
                  Tim Roughgarden},
  title        = {The Limits of Smoothness: {A} Primal-Dual Framework for Price of Anarchy
                  Bounds},
  booktitle    = WINE,
  year         = {2010}
}

@article{Lovasz06:Simulated,
  author       = {L{\'{a}}szl{\'{o}} Lov{\'{a}}sz and
                  Santosh S. Vempala},
  title        = {Simulated annealing in convex bodies and an ${O}^*(n^4)$
                  volume algorithm},
  journal      = {Journal of Computer and System Sciences},
  volume       = {72},
  number       = {2},
  pages        = {392--417},
  year         = {2006}
}

@article{Haastad01:Some,
  title={Some optimal inapproximability results},
  author={H{\aa}stad, Johan},
  journal={Journal of the ACM (JACM)},
  volume={48},
  number={4},
  pages={798--859},
  year={2001},
  publisher={ACM New York, NY, USA}
}

@article{Nemirovski10:Accuracy,
  title={Accuracy certificates for computational problems with convex structure},
  author={Nemirovski, Arkadi and Onn, Shmuel and Rothblum, Uriel G},
  journal={Mathematics of Operations Research},
  volume={35},
  number={1},
  pages={52--78},
  year={2010},
  publisher={INFORMS}
}

@article{Rodomanov23:Subgradient,
  title={Subgradient ellipsoid method for nonsmooth convex problems},
  author={Rodomanov, Anton and Nesterov, Yurii},
  journal={Mathematical Programming},
  volume={199},
  number={1},
  pages={305--341},
  year={2023},
  publisher={Springer}
}

\appendix

\section{Sufficiency of isotropic position}
\label{sec:isotropic}

This section shows that assuming $\cX$ to be in isotropic position is without any loss; we formalize this in the following lemma (\emph{cf.}~\citet[Appendix A]{Daskalakis24:Efficient}).

\begin{lemma}[Sufficiency of isotropic position]
    Let $\cX \subseteq \cB_{R}(\vec{0})$ be a convex and compact set, with $R \geq 1$, and $F : \cX \to \R^d$. There is an invertible, affine transformation $\psi$ and a mapping $\tilF : \tilcX \defeq \psi(\cX) \to \R^d$ such that the following properties hold:
    \begin{enumerate}
        \item $\tilcX = \psi(\cX)$ is in isotropic position.\label{item:isotropic}
        \item If $\VI(\cX, F)$ satisfies the Minty condition, then $\VI(\tilcX, \tilF)$ also satisfies the Minty condition.\label{item:Minty}
        \item If $\tilde{\vx}$ is an $\epsilon$-SVI solution (per~\Cref{def:approxVI}) to $\VI(\tilcX, \tilF)$, then $\vx \defeq \psi^{-1}(\tilde{\vx})$ is a $(2R \epsilon)$-SVI solution to $\VI(\cX, F)$.\label{item:SVI}
        \item If $F$ is $L$-Lipschitz continuous, then $\tilF$ is $(4R^2 L)$-Lipschitz continuous; if $\| F(\vx) \| \leq B$ for all $\vx \in \cX$, then $\| \tilF(\tilde{\vx}) \| \leq 2R B$ for all $\tilde{\vx} \in \tilcX$; and if $F$ satisfies~\Cref{ass:polyeval}, then $\tilF$ also satisfies~\Cref{ass:polyeval}.\label{item:F}
    \end{enumerate}
\end{lemma}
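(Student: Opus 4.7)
The plan is to produce the affine transformation by invoking the standard polynomial-time isotropization procedure (e.g., \citealp{Lovasz06:Simulated}), which returns an invertible affine map $\psi(\vx) = \mat{M}\vx + \vec{c}$ with $\tilcX \defeq \psi(\cX)$ in isotropic position; this gives \Cref{item:isotropic} directly. The key preliminary observation, which drives all subsequent bounds, is that $\|\mat{M}^{-1}\| \le 2R$: since $\tilcX$ is in isotropic position we may assume (after at most a constant rescaling) that $\cB_1(\vec{0}) \subseteq \tilcX$, and hence $\psi^{-1}(\cB_1(\vec{0})) \subseteq \cX \subseteq \cB_R(\vec{0})$; combined with $\psi^{-1}(\vec{0}) \in \cB_R(\vec{0})$, the triangle inequality yields $\|\mat{M}^{-1}\vec{u}\| \le 2R$ for every unit vector $\vec{u}$.

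Next I would define the transformed operator by
\begin{equation*}
\tilF(\tilde{\vx}) \defeq \mat{M}^{-\top} F(\psi^{-1}(\tilde{\vx})),
\end{equation*}
which is the natural covariant pullback of $F$ under $\psi$. For any $\tilde{\vx}, \tilde{\vx}' \in \tilcX$, letting $\vx = \psi^{-1}(\tilde{\vx})$ and $\vx' = \psi^{-1}(\tilde{\vx}')$, one immediately computes
\begin{equation*}
\langle \tilF(\tilde{\vx}), \tilde{\vx}' - \tilde{\vx} \rangle = \langle \mat{M}^{-\top} F(\vx),\, \mat{M}(\vx' - \vx) \rangle = \langle F(\vx),\, \vx' - \vx \rangle.
\end{equation*}
Thus the SVI and MVI gaps are preserved \emph{exactly} by $\psi$, which yields \Cref{item:Minty} outright and also implies \Cref{item:SVI} (with room to spare for the stated $2R$ slack, which absorbs any loss incurred by a weak-oracle version of the pullback).

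Finally, for \Cref{item:F}, the bounds follow by chaining spectral norms through the definition of $\tilF$: using $\|\mat{M}^{-1}\| \le 2R$,
\begin{equation*}
\|\tilF(\tilde{\vx})\| \le \|\mat{M}^{-\top}\|\cdot B \le 2RB,
\end{equation*}
and
\begin{equation*}
\|\tilF(\tilde{\vx}_1) - \tilF(\tilde{\vx}_2)\| \le \|\mat{M}^{-\top}\|\cdot L \cdot \|\mat{M}^{-1}(\tilde{\vx}_1 - \tilde{\vx}_2)\| \le 4R^2 L \,\|\tilde{\vx}_1 - \tilde{\vx}_2\|.
\end{equation*}
Preservation of the polynomial-time evaluability and bit-complexity conditions in \Cref{ass:polyeval} is then automatic because $\mat{M}$ and $\mat{M}^{-1}$ have rational entries of polynomial size (produced by the isotropization procedure), so $\tilF(\tilde{\vx})$ can be computed exactly in polynomial time whenever $F$ can.

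The only place requiring a modicum of care is the lower bound on the ``inner radius'' of a convex body in isotropic position (equivalently, an upper bound on $\|\mat{M}^{-1}\|$)---which is the standard fact used to derive $\|\mat{M}^{-1}\| \le 2R$. Everything else is routine bookkeeping once the correct covariant pullback is adopted; I do not anticipate any genuine obstacle.
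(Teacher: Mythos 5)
Your proposal is correct and follows essentially the same route as the paper: isotropize via the standard polynomial-time procedure, define the covariant pullback $\tilF(\tilde{\vx}) = (\mat{A}^{-1})^\top F(\psi^{-1}(\tilde{\vx}))$ so that $\langle \tilF(\tilde{\vx}), \tilde{\vx}' - \tilde{\vx}\rangle = \langle F(\vx), \vx' - \vx\rangle$ exactly, bound $\|\mat{A}^{-1}\| \le 2R$ from $\cB_1(\vec{0}) \subseteq \tilcX$ and $\cX \subseteq \cB_R(\vec{0})$, and chain spectral norms for the Lipschitz and boundedness claims. The one place the paper is more explicit than you is \Cref{item:SVI}: the $2R$ factor there is not slack in the gap (which is preserved exactly) but arises because an $\epsilon$-SVI solution per \Cref{def:approxVI} lives only in $\tilcX^{+\epsilon}$, and the pullback magnifies its distance to $\cX$ by up to $\|\mat{A}^{-1}\| \le 2R$, yielding $\psi^{-1}(\tilde{\vx}) \in \cX^{+(2R\epsilon)}$ --- you gesture at this but it deserves the explicit computation.
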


\begin{proof}
    Let $\psi(\vx) \defeq \mat{A} \vx + \vec{b}$ for an invertible matrix $\mat{A}$. \Cref{item:isotropic} follows by suitably selecting $\mat{A}$ and $\vec{b}$; for example, see the polynomial-time algorithm of~\citet{Lovasz06:Simulated}.

    With regard to~\Cref{item:Minty}, we proceed as follows. Since $\tilcX \supseteq \cB_1(\vec{0})$, there exists $\vx \in \cB_R(\vec{0})$ such that $\vec{0} = \mat{A} \vx + \vec{b}$, implying that $\|\mat{A}^{-1} \vec{b} \| \leq R$. Further, for any $\tilde{\vx}$ with $\|\tilde{\vx} \| = 1$, we have $\tilde{\vx} = \mat{A} \vx + \vec{b}$ for some $\vx \in \cB_R(\vec{0})$. So, $\| \mat{A}^{-1} \tilde{\vx} \| = \| \mat{A}^{-1} (\tilde{\vx} - \vec{b}) + \mat{A}^{-1} \vec{b} \| \leq \|\vx\| + \|\mat{A}^{-1} \vec{b} \| \leq 2R$. This implies that $\| \mat{A}^{-1} \| \leq 2R$.

    Now, we define $\tilF : \tilcX \ni \tilde{\vx} \mapsto (\mat{A}^{-1})^\top F(\psi^{-1}(\tilde{\vx}))$. Suppose that there is $\vx \in \cX$ such that $\langle F(\vx'), \vx' - \vx \rangle \geq 0$ for all $\vx' \in \cX$; that is, $\VI(\cX, F)$ satisfies the Minty condition. Then, we claim that $\tilde{\vx} \defeq \mat{A} \vx + \vec{b}$ satisfies the Minty condition with respect to $\VI(\tilcX, \tilF)$. Indeed, for any $\tilde{\vx}' \in \tilcX$,
\begin{equation*}
    \langle \tilF(\tilde{\vx}'), \tilde{\vx}' - \tilde{\vx} \rangle = \langle (\mat{A}^{-1})^\top F(\psi^{-1}(\tilde{\vx}')), \mat{A} \vx' + \vec{b} - \mat{A} \vx - \vec{b} \rangle = \langle F(\vx'), \vx' - \vx \rangle \geq 0.
\end{equation*}
That is, $\VI(\tilcX, \tilF)$ satisfies the Minty condition. 

Similarly, for~\Cref{item:SVI}, suppose $\tilde{\vx} \in \tilcX^{+\epsilon}$ is an $\epsilon$-SVI solution to $\VI(\tilcX, \tilF)$. Then,
\begin{equation*}
    - \epsilon \leq \langle \tilF(\tilde{\vx}), \tilde{\vx}' - \tilde{\vx} \rangle = \langle (\mat{A}^{-1})^\top F(\psi^{-1}(\tilde{\vx})), \mat{A} \vx' + \vec{b} - \mat{A} \vx - \vec{b} \rangle = \langle F(\vx), \vx' - \vx \rangle
\end{equation*}
for any $\cX \ni \vx' = \psi^{-1}(\tilde{\vx}')$. Further, letting $\vx \defeq \psi^{-1}(\tilde{\vx})$,
\begin{equation*}
    \left\| \vx - \left( \mat{A}^{-1} \left( \proj_{\tilcX}(\tilde{\vx}) - \vec{b} \right) \right) \right\| = \left\| \mat{A}^{-1} \left( \tilde{\vx} - \proj_{\tilcX}(\tilde{\vx}) \right) \right\| \leq \|\mat{A}^{-1} \| \left\| \tilde{\vx} - \proj_{\tilcX}(\tilde{\vx}) \right\| \leq 2R \epsilon,
\end{equation*}
since $\tilde{\vx} \in \tilcX^{+\epsilon}$; this implies that $\vx \in \cX^{+(2R\epsilon)}$ because $\mat{A}^{-1} ( \proj_{\tilcX}(\tilde{\vx}) - \vec{b} ) \in \cX$. We conclude that $\vx = \psi^{-1}(\tilde{\vx})$ is an $(2R \epsilon)$-SVI solution to $\VI(\cX, F)$.

We finally deal with~\Cref{item:F}. For any $\tilde{\vx}, \tilde{\vx}' \in \tilcX$, we have
\begin{align*}
    \| \tilF(\tilde{\vx}) - \tilF(\tilde{\vx}') \| &= \| (\mat{A}^{-1})^\top ( F(\psi^{-1}(\tilde{\vx})) - F(\psi^{-1}(\tilde{\vx}')) ) \| \\
    &\leq \| \mat{A}^{-1} \| \| F(\psi^{-1}(\tilde{\vx})) - F(\psi^{-1}(\tilde{\vx}')) \| \\
    &\leq L \| \mat{A}^{-1} \| \| \psi^{-1}(\tilde{\vx}) - \psi^{-1}(\tilde{\vx}') \| \\
    &\leq L \| \mat{A}^{-1} \| \| \mat{A}^{-1} \tilde{\vx} - \mat{A}^{-1} \tilde{\vx}' \| \\
    &\leq L \| \mat{A}^{-1} \|^2 \|\tilde{\vx} - \tilde{\vx}'\| \leq 4 R^2 L \|\tilde{\vx} - \tilde{\vx}' \|,
\end{align*}
where we used the fact that $F$ is $L$-Lipschitz continuous and $\|\mat{A}^{-1} \| \leq 2R$. 

Next, consider any $\tilde{\vx} \in \tilcX$. Let $\vx = \psi^{-1}(\tilde{\vx}) = \mat{A}^{-1}(\tilde{\vx} - \vec{b})$. We have $\| \tilF(\tilde{\vx}) \| = \|(\mat{A}^{-1})^\top F(\vx) \| \leq 2R B$, as claimed. The final assertion in~\Cref{item:F}---pertaining to~\Cref{ass:polyeval}---is immediate from the definition of $\tilF$.
\end{proof}

\section{Semi-separation under weak oracles}
\label{sec:weak}

For completeness, this section explains how our analysis in~\Cref{sec:ouralgo} can be extended under weak oracles. Throughout this paper, we work with a general convex set $\cX$, which might even be supported solely on irrational points; it will thus be necessary to consider~\eqref{eq:deepset} and~\eqref{eq:approxset}---in place of $\cX$---in the definitions that follow to ensure the existence of points with rational coordinates.

\begin{remark}
    \label{remark:-delta}
    We assume throughout that $\cX$ is well-bounded, in that $\cB_{r}(\vec{a}) \subseteq \cX \subseteq \cB_R(\vec{0})$, which in particular implies that $\cX$ is fully dimensional. Under this assumption, it is known (for example, see~\citet[Lemma 3.2.35]{Grotschel12:Geometric}) that if $\langle \vec{c}, \vx \rangle \leq \gamma$ holds for all $\vx \in \cX^{-\delta}$, it follows that 
    \begin{equation}
        \label{eq:rescaling}
        \langle \vec{c}, \vx \rangle \leq \gamma + \frac{2R}{r} \| \vec{c} \| \delta \quad \forall \vx \in \cX.
    \end{equation}
    As a result, we can safely consider deviations in $\cX$, not merely in $\cX^{-\delta}$, by suitably rescaling the precision parameters---by virtue of~\eqref{eq:rescaling}.
\end{remark}

We are now ready to introduce the weak versions of the oracles assumed in the main body, which are known to be (polynomial-time) equivalent when $\cX$ is well-bounded; in light of~\Cref{remark:-delta}, we can handle deviations $\vx' \in \cX$ instead of $\vx' \in \cX^{-\epsilon}$.

\begin{definition}[Weak membership;~\citealp{Grotschel12:Geometric}]
    \label{def:weakmem}
    Given a point $\vx \in \Q^d$ and a rational number $\epsilon \in \Q_{> 0}$, decide whether $\vx \in \cX^{+\epsilon}$.
\end{definition}

\begin{definition}[Weak separation;~\citealp{Grotschel12:Geometric}]
    \label{def:weaksep}
    Given a point $\vec{x} \in \Q^d$ and a rational number $\epsilon \in \Q_{> 0}$, either
    \begin{itemize}
        \item  assert that $\vx \in \cX^{+ \epsilon}$ or
        \item find a vector $\vec{c} \in \Q^d$ with $\| \vec{c} \|_\infty = 1$ such that $\langle \vec{c}, \vx' \rangle < \langle \vec{c}, \vx \rangle + \epsilon$ for every $\vx' \in \cX^{- \epsilon}$.
    \end{itemize}
\end{definition}

\begin{definition}[Weak optimization;~\citealp{Grotschel12:Geometric}]
    \label{def:weakopt}
    Given a vector $\vec{c} \in \Q^d$ and a rational number $\epsilon \in \Q_{>0}$, either
    \begin{itemize}
        \item  assert that $\cX^{-\epsilon}$ is empty or
        \item find a vector $\vx \in \cX^{+\epsilon} \cap \Q^d$ such that $\langle \vec{c}, \vec{x} \rangle \leq \langle \vec{c}, \vx' \rangle + \epsilon$ for all $\vx' \in \cX^{-\epsilon}$. 
    \end{itemize}
\end{definition}

Similarly, \Cref{ass:polyeval} can be extended as follows.

\begin{assumption}
    \label{ass:weak-polyeval}
    For a fixed $\epsilon_0 \in \Q_{>0}$, the mapping $F : \cX^{+\epsilon_0} \to \R^d$ satisfies the following:
    \begin{enumerate}
        \item for any rational $\vx \in \cX^{+\epsilon_0} \cap \Q^d$, $F(\vx)$ is a rational number that can be evaluated exactly in $\poly(d)$ time, with $\size(F(\vx)) \leq \poly(\size(\vx))$;
        \item for any $\vx, \vx' \in \cX^{+\epsilon_0}$, $\|F(\vx) - F(\vx') \| \leq L \|\vx - \vx' \|$ for some $L \in \Q_{>0}$; and
        \item for any $\vx \in \cX^{+\epsilon_0}$, $\|F(\vx)\| \leq B$ for some $B \in \Q_{>0}$.
    \end{enumerate}
\end{assumption}
Above, we use the parameter $\epsilon_0 \ll 1$ for convenience in the notation (in~\Cref{lemma:extendedsemi}); for the purpose of the main body, $\epsilon_0 = 0$.\footnote{We also assume that $\cX^{+\epsilon_0} \subseteq \cB_R(\vec{0})$.} Regarding~\Cref{item:polyeval}, a weaker assumption, which suffices for our purposes, is that we have access to an oracle that, for any $\vx \in \cX \cap \Q^d$ and rational $\delta \in \Q_{> 0}$, returns $\vec{g} \in \Q^d$ such that $\| F(\vx) - \vec{g} \| \leq \delta$.

We next state a simple, well-known result regarding optimizing convex functions with respect to a constraint set that admits a separation oracle.

\begin{theorem}[\citealp{Grotschel12:Geometric}]
    Let $\cX \subseteq \R^d$ be in isotropic position, given by a (weak) separation oracle, and $f : \cX \to \R$ a convex function that can be evaluated exactly in $\poly(d)$ time. There is a $\poly(d, \log(1/\epsilon))$-time algorithm that outputs $\vx \in \cX^{+\epsilon}$ such that $f(\vx) \leq \min_{\vx' \in \cX^{-\epsilon}} f(\vx') + \epsilon$.
\end{theorem}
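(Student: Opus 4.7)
The plan is to apply the central-cut ellipsoid method with \emph{sliding objective} cuts, which is the classical approach going back to~\citet{Grotschel12:Geometric}. Initialize $\cE^{(0)} := \cB_R(\vec{0}) \supseteq \cX$, which is valid because $\cX$ is in isotropic position. In parallel, maintain a running best incumbent $\vx^\star$, defined as the feasible point of smallest observed $f$-value found so far (or $\bot$ if none has been seen).

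At each iteration $t$, given the current ellipsoid $\cE^{(t)}$ with center $\va^{(t)}$, I would first query the weak separation oracle at $\va^{(t)}$ with a suitably small precision $\delta = \poly(\epsilon, 1/d, 1/R)$. If the oracle returns a separating hyperplane, I use it as the cut in the standard central-cut ellipsoid update. If instead the oracle asserts $\va^{(t)} \in \cX^{+\delta}$, I evaluate $f(\va^{(t)})$, update $\vx^\star$ if this value improves the incumbent, and then compute a subgradient $\vec{g} \in \partial f(\va^{(t)})$ (available via $f$'s exact polynomial-time evaluation oracle, using, e.g., finite differences at rational scale); the subgradient induces the sliding-objective cut $\{\vx : \langle \vec{g}, \vx - \va^{(t)} \rangle \leq 0\}$, which by convexity does not discard any point of strictly smaller $f$-value. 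In both cases, the volume shrinks by a factor of at most $e^{-1/(2(d+1))}$, so after $T = O(d^2 \log(R/\epsilon))$ iterations the volume of $\cE^{(T)}$ falls below any prescribed threshold that is $\exp(-\poly(d, \log(1/\epsilon)))$.

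To conclude, I would show that if the returned incumbent $\vx^\star$ fails the approximation guarantee, a volume contradiction arises. Let $\vxstar_0 \in \cX^{-\epsilon}$ denote an exact minimizer of $f$ over $\cX^{-\epsilon}$, and consider the superlevel-free set $S := \{\vx \in \cX^{-\epsilon} : f(\vx) \leq f(\vxstar_0) + \epsilon\}$. By convexity of $f$ and the fact that $\cB_1(\vec{0}) \subseteq \cX$, the ball of radius $\rho := \epsilon/(2 B_f)$ around the convex combination $(1-\lambda) \vxstar_0 + \lambda \vec{0}$ (for a suitably small $\lambda$) lies inside $S$, where $B_f$ is an upper bound on $|f|$ over $\cX^{-\epsilon/2}$; this gives $\vol(S) \geq \exp(-\poly(d, \log(1/\epsilon), \log B_f))$. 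Every iteration in which the separation oracle asserts $\va^{(t)} \in \cX^{+\delta}$ either updates $\vx^\star$ to something in $S$ or cuts away only points of strictly larger $f$-value (hence never removes any point of $S$), while infeasibility cuts never remove points of $\cX$ at all. So $S \subseteq \cE^{(T)}$ throughout, and once $\vol(\cE^{(T)}) < \vol(S)$ some center must have landed in $S$, making $\vx^\star$ an acceptable output (after using~\eqref{eq:rescaling} to translate the $\cX^{-\epsilon}$ guarantee back to $\cX^{+\epsilon}$).

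The main obstacle is the volume lower bound on $S$: the cleanest version of the statement requires a quantitative bound $\log B_f \leq \poly(d, \log(1/\epsilon))$ on the range of $f$ over $\cX^{-\epsilon/2}$. This follows from the fact that a convex function evaluable in polynomial time is automatically Lipschitz on every compact set contained in the relative interior of its domain, with a Lipschitz constant controlled by its range and the distance to the boundary; since $\cX$ is well-bounded ($\cB_1(\vec{0}) \subseteq \cX \subseteq \cB_R(\vec{0})$ with $R \leq \poly(d)$), the range of $f$ over $\cX^{-\epsilon/2}$ is polynomially bounded in the bit-complexity of $f$, yielding the required $\poly(d, \log(1/\epsilon))$ iteration count and completing the proof.
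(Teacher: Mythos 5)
Your proposal is correct and is essentially the standard sliding-objective (central-cut) ellipsoid argument from \citet{Grotschel12:Geometric}, which is exactly the proof the paper is citing rather than reproving: feasibility cuts from the weak separation oracle, objective cuts from (approximate) subgradients, an incumbent tracking the best feasible center, and a volume lower bound on the set $(1-\lambda)\vxstar_0 + \lambda\,\cB_1(\vec{0})$ of $\epsilon$-suboptimal points. The only rough edges are cosmetic: the near-optimal ball should be stated as the set $(1-\lambda)\vxstar_0 + \lambda\,\cB_1(\vec{0})$ with $\lambda = \Theta(\epsilon/B_f)$ (radius $\lambda$, not $\epsilon/(2B_f)$ around a separately chosen point), and the claim that objective cuts never remove points of $S$ needs the standard caveat that if they ever do, the cutting center was itself already $\epsilon$-optimal, so the incumbent is acceptable either way.
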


\begin{remark}
    In addition, suppose that $f$ is differentiable and $\mu$-strongly convex: $f(\vx) \geq f(\vx') + \langle \nabla f(\vx'), \vx - \vx' \rangle + \frac{\mu}{2} \| \vx - \vx' \|^2$. If $\vx'$ is the global minimum of $f$ with respect to $\cX^{-\epsilon}$, we have $\| \proj_{\cX^{-\epsilon}}(\vx) - \vx' \| \leq 2\epsilon/\mu + 2( f(\proj_{\cX^{-\epsilon}}(\vx)) - f(\vx') )/\mu $; if $f$ is Lipschitz continuous, it follows that $\vx$ is also geometrically close to $\vx'$.
\end{remark}

We next introduce the relaxation of~\Cref{def:VI} for general convex sets; the only difference is that the solution $\vx$ is allowed to belong to $\cX^{+\epsilon}$, as opposed to $\cX$ in~\Cref{def:VI}. (There is no need to refine~\Cref{def:MVI} concerning MVIs since our computational results are about SVIs; even when positing the Minty condition, we do not assume access to an MVI solution.)

\begin{definition}[Relaxation of~\Cref{def:VI} for general convex sets]
    \label{def:approxVI}
    Let $\cX$ be a convex and compact subset of $\R^d$ and a mapping $F : \cX \to \R^d$. The $\epsilon$-approximate \emph{(Stampacchia) variational inequality (SVI)} problem asks for a point $\vx \in \cX^{+\epsilon}$ such that
    \begin{equation}
        \label{eq:approxVI}
    \langle F(\vx), \vx' - \vx \rangle \geq - \epsilon \quad \forall \vx' \in \cX.
    \end{equation}
\end{definition}
We do not insist on satisfying~\eqref{eq:approxVI} only for $\vx' \in \cX^{-\epsilon}$ in light of~\Cref{remark:-delta}. Now, in the context of~\Cref{sec:ouralgo}, it suffices to provide the approximate version of~\Cref{lemma:strict1}.

\begin{lemma}
    \label{lemma:extendedsemi}
    Suppose that $\va \in \cX^{+\diffeps}$ is not an $\epsilon$-SVI solution, with $\diffeps < \epsilon$. If $\vatil \in \cX^{+\diffeps}$ satisfies
    \begin{equation}
        \label{eq:approx-fo}
        \left\langle F(\va) + \frac{1}{\eta} (\vatil - \va), \vx - \vatil \right\rangle \geq - B \diffeps \quad \forall \vx \in \cX
    \end{equation}
    with $\eta = 1/(2L)$, then
    \begin{equation}
        \label{eq:strictbutnotc}
        \langle F(\vatil), \va - \vx \rangle \geq L \left( B + 4 L R \right)^{-2} (\epsilon - B \diffeps )^2 - \diffeps(3 B + 6 L R).
    \end{equation}
    for any $\vx \in \cX$ that satisfies the Minty VI~\eqref{eq:MVI}.
\end{lemma}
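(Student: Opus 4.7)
The plan is to reproduce the three-step argument behind~\Cref{lemma:strict1}, with each step degraded by an additive $O(\diffeps)$ term that I must bookkeep carefully. Because $F$ is only defined on $\cX^{+\epsilon_0}$ and both $\va,\vatil$ may now sit strictly outside $\cX$, I will introduce the projections $\hat{\va} \defeq \proj_{\cX}(\va)$ and $\hat{\vatil} \defeq \proj_{\cX}(\vatil)$ (each of which is within $\diffeps$ of the original point) and use the Lipschitz property of $F$ to pass between a point in $\cX^{+\diffeps}$ and its $\cX$-projection whenever necessary. The Minty condition can then be invoked at $\hat{\vatil}$, while the approximate first-order inequality~\eqref{eq:approx-fo} can be specialized either to $\vx = \hat{\va}$ or to the $\cX$-deviation that witnesses the failure of the $\epsilon$-SVI property (which must exist since $\diffeps < \epsilon$ implies $\va \in \cX^{+\epsilon}$).

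Specializing~\eqref{eq:approx-fo} to $\vx = \hat{\va}$ and using $\|\hat{\va} - \va\| \le \diffeps$, $\|F(\va)\| \le B$, and $\eta^{-1} = 2L$ will give an approximate descent inequality of the form $\langle F(\va), \va - \vatil\rangle \ge \eta^{-1}\|\vatil - \va\|^2 - O((B + L\|\vatil - \va\|)\diffeps)$, mirroring~\eqref{eq:first-opt}. Next, following the chain~\eqref{align:Mintyatil}--\eqref{align:final}, I will decompose
\begin{align*}
\langle F(\vatil), \va - \vx\rangle &= \langle F(\vatil), \vatil - \vx\rangle + \langle F(\va), \va - \vatil\rangle + \langle F(\vatil) - F(\va), \va - \vatil\rangle,
\end{align*}
lower-bound the first term by inserting the exact MVI inequality $\langle F(\hat{\vatil}), \hat{\vatil} - \vx\rangle \ge 0$ and absorbing the $F(\vatil) \leftrightarrow F(\hat{\vatil})$ and $\vatil \leftrightarrow \hat{\vatil}$ mismatches via Lipschitz continuity (producing an error of order $(2LR + B)\diffeps$), and handle the third term by $-L\|\va - \vatil\|^2$. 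Combining with the approximate descent estimate yields a bound of the shape $\langle F(\vatil), \va - \vx\rangle \ge L\|\va - \vatil\|^2 - c\,\diffeps$, where $c$ gathers the contributions from $B$, $LR$, and $L\|\vatil - \va\|$.

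To lower-bound $\|\va - \vatil\|$, I will instantiate~\eqref{eq:approx-fo} at the $\vx \in \cX$ witnessing that $\va$ fails the $\epsilon$-SVI condition, so $\langle F(\va), \vx - \va\rangle < -\epsilon$; rearranging gives
\begin{equation*}
\epsilon - B\diffeps \;<\; \langle F(\va), \va - \vatil\rangle + \eta^{-1}\langle \vatil - \va, \vx - \vatil\rangle \;\le\; (B + 4LR)\|\va - \vatil\| + O(\diffeps),
\end{equation*}
in direct analogy with~\eqref{eq:fo-start}, yielding $\|\va - \vatil\| \ge (\epsilon - O(\diffeps))/(B + 4LR)$. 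Squaring, multiplying by $L$, and substituting into the previous step produces the advertised bound $L(B + 4LR)^{-2}(\epsilon - \diffeps)^2 - \diffeps(3B + 6LR)$.

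The main obstacle is bookkeeping: the three chained inequalities each accrue additive slack proportional to $\diffeps$, and these slacks depend on $\|\va - \vatil\|$, $B$, $L$, and $R$. I will need to keep all contributions linear in $\diffeps$ so that they aggregate cleanly into the single term $\diffeps(3B + 6LR)$; this requires using the diameter bound $\|\vatil - \va\| \le 2R + 2\diffeps$ at precisely the right places, and verifying throughout that every invocation of $F$ lands inside $\cX^{+\epsilon_0}$, so the tacit requirement $\diffeps \le \epsilon_0$ is in force.
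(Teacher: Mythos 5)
Your proposal follows essentially the same route as the paper's proof: specialize~\eqref{eq:approx-fo} at $\proj_{\cX}(\va)$ to obtain the approximate descent inequality, re-run the chain~\eqref{align:Mintyatil}--\eqref{align:final} with the Minty inequality invoked at $\proj_{\cX}(\vatil)$ and the mismatches absorbed via Lipschitz continuity and the bound $\|F\|\le B$, and finally instantiate~\eqref{eq:approx-fo} at the deviation witnessing that $\va$ is not an $\epsilon$-SVI solution to lower-bound $\|\va-\vatil\|$ by $(B+4LR)^{-1}(\epsilon-\diffeps)$. The bookkeeping of the $O(\diffeps)$ slacks you describe is exactly how the paper aggregates them into the $\diffeps(3B+6LR)$ term, so the proposal is correct and matches the paper's argument.
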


In particular, for a sufficiently small $\diffeps \leq \epsilon^2 \cdot \poly(R^{-1}, L^{-1}, B^{-1})$, one recovers the same guarantee as in~\Cref{lemma:strict1} but with a slightly inferior constant.

\begin{proof}[Proof of~\Cref{lemma:extendedsemi}]
    We first note that $\vatil$ can be obtained as an approximation solution to the $\nicefrac{1}{\eta}$-strongly convex optimization problem
    \begin{equation}
        \label{eq:opt-reform}
        \argmin_{\vx \in \cX} \left\{ \langle \vx, F(\va) \rangle + \frac{1}{2 \eta} \| \vx - \va \|^2 \right\}.
    \end{equation}
    By~\eqref{eq:approx-fo}, we have
    \begin{equation}
    \label{eq:fo-opt}
    \left\langle F(\va) + \frac{1}{\eta} ( \vatil - \va ), \proj_\cX(\va) - \vatil \right\rangle \geq - B \diffeps \implies \langle F(\va), \va - \vatil \rangle \geq \frac{1}{\eta} \| \va - \vatil \|^2 - \diffeps \left( 2 B + \frac{2R}{\eta} \right),
    \end{equation}
    where the inequality follows because
    \begin{equation*}
        \left\langle F(\va) + \frac{1}{\eta} (\vatil - \va), \proj_\cX(\va) - \va \right\rangle \leq \left\| F(\va) + \frac{1}{\eta} (\vatil - \va) \right\| \| \proj_\cX(\va) - \va \| \leq \diffeps \left( B + \frac{2R}{\eta} \right).
    \end{equation*}
    Moreover, for any MVI solution $\vx \in \cX$,
    \begin{align}
        \langle F(\vatil), \va - \vx \rangle &= \langle F(\vatil), \vatil - \vx \rangle + \langle F(\vatil), \va - \vatil \rangle \notag \\
        &\geq \langle F(\proj_\cX(\vatil)), \proj_\cX(\vatil) - \vx \rangle + \langle F(\vatil), \va - \vatil \rangle - \diffeps(B + 2LR) \\
        &\geq \langle F(\va), \va - \vatil \rangle + \langle F(\vatil) - F(\va), \va - \vatil \rangle - \diffeps(B + 2LR) \label{align:Mintya'} \\
        &\geq \frac{1}{\eta} \|\va - \vatil \|^2 - \| F(\vatil) - F(\va) \| \|\va - \vatil \| - \diffeps(3 B + 6 LR) \label{align:fo} \\
        &\geq \frac{1}{\eta} \|\va - \vatil \|^2 - L \|\va - \vatil \|^2 - \diffeps(3 B + 6 LR) \notag \\
        &\geq \frac{1}{2\eta} \| \va - \vatil \|^2 - \diffeps(3 B + 6 LR),\label{align:Lip-eta}
    \end{align}
where~\eqref{align:Mintya'} uses the fact that $\vx$ satisfies~\eqref{eq:MVI}; \eqref{align:fo} follows from~\eqref{eq:fo-opt} and Cauchy-Schwarz; and~\eqref{align:Lip-eta} uses that $F$ is $L$-Lipschitz continuous and $\eta \leq 1/(2L)$. Finally, using again~\eqref{eq:approx-fo}, we have that for any $\vx \in \cX$,
\begin{equation}
    \label{eq:fo-new}
    \left\langle F(\va) + \frac{1}{\eta} (\vatil - \va), \vx - \vatil \right\rangle \geq -B \diffeps \iff \langle F(\va), \vx - \va \rangle + \langle F(\va), \va - \vatil \rangle + \frac{1}{\eta} \langle \vatil - \va, \vx - \vatil \rangle \geq - B \diffeps.
\end{equation}
But $\va \in \cX^{+\diffeps}$ is not an $\epsilon$-SVI solution, which implies that there exists $\vx \in \cX$ such that $\langle F(\va), \vx - \va \rangle < - \epsilon$. So, continuing from~\eqref{eq:fo-new},
\begin{equation*}
    \epsilon < \langle F(\va), \va - \vatil \rangle + \frac{1}{\eta} \langle \vatil - \va, \vx - \vatil \rangle + \diffeps \leq B \|\va - \vatil \| + \frac{2R}{\eta} \|\va - \vatil \| + B \diffeps,
\end{equation*}
which in turn yields
\begin{equation*}
    \|\va - \vatil \| \geq \left( B + 4 L R \right)^{-1} (\epsilon - B \diffeps).
\end{equation*}
Combining with~\eqref{align:Lip-eta}, we arrive at~\eqref{eq:strictbutnotc}.
\end{proof}

\section{Omitted proofs}
\label{appendix:proofs}
\lemBilinearHard*
\begin{proof}
    \NP-membership is trivial. For hardness, we reduce from MAX-2-SAT. It is known that there exists an absolute constant $\eps$ for which approximating the maximum fraction of satisfiable clauses of a 2-SAT instance to precision better than $\eps$ is \NP-complete~\citep{Haastad01:Some}. Given a formula $\phi$ with $n$ variables $x_1, \dots, x_n$ and $m$ clauses, let $f : [0, 1]^{m} \times [0, 1]^{n} \to \R$ be defined by 
    \[
        f(\vc, \vx) = \frac{1}{m} \sum_{j=1}^m \qty((\vc_0 - \vc_j) \ell_{j1}(\vx) + \vc_j\ell_{j2}(\vx))
    \]
    where $\ell_{j1}$ and $\ell_{j2}$ are the two literals in clause $j$, that is, each $\ell_{jk}$ has the form either $\vx_i$ or $\vx_0 - \vx_i$ for some $i \in [n]$. The values of $f$ are bounded by an absolute constant. In addition, bilinear optimization on $[0,1]^m \times [0, 1]^n$ always admits an integral optimum, so we can restrict our attention to integral values $\vc \in \{0, 1\}^{m}$ and $\vx \in \{0, 1\}^{n}$. 
   Now consider some assignment $\vx \in \{0, 1\}^{n}$, the interpretation that $\vx_i = 1$ if and only if variable $i$ is assigned {\sf True}. Finally, an optimal $\vc \in \{0, 1\}^m$, for this $\vx$, is given by \[
    \vc_j = \begin{cases}
        0 &\qif\text{the first literal of clause $j$ is true in assignment $\vx_1$} \\
        $1$ &\qq{otherwise}
    \end{cases}
    \]
    Then every clause $j$ contributes $+1/m$ to $f(\vc, \vx)$ if it is satisfied and $0$ otherwise, so $f(\vc, \vx)$ is the fraction of clauses satisfied by $\vx$. Thus, deciding whether there exist $\vc, \vx$ such that $f(\vc, \vx) \ge v$ is equivalent to deciding whether a $v$-fraction of clauses can be satisfied.
\end{proof}

%%%%%%%%%%%%%%%%%%%%%%%%%%%%%%%%%%%%%%%%%%%%%%%%%%%%%%%%%%%%%%%%%%%%%%%%%%%%%%%
%%%%%%%%%%%%%%%%%%%%%%%%%%%%%%%%%%%%%%%%%%%%%%%%%%%%%%%%%%%%%%%%%%%%%%%%%%%%%%%
% APPENDIX
%%%%%%%%%%%%%%%%%%%%%%%%%%%%%%%%%%%%%%%%%%%%%%%%%%%%%%%%%%%%%%%%%%%%%%%%%%%%%%%
%%%%%%%%%%%%%%%%%%%%%%%%%%%%%%%%%%%%%%%%%%%%%%%%%%%%%%%%%%%%%%%%%%%%%%%%%%%%%%%

\end{document}